\sloppy\pagestyle{plain}
\makeatletter\@addtoreset{equation}{part} \makeatother
\makeatletter\@addtoreset{section}{part} \makeatother
\newtheorem{theorem}[equation]{Theorem}
\newtheorem{proposition}[equation]{Proposition}
\newtheorem{lemma}[equation]{Lemma}
\newtheorem{corollary}[equation]{Corollary}
\newtheorem{conjecture}[equation]{Conjecture}
\newtheorem{principle}[equation]{Principle}
\newtheorem{optimisticpicture}[equation]{Optimistic picture}
\newtheorem{question}[equation]{Question}
\newtheorem{problem}[equation]{Problem}
\newtheorem{fact}[equation]{Fact}
\theoremstyle{definition}
\newtheorem{example}[equation]{Example}
\newtheorem{definition}[equation]{Definition}
\newtheorem{compactificationconstruction}[equation]{Compactification construction}
\theoremstyle{remark}
\newtheorem{remark}[equation]{Remark}
\theoremstyle{remark}
\newcommand{\QQ}{{\mathbb Q}}
\newcommand{\ZZ}{{\mathbb Z}}
\newcommand{\NN}{{\mathbb N}}
\newcommand{\PP}{{\mathbb P}}
\newcommand{\CC}{{\mathbb C}}
\newcommand{\RR}{{\mathbb R}}
\newcommand{\TT}{{\mathbb T}}
\newcommand{\FF}{{\mathbb F}}
\newcommand{\Aff}{{\mathbb A}}
\def \P {\mathbb{P}}
\def \R {\mathbb{R}}
\newcommand{\bbA}{{\mathbb A}}
\newcommand{\bbC}{{\mathbb C}}
\newcommand{\bbH}{{\mathbb H}}
\newcommand{\bbP}{{\mathbb P}}
\def \C {\mathbb{C}}
\def \sA {\mathbb{A}}
\def \sD {\mathbb{D}}
\def \sE {\mathbb{E}}
\def \QQQ {\mathcal{Q}}
\def \TTT {\mathcal{T}}
\newcommand{\pic}{\mathrm{Pic}\,}
\newcommand{\Pic}{\mathrm{Pic}\,}
\newcommand{\virt}{\mathrm{virt}}
\newcommand{\Spec}{\mathrm{Spec}\,}
\newcommand{\cO}{\mathcal{O}}
\newcommand{\vol}{\mathrm{vol}\,}
\newcommand{\iX}{i_X}
\newcommand{\id}{\mathrm{id}}
\newcommand{\init}{\mathrm{init}\,}
\newcommand{\scr}[1]{\ensuremath{\mathcal{#1}}}
\def \Res {\mathrm{Res}\,}
\newcommand{\iY}{i_Y}
\def \G {\mathrm{Gr}}
\def \Ver {\mathrm{Ver}}
\def \Ar {\mathrm{Ar}}
\def \vv {\mathbf{v}}
\def \hh {\mathbf{h}}
\def \wt {\mathrm{wt}}
\def \O {\mathcal{O}}
\def \HB {\mathrm{HB}}
\def \MB {\mathrm{MB}}
\def \VB {\mathrm{VB}}
\def \X {\mathcal{X}}
\def \Y {\mathcal{Y}}
\newcommand{\arrow}[2]{\langle #1\to #2 \rangle}
\def \ge {\geqslant}
\def \le {\leqslant}
\def \geq {\geqslant }
\def \leq {\leqslant}
\begin{document}

\title{Toric Landau--Ginzburg models}

\author{Victor Przyjalkowski}

\thanks{The author was partially supported by Laboratory of Mirror Symmetry NRU
HSE, RF government grant, ag. No. 14.641.31.0001. % and by Young Russian Mathematics award.
He is a Young Russian Mathematics award winners and would like to thank
its sponsors and jury.}

\address{\emph{Victor Przyjalkowski}
\newline
\textnormal{Steklov Mathematical Institute of Russian Academy of Sciences, 8 Gubkina street, Moscow 119991, Russia.}
\newline
\textnormal{National Research University Higher School of Economics, Russian Federation, Laboratory of Mirror Symmetry, NRU HSE, 6 Usacheva str., Moscow, Russia, 119048.}
\newline
\textnormal{\texttt{victorprz@mi.ras.ru, victorprz@gmail.com}}}

%\keywords{Weak Landau--Ginzburg model, Fano threefold, toric degeneration, intermediate Jacobian}

\maketitle

\begin{abstract}
This is a review of the theory of toric Landau--Ginzburg models --- the effective approach to mirror symmetry for Fano varieties.
We mainly focus on the cases of dimensions $2$ and $3$, as well as on the case of complete intersections in weighted projective spaces
and Grassmannians. Conjectures that relate invariants of Fano varieties and their Landau--Ginzburg models,
such as Katzarkov--Kontsevich--Pantev conjectures, are also studied.
\end{abstract}

\tableofcontents

\part{Introduction.}
\label{part:introduction}

One of the most brilliant ideas in mathematics in the last three decades is Mirror Symmetry.
As it often happens, it came to mathematics from physics.
That is, Calabi--Yau threefolds (i.\,e. varieties of complex dimension $3$ with non-vanishing everywhere defined holomorphic $3$-form)
play a central role in elementary particles description in the string theory.
These varieties, enhanced by symplectic forms and complex structures, can be considered as symplectic or algebraic manifolds.
Physicists noticed that these varieties come in (non-uniquely defined) pairs such that symplectic properties
of Calabi--Yau manifold~$X$ (the so called branes of type~$A$)
correspond to algebraic properties of its pair~$Y$ (the so called branes of type~$B$)
and, vice-versa, symplectic properties for~$Y$ correspond to algebraic properties for~$X$.
One of numerical consequences of the correspondence is Mirror Symmetry of Hodge numbers.
It states that~$h^{i,j}(X)=h^{i,3-j}(Y)$.
One can say that putting a mirror to Hodge diamond for~$X$ one can see the Hodge diamond for~$Y$.
This justifies the term ``Mirror Symmetry''.

Just after making this breakthrough it was straightforwardly generalized to higher-dimensional Calabi--Yau varieties.
Some numerical consequences of the discovery were also formulated, which enabled one to
formulate the idea of Mirror Symmetry mathematically.
The first example of the phenomena is the famous paper~\cite{COGP91},
where the generic quintic threefold in $\PP^4$ was considered.
The certain series for the hypersurface was considered,
that is the one constructed by expected numbers of rational curves of given degree
lying on the variety (Clemens conjecture states that for very generic quintic the numbers are finite).
The certain one-dimensional family was considered. The period for the family, that is the function
given by integrals of fiberwise forms over fiberwise cycles, after certain transformation
coincides with the series for the quintic.
This principle of correspondence of the series constructed by numbers of rational curves lying on the
manifold and periods of the dual one-parameter family is the basement of the Mirror Symmetry conjecture of
variations of Hodge structures.

The following generalization of Mirror Symmetry is the one for Fano varieties,
that is varieties with ample anticanonical class.
Such varieties play an important role in algebraic geometry: for instance, they are
the main ``building bricks'' in Minimal Model Program.
Moreover, they have rich geometry; say, a lot of rational curves lie on them.
In opposite to the Calabi--Yau case, mirror partners for Fano varieties are not varieties of the same kind but
certain varieties together with complex-valued functions called \emph{superpotential}.
Such varieties are called Landau--Ginzburg models and they can be described as
one-dimensional families of fibers of superpotentials.
In particular, fibers of the families are Calabi--Yau varieties mirror dual to anticanonical sections
of the Fano varieties.
Mirror Symmetry conjecture of variations of Hodge structures claims the correspondence between
$I$-series that are constructed by Gromov--Witten invariants, that are expected numbers
of rational curves of given degree lying on the manifold (it's important here that it is Fano or close to be Fano
to have enough rational curves) and periods of the dual family.
In other words it claims the coincidence of the second Dubrovin's connection for the Fano manifold
and the Gauss--Manin connection for the dual Landau--Ginzburg model, or
coincidence of regularized quantum differential equation of the variety and Picard--Fuchs differential
equation of the dual model.

The first and the main example when Mirror Symmetry conjecture of variations of Hodge structures holds
was given by Givental (see~\cite{Gi97b}, and also~\cite{HV00})).
He constructed Landau--Ginzburg models for complete intersections in smooth toric varieties.
This construction can be generalized to complete intersections in singular toric varieties
and, more general, varieties admitting ``good'' toric degenerations, such as Grassmannians
or partial flag manifolds (see~\cite{BCFKS97} and~\cite{BCFKS98}).
Moreover, Givental's model for a toric variety $T$ can be simplified by expressing monomially some variables in terms of others
such that the superpotential becomes a Laurent polynomial in $\dim (T)$ variables.
The Newton polytope of the Laurent polynomial coincide with the fan polytope for $T$,
that is a convex hull of integral generators of rays of a fan for $T$.
For a complete intersection it is often possible to make one more birational change of variables
after which the superpotential remains being representable by a Laurent polynomial.
Moreover, this change of variables transforms Givental integral (that express periods) correctly.

Consider a Gorenstein toric variety $T$. Its fan polytope is reflexive, which means that the dual the polytope
is integral.
%Anticanonical linear system on $T$ can be described as the linear system of Laurent polynomials with support on the dual polytope.
%Thus Givental's Landau--Ginzburg model for an anticanonical section of the toric variety can be described
%as an anticanonical section of the dual toric variety.
%It turns out that Mirror Symmetry conjecture of Hodge numbers holds for this duality.
%That is, c
Consider the dual to $T$ toric variety $T^\vee$;
in other words the varieties
$T$ and $T^\vee$ are defined by dual polytopes.
Let $X$ be a Calabi--Yau complete intersection in $T$ of dimension $n$,
which is defined by some nef-partition.
Batyrev and Borisov (see~\cite{BB96}) defined the dual nef-partition,
which gives the dual Calabi--Yau variety $Y$.
According to Givental, Mirror Symmetry conjecture of variations
of Hodge structures holds for $X$ and $Y$.
In loc. cit it is shown that
$$
h_{st}^{p,q}(X)=h_{st}^{p,n-q}(Y),
$$
where $h_{st}^{p,q}(Y)$ are stringy Hodge numbers.
(In particular in our case they coincide with Hodge numbers of a crepant resolution of $Y$,
which, by Batyrev's theorem (see~\cite{Ba99}) do not depend on the particular resolution.)
Thus in our case Mirror Symmetry conjecture for Hodge numbers follows from Mirror Symmetry conjecture of variations
of Hodge structures.
In the Fano case one can't claim the correspondence of Hodge diamonds
because the dual objects are not varieties but families of varieties.
In~\cite{KKP17} the analogues of Hodge numbers for ``tame compactified Landau--Ginzburg models''
were defined (in three ways). The authors made a conjecture about mirror correspondence for them.
We in particular study these conjectures in this paper, correct them a bit and observe schemes of
their proofs for the two- and three-dimensional cases.

The next step is Kontsevich's Homological Mirror Symmetry conjecture.
It states mirror correspondence in terms of derived categories.
That is, considering Fano manifold $X$ as an algebraic variety one can construct the derived category of coherent sheaves $D^b(coh\ X)$,
and considering $X$ as a symplectic variety (with chosen symplectic form) one can construct the Fukaya category $Fuk (X)$,
whose objects are Lagrangian submanifolds for the symplectic form,
and morphisms are Floer homology.
On the other hand, similar categories can be defined for a Landau--Ginzburg model $w\colon Y\to \CC$.
Analogue of the derived category of coherent sheaves for the Landau--Ginzburg model is the derived category
of singularities $D^b_{sing}(Y,w)$, that is a product over all singular fibers
of quotients of categories of coherent sheaves by subcategories of perfect complexes.
Analogue of the Fukaya category is the Fukaya--Seidel category $FS(Y,w)$,
whose objects are vanishing to singularities Lagrangian cycles (for chosen symplectic form on the Landau--Ginzburg model).
Homological Mirror Symmetry conjecture states the equivalences
\begin{equation*}
Fuk(X)\cong D^b_{sing}(Y,w),\ \ \ \
D^b(coh\ X)\cong FS(Y,w).
\end{equation*}

Homological Mirror Symmetry conjecture is very powerful. For instance, the Bondal--Orlov theorem states that a Fano variety
can be reconstructed from its derived category of coherent sheaves.
However because of the deepness of the conjecture it is hard to prove it even for the simplest cases.
The positive examples are the partial proofs of the conjecture (that is, the proof of one of equivalences
in the conjecture) for del Pezzo surfaces (\cite{AKO06}),
toric varieties (\cite{Ab09}), and some of hypersurfaces (\cite{Sh15}).
Let us mention that Mirror Symmetry conjecture of variations of Hodge structures is claimed to be a numerical consequence of Homological
Mirror Symmetry conjecture, since the equivalence of categories implies the isomorphism of their Hochschild cohomologies,
which in our case correspond to quantum cohomology and variations of Hodge structures.

It is expected that different versions of mirror symmetry conjectures agree one with others.
This means that Givental's Landau--Ginzburg models satisfy Homological Mirror Symmetry conjecture.
More precise, the following compactification principle should hold:
there should exist fiberwise (log) compactification of a Landau--Ginzburg model
which, after choosing a symplectic form, satisfies Homological Mirror Symmetry conjecture.
In particular, fibers of the compactification should be Calabi--Yau varieties mirror dual
to anticanonical sections of the Fano variety. These three properties (correspondence
to Gromov--Witten invariants, the existence of compactification of a family of Calabi--Yau varieties,
and a connection with toric degenerations) justify the notion of toric Landau--Ginzburg model
which is central in this paper. Similarly to the case of smooth toric varieties
(but not complete intersections therein!), toric Landau--Ginzburg model is an algebraic torus
together with non-constant complex-valued function satisfying the properties discussed above.
Since the function on the torus (after choosing a basis) is nothing but a Laurent polynomial,
we call the Laurent polynomial (satisfying the properties) toric Landau--Ginzburg model.
See Part~\ref{part:toric LG} for the precise definition.

Strong version of Mirror Symmetry conjecture of variations of Hodge structures claims
the existence of toric Landau--Ginzburg model for each smooth Fano variety.

The notion of toric Landau--Ginzburg model turned out to be an effective tool for studying mirror symmetry conjectures.
This paper is a review of the theory of toric Landau--Ginzburg models.
In particular, we construct them for a large class of Fano varieties such as del Pezzo surfaces,
Fano threefolds, complete intersections in (weighted) projective spaces and Grassmannians.
We also construct their compactifications and study their properties, invariants,
and related conjectures.

\medskip

We present only sketches of proofs for a lot of results in the paper; one can find details
in the references. The paper is organized as follows.
Part~\ref{part:preliminaries} contains definitions and preliminaries needed for the following.
Part~\ref{part:toric LG} is devoted to the notion of toric Landau--Ginzburg models.
Del Pezzo surfaces are discussed in Part~\ref{part: del Pezzo surfaces}.
We present there a precise construction of toric Landau--Ginzburg model depending on a divisor
on a del Pezzo surface.

Part~\ref{part:Fano 3-folds}, central in the paper, is devoted to the threefold case.
Section~\ref{section: weak LG 3-folds} contains construction of weak Landau--Ginzburg models.
In Section~\ref{section: Calabi--Yau 3-folds} (log) Calabi--Yau com\-pac\-ti\-fi\-cations are constructed. In Section~\ref{section: toric LG 3-folds} we discuss toric degenerations of Fano threefolds that correspond to their weak Landau--Ginzburg models.
We also present a certain construction for the Picard rank one case.
In Section~\ref{section:Modularity} we compute Picard lattices of fibers of Landau--Ginzburg
models for the Picard rank one case and show that the fibers are Dolgachev--Nikulin mirrors to anticanonical sections of Fano varieties.

In Part~\ref{part:KKP} we study Katzarkov--Kontsevich--Pantev conjectures on Hodge numbers of Landau--Ginzburg models.
In Section~\ref{section: KKP} we, following~\cite{KKP17}, define and discuss Hodge numbers of Landau--Ginzburg models
and Katzarkov--Kontsevich--Pantev conjectures.
In Section~\ref{section: KKP for surfaces} we prove the conjectures for del Pezzo surfaces. Finally, in Section~\ref{subsection:KKP-3}
we present a scheme of the proof of the conjectures in the threefold case.

Part~\ref{part:complete intersections in Grass} is devoted to the higher-dimensional case,
that is the cases of (weighted) complete intersections and Grassmannians.
A general Givental's construction of Landau--Ginzburg models for complete intersections in smooth toric varieties
is presented in Section~\ref{section: complete Givental}.
The most of results in the rest sections are related to the problem of existence of ge\-ne\-ra\-lizations of such models
and to the question if they are birational to weak Landau--Ginzburg mo\-dels. In Section~\ref{section:complete intersections}
we consider the case of weighted complete intersections. We present there results on existence of nef-partitions that guarantee
the existence of weak Landau--Ginzburg models. In the case of complete intersections in the usual projective spaces
we show existence of Calabi--Yau compactifications and toric degenerations.
The rest of the section contains boundness results for families of smooth complete intersections.
More details on this part one can find in the review~\cite{PSh18} (in preparation).
Finally, in Section~\ref{section: CI in Grass} we consider the case of complete intersections in Grassmannians.
For each of such complete intersection we show the existence of Batyrev--Ciocan-Fontanine--Kim--van Straten construction
which is birationally equivalent to weak Landau--Ginzburg models.

\section*{Notation and conventions}

All varieties are considered over the field of complex numbers $\CC$.

We consider only~{genus zero} Gromov--Witten invariants.

Homology $H_*(X,\ZZ)$ and cohomology $H^*(X,\ZZ)$ we denote by $H_*(X)$ and $H^*(X)$ respectively.
Cohomology with compact support (of a variety $X$ with coefficients in the constant sheaf $\CC_X$) we denote by $H_c^* (X)$.
Poincare dual class to $\gamma\in H^*(X)$ we denote by $\gamma^\vee$.
The space $\Pic (X)\otimes \CC$ we denote by $\Pic (X)_\CC$.

For any two numbers $n_1$ and $n_2$ we denote the set $\{i\mid n_1\le i\le n_2\}$ by $[n_1,n_2]$.

Calabi--Yau variety for us is a projective variety with trivial canonical class.

We often denote a Cartier divisor on a variety $X$ and its class in $\pic(X)$ by the same symbol.

A smooth degree $d$ del Pezzo surface (except for the quadric surface) we denote by $S_d$.

A smooth Fano variety (considered as an element of a family of the varieties of its type)
of Picard rank $k$ and number $m$ in the lists in~\cite{IP99} we denote by $X_{k-m}$.

We use the notation $\PP(w_0,\ldots,w_n)$ for a weighted projective space with weights $w_0,\ldots,w_n$.
(Weighted) projective spaces with coordinates $x_0,\ldots,x_n$ we denote by $\PP[x_0:\ldots:x_n]$.
Affine space with coordinates $x_0,\ldots,x_n$ we denote by $\Aff[x_1,\ldots,x_n]$.

The ring $\CC[x_1^{\pm 1},\ldots,x_n^{\pm 1}]$ we denote by $\TT[x_1,\ldots, x_n]$.
The torus $\Spec \TT[x_1,\ldots, x_n]$ we denote by $\TTT[x_1,\ldots, x_n]$.

An integral polytope $\Delta\in \ZZ^n\otimes \RR$ for us is a polytope with integral vertices, that is ones lying in $\ZZ^n$.
An integral length of an integral segment is a number of integral points on it minus one.

We consider pencils in the birational sense. That is, a pencil for us is a family birational to a family of fibers of a map to $\PP^1$.

\part{Preliminaries.}
\label{part:preliminaries}

\section{Gromov--Witten invariants and $I$-series}
\label{section:GW invariants}

In this part we introduce notions and notation of Gromov--Witten theory we need. Details one can find, say, in~\cite{Ma02}.

\begin{definition}[{\cite[V--3.3.2]{Ma02}}]
\emph{The moduli space of stable maps} of rational curves of
class $\beta\in H_2(X)$ with $n$ marked points to $X$ is the
Deligne--Mumford stack {(}see~\cite{Ma02}, V--5.5{)}
of stable maps $f\colon C\to X$ of genus $0$ curves with $n$ marked points
such that что $f_*(C)=\beta$.
\end{definition}

Consider \emph{the evaluation maps}
$ev_i:\bar{M}_n(X,\beta)\rightarrow X$, given by
$ev_i(C;p_1,\ldots,p_n,f)=f(p_i)$. Let
$\pi_{n+1}:\bar{M}_{n+1}(X,\beta)\rightarrow \bar{M}_n(X,\beta)$
be \emph{the forgetful map} at the point $p_{n+1}$ which forget
this point and contract unstable component after it. Consider the
sections $$\sigma_i:\bar{M}_n(X,\beta)\rightarrow
\bar{M}_{n+1}(X,\beta)$$ defined as follows. The
image of a curve $(C;p_1,\ldots,p_n,f)$ under the map $\sigma_i$ is a
curve $(C';p_1,\ldots,p_{n+1},f')$. Here $C'=C\bigcup C_0$,
$C_0\simeq \PP^1$, $C_0$ and $C$ intersect at the non-marked
point $p_i$ on $C'$, and $p_{n+1}$ and $p_i$ lie on
 $C_0$. The map $f'$ contracts the curve $C_0$ to the point and $f'|_C=f$.

Consider the sheaf $L_i$ given by
$L_i=\sigma_i^*\omega_{\pi_{n+1}}$, where $\omega_{\pi_{n+1}}$ is a
relative dualizing sheaf of $\pi_{n+1}$. Its fiber over the point
$(C;p_1,\ldots,p_n,f)$ is $T^*_{p_i}C$.

\begin{definition}[{\cite[VI--2.1]{Ma02}}]
The \emph{cotangent line
class} is the class
$$
\psi_i=c_1(L_i)\in H^2(\bar{M}_n(X,\beta)).
$$
\end{definition}

\begin{definition}[{\cite[VI--2.1]{Ma02}}]  {
Consider
$$
\gamma_1,\ldots, \gamma_n\in H^*(X),
$$
let
$a_1,\ldots,a_n$ be non-negative integers, and let $\beta \in H_2(X)$. Then \emph{the
Gromov--Witten invariant with descendants} is the number given by
\begin{equation*}
\langle\tau_{a_1} \gamma_1,\ldots,
\tau_{a_n}\gamma_n\rangle_\beta=
ev_1^*\gamma_1\cdot\psi_1^{a_1}\cdot\ldots \cdot
ev_n^*\gamma_n\cdot \psi_n^{a_n}\cdot [\bar{M}_n(X,\beta)]^\virt
\end{equation*}
if $\sum \mathrm{codim}\, \gamma_i+\sum a_i=\mathrm{vdim}\,
\bar{M}_n(X,\beta)$ and 0 otherwise. The invariants with $a_i=0$, $i=1\ldots,n$, are called \emph{prime}. We omit symbols
$\tau_0$ in this case. }
\end{definition}

Gromov--Witten invariants are usually ``packed'' into different structures
for convenience.
The simplest ones are one-pointed ($n=1$); they are usually packed to $I$-series.

Let $X$ be a smooth Fano variety of dimension $N$ and Picard number $\rho$.
Choose a basis
$$%\mathcal H=
\{H_1,\ldots,H_{\rho}\}$$
in $H^2(X)$
so that for any $i\in [1,\rho]$ and any curve $\beta$ in the K\"ahler cone
$K$ of $X$ one has~\mbox{$H_i\cdot\beta\ge 0$}.
Introduce formal variables $q^{\tau_i}$, $i\in [1,\rho]$ and denote $q_i=q^{\tau_i}$.
For any~\mbox{$\beta\in H_2(X)$} denote
$$q^\beta=q^{\sum \tau_i (H_i\cdot \beta)}.$$
Consider the Novikov ring $\C_q$, i.\,e. a group ring for $H_2(X)$. We treat it as a ring of polynomials over $\C$ in formal variables
$q^\beta$, with relations
$$q^{\beta_1}q^{\beta_2}=q^{\beta_1+\beta_2}.$$
Note that for any $\beta\in K$ the monomial $q^\beta$ has non-negative degrees in $q_i$.

\begin{definition}[{details see in \cite{Ga00}, \cite{Prz07a}}]
\label{definition:I-series}
Let $\mu_1,\ldots, \mu_N$ be a basis in $H^*(X)$ and let
$\check{\mu}_1,\ldots,\check{\mu}_N$ be the dual basis.
The \emph{$I$-series} (or Givental \emph{$J$-series}) for $X$ is given by the following.
\begin{gather*}
I^X_\beta=ev_*\left(\frac{1}{1-\psi}\cdot
[\bar{M}_1(X,\beta)]^\virt\right)=\sum_{i,j\geqslant 0}
\langle\tau_i \mu_j\rangle_\beta\check{\mu}_j,\\
I^X(q_1,\ldots,q_{\rho})=1+\sum_{\beta\in K} I_\beta^X\cdot q^\beta.
\end{gather*}

The \emph{constant term} of $I$-series $I^X_{0}$
is
\begin{gather*}
I^X_{0}(q_1,\ldots,q_{\rho})=1+\sum_{\beta\in K}
\langle\tau_{(-K_X)\cdot\beta-2} {\mathbf{1}}\rangle_\beta
\cdot q^\beta,
\end{gather*}
where $\mathbf{1}$ is the fundamental class.
(The map $ev$ and the cotangent line class are unique for one-pointed invariants, so we omit indices.)
The series
$$
\widetilde{I}^X_{0}(q_1,\ldots,q_{\rho})=
1+\sum_{\beta \in K} (-K_X\cdot \beta)!\langle\tau_{-K_X\cdot\beta-2} \mathbf 1\rangle_{\beta}
\cdot q^\beta
$$
is called the \emph{constant term of regularized $I$-series} for~$X$.

\end{definition}

Consider the class of divisors $H=\sum \alpha_i H_i$.
One can restrict $I$-series, the usual and the regularized ones, to the direction corresponding to the divisor class putting
$\sigma_i=\alpha_i \sigma$ and~\mbox{$t=q^\sigma$}.
Let us fix a divisor class $D$.
We are interesting in restrictions of $I$-series on the \emph{anticanonical direction corresponding to
$D$}.
For this we change $q^\beta$ by $e^{-D\cdot \beta}t^{-K_X\cdot \beta}$.
In particular one can define a \emph{restriction of the constant term of regularized $I$-series to the anticanonical direction} (so that $D=0$).
It has the form
$$
\widetilde{I}^X_0(t)=1+a_1t+a_2t^2+\ldots,\ \ \ \ a_i\in \CC.
$$

\section{Toric geometry}
\label{section:toric}

The definition and the main properties of toric varieties see
in~\cite{Da78} or in~\cite{Fu93}. Let us just remind that toric variety is
a variety with an action of a torus $\Spec(\CC^*)^N$ such that
one of its orbits is a Zariski open set. Toric variety is determined
by its \emph{fan}, i. e. some collection of cones with vertices in
the points of lattice that is dual to the lattice of torus
characters. Moreover, algebro-geometric properties of toric
variety can be formulated in terms of properties of this fan.
Remind some of them.

Every cone of the fan $\kappa\subset \mathcal N_{\RR}=\mathcal N\otimes \RR$, $\mathcal N\simeq \ZZ^N$ of dimension $r$
corresponds to the orbit of the torus of dimension $N-r$. Thus, each edge (one-dimensional
cone) correspond to the (equivariant) Weil
divisor. That is, let $\Sigma\in \mathcal N$ be a
fan of the toric variety $X_\Sigma$ and let $\sigma\in \Sigma$ be
any cone. Let $\mathcal M$ be a lattice dual to $\mathcal N$ with respect to some
non-degenerate pairing $\langle \cdot,\cdot\rangle$ and
$\sigma^\vee$ be a dual cone for $\sigma$ (i.~e. $\sigma^\vee=
\{l\in \mathcal M| \forall k\in \sigma \ \ \langle l,k\rangle\geq 0\}$). Let
$U_\sigma=\mathrm{Spec}\, \CC [\sigma^\vee]$ correspond to
$\sigma$. The variety $X_\sigma$ is obtained from the affine
varieties $U_\sigma$, $\sigma\in \Sigma$, by gluing together
$U_\sigma$ and $U_\tau$ along $U_{\sigma\cap\tau}$, $\sigma,\tau\in
\Sigma$. %Thus, if $l\subset\sigma\in \Sigma$ is an edge of the fan,
%then the divisor that is correspond to $l$ restricted on $U_\sigma$
%as $U_l\subset U_\sigma$.
The divisors which correspond to the edges of the fan generate
divisor class group. A Weil divisor $D=\sum d_i M_i$, where $M_i$
corresponds to edges, is Cartier if for each cone of the fan
$\sigma$ there exist a vector $n_\sigma$ such that $\langle
n_\sigma, m_i\rangle=d_i$ where $m_i$ are primitive elements of
the edges of this cone. If such vector is the same for all cones,
then the divisor is principal. Hence if the toric variety is
$N$-dimensional and the number of the edges is $N+\rho$, then the rank of
the divisor class group is $\rho$.

\begin{definition}
The variety is called \emph{$\QQ$-factorial} if some multiple of each Weil
divisor is a Cartier
divisor.
\end{definition}

In particular, there exists an intersection theory for Weil divisors
on the $\mbox{$\QQ$-factorial}$ variety.
Toric variety is $\QQ$-factorial if and only if any cone of the
fan, which corresponds to this variety, is simplicial. In this
case the Picard group is generated (over $\QQ$) by divisors, which
correspond to the edges of the fan.

Consider a weighted projective space
$\PP=\PP(w_0,\ldots,w_N)$. The fan which corresponds to it is
generated by integer vectors $m_0,\ldots,m_N\in \RR^N$ such that
$\sum w_i m_i=0$. If $w_0=1$, then one can put $m_0=(-w_1,\ldots,
-w_N)$, $m_i=e_i$, where $e_i$ is a basis of $\RR^N$. The collection
$\{m_i\}$ corresponds to the collection of standard divisors--strata
$\{ D_i\in
H^0\left(\mathcal O_\PP(w_i)\right)\}$.

A toric variety is \emph{smooth}
if for any cone $\sigma$ in the fan that correspond to this
variety the subgroup $\sigma\cap \ZZ^N$ is generated by the subset
of the basis of the lattice $m_1^\sigma, \ldots, m_k^\sigma$.
Adding the edge $a=a_1 m_1^\sigma+\ldots+a_km_k^\sigma$, $a_i\in
\QQ$ to the cone (and connection it with ``neighbor'' faces)
corresponds to weighted blow up (along subvariety which correspond
to $\sigma$) with weights $1/r\cdot(\alpha_1,\ldots,\alpha_k)$,
where $\alpha_i\in \ZZ$ and $a_i=\alpha_i/r$. Thus we can get toric resolution of a
toric variety adding consecutively
edges to the fan in this way.

In particular, singular locus of $\PP$ is the union of
strata given by $x_{i_1}=\ldots=x_{i_j}=0$, where $x_{i_j}$ is the
coordinate of weight $w_{i_j}$ and $\{ i_1,\ldots,i_j \}$ is the
maximal set of indices such that greatest common factor of the
others is greater than $1$, see Lemma~\ref{lemma:singularities-of-P}.

Let $X$ be a factorial $N$-dimensional toric Fano variety of Picard rank $\rho$ corresponding to
a fan $\Sigma_X$ in the lattice $\mathcal{N}$. Let $D_1,\ldots, D_{N+\rho}$ be its
prime invariant divisors.
Let $\mathcal{M}=\mathcal{N}^{\vee}$, and let $\mathcal{D}\simeq\ZZ^{N+\rho}$ be a lattice with
the basis $\{D_1,\ldots, D_{N+\rho}\}$ (so that one has a natural identification $\mathcal{D}\simeq\mathcal{D}^{\vee}$).
By~\cite[Theorem~4.2.1]{CLS11} one has an exact sequence
$$
0\to \mathcal{M} \to \mathcal{D}
\to A_{N-1}(X)=\pic(X)\simeq\ZZ^{\rho}\to 0.
$$
We use factoriality of $X$ here to identify the class group $A_{N-1}(X)$ and the Picard
group~\mbox{$\pic(X)$}.
Dualizing this exact sequence, we obtain
an exact sequence
\begin{equation}
\label{sequence}
0\to \pic(X)^{\vee}\to \mathcal{D}\to \mathcal{N}\to 0.
\end{equation}
Thus $\pic(X)^{\vee}$ can be identified with the lattice of relations on primitive vectors on the
rays of $\Sigma_X$ considered as Laurent monomials in variables $u_i$.
On the other hand, as the basis in $\pic(X)$ is chosen we can identify $\pic(X)^{\vee}$ and $\pic(X)=H^2(X)$.
Hence we can choose a basis in the lattice of relations on primitive vectors on the
rays of $\Sigma_X$ corresponding to $\{H_i\}$ and, thus, to $\{q_i\}$.
We denote these relations by $R_i$, and interpret them as monomials in
the variables~\mbox{$u_1,\ldots,u_{N+\rho}$}.

Consider a toric variety $T$.
\emph{A fan} (or \emph{spanning}) \emph{polytope} $F(T)$ is a convex hull of integral generators
of fan's rays for $T$.
Let
$$
\Delta=F(T)\subset \mathcal N_\RR.
$$
Let
$$
\nabla=\{x\ |\ \langle x,y\rangle \geq -1 \mbox{ for all } y\in \Delta \}\subset \mathcal M_\RR=\mathcal N^\vee\otimes \RR.
$$
be the dual polytope.

For an integral polytope $\Delta$ we associate a (singular) toric Fano variety $T_{\Delta}$
defined by a fan whose cones are cones over faces of $\Delta$. We also associate a (not uniquely defined) toric variety $\widetilde{T}_\Delta$
with $F(\widetilde T_\Delta)=\Delta$ such that for any toric variety $T'$ with $F(T')=\Delta$ and for any
morphism $T'\to \widetilde T_\Delta$ one has $T'\simeq \widetilde T_\Delta$. In other words, $\widetilde T_\Delta$ is
given by ``maximal triangulation'' of $\Delta$.

\begin{definition}
\label{definition:reflexive}
The variety $T_\Delta$ and the polytope $\Delta$ are called \emph{reflexive} if $\nabla$ is integral.
\end{definition}

Let $T$ be reflexive.
Denote $T_\nabla$ by $T^\vee$ and $\widetilde T_\nabla$ by $\widetilde T^\vee$.

Finally summarize some facts related to toric varieties and their anticanonical sections. One can see, say,~\cite{Da78} for details.
It is more convenient to start from the toric variety $T^\vee$ for the following.

\begin{fact}
\label{fact: toric 1}
Let the anticanonical class $-K_{T^\vee}$ be very ample (in particular, this holds in reflexive threefold case, see~\cite{JR06} and~\cite{CPS05}).
One can embed $T^\vee$ to a projective space in the following way.
Consider a set $A\subset M$ of integral points in a polytope $\Delta$ dual to $F(T^\vee)$. Consider a projective space
$\PP$ whose coordinates $x_i$ correspond to elements $a_i$ of $A$.
Associate a homogenous equation $\prod x_i^{\alpha_i}=\prod x_j^{\beta_j}$ with any homogenous relation $\sum \alpha_i a_i=\sum \beta_j a_j$, $\alpha_i, \beta_j\in \ZZ_+$.
The variety $T^\vee$ is cut out in $\PP$ by equations associated to all homogenous relations on $a_i$.

\end{fact}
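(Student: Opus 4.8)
The plan is to identify the embedding of $T^\vee$ with the standard anticanonical embedding of a projective toric variety and then describe its defining ideal combinatorially. First I would recall that since $\Delta$ is the polytope dual to $F(T^\vee)$, the lattice points $A = \Delta \cap \mathcal{M}$ furnish a basis of the global sections of the anticanonical sheaf: by the standard correspondence between lattice points of a polytope and torus-equivariant sections of the associated ample line bundle (see \cite{Fu93} or \cite{CLS11}), one has $H^0(T^\vee, \mathcal{O}(-K_{T^\vee})) = \bigoplus_{a_i \in A} \CC\cdot \chi^{a_i}$, where $\chi^{a_i}$ denotes the character corresponding to $a_i \in \mathcal{M}$. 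Assigning the homogeneous coordinate $x_i$ to $a_i$ thus realizes the map $T^\vee \to \PP$ as the map given by the complete linear system $|-K_{T^\vee}|$, which is an embedding precisely because $-K_{T^\vee}$ is very ample by hypothesis.

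Next I would pin down the image. On the open torus orbit, a point with character values $t$ is sent to the point with homogeneous coordinates $x_i = \chi^{a_i}(t)$. Consequently any additive relation $\sum_i \alpha_i a_i = \sum_j \beta_j a_j$ among the lattice points (with $\alpha_i, \beta_j \in \ZZ_+$) forces the multiplicative identity $\prod_i \chi^{a_i}(t)^{\alpha_i} = \prod_j \chi^{a_j}(t)^{\beta_j}$, i.e. $\prod_i x_i^{\alpha_i} = \prod_j x_j^{\beta_j}$ holds identically on the torus, hence on its closure $T^\vee$. This shows the image is contained in the common zero locus $V$ of all the binomials attached to homogeneous relations on the $a_i$.

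For the reverse inclusion --- that $T^\vee$ is \emph{cut out} by exactly these equations --- I would argue that the toric ideal of the point configuration $A$ is generated by such binomials. This is the classical statement that the defining ideal of a projectively normal toric variety embedded by lattice points is the lattice ideal generated by the binomials $\prod x_i^{\alpha_i} - \prod x_j^{\beta_j}$ coming from integral relations; equivalently $V$ is irreducible of the correct dimension with the torus as a dense orbit, so it coincides with $T^\vee$. The key input is projective normality of the anticanonical embedding, which guarantees that $A$ generates the semigroup in each degree and hence that no higher-degree equations beyond these binomials are needed.

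The main obstacle is this last step: verifying that the lattice-point binomials already generate the full homogeneous ideal, rather than merely carving out $T^\vee$ set-theoretically, requires projective normality (or normality of the semigroup algebra generated by $A$). For reflexive polytopes --- and in particular in the reflexive threefold case singled out in the statement --- this follows from standard results on normality of the anticanonical polytope, but in general one must either invoke such a normality result or be content with the set-theoretic description. I would therefore present the set-theoretic cut-out as the robust conclusion and flag that the scheme-theoretic refinement rests on projective normality, citing \cite{Da78} and \cite{CLS11} for the underlying toric facts.
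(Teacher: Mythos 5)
The paper does not prove this Fact at all --- it is listed among background ``facts'' with a pointer to~\cite{Da78} --- so there is no in-paper argument to compare against; your proposal supplies the standard proof, and it is correct. The one place where you are more cautious than necessary is the final step: projective normality is \emph{not} needed to conclude that $T^\vee$ is cut out (even ideal-theoretically) by the binomials. The ideal $I_A$ generated by the binomials attached to homogeneous relations is precisely the kernel of the monomial map $x_i\mapsto s\,\chi^{a_i}$ into $\CC[\mathcal M\times\ZZ]$: any element of that kernel decomposes, by grouping monomials according to their image, into a sum of such binomial differences, with no hypothesis on the semigroup generated by $A$. Hence $I_A$ is prime, its zero locus is the closure of the image of the torus, and since the very ample anticanonical map embeds $T^\vee$ as exactly that closure, one gets $V(I_A)=T^\vee$ and $I_A=I(T^\vee)$ outright. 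Projective normality of the embedding (equivalently, that $A$ generates the graded semigroup in every degree) is a genuinely stronger statement --- it controls surjectivity of the multiplication maps on sections and normality of the homogeneous coordinate ring --- but the Fact as stated does not require it. With that correction your argument is complete; the references~\cite{Da78} and~\cite{CLS11} you cite are the appropriate sources for both the description of $H^0(T^\vee,\cO(-K_{T^\vee}))$ by lattice points of $\Delta$ and the binomial generation of toric ideals.
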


\begin{fact}
\label{fact: toric 2}
 The anticanonical linear system of $T^\vee$ is a restriction of $\cO_\PP(1)$. In particular,
it can be described as (a projectivisation of) a linear system of Laurent polynomials whose Newton
polytopes contain in $\Delta$.
\end{fact}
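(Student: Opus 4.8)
The plan is to derive both assertions directly from the embedding produced in Fact~\ref{fact: toric 1}, combined with the standard dictionary between torus-invariant divisors on a toric variety and lattice polytopes (see \cite{Da78} or \cite{CLS11}). Throughout let $M$ denote the character lattice of the torus of $T^\vee$, so that $F(T^\vee)=\nabla$ lies in the cocharacter lattice and $\Delta$ is the polytope dual to $\nabla$; indeed $\nabla=\Delta^\vee$ by definition, and hence $\Delta=\nabla^\vee$ by biduality of reflexive polytopes.

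First I would record the standard description of the anticanonical system. For the Gorenstein toric Fano variety $T^\vee$ one has $-K_{T^\vee}=\sum_\rho D_\rho$, the reduced sum of the prime invariant divisors, and the polytope of this divisor is
\[
P=\{a\in M_\RR\ |\ \langle a,y\rangle\ge -1 \text{ for all } y\in F(T^\vee)\}.
\]
Since $F(T^\vee)=\nabla$, this is exactly the defining inequality of $\nabla^\vee=\Delta$, so $P=\Delta$. The standard correspondence then identifies $H^0\big(T^\vee,\cO(-K_{T^\vee})\big)$ with the span of the characters $\{\chi^a\ |\ a\in A\}$, where $A=\Delta\cap M$; this is precisely the set $A$ indexing the coordinates $x_i$ of $\PP$ in Fact~\ref{fact: toric 1}.

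For the first assertion I would then observe that the embedding $T^\vee\hookrightarrow\PP$ of Fact~\ref{fact: toric 1} is the map given by the complete anticanonical linear system: its homogeneous coordinates $x_i$ form a basis of $H^0(-K_{T^\vee})$, so the restriction map $H^0(\PP,\cO_\PP(1))\to H^0(T^\vee,\cO(-K_{T^\vee}))$ is an isomorphism and $\cO_\PP(1)|_{T^\vee}\simeq\cO_{T^\vee}(-K_{T^\vee})$. Thus every hyperplane cuts out an anticanonical divisor and, conversely, every anticanonical divisor arises this way, i.e. $|-K_{T^\vee}|$ is the restriction of $|\cO_\PP(1)|$.

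For the second assertion I would restrict to the open dense torus $\TTT\subset T^\vee$. Trivializing $\cO(-K_{T^\vee})$ over $\TTT$ by the invariant section with divisor $\sum_\rho D_\rho$, each coordinate $x_i$ becomes the Laurent monomial $u^{a_i}$ with $a_i\in A$, so a general hyperplane section $\sum_i c_i x_i$ restricts to the Laurent polynomial $\sum_i c_i u^{a_i}$; its support lies in $A$, hence its Newton polytope is contained in $\conv A=\Delta$. Letting $[c_i]$ vary over $\PP^{|A|-1}$ recovers the whole anticanonical system, giving the claimed projectivized family of Laurent polynomials. The only genuine point requiring care is that the restriction $H^0(\PP,\cO_\PP(1))\to H^0(-K_{T^\vee})$ is an isomorphism, i.e. that no anticanonical section is lost under restriction; this is ensured by taking \emph{all} lattice points of $\Delta$ as coordinates and cutting out $T^\vee$ by all homogeneous relations among them, exactly as in Fact~\ref{fact: toric 1}. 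The remaining ingredients — the equality $P=\Delta$ from reflexivity and the monomial description of sections — are the routine toric dictionary.
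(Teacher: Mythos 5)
Your argument is correct: it is the standard toric dictionary (the polytope of $-K_{T^\vee}=\sum_\rho D_\rho$ equals $\nabla^\vee=\Delta$ by reflexivity, so the embedding of Fact~\ref{fact: toric 1} is the complete anticanonical map and sections restrict to Laurent polynomials supported on $\Delta\cap M$). The paper gives no proof of this Fact, merely citing~\cite{Da78}, and your write-up supplies exactly the routine verification that reference would contain, including the one point worth making explicit — that the restriction $H^0(\PP,\cO_\PP(1))\to H^0(-K_{T^\vee})$ is an isomorphism because all lattice points of $\Delta$ are used as coordinates.
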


\begin{fact}
\label{fact: toric 3}
Toric strata of $T^\vee$ of dimension $k$ correspond to $k$-dimensional faces of $\Delta$.
Denote by $R_{f}$ an anticanonical
section corresponding to a Laurent polynomial $f\in \CC[N]$ and by $F_Q$ a stratum corresponding to a face $Q$ of $\Delta$.
Denote by $f|_Q$ a sum of those monomials of $f$ whose support lie in $Q$.
Denote by $\PP_Q$ a projective space whose coordinates correspond to $Q\cap N$.
(In particular, $Q$ is cut out in $\PP_Q$ by homogenous relations on integral points of $Q\cap N$.)
Then $R_{Q,f}=R_f|_{F_Q}=\{f|_Q=0\}\subset \PP_Q$.
\end{fact}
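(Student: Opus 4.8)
The plan is to read the restriction off the orbit--face correspondence, using the monomial description of anticanonical sections from Fact~\ref{fact: toric 2}. On the big torus $\Spec\CC[N]\subset T^\vee$ the homogeneous coordinate $x_a$ attached to a lattice point $a\in A=\Delta\cap N$ restricts to the monomial $z^a$, so an anticanonical section $R_f$ is the trace on $T^\vee$ of the zero locus of the linear form $\ell_f=\sum_a c_a x_a$, where $f=\sum_a c_a z^a$. First I would recall the standard orbit--face correspondence for the embedding of Fact~\ref{fact: toric 1} (see~\cite{Fu93} or~\cite{CLS11}): the torus orbits of $T^\vee$ are in dimension-preserving bijection with the faces of the polytope $\Delta$ of the polarization $-K_{T^\vee}$, the stratum $F_Q$ being the closure of the orbit attached to $Q$, so that $\dim F_Q=\dim Q$.

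The heart of the argument is to show that $F_Q$ lies in the coordinate subspace $\PP_Q\subset\PP$ spanned by the $x_a$ with $a\in Q$. For this I would degenerate a point of the big torus along a one-parameter subgroup $\lambda\mapsto\lambda^{v}$, choosing a cocharacter $v\in M$ so that the function $a\mapsto\langle v,a\rangle$ attains its minimum over $\Delta$ precisely on $Q$. Since $x_a=z^a$ scales as $\lambda^{\langle v,a\rangle}$, rescaling by the minimal power of $\lambda$ and letting $\lambda\to 0$ sends every coordinate with $a\notin Q$ to $0$, while the limit point lies in the orbit attached to $Q$; by torus-equivariance the whole stratum $F_Q$ is then contained in $\PP_Q$. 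Applying Fact~\ref{fact: toric 1} to $F_Q$ with the polytope $Q$ identifies it with the toric variety cut out in $\PP_Q$ by the homogeneous relations on $Q\cap N$, which is the parenthetical claim.

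It remains to restrict the section. Restricting the linear form $\ell_f$ to the subspace $\PP_Q$, on which $x_a=0$ for $a\notin Q$, leaves $\sum_{a\in Q}c_a x_a$, whose pullback to the big torus of $F_Q$ is exactly $f|_Q$ by definition of the latter. Hence $R_{Q,f}=R_f|_{F_Q}=\{f|_Q=0\}\subset\PP_Q$, as claimed. I expect the only delicate point to be the identification of $F_Q$ with the coordinate subvariety of $\PP_Q$: one must check that the coordinates indexed by $a\notin Q$ vanish on the entire stratum and that no further coordinates vanish identically, which is exactly where the dimension count from the orbit--face correspondence and the very ampleness (hence normality) of $-K_{T^\vee}$ enter.
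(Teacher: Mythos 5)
Your argument is correct and is exactly the standard one: the paper states this as a Fact without proof, referring to Danilov's survey, and what you write out --- the orbit--face correspondence for the polytope of the very ample anticanonical polarization, the one-parameter-subgroup limit showing $F_Q\subset\PP_Q$ with no extra coordinates vanishing, and the restriction of the linear form $\ell_f$ to recover $f|_Q$ --- is precisely the argument those references supply. The only cosmetic issue is the lattice bookkeeping (your cocharacter should live in the lattice dual to the one containing $\Delta$; the paper itself is inconsistent about $M$ versus $N$ here), which does not affect the proof.
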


\begin{fact}
\label{fact: toric 4}
In particular, $R_{f}$ does not pass through
a toric point corresponding to a vertex of $\Delta$ if and only if its coefficient
at this vertex is non-zero. The constant Laurent polynomial corresponds to
the boundary divisor of $T^\vee$.
\end{fact}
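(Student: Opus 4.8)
The plan is to read off both statements from Fact~\ref{fact: toric 3} by specializing the face $Q$ and inspecting the restricted equation $R_{Q,f}=\{f|_Q=0\}\subset\PP_Q$.

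First I would take $Q=\{v\}$ to be a vertex of $\Delta$, so that $F_Q$ is exactly the toric point attached to $v$. Since $v$ is the only lattice point of $Q$, the space $\PP_Q$ has a single homogeneous coordinate; that is, $\PP_Q=\PP^0$ is a point, namely $F_Q$ itself. By definition $f|_Q$ retains only the monomial of $f$ supported at $v$, so $f|_Q=c_v x_v$, where $x_v$ is the sole homogeneous coordinate of $\PP_Q$ and $c_v$ is the coefficient of $f$ at the vertex $v$. Now $R_f$ passes through $F_Q$ precisely when $R_{Q,f}=R_f|_{F_Q}=\{f|_Q=0\}$ is nonempty inside $\PP_Q$. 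On the zero-dimensional space $\PP_Q$ the single homogeneous coordinate is everywhere nonzero, so the equation $c_v x_v=0$ is solvable if and only if $c_v=0$. Hence $R_f$ meets the toric point of $v$ exactly when $c_v=0$, which is equivalent to the first assertion.

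For the second assertion I would apply the same fact to the constant Laurent polynomial $f$, whose support is the single point $0$. Because $T^\vee$ is reflexive, the origin is the unique interior lattice point of $\Delta$, hence it lies in no proper face $Q$. Consequently $f|_Q\equiv 0$ for every proper face, and Fact~\ref{fact: toric 3} yields $R_{Q,f}=\{0=0\}=F_Q$; in other words $R_f$ contains each boundary stratum $F_Q$ and therefore the entire toric boundary of $T^\vee$. Since $R_f$ is, by Fact~\ref{fact: toric 2}, an anticanonical section, it has the same dimension as the boundary, so the two agree as sets.

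The only delicate point is upgrading this set-theoretic coincidence to an equality of divisors, i.e. checking that each prime boundary component occurs in $R_f$ with multiplicity one. I expect to settle this with the Cox homogeneous coordinates of $T^\vee$: the global section of $\cO(-K_{T^\vee})$ indexed by a lattice point $m\in\Delta$ is the monomial $\prod_i u_i^{\langle m,b_i\rangle+1}$, where $b_i$ is the primitive generator of the ray corresponding to $D_i$; for $m=0$ this is $\prod_i u_i$, whose zero divisor is the reduced boundary $\sum_i D_i$. This both identifies the constant polynomial with the boundary divisor and supplies the missing multiplicity-one statement, completing the argument.
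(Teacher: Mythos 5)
Your argument is correct, and it is essentially the intended one: the paper states this as a Fact without proof (deferring to Danilov), with the opening ``In particular'' signalling exactly your first step, namely specializing Fact~\ref{fact: toric 3} to a vertex face $Q=\{v\}$, where $\PP_Q=\PP^0$ and $f|_Q=c_vx^v$ vanishes nowhere unless $c_v=0$. For the second claim, your closing Cox-coordinate computation $\mathrm{div}\bigl(\prod_i u_i^{\langle 0,b_i\rangle+1}\bigr)=\sum_i D_i$ is the clean way to get equality of divisors (and it quietly repairs the slightly loose intermediate step ``same dimension, so the sets agree'' --- the direct observation there is that a nonzero constant does not vanish on the open orbit, so $R_f$ is contained in the boundary).
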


\part{Toric Landau--Ginzburg models}
\label{part:toric LG}

Consider a smooth Fano manifold $X$ of dimension $n$ and a divisor $D$ on it. Consider the restriction
$$
\widetilde{I}^{X,D}_{0}(t)
=1+\sum_{\beta \in K,\ a\in \ZZ_{\geq 0}} (-K_X\beta)!\langle\tau_{a} \mathbf 1\rangle_{\beta}
\cdot e^{-\beta\cdot D}t^{-K_X\cdot \beta}
$$
of constant term of regularized $I$-series corresponding to $D$.

Consider the torus $%\TT_{LG}=\mathbb
G_{\mathrm{m}}^n=\prod_{i=1}^n \TTT[x_i]$ and a function $f$
on it. This function can be represented by a Laurent polynomial: $f=f(x_1^{\pm 1}\ldots,x_n^{\pm 1})$. Denote
the constant term (that is a coefficient at $x_1^0\cdot \ldots
\cdot x_n^0$) of the polynomial $f$ by $[f]_0$ and put
$$
\Phi_f=\sum_{i=0}^\infty [f^i]_0 t^i\in \CC[[t]].
$$

\begin{definition} The series $\Phi_f$ %=\sum_{i=0}^\infty \phi_f(i)\cdot t^i$
is called \emph{the constant terms series} for $f$.
\end{definition}

\begin{definition}
\label{definition: main integral}
let $f$ be a Laurent polynomial in $n$ variables $x_1,\ldots,x_n$.
The integral
\begin{multline*}
I_f(t)=
\frac{1}{(2\pi i)^n}\int\limits_{|x_i|=\varepsilon_i}\frac{dx_1}{x_1}\wedge\ldots\wedge \frac{dx_n}{x_n}\frac{1
}{1-tf} =\\= \frac{1}{(2\pi i)^n}\sum_{j=0}^\infty t^j \cdot \int\limits_{|x_i|=\varepsilon_i}
f^j\frac{dx_1}{x_1}\wedge\ldots\wedge \frac{dx_n}{x_n}
 \in\CC[[t]]
\end{multline*}
is called \emph{the main period} for $f$, where $\varepsilon_i$ are some positive real numbers.
\end{definition}

\begin{remark}
\label{remark: Picard--Fuchs}
One has $I_f(t)=\Phi_f$.
\end{remark}

The following theorem justifies this definition.

\begin{theorem}[{see \cite[Proposition 2.3]{Prz08b}}]
\label{theorem: Picard--Fuchs}
Let $f$ be a Laurent polynomial in $n$ variables.
Let $PF_f=PF_f\left(t,
\frac{\partial}{\partial t}\right)$ be a Picard--Fuchs differential operator of the pencil of hypersurfaces in torus given by $f$.
Then~\mbox{$PF_f[I_f(t)]=0$}.
\end{theorem}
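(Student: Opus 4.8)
The plan is to show that the main period $I_f(t)$ is annihilated by the Picard--Fuchs operator of the pencil of hypersurfaces $\{f = s\}$ in the torus. The key observation is that $I_f(t)$ is, up to reparametrization, a \emph{period integral} of the family, so it must satisfy the differential equation governing how periods vary with the parameter. Concretely, writing $t = 1/s$ one can interpret
$$
I_f(t) = \frac{1}{(2\pi i)^n}\int\limits_{|x_i|=\varepsilon_i}\frac{1}{1-tf}\,\frac{dx_1}{x_1}\wedge\ldots\wedge\frac{dx_n}{x_n}
$$
as the integral of the fiberwise holomorphic form over a cycle that is the product of the circles $|x_i|=\varepsilon_i$; this torus cycle is a vanishing cycle (or a sum of such) in the Milnor/Lefschetz sense for the family, and such integrals of algebraic forms over algebraic cycles are exactly the periods to which Picard--Fuchs theory applies.

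First I would make precise the pencil: the Laurent polynomial $f$ defines a map $f\colon \GG_{\mathrm{m}}^n \to \Aff^1$, whose fibers $Z_s = \{f = s\}$ form a one-parameter family of affine hypersurfaces in the torus. On the middle cohomology of these fibers the Gauss--Manin connection is defined, and the Picard--Fuchs operator $PF_f$ is by definition the generator of the annihilator ideal of the relevant cohomology class under this connection; equivalently, $PF_f$ is the minimal-order linear ODE satisfied by \emph{every} period of the family. Thus the content of the theorem reduces to verifying that $I_f(t)$ genuinely \emph{is} a period of this family, after the substitution relating $t$ and the fiber parameter $s$.

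The second step is to exhibit $I_f(t)$ as such a period by deforming the contour. Expanding $\tfrac{1}{1-tf} = \sum_j t^j f^j$ and integrating term by term recovers the coefficients $[f^j]_0$, matching the constant-terms series $\Phi_f$ of Remark~\ref{remark: Picard--Fuchs}; this identifies $I_f$ as the generating function of the traces of powers of $f$. By a standard residue argument one rewrites
$$
\int\limits_{|x_i|=\varepsilon_i}\frac{1}{1-tf}\,\frac{dx_1}{x_1}\wedge\ldots\wedge\frac{dx_n}{x_n}
= \int_{\gamma_s}\frac{\omega}{df},
$$
where $\omega = \tfrac{dx_1}{x_1}\wedge\ldots\wedge\tfrac{dx_n}{x_n}$ and $\gamma_s$ is a transversal cycle on the fiber $Z_s$, so that $I_f(t)$ becomes a fiberwise period of the Poincar\'e residue of $\omega/(s - f)$. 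Since the Gauss--Manin connection annihilates no individual period but rather is annihilated \emph{by} the operator $PF_f$ acting on all of them, applying $PF_f\left(t, \tfrac{\partial}{\partial t}\right)$ to this period integral and differentiating under the integral sign yields an exact form in each fiber, whose integral over the closed cycle $\gamma_s$ vanishes. Hence $PF_f[I_f(t)] = 0$.

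The main obstacle is the justification of differentiation under the integral sign and, more delicately, the identification of the torus cycle $\{|x_i| = \varepsilon_i\}$ with a genuine cycle on the fiber $Z_s$ together with the claim that $PF_f$ produces an \emph{exact} (rather than merely closed) relative form. This is precisely the place where one must invoke the theory of the Gauss--Manin connection: the operator $PF_f$ is constructed exactly so that $PF_f(\omega/df)$ is $d$-exact modulo the ideal $(f - s)$, and verifying this requires controlling the de Rham cohomology of the fibers and ensuring the vanishing-cycle integrals behave as expected near the singular fibers of the pencil. I expect the term-by-term expansion and the residue rewriting to be routine; the analytic subtlety lies in legitimizing the passage from the generating-function form to the honest period and in citing the relevant finiteness of the de Rham cohomology so that a nonzero minimal operator $PF_f$ exists and annihilates the period.
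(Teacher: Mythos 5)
The paper does not actually prove this statement --- it is quoted from \cite[Proposition 2.3]{Prz08b} --- so there is no in-text argument to compare against; but your outline is the standard one and matches the cited proof: expand $\frac{1}{1-tf}$ to identify $I_f$ with $\Phi_f$, then use the Leray residue to rewrite the integral as a Gelfand--Leray period $\int_{\gamma_\lambda}\Omega/df$ over a locally constant family of $(n-1)$-cycles on the fibers $\{f=\lambda\}$, $\lambda=1/t$, and conclude because $PF_f$ by definition annihilates all such periods. You also correctly locate the only genuinely delicate point, namely realizing the torus cycle $\{|x_i|=\varepsilon_i\}$ as (homologous to) a Leray tube over a cycle on the nearby fiber, which is where the residue theorem of \cite{ATY85} enters. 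The one detail to tidy up is normalization: with $\lambda=1/t$ one has $\frac{t}{1-tf}=\frac{1}{\lambda-f}$, so it is $t\,I_f(t)$ that is literally the period of the residue form, and one should say explicitly that conjugating the annihilating operator by $t$ (or fixing the convention for $PF_f$ in the variable $t$) accounts for this; it does not affect the validity of the argument.
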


%\begin{remark}
%Denote the order of $PF_f$ by $m$ and denote the degree with
%respect to $t$ by $r$. Put $F_\alpha=\{1-\alpha f=0\}$, and let $Z$ be a semistable compactification of
%$\{F_t\}$ (so we have the map $Z\to \PP^1$;
%denote it for simplicity by $f$). Denote the dimension of
%the transcendental part of $R^{n-1}f_!\,\ZZ_Y$ by $m_f$ (for an algorithm for
%computing it see~\cite{DH86}), and denote the number of
%singularities (in particular, fake) of $f$ counted with multiplicities by $r_f$.
%Then $m\leq
%m_f$ and $r\leq r_f$.
%So we can write a differential operator of bounded order by $t$ and $D$ as an operator
%with indeterminant coefficients. As $I_f$ annihilates it, we get a system of infinite number of linear equations.
%To check that $L_X=PF_f$ we need to solve this system (it has a unique solution, up to scaling, so
%we need to solve a finite system of linear equations).
%
%However in practice it is enough to compare the first few coefficients of the
%expansion of $I_f$ and $\widetilde{I}_{0}^X$.
%Indeed, the first several coefficients of $\widetilde{I}^X_{0}$ determine $L_X$.
%Thus if the first several coefficients coincide with the first several coefficients of
%$I_f$, then the differential operator vanishing $\Phi_f$, coincide, up to higher order coefficients, with $L_X$, so that $PF_f=L_X$.
%\end{remark}

Now let us give the central definition of the paper.

\begin{definition}[{see~\cite[\S6]{Prz13}}]
\label{definition: toric LG}
\emph{A toric Landau--Ginzburg model} for a pair of a smooth Fano variety $X$ of dimension $n$ and divisor $D$ on it is a Laurent polynomial $\mbox{$f\in \TT[x_1, \ldots, x_n]$}$ which satisfies the following.
\begin{description}
  \item[Period condition] One has $I_f=\widetilde{I}^{X,D}_0$.
  \item[Calabi--Yau condition] There exists a relative compactification of a family
$$f\colon (\CC^*)^n\to \CC$$
whose total space is a (non-compact) smooth Calabi--Yau
variety $Y$. Such compactification is called \emph{a Calabi--Yau compactification}.
  \item[Toric condition] There is a degeneration
   $X\rightsquigarrow T_X$ to a toric variety~$T_X$ such that $F(T_X)=N(f)$, where $N(f)$ is the Newton polytope for $f$.
\end{description}

Laurent polynomial satisfying the period condition is called a \emph{weak Landau--Ginzburg model}.
\end{definition}

\begin{definition}[\cite{Prz17}]
\label{definition: log CY}
A compactification of the family $f\colon (\CC^*)^n\to \CC$ to a family $f\colon Z\to \PP^1$, where $Z$ is smooth and
$-K_Z=f^{-1}(\infty)$, is called a \emph{log Calabi--Yau compactification} (cf. Definition~\ref{def-3}).
\end{definition}

Now discuss why the notion of toric Landau--Ginzburg model is natural.

The period condition is nothing but Mirror Symmetry conjecture of variations of Hodge structures for the case when the ambient space is
an algebraic torus. This condition relates Gromov--Witten invariants with periods of the dual model.
Periods are integrals of fiberwise forms over fiberwise fibers. This means that they preserve under birational transformations which are
biregular in the neighborhood of the cycles we integrating over.
For instance, Givental constructed Landau--Ginzburg models in smooth toric varieties as quasi-affine varieties with superpotentials
(see Section~\ref{section: complete Givental} and Paragraph~\ref{section: Grass construction}).
However in a lot of cases these models are birational to algebraic tori, and the main periods preserve under the corresponding
birational isomorphisms, see Paragraph~\ref{section: weak CI}, Part~\ref{part:complete intersections in Grass} and~\cite{DH15} and~\cite{CKP14}.

The Calabi--Yau condition is going back to the following principle.

\begin{principle}[{Compactification principle,~\cite[Principle 32]{Prz13}}]
\label{principle:compactification}
There exists a fiberwise compactification of family of fibers of ``good'' toric Landau--Ginzburg model (defined up to flops)
satisfying (B side of) Homological Mirror Symmetry conjecture.
\end{principle}

In particular this means that there should exist a fiberwise compactification to (open) smooth Calabi--Yau variety which is
a family of smooth compact Calabi--Yau varieties.
This condition is strong enough: say, if $f(x_1,\ldots, x_n)$ is a toric Landau--Ginzburg model for a variety  $X$,
then for $k>1$ the Laurent polynomial $f(x_1^k,\ldots, x_n)$ satisfies the period condition for $X$, but not the
the Calabi--Yau condition and, thus, it does not satisfy the compactification principle.

\begin{example}
\label{example:4cubic}
The polynomials
$$
\frac{(x+y+1)^3}{xyzw}+z+w
$$
and
$$
\left(x_1+x_2+\frac{1}{x_1x_2}\right)\left(y_1+y_2+\frac{1}{y_1y_2}\right)
$$
satisfy the period and Calabi--Yau conditions for a cubic fourfold (see~\cite{KP09}).
However they are not fiberwise birational (they have different numbers of components of central fibers, cf. Section~\ref{section:Modularity}).
It is expected that the first polynomial satisfy the compactification principle.
\end{example}

One can easily see that the second Laurent polynomial in Example~\ref{example:4cubic}
is not toric Landau--Ginzburg model for a cubic fourfold: the degree of the corresponding toric variety
differs from the degree of cubic.

Finally, the toric condition goes back to Batyrev--Borisov construction of mirror duality for Calabi--Yau complete intersections
in toric varieties via the duality of toric varieties, see~\cite{Ba93}.

We consider mirror symmetry as a correspondence between Fano varieties and Laurent polynomials.
That is, the strong version of Mirror Symmetry conjecture of variations of Hodge structures states the following.
\begin{conjecture}
\label{conjecture:MS}
Any pair of a smooth Fano manifold and a divisor on it has a toric Landau--Ginzburg model.
\end{conjecture}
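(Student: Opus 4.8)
The plan is to establish the conjecture not by a single uniform argument but by a construction that produces, for each pair $(X,D)$, a Laurent polynomial $f$ and then verifies separately the three clauses of Definition~\ref{definition: toric LG}. The natural entry point is the period condition, which is the most tractable and for which Givental's work supplies a template. For a Fano variety realized as a complete intersection in a smooth (or mildly singular) toric variety, one writes down Givental's superpotential and then, following the monomial change of variables sketched in the introduction, reduces it to a Laurent polynomial $f$ in $n=\dim X$ variables whose Newton polytope is essentially the fan polytope of the ambient toric degeneration. The equality $I_f=\widetilde I^{X,D}_0$ then follows from Givental's mirror theorem together with the invariance of the main period under the birational changes of variables, using Theorem~\ref{theorem: Picard--Fuchs} and Remark~\ref{remark: Picard--Fuchs} to identify $I_f$ with the constant terms series $\Phi_f$. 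This already yields a weak Landau--Ginzburg model for a large class of $X$.

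First I would therefore reduce the general statement to the production of a single weak Landau--Ginzburg model per deformation class, exploiting the fact that the three conditions are invariant under the relevant admissible operations: fiberwise birational maps that are biregular near the integration cycles, flops, and mutations. Second, to upgrade a weak model to an honest toric Landau--Ginzburg model I would verify the Calabi--Yau condition by constructing a relative compactification of $f\colon(\CC^*)^n\to\CC$ whose total space is a smooth Calabi--Yau variety: one compactifies the generic fiber inside the toric variety $\widetilde T_{N(f)}$ attached to the Newton polytope and resolves crepantly, so that the fibers become Calabi--Yau and the total space acquires trivial canonical class, in the spirit of Principle~\ref{principle:compactification} and Fact~\ref{fact: toric 3}. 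Third, I would establish the toric condition by exhibiting a degeneration $X\rightsquigarrow T_X$ with $F(T_X)=N(f)$; here the Newton polytope of the model produced in the first step is forced to be reflexive, and one seeks a corresponding toric Fano degeneration of $X$ realizing it.

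The implementation is unconditional only where a classification or an explicit ambient description is available: del Pezzo surfaces (Part~\ref{part: del Pezzo surfaces}), the full list of Fano threefolds (Part~\ref{part:Fano 3-folds}), weighted complete intersections, and complete intersections in Grassmannians (Part~\ref{part:complete intersections in Grass}), where the three conditions can be checked case by case. The main obstacle is precisely the absence of any such handle in the general case: for an arbitrary smooth Fano variety there is neither a classification in dimension $\ge 4$ nor a guaranteed realization as a complete intersection in a space to which Givental's construction applies, so there is no canonical candidate $f$ to begin with. Even granting a weak model, the Calabi--Yau condition is delicate --- not every Laurent polynomial satisfying the period condition admits a smooth Calabi--Yau compactification, as the second polynomial in Example~\ref{example:4cubic} shows --- and the existence of the matching toric degeneration $X\rightsquigarrow T_X$ is itself a nontrivial problem. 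Thus I expect the conjecture to remain open in full generality, any complete proof requiring either a uniform construction of candidate Laurent polynomials independent of classification, or a structural theorem guaranteeing simultaneously the Calabi--Yau compactification and the toric degeneration.
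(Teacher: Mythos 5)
The statement you were asked to prove is stated in the paper only as a conjecture (the strong form of the mirror symmetry conjecture of variations of Hodge structures), and the paper offers no general proof --- it verifies the three conditions of Definition~\ref{definition: toric LG} case by case for del Pezzo surfaces, Fano threefolds, weighted complete intersections, and complete intersections in Grassmannians. Your proposal correctly identifies exactly this situation and describes the same strategy (Givental-type weak models, Calabi--Yau compactification inside the dual toric variety, and matching toric degenerations), together with the same obstructions to a general argument, so it is consistent with the paper.
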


\begin{corollary}
Any smooth Fano variety has a toric degeneration.
\end{corollary}

This lets to hope to have the following picture.

\begin{optimisticpicture}[{\cite[Optimistic Picture 38]{Prz13}}]
\label{optimisticpicture}
Toric degenerations of smooth Fano varieties are in one-to-one correspondence with toric Landau--Ginzburg models.
They satisfy the compactification principle.
\end{optimisticpicture}

\begin{question}
Does the opposite to the second part of the compactification principle hold? That is, is it true that all Landau--Ginzburg models
(in the sense of Homological Mirror Symmetry) of the same dimension as the initial Fano variety are compactifications of toric ones? In particular, is it true that all of them are rational?
\end{question}

\begin{question}
Does the Optimistic Picture~\ref{optimisticpicture} need any extra condition on toric varieties?
\end{question}

\part{Del Pezzo surfaces}
\label{part: del Pezzo surfaces}

We start the section by recalling well known facts about del Pezzo surfaces. We refer, say, to~\cite{Do12} as to one of huge amount
of references on del Pezzo surfaces.

The initial definition of del Pezzo surface is the following one given by P.\,del Pezzo himself.

\begin{definition}[\cite{dP87}]
\emph{A del Pezzo surface} is a non-degenerate (that is not lying in a linear subspace) irreducible linear normal (that is it is not a projection of degree $d$ surface in $\PP^{d+1}$) surface in $\PP^d$ of degree $d$ which is not a cone.
\end{definition}

In modern words this means that a del Pezzo surface is an (anticanonically embedded) surface with ample anticanonical class
and canonical (the same as du Val, simple surface, Kleinian, or rational double point) singularities. (Classes of canonical and
Gorenstein singularities for surfaces coincide.)
So we use the following more general definition.

\begin{definition}
\emph{A del Pezzo surface} is a complete surface with ample anticanonical class and canonical singularities.
\emph{A weak del Pezzo surface} is a complete surface with nef and big anticanonical class and canonical singularities.
\end{definition}

\begin{remark}
Weak del Pezzo surfaces are (partial) minimal resolutions of singularities of del Pezzo surfaces. Exceptional
divisors of the resolutions are $(-2)$-curves.
\end{remark}

\emph{A degree} of del Pezzo surface $S$ is the number $d=(-K_S)^2$. One has $1\leq d \leq 9$.
If $d>2$, then the anticanonical class of $S$ is very ample and it gives the embedding $S\hookrightarrow \PP^d$, so both definitions
coincide.
In this section we assume that $d>2$.

Obviously, projecting a degree $d$ surface in $\PP^d$ from a smooth point on it one gets degree $d-1$ surface in $\PP^{d-1}$.
This projection is nothing but blow up of the center of the projection and blow down all lines passing through the
point. (By adjunction formula these lines are $(-2)$-curves.) If we choose general (say, not lying on lines) centers of projections we get a classical description
of a smooth del Pezzo surface of degree $d$ as a quadric surface (with $d=8$) or a blow up of $\PP^2$ in $9-d$ points.
They degenerate to singular surfaces which are projections from non-general points (including infinitely close ones).
Moreover, all del Pezzo surfaces of given degree lie in the same irreducible deformation space except for degree $8$ when there are two
components (one for a quadric surface and one for a blow up $\FF_1$ of $\PP^2$). General elements of the families are smooth,
and all singular del Pezzo surfaces are degenerations of smooth ones in these families.
This description enables us to construct toric degenerations of del Pezzo surfaces.
That is, $\PP^2$ is toric itself.
Projecting from toric points one gets a (possibly singular) toric del Pezzo surfaces.

\begin{remark}
The approach to description of del Pezzo surfaces via their toric degenerations and the connection of the degenerations by elementary transformations (projections) can be generalized to the threefold case. That is, smooth Fano threefolds can be connected via toric degenerations
and \emph{toric basic links}. For details see~\cite{CKP13}.
\end{remark}

\begin{remark}
\label{remark: del Pezzo 1 and 2}
Del Pezzo surfaces of degree $1$ or $2$ also have toric degenerations.
Indeed, these surfaces can be described as hypersurfaces in weighted projective spaces, that is ones
of degree $4$ in $\PP(1,1,1,2)$ and of degree $6$ in $\PP(1,1,2,3)$ correspondingly,
so they can be degenerated to binomial hypersurfaces, cf. Example~\ref{ex:ci}.
However their singularities are worse then canonical.
\end{remark}

Let $T_S$ be a Gorenstein toric degeneration of a del Pezzo surface $S$ of degree $d$. Let $\Delta=F(T_S)\subset \mathcal N_\RR=\ZZ^2\otimes \RR$ be a fan polygon
of $T_S$. Let $f$ be a Laurent polynomial such that $N(f)=\Delta$.

Our goal now is to describe in details a way to construct a Calabi--Yau compactification for $f$.
More precise, we construct a commutative diagram
\[\xymatrix{
(\CC^*)^2\ar@{^{(}->}[r] \ar[rd]^f& Y\ar[d] \ar@{^{(}->}[r] & Z\ar[d]\\
& \Aff^1 \ar@{^{(}->}[r] & \PP^1,\\
}
\]
where $Y$ and $Z$ are smooth, fibers of maps $Y\to \Aff^1$ and $Z\to \PP^1$ are compact, and $-K_Z=f^{-1}(\infty)$;
we denote all ``vertical'' maps in the diagram by $f$ for simplicity.

The strategy is the following. First we consider a natural compactification of the pencil $\{f=\lambda\}$ to an elliptic pencil
in a toric del Pezzo surface $T^\vee$. Then we resolve singularities of $T^\vee$ and get a pencil in a smooth toric weak del Pezzo surface
$\widetilde{T}^\vee$. Finally we resolve a base locus of the pencil to get $Z$. We get $Y$ cutting out strict transform of the boundary divisor of $\widetilde{T}^\vee$.

The polygon $\Delta$ %is \emph{reflexive}, that is an polygon with
has integral vertices in
$\mathcal N_\RR$ and it has the origin as a unique strictly internal integral point.
A dual polygon $\nabla=\Delta^\vee \subset \mathcal M=\mathcal N^\vee$ has integral vertices and a unique strictly internal integral point as well.
Geometrically this means that singularities of $T$ and $T^\vee$ are canonical. %, or, the same, Gorenstein.

\begin{remark}
The normalized volume of $\nabla$ is given by
$$\mathrm{vol}\, \nabla= |\mbox{integral points in $\nabla$}|-1=(-K_{S})^2=d.$$
It is easy to see %(cf. Figure~\ref{figure:del Pezzo})
that $$|\mbox{integral points on the boundary of $\Delta$}|+|\mbox{integral points on the boundary of $\nabla$}|=12.$$
In particular, $\mathrm{vol}\, \Delta=12-d$.
\end{remark}

\begin{compactificationconstruction}[\cite{Prz17}]
\label{compactification construction}
By Fact~\ref{fact: toric 2}, the anticanonical linear system on $T^\vee$ can be described as a projectivisation of a linear space of Laurent polynomials whose
Newton polygons are contained in $\nabla^\vee=\Delta$. Thus the natural way to compactify the family is to do it using
embedding $(\CC^*)^2\hookrightarrow T^\vee$. Fibers of the family are anticanonical divisors in this (possibly singular)
toric variety. Two anticanonical sections intersect by $(-K_{T^\vee})^2= \mathrm{vol}\, \Delta=12-d$ points (counted with multiplicities),
so the compactification of the pencil in $T^\vee$ has $12-d$ base points (possibly with multiplicities).
The pencil $\{\lambda_0 f=\lambda_1\}$, $(\lambda_0:\lambda_1)\in \PP$,
is generated by its general member and a divisor corresponding to a constant Laurent polynomial, i.\,e. to the boundary
divisor of $T^\vee$. Let us mention that the torus invariant points of $T^\vee$ do not lie in the base locus of the family
by Fact~\ref{fact: toric 4}.

Let $\widetilde{T}^\vee\to T^\vee$ be a minimal resolution of singularities of $T^\vee$.
Pull back the pencil under consideration. We get an elliptic pencil with $12-d$ base points (with multiplicities),
which are smooth points of %one of members of the family, that is
the boundary divisor $D$ %=(D_1,\ldots,D_d)$
of the toric surface $\widetilde{T}^\vee$; this divisor is a wheel of $d$ smooth rational curves.
Blow up these base points and get an elliptic surface $Z$.
Let $E_1,\ldots,E_{12-d}$ be the exceptional curves of the blow up $\pi\colon Z\to \widetilde{T}^\vee$; in particular, $Z$ is not toric. Denote a strict transform
of $D$ by $D$ for simplicity.
Then one has
$$
-K_Z=\pi^*(-K_{\widetilde{T}^\vee})-\sum E_i=D+\sum E_i-\sum E_i=D.
$$
Thus the anticanonical class $-K_Z$ contains $D$ and consists of fibers of $Z$.
This, in particular, means that an open variety $Y=Z\setminus D$ is a Calabi--Yau compactification of the pencil provided by $f$.
This variety has $e>0$ sections, where $e$ is a number of base points of the pencil in $\widetilde{T}^\vee$ counted \emph{without} multiplicities.

Summarizing, we obtain an elliptic surface $f\colon Z\to \PP^1$ with smooth total space $Z$ and a wheel $D$ of $d$ smooth rational curves over $\infty$.
%and with a section.
\end{compactificationconstruction}

\begin{remark}
\label{remark: Hodge numbers for Z}
Let the polynomial $f$ be general among ones with the same Newton polygon. Then singular fibers of $Z\to \PP^1$ are either curves
with a single node or a wheel of $d$ rational curves over $\infty$. By Noether formula one has
$$
12\chi (\cO_Z)=(-K_Z)^2+e(Z)=e(Z),
$$
where $e(Z)$ is a topological Euler characteristic. Thus singular fibers for $Z\to \PP^1$
are $d$ curves with one node and a wheel of $d$ curves over $\infty$. This description is given in~\cite{AKO06}.
%\end{itemize}
\end{remark}

\begin{remark}
One can compactify all toric Landau--Ginzburg models for all del Pezzo surfaces of degree at least three simultaneously.
That is, all reflexive polygons are contained in the biggest polygon $B$, that has vertices $(2,-1)$, $(-1,2)$, $(-1,-1)$. Thus fibers of all toric Landau--Ginzburg models can be simultaneously compactified to (possibly
singular) anticanonical curves on $T_{B^\vee}=\PP^2$. Blow up the base locus to construct a base points free family. However in this
case a general member of the family can pass through toric points as it can happen that $N(f)\varsubsetneq B$.
This means that some of exceptional divisors of the minimal resolution are %not sections of the elliptic pencil but
extra curves in a wheel over infinity.

In other words, consider a triangle of lines %and an elliptic curve
on $\PP^2$.
A general member of the pencil given by $f$ is an elliptic curve on $\PP^2$.
The total space of the log Calabi--Yau compactification %(together with a fiber over infinity)
is a blow up of nine intersection points (counted with multiplicities) of the elliptic curve and the triangle of lines.
Exceptional divisors for points lying over vertices of the triangle are components of the wheel over infinity for the log
Calabi--Yau compactification; the others are either sections of the pencil or components of fibers over finite points.
\end{remark}

Now following~\cite{Prz17} describe toric Landau--Ginzburg models for del Pezzo surfaces and toric weak del Pezzo surfaces.
That is, for a del Pezzo surface $S$, its
Gorenstein toric degeneration $T$ with a fan polygon $\Delta$,
its crepant resolution $\widetilde{T}$ with the same fan polygon, and a divisor $D\in \Pic(S)_\CC\simeq \Pic(\widetilde T)_\CC$,
we construct two Laurent polynomials $f_{S,D}$ and $f_{\widetilde T,D}$, that are toric Landau--Ginzburg models
for $S$ and $\widetilde T$ correspondingly, by induction. For this use, in particular, Givental's construction of Landau--Ginzburg
models for smooth toric varieties, see Section~\ref{section: complete Givental}.

Let $S\simeq \PP^1\times \PP^1$ be a quadric surface, and let $D_S$ be an $(a,b)$-divisor on it.
Let $T_1=S$,
and let $T_2$ be a quadratic cone; $T_1$ and $T_2$ are the only Gorenstein toric degenerations of $S$.
The crepant resolution of $T_2$ is a Hirzebruch surface $\FF_2$, so let $D_{\FF_2}=\alpha s+\beta f$,
where $s$ is a section of $\FF_2$, so that $s^2=-2$, and $f$ is a fiber of the map $\FF_2\to \PP^1$.
Define
$$
f_{S,D_S}=f_{\widetilde T_1,D_S}=x+\frac{e^{-a}}{x}+y+\frac{e^{-b}}{y}
$$
for the first toric degeneration and
$$
f_{S,D_S}=y+e^{-a}\frac{1}{xy}+\left(e^{-a}+e^{-b}\right)\frac{1}{y}+e^{-b}\frac{x}{y},\ \ f_{\widetilde T_2,D_{\FF_2}}=y+\frac{e^{-\beta}}{xy}
+\frac{e^{-\alpha}}{y}+\frac{x}{y}
$$
for the second one.

Now assume that $S$ is a blow up of $\PP^2$.
First let $S=T=\widetilde T=\PP^2$, let $l$ be a class of a line on $S$, and let $D=a_0l$.
Then up to a toric change of variables one has
$$
f_{\PP^2,D}=x+y+\frac{e^{-a_0}}{xy}.
$$
Now let $S'$ be a blow up of $\PP^2$ in $k$ points with exceptional
divisors $e_1, \ldots, e_k$, let $S$ be a blow up of $S'$ in a point,
and let $e_{k+1}$ be an exceptional divisor for the blow up.
We identify divisors on $S'$ and their strict transforms on $S$,
so $\Pic(S')=\Pic (\widetilde T')=\ZZ l+\ZZ{e_1}+\ldots+\ZZ{e_k}$ and
$\Pic(S)=\ZZ l+\ZZ{e_1}+\ldots+\ZZ{e_k}+\ZZ{e_{k+1}}$.
Let $D'=a_0l+a_1e_1+\ldots+a_ke_k\in \Pic(S')_\CC$
and $D=D'+a_{k+1}e_{k+1}\in \Pic(S)_\CC$.
First describe the polynomial $f_{\widetilde T,D}$.
Combinatorially $\Delta=F(\widetilde T)$
is obtained from a polygon $\Delta'=F(\widetilde T')$ by adding
one integral point $K$ that corresponds to the exceptional
divisor $e_{k+1}$, and taking a convex hull. Let
$L$, $R$ be boundary points of $\Delta$ neighbor to $K$, left and right with respect to the clockwise order.
Let $c_L$ and $c_R$ be coefficients in $f_{\widetilde T',D'}$
at monomials corresponding to $L$ and $R$. Let $M\in \TT[x,y]$ be a monomial corresponding to $K$.
Then from Givental's description of Landau--Ginzburg models for toric varieties (see Section~\ref{section: complete Givental}) one gets
$$
f_{\widetilde T,D}=f_{\widetilde S',D'}+c_Lc_Re^{-a_{k+1}}M.
$$

The polynomial $f_{S,D}$ differs from $f_{\widetilde T,D}$ by coefficients at non-vertex boundary points.
For any boundary point $K\in \Delta$ define \emph{marking} $m_K$ as a coefficient of $f_{\widetilde T,D}$ at $K$.
Consider a facet of $\Delta$ and let $K_0,\ldots, K_r$ be integral points in clockwise order of this facet.
Then coefficient of $f_{S,D}$ at $K_i$ is a coefficient at $s^i$ in the polynomial
$$
m_{K_0}\left(1+\frac{m_{K_1}}{m_{K_0}}s\right)\cdot\ldots\cdot \left(1+\frac{m_{K_r}}{m_{K_{r-1}}}s\right).
$$

\begin{remark}
One has $\Pic(S)\simeq \Pic(\widetilde T)$.
That is, if $S$ is not a quadric, then both $S$ and $\widetilde T$
are obtained by a sequence of blow ups in points (the only difference is
that the points for $\widetilde T$ can lie on exceptional divisors of previous blow ups).
Thus in both cases Picard groups are generated by a class of a line on $\PP^2$ and exceptional divisors
$e_1,\ldots, e_k$. However an image of $e_i$ under the map of Picard groups
given by the degeneration of $S$ to $\widetilde T$ can be not equal to $e_i$ itself
but to some linear combination of the exceptional divisors.
In other words these bases do not agree with degenerations.
\end{remark}

\begin{remark}
The spaces parameterizing toric Landau--Ginzburg models for $S$ and for $\widetilde T$ are the same ---
they are the spaces of Laurent polynomials with Newton polygon $\Delta$ modulo toric rescaling.
Thus any Laurent polynomial corresponds to different elements of $\Pic(S)_\CC\simeq \Pic(\widetilde T)_\CC$.
This gives a map $\Pic(S)_{\CC}\to \Pic(\widetilde T)_{\CC}$. However this
map is transcendental because of exponential nature of the parametrization.
\end{remark}

\begin{proposition}[{\cite[Proposition 21]{Prz17}}]
\label{proposition: toric LG for del Pezzo}
The Laurent polynomial $f_{S,D}$ is a toric Landau--Ginzburg model for $(S,D)$.
\end{proposition}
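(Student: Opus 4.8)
I would read the statement as the assertion that $f_{S,D}$ satisfies all three conditions of Definition~\ref{definition: toric LG}: the toric, Calabi--Yau and period conditions. The plan is to dispose of the first two quickly, essentially for free from the construction, and to concentrate all the effort on the period condition, which is where the content lies.

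\emph{Toric and Calabi--Yau conditions.} By construction $N(f_{S,D})=\Delta=F(T)$, where $T$ is the Gorenstein toric degeneration of $S$ we started from, so the toric condition amounts only to checking that every vertex of $\Delta$ carries a nonzero coefficient, i.e.\ that the Newton polygon is exactly $\Delta$ and not a proper subpolygon. I would verify this by induction on the number of blow ups: passing to $\widetilde T$ introduces the new vertex $K$ with coefficient $c_Lc_Re^{-a_{k+1}}\neq 0$, and the marking recipe rescales only \emph{non-vertex} boundary coefficients, so vertex coefficients stay nonzero. For the Calabi--Yau condition I would simply invoke Compactification Construction~\ref{compactification construction}, which applies to \emph{any} Laurent polynomial whose (reflexive) Newton polygon is $\Delta$ and produces a smooth relative compactification $Z\to\PP^1$ with $-K_Z=f^{-1}(\infty)$; then $Y=Z\setminus D$ is a smooth Calabi--Yau compactification of $f\colon(\CC^*)^2\to\CC$. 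Thus both conditions hold automatically.

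\emph{Period condition.} By Remark~\ref{remark: Picard--Fuchs} one has $I_{f_{S,D}}=\Phi_{f_{S,D}}$, so it suffices to prove $\Phi_{f_{S,D}}=\widetilde I^{S,D}_0$. I would argue by induction on the number of blow ups (equivalently, downward on the degree $d$), running the two families $f_{\widetilde T,D}$ and $f_{S,D}$ in parallel. In the base cases $S=\PP^2$ and $S=\PP^1\times\PP^1$ both polynomials coincide with Givental's models, and $\Phi$ is computed directly and matched against the classical regularized $I$-series. For the smooth toric weak del Pezzo $\widetilde T$ the period condition is not new: $f_{\widetilde T,D}$ is, up to a monomial change of variables, Givental's Landau--Ginzburg model of Section~\ref{section: complete Givental}; the additive formula $f_{\widetilde T,D}=f_{\widetilde T',D'}+c_Lc_Re^{-a_{k+1}}M$ records precisely how that model changes when one blows up the toric point separating $L$ and $R$, and Givental's mirror theorem then yields $\Phi_{f_{\widetilde T,D}}=\widetilde I^{\widetilde T,D}_0$.

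It remains to transfer this equality from $\widetilde T$ to $S$, and this is where I expect the genuine difficulty. On the Fano side, $S$ and $\widetilde T$ have the same degree and are deformation equivalent (both are blow ups of $\PP^2$, in general versus special position, hence fit in a connected family of smooth projective surfaces); since genus zero Gromov--Witten invariants with descendants are deformation invariant, this gives $\widetilde I^{S,D}_0=\widetilde I^{\widetilde T,D}_0$ under the induced identification $\Pic(S)_\CC\simeq\Pic(\widetilde T)_\CC$ and of the class $D$. On the Laurent polynomial side, $f_{S,D}$ differs from $f_{\widetilde T,D}$ only in the coefficients at non-vertex boundary points, replaced by the binomial markings prescribed just before the statement, so the remaining point is the combinatorial lemma $\Phi_{f_{S,D}}=\Phi_{f_{\widetilde T,D}}$, asserting that this replacement leaves every $[f^i]_0$ unchanged. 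Combining the three equalities gives $\Phi_{f_{S,D}}=\widetilde I^{S,D}_0$. The main obstacle is exactly this last combinatorial lemma, together with the bookkeeping of divisor classes: one must track how the marking rescaling acts on the constant terms series and check that it is exactly compensated, and this has to be done compatibly with the transcendental identification $\Pic(S)_\CC\simeq\Pic(\widetilde T)_\CC$ recorded in the remarks, so that the parameter $D$ on the two sides genuinely matches. By comparison, the base-case computations and the check that the additive formula reproduces Givental's model are routine.
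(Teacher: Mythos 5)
Your treatment of the toric and Calabi--Yau conditions is correct and agrees with the paper, which likewise disposes of them by construction and by Compactification Construction~\ref{compactification construction}. The problem is the period condition, where your reduction to the smooth toric surface $\widetilde T$ fails at both of its key steps. First, the ``combinatorial lemma'' $\Phi_{f_{S,D}}=\Phi_{f_{\widetilde T,D}}$ is false: already for $S=S_7$, $D=0$ and the degeneration $T_{\Delta_2}$ of Example~\ref{example: S7} one has $f_{\widetilde T,0}=x+y+\frac{1}{xy}+\frac{1}{y}+\frac{x}{y}$ while $f_{S,0}=x+y+\frac{1}{xy}+\frac{2}{y}+\frac{x}{y}$, so that $[f_{\widetilde T,0}^2]_0=2$ but $[f_{S,0}^2]_0=4$. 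Second, the equality $\widetilde I^{S,D}_0=\widetilde I^{\widetilde T,D}_0$ does not follow from deformation invariance: although individual descendant invariants match under the identification of $H_2$ induced by a smooth family, the series $\widetilde I^{X,D}_0$ is a sum over the cone of effective classes restricted to the anticanonical direction, and this is not deformation invariant --- $\widetilde T$ carries $(-2)$-curve classes $\beta$ with $-K\cdot\beta=0$ that are effective on $\widetilde T$ but not on $S$, and the identification $\Pic(S)_\CC\simeq\Pic(\widetilde T)_\CC$ under which the two Laurent polynomials correspond is the transcendental one discussed in the remark before the proposition, not the linear one coming from the deformation. Hence the three equalities you propose to chain together cannot all hold with the same coordinates of $D$ on both sides; the computation $4\neq 2$ above shows they do not.

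The paper's own argument avoids this entirely: it computes $\widetilde I^{S,D}_0$ by applying Givental's mirror theorem (Theorem~\ref{theorem: Givental I-series toric}) to $S$ itself, which is a smooth toric variety or a complete intersection in a smooth toric variety, and compares the resulting explicit sum directly with $\Phi_{f_{S,D}}$, exactly as in Example~\ref{example: S7}. The surface $\widetilde T$ enters the construction only as a bookkeeping device for the coefficients; it is mirror to a different surface with a genuinely different quantum period. If you want to relate the polynomials attached to two different toric degenerations of the same $S$, the correct mechanism is the mutation of Proposition~\ref{proposition:mutations}, a fiberwise birational map preserving periods, not a comparison of $f_{S,D}$ with $f_{\widetilde T,D}$.
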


\begin{proof}
It is well known that $S$ is either a smooth toric variety
or a complete intersection in a smooth toric variety.
This enables one to compute a series $\widetilde I^S$ and, since, $\widetilde I^{S,D}$ following~\cite{Gi97b}, see Theorem~\ref{theorem: Givental I-series toric}.
Using this it is straightforward to check that the period condition for $f_{S,D}$ holds.
The Calabi--Yau condition holds by Compactification Construction~\ref{compactification construction}. %from Section~\ref{subsection:LG for DP}.
Finally the toric condition holds by construction.
(See Example~\ref{example: S7}.)
\end{proof}

\begin{proposition}[{\cite[Proposition 22]{Prz17}}]
\label{proposition:mutations}
Consider two different Gorenstein toric degenerations $T_1$ and $T_2$ of a del Pezzo surface $S$. Let $\Delta_1=F(T_1)$ and $\Delta_2=F(T_2)$.
Consider families of Calabi--Yau compactifications of Laurent polynomials
with Newton polygons $\Delta_1$ and $\Delta_2$. Then there is a birational isomorphism of these families.
In other words, there is a birational isomorphism between affine spaces of Laurent polynomials
with supports in $\Delta_1$ and $\Delta_2$ modulo toric change of variables that preserves Calabi--Yau compactifications.
\end{proposition}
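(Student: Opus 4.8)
The plan is to exhibit the birational isomorphism explicitly as a \emph{mutation} of Laurent polynomials and to verify that it respects the compactification procedure of Compactification Construction~\ref{compactification construction}. First I would recall the combinatorial picture: both $\Delta_1$ and $\Delta_2$ are reflexive polygons with $\mathrm{vol}\,\Delta_i = 12-d$, and a birational change of the fan polygon of a Gorenstein toric del Pezzo degeneration corresponds to a \emph{combinatorial mutation} (in the sense of polytope mutations with respect to a lattice segment and a width direction). The key point is that such a polytope mutation is induced by an explicit birational automorphism of the torus $(\CC^*)^2$ of the form
$$
\varphi\colon (x,y)\mapsto (x,\ y\cdot g(x)),
$$
where $g$ is a Laurent polynomial in one variable whose Newton segment is the chosen factor. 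Under $\varphi$ the support of a Laurent polynomial $f$ with $N(f)=\Delta_1$ transforms to a support contained in $\Delta_2$, at least generically, and this is precisely the transformation that matches the two degenerations of $S$.

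Second, I would verify that $\varphi$ preserves the main period. This is where the material already developed is used: by Remark~\ref{remark: Picard--Fuchs} the main period $I_f$ equals the constant terms series $\Phi_f$, and the constant term $[f^i]_0$ is the integral of $f^i\,\tfrac{dx}{x}\wedge\tfrac{dy}{y}$ over a torus cycle. Since $\varphi$ is a monomial-type change of variables of the torus, it pulls back the invariant form $\tfrac{dx}{x}\wedge\tfrac{dy}{y}$ to itself, and one checks that the cycle of integration can be deformed to a $\varphi$-compatible one without crossing poles. Hence $\Phi_f=\Phi_{\varphi_*f}$, so both Laurent polynomials satisfy the period condition for the \emph{same} restricted $I$-series, i.e. they are weak Landau--Ginzburg models for the same pair $(S,D)$ for a suitably matched divisor class.

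Third, and this is the heart of the statement, I would show that $\varphi$ lifts to a birational isomorphism of the two families of Calabi--Yau compactifications. Here the strategy is to run Compactification Construction~\ref{compactification construction} for each $\Delta_i$: in each case one compactifies the pencil inside $T_i^\vee$, passes to the minimal resolution $\widetilde T_i^\vee$, and blows up the $12-d$ base points on the boundary wheel. The mutation $\varphi$ identifies the two pencils on the open torus, and because in both cases the total space $Z_i$ is obtained from the \emph{same} abstract elliptic surface data (an elliptic pencil with $d$ nodal fibers and a wheel of length $d$ over infinity, by Remark~\ref{remark: Hodge numbers for Z}), the resulting surfaces $Z_1$ and $Z_2$ are birational, and the birational map is biregular away from the boundary divisors $D_i$. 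Restricting to $Y_i=Z_i\setminus D_i$ gives the desired birational isomorphism of families, compatible with the maps to $\PP^1$.

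The main obstacle will be the third step: controlling the indeterminacy of $\varphi$ at the boundary. A torus mutation is only a birational map of the ambient toric surfaces, so I must check that the base points of the two pencils correspond correctly and that $\varphi$ does not introduce or destroy components of the wheel over infinity in an incompatible way. Concretely, the difficulty is to show that after resolving $T_i^\vee$ and blowing up base points, the strict transform of the mutation becomes an isomorphism in a neighborhood of the fibers we integrate over, so that the Calabi--Yau compactifications (and not merely the generic fibers) are matched. The reflexivity of $\Delta_i$ and the fact that by Fact~\ref{fact: toric 4} the pencil avoids the torus-invariant points are what make this manageable, but verifying the precise correspondence of boundary behavior under mutation is the technical crux.
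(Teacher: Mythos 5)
Your first step coincides with the paper's: the polygons $\Delta_1$ and $\Delta_2$ differ by a sequence of combinatorial mutations (as in~\cite{ACGK12}), and each mutation is realized by a birational automorphism of $(\CC^*)^2$ of the form $(x,y)\mapsto(x,\,y\cdot g(x))$ which carries one pencil fiberwise birationally onto the other (cf.\ Example~\ref{example: S7}). Your second step, checking that the period is preserved, is true but not needed for this proposition, and a small inaccuracy: a mutation is not a monomial change of variables, though it does preserve the logarithmic form $\frac{dx}{x}\wedge\frac{dy}{y}$.

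The genuine gap is in your third step, which you yourself flag as the ``technical crux'' without resolving it. You do not need to control the indeterminacy of the mutation along the boundary of $\widetilde T_i^\vee$, nor to match base points or components of the wheel over infinity. The paper closes the argument with a single curve-theoretic observation: the fibers of the Calabi--Yau compactifications are smooth projective curves of genus one, and birational smooth projective curves are isomorphic. Hence the fiberwise birational correspondence induced by the mutation on the open tori automatically extends to an isomorphism of the generic compactified fibers, which is exactly a birational isomorphism of the two families over the base. Without this (or an equivalent substitute), your proposal does not actually establish the claimed birational isomorphism of the compactified families --- it only reduces it to a boundary analysis that you leave open, and which the dimension-one nature of the fibers renders unnecessary.
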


\begin{proof}
One can check that polygons $\Delta_1$ and $\Delta_2$ differ by (a sequence of) mutations (see, say,~\cite{ACGK12}).
These mutations agree with fiberwise birational isomorphisms of toric Landau--Ginzburg models
modulo change of basis in $H^2(S,\ZZ)$ by the construction. The statement follows from the fact that birational elliptic curves are isomorphic.
\end{proof}

\begin{remark}
\label{remark:canonical_the_same}
Let $D=0$. Then
%Given a polytope $\Delta$ let us call a polynomial $f_\Delta$ that have
the polynomial $f_{S,0}$ has
coefficients $1$ at vertices of its Newton polygon and $\binom{n}{k}$ at $k$-th
integral point of an edge of integral length $n$.
In other words, $f_{S,0}$ is binomial, cf. Section~\ref{section: weak LG 3-folds}. %it satisfies the \emph{binomial principle}, see~\cite{Prz13}.
\end{remark}

\begin{example}
\label{example: S7}
Let $S=S_7$. This surface has two Gorenstein toric degenerations: it is toric itself, and also it can be degenerated to
a singular surface which is obtained by a blow up of $\PP^2$, a blow up of a point on the exceptional curve, and a blow down
the first exceptional curve to a point of type $\sA_1$.

Let $\Delta_1$ be the polygon with vertices $(1,0)$, $(1,1)$, $(0,1)$, $(-1,-1)$, $(0,-1)$,
%rightmost polytope on the third line of Figure~\ref{figure:del Pezzo},
and let $D=a_0l+a_1e_1+a_2e_2$.
Then
$$
f_{\widetilde T_{\Delta_1},D}=f_{S,D}=x+y+e^{-a_0}\frac{1}{xy}+e^{-(a_0+a_1)}\frac{1}{y}+e^{-a_2}xy.
$$

Let $\Delta_2$ be the polygon with vertices $(1,0)$, $(0,1)$, $(-1,-1)$, and $(1,-1)$,
and let $\mbox{$D=a_0l+a_1e_1+a_2e_2$}$.
Then
$$
f_{\widetilde T_{\Delta_2},D}=x+y+e^{-a_0}\frac{1}{xy}+e^{-(a_0+a_1)}\frac{1}{y}+e^{-(a_0+a_1+a_2)}\frac{x}{y},
$$
$$
f'_{S,D}=x+y+e^{-a_0}\frac{1}{xy}+\left(e^{-(a_0+a_1)}+e^{-(a_0+a_2)}\right)\frac{1}{y}+e^{-(a_0+a_1+a_2)}\frac{x}{y}.
$$
(Here $f_{S,D}$ and $f'_{S,D}$ are toric Landau--Ginzburg models for $(S,D)$ in different bases in $(\CC^*)^2$.)
One can easily check that the mutation
$$
x\to x,\ \ \ y\to \frac{y}{1+e^{-a_2}x}
$$
sends $f_{S,D}$ to $f'_{S,D}$.

The surface $S$ is toric, so by Givental
$$
\widetilde I_0^{S,D}= \sum_{k,l,m}  \frac{(2k+3l+2m)!e^{-a_0(k+l+m)-a_1k-a_2m}t^{2k+3l+2m}}{(k+l)!(l+m)!k!l!m!}
$$
(see~\cite{fanosearch}).
One can check that $\widetilde I_0^{S,D}=I_{f_{S,D}}=I_{f'_{S,D}}$.
\end{example}

\part{Fano threefolds}
\label{part:Fano 3-folds}

This part is devoted to the most studied case of toric Landau--Ginzburg models --- that is, models for Fano threefolds.
We mainly focus on the Picard rank one case.

\section{Weak Landau--Ginzburg models}
\label{section: weak LG 3-folds}

Consider a smooth Fano threefold $X$ of Picard rank $\rho$ and a divisor $D$ on it.
Recall that we associate with them the regularized series $\widetilde I^{X,D}$ and, in particular, the constant term of this series $\widetilde I^{X,D}_0$.
These series are given by an intersection theory (of curves and divisors) on $X$ and the series $\widetilde I^X=\widetilde I^{X,0}$,
see the beginning of Section~\ref{part:toric LG}, or even the series $I^X_0$, see~\cite{Prz08a}.
Coefficients of the series depend on the even part of cohomology of $X$,
which is quite simple: $H^0(X,\ZZ)=H^6 (X,\ZZ)=\ZZ$, $H^2(X,\ZZ)=H^4(X,\ZZ)=\ZZ^\rho$.
The relations on Gromov--Witten invariants shows that the coefficients are given by finite (small)
number of three-pointed Gromov--Witten invariants, see details, say, in~\cite{Prz08a}.
In the case of Picard rank one Fano threefolds these three-pointed invariants
were found, using Givental, Fulton--Woodward, and others results, see~\cite{Prz07a},~\cite{Prz07b} and references therein.
Theorems~\ref{theorem: Givental I-series toric} and~\ref{theorem: Grass I-series} enable one to compute $\widetilde I^X_0$
for complete intersections in smooth toric varieties and Grassmannians. Fano threefolds with $\rho>1$
have descriptions of this type, and the corresponding series $\widetilde I^X_0$ are computed in~\cite{CCGK16}, see also~\cite{fanosearch}.
We need the series $\widetilde I^X_0$ unless otherwise stated,
so we do not need the intersection theory on $X$. From now on we assume the series $\widetilde I^X_0$ known.

We assume that $D=0$.
Recall that a Laurent polynomial $f_X$ is called a weak Landau--Ginzburg model for $X$ if it satisfies the period condition,
that is if its main period (see Definition~\ref{definition: main integral}) coincides with $I^X_0$.
There are $105$ families of smooth Fano threefolds, see, for instance,~\cite{IP99} and~\cite{MM82}.
Their anticanonical classes are very ample for $98$ of them.
Weak Landau--Ginzburg models are known for each of them (they are usually not unique), see~\cite{fanosearch} for the case of very ample anticanonical class and Proposition~\ref{proposition:CY hyperelliptic-Laurent} for the other case.

Smooth Fano threefolds with non-very ample anticanonical classes can be described as complete intersections in smooth toric varieties
or weighted projective spaces, so one can construct Givental's Landau--Ginzburg models (see Definition~\ref{definition: Givental LG})
satisfying the period condition.

\begin{proposition}[cf. Proposition~\ref{proposition:CY hyperelliptic-CY}]
\label{proposition:CY hyperelliptic-Laurent}
Fano threefolds $X_{1-1}$, $X_{1-11}$, $X_{2-1}$, $X_{2-2}$, $X_{2-3}$, $X_{9-1}$, and $X_{10-1}$
have toric Landau--Ginzburg models.
\end{proposition}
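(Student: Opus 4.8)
The plan is to verify the three conditions of Definition~\ref{definition: toric LG} --- the period, Calabi--Yau, and toric conditions --- one at a time for each of the seven threefolds. The whole approach rests on the observation that, although the anticanonical class of each such $X$ fails to be very ample, $X$ is a complete intersection in a smooth toric variety or in a weighted projective space (in most cases a hypersurface in a weighted projective space, the anticanonical morphism being the associated double cover). This is exactly the feature that both forces $-K_X$ to be non-very-ample and supplies the Givental-theoretic input we need.

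First I would write down a weak Landau--Ginzburg model. Realizing $X$ as a complete intersection in a (possibly singular) toric variety and applying Givental's construction (Definition~\ref{definition: Givental LG}, together with its generalization to singular toric ambient spaces), one obtains a Landau--Ginzburg model whose main period equals $\widetilde I^X_0$; the series $\widetilde I^X_0$ itself is computed via Theorem~\ref{theorem: Givental I-series toric}. I would then carry out the monomial birational change of variables described in the introduction to rewrite Givental's potential as a genuine Laurent polynomial $f_X$ in three variables, checking that this change of variables transforms the Givental integral correctly so that the period condition $I_{f_X}=\widetilde I^X_0$ survives. This much is already recorded in the literature; the new point is to arrange the $f_X$ so that the remaining two conditions hold as well.

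The Calabi--Yau condition is the crux, and I would deduce it from the companion Proposition~\ref{proposition:CY hyperelliptic-CY}: one compactifies the pencil $\{f_X=\lambda\}$ fiberwise inside the toric variety attached to the Newton polytope $N(f_X)$, resolves its singularities, and blows up the base locus to produce a smooth total space $Y$ with trivial canonical class. The hard part lives here: because $X$ is a \emph{weighted} complete intersection, the toric compactification of the pencil is genuinely singular, so one must verify that these singularities admit a resolution preserving triviality of the canonical class and that the strict transform of the general fiber remains smooth. This is precisely the phenomenon that distinguishes these seven cases from the $98$ with very ample anticanonical class, and it is where the double-cover (hyperelliptic) structure is used.

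Finally, for the toric condition I would exhibit a degeneration $X\rightsquigarrow T_X$ with $F(T_X)=N(f_X)$. Since each $X$ is a weighted complete intersection, it degenerates to a binomial, hence toric, complete intersection, as in Example~\ref{ex:ci}; the identification $F(T_X)=N(f_X)$ then follows from Givental's construction, in which the primitive ray generators of the fan of $T_X$ are exactly the exponent vectors of the monomials of $f_X$. Combining the three verifications yields a toric Landau--Ginzburg model for each of $X_{1-1}$, $X_{1-11}$, $X_{2-1}$, $X_{2-2}$, $X_{2-3}$, $X_{9-1}$, and $X_{10-1}$.
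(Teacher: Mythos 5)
Your overall strategy (Givental model for the complete-intersection presentation, monomial/birational change of variables to get a Laurent polynomial and preserve the period integral, binomial degeneration as in Example~\ref{ex:ci} for the toric condition) is the paper's strategy; in fact the paper's own proof of this proposition only carries out the first of these steps — it produces the Laurent polynomials $f_{i-j}$ via the explicit changes of variables from formula~\eqref{formula: CI LG} and cites the period condition — and defers the Calabi--Yau and toric conditions to the companion Proposition~\ref{proposition:CY hyperelliptic-CY}. Your toric-condition argument also agrees with what the paper does (Theorem~\ref{theorem: CI are toric LG}).

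The genuine gap is in your Calabi--Yau step. You propose to compactify the pencil $\{f_X=\lambda\}$ fiberwise inside the toric variety attached to the Newton polytope $N(f_X)$, i.e.\ to run the machinery of Theorem~\ref{theorem: Minkowski CY}. But that machinery requires $N(f_X)$ to be reflexive: only then do the fibers compactify to \emph{anticanonical} sections of $T^\vee$, only then is the constant Laurent polynomial the boundary divisor, and only then does Lemma~\ref{lemma: 3dim smoothness} give a crepant resolution of the ambient space. For these seven varieties the Newton polytopes are not reflexive (e.g.\ $N\bigl((x+y+z+1)^6/(xyz)\bigr)$ is a dilated simplex with many interior lattice points), which is exactly why these cases are excluded from the Minkowski-type treatment of the other $98$ families. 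The paper instead makes a further birational change of variables presenting each pencil as a family of quartics in $\PP^3$ or of bidegree $(2,3)$ hypersurfaces in $\PP^1\times\PP^2$, and then checks by hand that the total space has a crepant (partly small) resolution. Your hedge that the singular toric compactification "must be resolved preserving triviality of the canonical class" does not repair this: for a non-reflexive Newton polytope the compactified fibers are not anticanonical to begin with, so no resolution of the ambient toric variety will produce a Calabi--Yau family without the extra birational modification that the paper supplies.
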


\begin{proof}
The Fano variety $X_{1-1}$  is a hypersurface section of degree $6$ in $\PP(1,1,1,1,3)$.
The Fano variety $X_{1-11}$ is a hypersurface section of degree $6$ in $\PP(1,1,1,2,3)$.
The Fano variety $X_{2-1}$ is a hypersurface section of type $(1,1)$ in $\PP^1\times X_{1-11}$ in an anticanonical embedding;
in other words, it is a complete intersection of hypersurfaces of types $(1,1)$ and $(0,6)$ in $\PP^1\times \PP(1,1,1,2,3)$.
The Fano variety $X_{2-2}$ is a hypersurface in a certain toric variety, see~\cite{CCGK16}.
The Fano variety $X_{2-3}$ is a hyperplane section of type $(1,1)$ in $\PP^1\times X_{1-12}$ in an anticanonical embedding;
in other words, it is a complete intersection of hypersurfaces of types $(1,1)$ and $(0,4)$ in $\PP^1\times \PP(1,1,1,1,2)$.
Finally one has $X_{9-1}=\PP^1\times S_2$ and $X_{10-1}=\PP^1\times S_1$.

For a variety $X_{i-j}$ construct its Givental's type Landau--Ginzburg models and then present it by Laurent polynomial $f_{i-j}$, see, for instance, formula~\eqref{formula: CI LG}.
It satisfies the period condition by~\cite{fanosearch}
Consider these cases one by one.

Givental's Landau--Ginzburg model for $X_{2-1}$ is
a complete intersection
$$
\left\{
  \begin{array}{ll}
    u+v_0=0,  \\
    v_1+v_2+v_3=0
  \end{array}
\right.
$$
in $\TTT[u,v_0,v_1,v_2,v_3]$
with a function
$$
u+\frac{1}{u}+v_0+v_1+v_2+v_3+\frac{1}{v_1v_2^2v_3^3},
$$
see Definition~\ref{definition: Givental LG}.
After the birational change of variables
$$
v_1=\frac{x}{x+y+1},\ \ v_2=\frac{y}{x+y+1},\ \ v_3=\frac{1}{x+y+1},\ \ u=\frac{z}{z+1},\ \ v_0=\frac{1}{z+1}
$$
one, up to an additive shift, gets a function
$$
f_{2-1}=\frac{(x+y+1)^6(z+1)}{xy^2}+\frac{1}{z}
$$
on a torus $\TTT[x,y,z]$.

In the similar way one gets Calabi--Yau compactifications for the other varieties.
One has
$$
f_{1-1}=\frac{(x+y+z+1)^6}{xyz},
$$
$$
f_{1-11}=\frac{(x+y+1)^6}{xy^2z}+z,
$$
$$
f_{2-2}=\frac{(x+y+z+1)^2}{x}+\frac{(x+y+z+1)^4}{yz},
$$
$$
f_{2-3}=\frac{(x+y+1)^4(z+1)}{xyz}+z+1,
$$
$$
f_{9-1}=x+\frac{1}{x}+\frac{(y+z+1)^4}{yz}.
$$
\begin{equation*}
f_{10-1}=\frac{(x+y+1)^6}{xy^2}+z+\frac{1}{z}.
\qedhere
\end{equation*}
\end{proof}

Thus one can assume that the anticanonical class of $X$ is very ample. To find a weak Landau--Ginzburg model for $X$ one can,
similarly to Proposition~\ref{proposition:CY hyperelliptic-Laurent}, construct Givental's Landau--Ginzburg model
and try to find birational isomorphisms of total spaces of such models with an algebraic torus (cf. Theorem~\ref{theorem: CI in Grass Laurent}).
However we use another approach. That is, we consider ``good'' three-dimensional polytopes and study ``correct''
Laurent polynomials supported on them (in particular, their coefficients are symmetric enough).
At the moment the most appropriate method of constructing weak Landau--Ginzburg models seems to be a generalization
of the one described below, see Remark~\ref{remark: maximally mutational}. However we do not need it here.

Weak Landau--Ginzburg models, ``guessed'' via the period condition (see~\cite{Prz08b}) or obtained from Landau--Ginzburg models
for complete intersections often first have reflexive Newton polytopes and, second, they often satisfy the \emph{binomial principle},
see~\cite{Prz13}.
It declares a way to put coefficients of Laurent polynomials with fixed Newton polytopes.
That is, one needs to put 1's at vertices of such polytope,
and $\binom{n}{i}$ on $i$-th (from any end) integral point of an edge of integral length $n$.
This principle can be applied in many cases (in other words, for Newton polytopes of toric varieties with cDV singularities,
that is ones whose integral points, except for the origin, lie on edges).
Most of smooth Fano threefolds have degenerations to toric varieties with cDV singularities,
but not all unfortunately.
Thus we use the following generalization of the binomial principle.

\begin{definition}[see~\cite{CCGK16}]
\label{definition: Minkowski}
An integral polygon is called \emph{of type $A_n$}, $n\geq 0$, if it is a triangle such that two its edges have integral length $1$ and
the rest one has integral length $n$.
(In other words, its integral points are $3$ vertices and $n-1$ points lying on the same edge.)
In particular, $A_0$ is a segment of integral length $1$.

An integral polygon $P$ is called \emph{Minkowski}, or \emph{of Minkowski type}, if it is a Minkowski sum of several polygons of type $A_n$,
that is
$$
P=\left\{p_1+\ldots +p_k |\, p_i\in P_i \right\}
$$
for some polygons $P_i$ of type $A_{k_i}$, and if the affine lattice generated by $P\cap \ZZ^2$ is a sum of affine lattices
generated by $P_i\cap \ZZ^2$. Such decomposition is called \emph{admissible lattice Minkowski decomposition} and
denoted by $P=P_1+\ldots +P_k$.

An integral three-dimensional polytope is called \emph{Minkowski} if it is reflexive and if all its facets
are Minkowski polygons.
\end{definition}

\begin{definition}[see~\cite{CCGK16}]
Let $P\in \ZZ^2\otimes \RR$ be an integral polygon of type $A_n$. Let $v_0,\ldots,v_n$ be consecutive integral points
on the edge of $P$ of integral length $n$ and let $u$ be the rest integral point of $P$.
Let $x=(x_1,x_2)$ be a multivariable that corresponds to an integral lattice $\ZZ^2\subset \RR^2$.
Put
$$
f_P=x^u+\sum \binom{n}{k} x^{v_k}.
$$
(In particular one has $f_P=x^u+x^{v_0}$ for $n=0$.)

Let $Q=Q_1+\ldots+Q_s$ be an admissible lattice Minkowski decomposition of an integral polygon $Q\subset \RR^2$.
Put
$$
f_{Q_1,\ldots,Q_s}=f_{Q_1}\cdot\ldots\cdot f_{Q_s}.
$$

A Laurent polynomial $f\subset \TT[x_1,x_2,x_3]$ is called \emph{Minkowski} if $N(f)$ is Minkowski and
for any facet $Q\subset N(f)$, %after consideration of it
as for an integral polygon, there exist an admissible lattice Minkowski decomposition $Q=Q_1+\ldots+Q_s$ such that $f|_Q=f_{Q_1,\ldots,Q_s}$.
\end{definition}

All $98$ families of smooth Fano threefolds that have very ample anticanonical classes have weak Landau--Ginzburg models of Minkowski type, see~\cite{CCGK16}.

\begin{remark}
\label{remark: maximally mutational}
There exist the following notion of \emph{maximally mutable polynomial}, see, for instance,~\cite{KT}).
A birational isomorphism $\phi\colon \TTT[x_1,\ldots,x_n]\to \TTT[y_1,\ldots,y_n]$ is called elementary mutation
of the polynomials
$f$ and $g$ if it is given by $y_1=r(x_1,\ldots,x_n)$, $y_i=x_i$ for $2\leq i\leq n$,
and $\phi^*(f)=g$.
The Laurent polynomials $f$ and $g$ in $n$ variables are called mutationally equivalent if there exists a sequence
of mutations transforming one to another.
On the other hand, if we have a polytope $\Delta$ and a vector $v$ in the dual space, one can define a mutation of $\Delta$ in $v$
(if it exists) multiplying $k$-slice for $v$ (that is a set $\{p\in \Delta\mid \langle p,n\rangle=k\}$)
on $k$-th power of some fixed polytope $\Delta_v$
(and dividing on it for $k<0$).
(Mutations of polytopes correspond to deformations of toric Fano varieties, whose fan polytopes they are, see~\cite{IV12}.)
It is clear that mutations of polynomials induce mutations of their Newton polytopes.
However the opposite is not true in general. There are strong restrictions to make the opposite true.
A Laurent polynomial is called maximally mutational if any mutation of its Newton polytope
is given by mutation of the polynomial and if it is true for all mutations of the polynomial.
Rigid (without parameters) maximally mutational Laurent polynomials form a class of weak Landau--Ginzburg
models fits now in the best way to the list of Fano varieties.
In dimension $2$ there are exactly $10$ such classes, and elements of each of them are weak Landau--Ginzburg
models for all of $10$ families of del Pezzo surfaces.
In dimension $3$ there are $105$ classes,
and each of them correspond to one of $105$ families of Fano threefolds (private communication with A.\,Kasprzyk).
\end{remark}

\begin{remark}
\label{remark:Minkowski_equivalence}
Minkowski decompositions of facets of Newton polytopes of Laurent polynomials of Minkowski type
naturally give mutations of the polynomials. It turns out (see~\cite{CCGK16}) that Minkowski type polynomials
are mutationally equivalent if and only if they have the same constant term series (and, thus, they are weak Landau--Ginzburg models
of the same Fano variety if such variety exists). Classes of Laurent polynomials of Minkowski type
which do not correspond to smooth Fano threefolds are expected to correspond to smooth orbifolds.
\end{remark}

\section{Calabi--Yau compactifications}
\label{section: Calabi--Yau 3-folds}

Let $f$ be a weak Landau--Ginzburg model for a smooth Fano threefold $X$ and a divisor $D$ on it.
Recall the notation from Section~\ref{section:toric}.
Let $\Delta=N(f)$, $\nabla=\Delta^\vee$, $T=T_\Delta$, $T^\vee=T_\nabla$.
In a lot of cases
polynomials satisfying the period and toric conditions satisfy
the Calabi--Yau condition as well. However it is not easy to check this condition:
there are no general enough approaches as for the rest two conditions are; usually one needs to
check the Calabi--Yau condition ``by hand''. The natural idea is to compactify the fibers
of the map $f\colon (\CC^*)^n\to \CC$ using the embedding $(\CC^*)^n\hookrightarrow T^\vee$.
Indeed, the fibers compactify to anticanonical sections in $T^\vee$, and, since, have trivial canonical classes. However, first,
$T^\vee$ is usually singular, and, even if we resolve it (if it has a crepant resolution!),
we can just conclude that its general anticanonical section is a smooth Calabi--Yau variety,
but it is hard to say anything about the particular sections we need.
Second, the family of anticanonical sections we are interested in has a base locus which
we need to resolve to construct a Calabi--Yau compactification; and this resolution can be non-crepant.

Coefficients of the polynomials that correspond to trivial divisors tend to have very symmetric
coefficients, at least for the simplest toric degenerations. In this case the base loci
are more simple and enable us to construct Calabi--Yau compactifications.

We will assume $f$ to be of Minkowski type. In particular, $\nabla$ is integral, in other words, $\Delta$ is reflexive,
and integral points of both $\Delta$ and $\nabla$ are either boundary points or the origins.

\begin{lemma}
\label{lemma: 3dim smoothness}
Let $T$ be a threefold reflexive toric variety. Then $\widetilde T^\vee$ is smooth.
\end{lemma}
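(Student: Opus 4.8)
The plan is to examine the maximal cones of the fan defining $\widetilde{T}^\vee = \widetilde{T}_\nabla$ and show that each of them is unimodular, which is precisely the smoothness criterion for toric varieties recalled in Section~\ref{section:toric}. By definition $\widetilde{T}_\nabla$ is given by a maximal triangulation of $\nabla$. Since $T$ is reflexive, so is $\nabla$ (Definition~\ref{definition:reflexive}); hence the origin is its only interior lattice point and every other lattice point of $\nabla$ lies on $\partial\nabla$. Thus the maximal triangulation is a star triangulation with apex the origin: its maximal cones are the cones over the two-dimensional simplices of a fine triangulation of $\partial\nabla$, that is, over lattice triangles whose vertices are lattice points of $\nabla$ and which contain no further lattice point.

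First I would record the height-one property coming from reflexivity. Each facet $Q$ of $\nabla$ lies in an affine hyperplane $\{\langle\,\cdot\,,v_Q\rangle = -1\}$ for a primitive vector $v_Q\in\mathcal N$ (the vertex of $\Delta$ dual to $Q$); this is the standard reflexivity characterization, and it says precisely that every facet is at lattice distance $1$ from the origin. A triangle $\tau$ of the fine triangulation of $\partial\nabla$ is contained in one such facet, hence in such a height-one hyperplane.

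Next I would compute the determinant of the cone $\sigma_\tau$ over $\tau$. Writing the primitive generators of its rays as $v_1,v_2,v_3$ and choosing a splitting $\mathcal M = \ZZ m_0 \oplus (\ker\langle\,\cdot\,,v_Q\rangle \cap \mathcal M)$ with $\langle m_0,v_Q\rangle = -1$, the three generators all have the same first coordinate, so $|\det(v_1,v_2,v_3)|$ equals the normalized two-dimensional area of $\tau$ measured in the affine lattice of the facet $Q$. Because the triangulation is fine, $\tau$ is an empty lattice triangle, and an empty lattice triangle in a two-dimensional lattice has normalized area $1$ (equivalently, its edge vectors form a lattice basis, by the $\mathrm{SL}_2(\ZZ)$-classification, or by Pick's formula). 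Therefore $|\det(v_1,v_2,v_3)| = 1$, so $\sigma_\tau$ is a smooth cone. Since this holds for every maximal cone, $\widetilde{T}^\vee$ is smooth.

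The step I expect to carry the whole argument — and the reason the statement is restricted to threefolds — is the last one: the facets of the three-dimensional polytope $\nabla$ are two-dimensional, so the simplices of a fine triangulation are lattice triangles, and in dimension two ``empty'' forces ``unimodular.'' This implication fails in higher dimension (empty simplices such as the Reeve tetrahedron are not unimodular), which is exactly why a maximal crepant partial resolution of a reflexive polytope need not be smooth once $\dim \nabla \geq 4$.
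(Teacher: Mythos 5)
Your proof is correct and follows essentially the same route as the paper's: both reduce smoothness to the observation that the maximal cones are cones over triangles of the fine star triangulation, which lie in height-one hyperplanes by reflexivity and are unimodular by Pick's formula, so their vertices form a lattice basis. Your version is in fact slightly more careful than the paper's, which invokes only the absence of \emph{interior} lattice points before applying Pick's formula, whereas emptiness of the edges (coming from maximality of the triangulation) is also needed.
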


\begin{proof}
Let $C$ be a two-dimensional cone of the fan of $\widetilde T^\vee$. It is a cone over an integral triangle $R$ without strictly internal integral points, such that $R$ lies in the affine plane $L=\{x|\ \langle x,y\rangle\geq -1\}$ for some $y\in \mathcal N$.
This means that is some basis $e_1,e_2,e_3$ in $\mathcal M$ one
gets $L=\{a_1e_1+a_2e_2+e_3\}$. Let $P$ be a pyramid over $R$ whose vertex is an origin. Then by Pick's formula
one has $\vol R=\frac{1}{2}$ and $\vol P=\frac{1}{6}$, which means that vertices of $R$ form basis in $\mathcal M$, so $\widetilde T^\vee$ is smooth.
\end{proof}

Unfortunately, Lemma~\ref{lemma: 3dim smoothness} does not hold for higher dimensions in general, because there are $n$-dimensional
simplices whose only integral points are vertices, such that their volumes are greater then $\frac{1}{n!}$.

\begin{lemma}[{\cite[Lemma 25]{Prz17}}]
\label{lemma:restrictions Minkowski}
Let $f$ be a Laurent polynomial of Minkowski type.
Then for any facet $Q$ of $\Delta$ the curve $R_{Q,f}$ is a union of (transversally intersecting) smooth rational curves (possibly with multiplicities).
\end{lemma}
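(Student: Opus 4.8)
The plan is to analyze the restriction $f|_Q$ using the Minkowski decomposition of the facet $Q$. By Definition~\ref{definition: Minkowski}, since $f$ is of Minkowski type, there is an admissible lattice Minkowski decomposition $Q=Q_1+\ldots+Q_s$ with each $Q_i$ of type $A_{n_i}$, and $f|_Q=f_{Q_1,\ldots,Q_s}=f_{Q_1}\cdot\ldots\cdot f_{Q_s}$. By Fact~\ref{fact: toric 3}, the curve $R_{Q,f}$ is cut out in $\PP_Q$ by $\{f|_Q=0\}$. Since $f|_Q$ factors as a product of the $f_{Q_i}$, the curve $R_{Q,f}$ is the union of the curves $\{f_{Q_i}=0\}$. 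First I would reduce the problem to two claims: that each individual factor $\{f_{Q_i}=0\}$ is a smooth rational curve, and that these curves meet transversally.

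For the smoothness and rationality of each factor, I would examine $f_{Q_i}$ explicitly. For a polygon $Q_i$ of type $A_n$, by definition $f_{Q_i}=x^u+\sum_k \binom{n}{k}x^{v_k}$, where $v_0,\ldots,v_n$ lie on the edge of integral length $n$ and $u$ is the remaining vertex. The key observation is that $\sum_k \binom{n}{k} x^{v_k}$ is (up to a monomial factor) the binomial expansion of a power of a linear form: if $v_k=v_0+k\,w$ for the primitive edge direction $w$, then this sum equals $x^{v_0}(1+x^w)^n$. Thus $f_{Q_i}=x^u+x^{v_0}(1+x^w)^n$, and after clearing denominators the equation $\{f_{Q_i}=0\}$ inside $\PP_{Q_i}$ becomes (the projectivization of) a graph-type equation that is manifestly a smooth rational curve --- essentially a rational normal curve, parametrized by the single toric coordinate along the edge. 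I would verify that passing to $\PP_Q$ (via the lattice embedding guaranteed by admissibility) preserves smoothness of each component, as admissibility ensures the sublattice generated by $Q_i\cap\ZZ^2$ embeds compatibly so that the component maps isomorphically onto its image.

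For the transversality of the intersections, I would use the Minkowski sum structure together with the admissibility condition on lattices. The admissibility hypothesis, that the affine lattice generated by $Q\cap\ZZ^2$ is the sum of those generated by the $Q_i\cap\ZZ^2$, is what guarantees that the factors $f_{Q_i}$ involve ``independent'' monomial directions, so their vanishing loci meet transversally in $\PP_Q$. Concretely, at an intersection point of $\{f_{Q_i}=0\}$ and $\{f_{Q_j}=0\}$, the differentials $df_{Q_i}$ and $df_{Q_j}$ span a two-dimensional space because the gradients point in directions governed by the distinct lattice summands; the lattice-sum condition prevents them from being parallel. The multiplicities mentioned in the statement arise exactly when several factors $f_{Q_i}$ coincide (i.e. the decomposition repeats a summand), in which case the corresponding component appears with that multiplicity.

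The main obstacle I expect is the transversality verification, and in particular making precise how the lattice admissibility condition forces the tangent directions of distinct components to be independent. Checking that each factor is smooth and rational is a direct computation with the binomial coefficients and the parametrization by the edge coordinate. By contrast, transversality is a genuinely two-dimensional statement about how the Minkowski summands sit inside the facet, and the careful point is to translate the combinatorial admissibility of the decomposition $Q=Q_1+\ldots+Q_s$ into the linear-algebraic statement that the gradients of the factors are in general position at their common zeros. I would handle this by working in local coordinates on $\PP_Q$ adapted to the lattice decomposition, where each $f_{Q_i}$ depends (to first order at an intersection point) on a distinct coordinate determined by its summand, making transversality immediate.
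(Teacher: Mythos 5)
Your overall strategy coincides with the paper's: the paper's own argument is only a two-sentence sketch (reduce to the indecomposable case via the factorization $f|_Q=f_{Q_1}\cdots f_{Q_s}$, then identify the zero locus of each $A_n$-factor as a smooth rational curve), and your computation $f_{Q_i}=x^u+x^{v_0}(1+x^w)^n$ with the resulting graph/rational-normal-curve description is the right way to make the indecomposable case precise. A simplification you could use for the passage from the torus to the compact stratum: every $f_{Q_i}$ has coefficient $1$ at each vertex of $Q_i$, so $f|_Q$ has nonzero coefficients at all vertices of $Q$; by Fact~\ref{fact: toric 4} the curve $R_{Q,f}$, and hence each of its components, misses every torus-fixed point of the stratum $F_Q$ --- which is exactly where the singularities of that toric surface live. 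This disposes of your worry about how the components sit inside $\PP_Q$ and controls their closures at the boundary more cleanly than tracking the sublattice generated by each $Q_i$.

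The step I would not accept as written is your transversality mechanism. Admissibility says the affine lattices generated by the $Q_i\cap\ZZ^2$ \emph{sum} to the lattice generated by $Q\cap\ZZ^2$; it is not a direct-sum decomposition, and in general one cannot choose coordinates in which distinct factors depend, even to first order, on distinct variables. For example, take two distinct $A_1$ summands with $f_{Q_1}=1+x+y$ and $f_{Q_2}=1+x+xy$: each summand already generates the full lattice $\ZZ^2$, so their ``monomial directions'' are not independent in any sense, yet the curves do meet transversally (at $(1,-2)$ the gradients are $(1,1)$ and $(-1,1)$). So the parenthetical transversality claim has to be checked by directly comparing the tangent directions of the parametrized components $t=-(1+s)^{n_i}$ at their common zeros, not by separating variables; your ``local coordinates adapted to the lattice decomposition'' do not exist when the summands' lattices overlap. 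For what it is worth, the paper's sketch does not address transversality at all, and in the application (Proposition~\ref{proposition:CY compactification}) only the smoothness of the individual components and a condition on the components of $D$ are actually used, so this gap is in the strength of the stated conclusion rather than in the part of the lemma that is needed downstream.
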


\begin{proof}[Idea of the proof]
For non-Minkowski decomposable case this follows from Facts~\ref{fact: toric 1} and~\ref{fact: toric 3}.
In the decomposable case the curve $R_{Q,f}$ is a union of curves that correspond to Minkowski summands of $Q$.
\end{proof}

\begin{proposition}
\label{proposition:CY compactification}
Let $W$ be a smooth threefold. Let $F$ be a one-dimensional anticanonical linear system on $W$
with reduced fiber $D=F_\infty$.
Let a base locus $B\subset D$ be a union of smooth curves (possibly with multiplicities) such that for any two components
$D_1,D_2$ of $D$ one has $D_1\cap D_2\not\subset B$. Then there is a resolution of the base locus $f\colon Z\to \PP^1$
with a smooth total space $Z$ such that
$-K_Z=f^{-1}(\infty)$.
\end{proposition}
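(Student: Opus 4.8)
The plan is to realise the pencil $F$ as a rational map $\phi\colon W\dashrightarrow\PP^1$ whose indeterminacy locus is exactly the base locus $B$, and to resolve this indeterminacy by a sequence of blow-ups of smooth curves, exactly as in the surface situation of Compactification Construction~\ref{compactification construction}. Writing $D=F_\infty$ and choosing a general member $F_0$ of the pencil, one has $B=D\cap F_0$ scheme-theoretically, since the base locus of a pencil is the scheme-theoretic intersection of any two of its distinct members. By hypothesis the support of $B$ is a union of smooth curves, and the condition $D_1\cap D_2\not\subset B$ guarantees that no component of $B$ lies in the double locus of $D$; hence along the generic point of each component $C$ of $B$ the divisor $D$ is smooth, so $\mathrm{mult}_C\, D=1$. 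Since every centre is a smooth curve in a smooth threefold, the total space $Z$ is automatically smooth, and the only real content is the identity $-K_Z=f^{-1}(\infty)$.

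First I would treat the transversal, multiplicity-one situation. Let $\pi\colon\widetilde W\to W$ be the blow-up of a smooth curve $C\subset B$ with exceptional divisor $E$. As $W$ is a smooth threefold and $C$ a smooth curve, $K_{\widetilde W}=\pi^*K_W+E$, i.e. the discrepancy of $E$ equals $1$; moreover $\pi^*D=\widetilde D+E$ because $\mathrm{mult}_C\, D=1$. In suitable local coordinates $(x,y,t)$ with $C=\{x=y=0\}$, $D=\{y=0\}$ and $\phi=x/y$, a direct two-chart computation shows that $E$ \emph{dominates} $\PP^1$, that the strict transform $\widetilde D$ is the fibre over $\infty$, and that the induced pencil has dropped its base locus there. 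Consequently, using $D\sim -K_W$,
\[
-K_{\widetilde W}=\pi^*(-K_W)-E=\pi^*D-E=\widetilde D+E-E=\widetilde D=f^{-1}(\infty),
\]
which is precisely the desired identity. Iterating over the finitely many transversal components of $B$ — each blown up inside the smooth locus of the relevant strict transform of $D$, as guaranteed by the crossing hypothesis — produces the morphism $f\colon Z\to\PP^1$ with $-K_Z=f^{-1}(\infty)$.

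The remaining, and principal, difficulty is the case of components of $B$ carrying multiplicities, where $D$ and the general member $F_0$ are tangent along $C$ and a single blow-up does not separate them. Here I would argue by induction on the total multiplicity of $B$: one blow-up of $C$ (followed, if necessary, by blow-ups of infinitely near smooth curves) lowers the multiplicity by one, and after finitely many steps the pencil becomes base-point free. The quantity that must be tracked through the tower is, for each exceptional divisor $E_i$, the triple consisting of $m_i=\mathrm{mult}$ of the total transform of $D$ along the $i$-th centre, the discrepancy $a_i$ of $E_i$ over $W$, and the multiplicity $e_i$ of $E_i$ in the scheme-theoretic fibre $f^{-1}(\infty)$; granting the coefficient identity $m_i-a_i=e_i$ at every stage one obtains
\[
-K_Z=\pi^*D-\sum_i a_iE_i=\widetilde D+\sum_i(m_i-a_i)E_i=\widetilde D+\sum_i e_iE_i=f^{-1}(\infty).
\]
The hard part is exactly verifying $m_i-a_i=e_i$ through the iterated blow-ups, since the discrepancies $a_i$ accumulate by the pullback rule $a_i=a_j\cdot(\mathrm{mult}_{C_i}E_j)+1$ when $C_i$ lies on an earlier exceptional divisor $E_j$, and one must check that the multiplicities $m_i$ of $D$ and the vertical multiplicities $e_i$ over $\infty$ grow in lockstep with them. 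Maintaining smoothness of the centres throughout is comparatively routine — the hypothesis $D_1\cap D_2\not\subset B$ keeps the blow-up centres away from the singularities of $D$ — so the essential labour is the local multiplicity bookkeeping at each tangency, which reduces to the kind of explicit chart analysis carried out above in the transversal case.
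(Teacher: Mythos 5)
Your single blow-up computation in the transversal case is exactly the paper's: since $D$ is reduced and the hypothesis $D_1\cap D_2\not\subset B$ forces each component $C$ of $B$ to lie in the smooth locus of $D$, one has $\mathrm{mult}_C D=1$, so $\pi^*D=D'+E$ and $-K_{W'}=\pi^*(-K_W)-E=D'$. Up to that point you and the paper agree.

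The gap is in how you handle the components of $B$ with multiplicity $\ge 2$. You set up a global bookkeeping scheme relative to the original $W$ — discrepancies $a_i$, multiplicities $m_i$ of the total transform of $D$, vertical multiplicities $e_i$ — and then write ``granting the coefficient identity $m_i-a_i=e_i$ at every stage'' and ``the hard part is exactly verifying $m_i-a_i=e_i$ through the iterated blow-ups.'' That identity is precisely the content of the proposition in the tangential case, and you do not prove it; as written the argument is incomplete. Moreover the route you propose is harder than necessary. The paper's proof dissolves the bookkeeping by making the statement itself the induction hypothesis: after one blow-up, the \emph{new} anticanonical divisor is the reduced strict transform $D'$, which is again the fibre over $\infty$ of the new pencil, and the new base locus is again a union of smooth curves (the old ones, possibly together with a curve $C'\simeq E\cap D'_i$) each of which has multiplicity $1$ in $D'$ and satisfies the same crossing condition. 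So at every step one repeats the \emph{same} single-blow-up computation against the current model; the exceptional divisor's coefficient in the pullback of the current anticanonical divisor is always exactly $1$ and cancels exactly with the discrepancy, and no coefficients ever accumulate. What remains to check in that formulation is only (a) that the hypotheses really are re-established on $W'$ — in particular that $C'$ is smooth and that $D'_1\cap D'_2\not\subset B'$ — and (b) termination, which the paper gets from $(W,F)$ being a canonical pair (your ``total multiplicity drops by one'' argument serves the same purpose). If you recast your induction so that the identity $-K=f^{-1}(\infty)$ is carried along as part of the inductive hypothesis rather than verified a posteriori against $W$, the ``hard part'' you flag disappears.
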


\begin{proof}[Proof (cf. Compactification Construction~\ref{compactification construction}).]
Let $\pi\colon W'\to W$ be a blow up of one component $C$ of $B$ on $W$.
Since $\pi$ is a blow up of a smooth curve on a smooth variety, $W'$ is smooth.
Let $E$ be an exceptional divisor of the blow up.
Let $D'=\cup D'_i$ be a proper transform of $D=\cup D_i$.
Since the multiplicity of $C$ in $D$ is $1$, one gets
$$
-K_{W'}=\pi^*(-K_{W})-E=D'+E-E=D'.
$$
Moreover, a base locus of the family on $W'$ is the same as $B$ or $B\setminus C$,
possibly together with a smooth curve $C'$ which is isomorphic to
$E\cap D'_i$;
in particular, $C$ is isomorphic to $\PP^1$.
(There are no isolated base points as the base locus is an intersection of two divisors on a smooth variety.)
Thus all conditions of the proposition hold for $W'$. Since $(W,F)$ is a canonical pair,
the base locus $B$ can be resolved in finite number of blow ups. This gives the required resolution.%~$\varphi$.
\end{proof}

\begin{theorem}
\label{theorem: Minkowski CY}
Any Minkowski Laurent polynomial in three variables admits a log Calabi--Yau compactification.
\end{theorem}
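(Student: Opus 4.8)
The plan is to build the log Calabi--Yau compactification by the same three-stage strategy already used in the surface case (Compactification Construction~\ref{compactification construction}), now combining the three-dimensional smoothness result with the general resolution principle just established. Let $f$ be a Minkowski Laurent polynomial in three variables, and set $\Delta=N(f)$, $\nabla=\Delta^\vee$, $T=T_\Delta$, $T^\vee=T_\nabla$. Since $f$ is Minkowski it is in particular reflexive, so $\nabla$ is integral and $T^\vee$ is a reflexive toric Fano threefold. First I would compactify the pencil $\{f=\lambda\}$ fiberwise via the embedding $(\CC^*)^3\hookrightarrow T^\vee$, using Fact~\ref{fact: toric 2}: the fibers become anticanonical sections of $T^\vee$, hence have trivial canonical class, and the pencil is generated by a general member together with the boundary divisor of $T^\vee$ (the member corresponding to the constant Laurent polynomial, by Fact~\ref{fact: toric 4}).

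Next I would pass to the maximal crepant resolution $\widetilde T^\vee\to T^\vee$. The crucial input here is Lemma~\ref{lemma: 3dim smoothness}, which guarantees that in dimension three $\widetilde T^\vee$ is actually \emph{smooth} (this is exactly where the three-variable hypothesis is essential, as the remark following that lemma warns). So after pulling back the pencil I obtain a base-point-free-away-from-the-boundary family of anticanonical sections on the smooth threefold $W=\widetilde T^\vee$, whose fiber over $\infty$ is the reduced boundary divisor $D$. The total space $W$ is smooth and the anticanonical linear system $F$ on it is one-dimensional with reduced fiber $D=F_\infty$, putting us precisely in the setting of Proposition~\ref{proposition:CY compactification}.

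The remaining task is to verify the hypotheses of Proposition~\ref{proposition:CY compactification} and then invoke it. I would use Lemma~\ref{lemma:restrictions Minkowski}: for a Minkowski polynomial every facet restriction $R_{Q,f}$ is a union of transversally intersecting smooth rational curves, so the base locus $B\subset D$ is a union of smooth curves, as required. The transversality and the structure of Minkowski decompositions also give the separation condition that for distinct components $D_1,D_2$ of $D$ one has $D_1\cap D_2\not\subset B$ --- the base locus lives on the individual toric boundary divisors according to the facet decomposition, and cannot contain a whole stratum where two boundary divisors meet. With these two conditions in hand, Proposition~\ref{proposition:CY compactification} produces a resolution of the base locus $f\colon Z\to\PP^1$ with $Z$ smooth and $-K_Z=f^{-1}(\infty)$, which is exactly a log Calabi--Yau compactification in the sense of Definition~\ref{definition: log CY}.

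The main obstacle I expect is not the resolution step itself (which is now packaged cleanly into Proposition~\ref{proposition:CY compactification}) but the careful bookkeeping needed to confirm the base-locus hypotheses: one must check that the multiplicities appearing in $R_{Q,f}$ do not create embedded or non-reduced structure that violates the ``union of smooth curves with multiplicities'' and the non-containment condition $D_1\cap D_2\not\subset B$. Concretely, the delicate point is to rule out base locus concentrating at the zero-dimensional strata of $D$ (the toric points), and for this I would again appeal to Fact~\ref{fact: toric 4}, which ensures a general Minkowski member has nonzero coefficients at the vertices and hence misses those points. Everything else is the mechanical translation of the two-dimensional argument into three dimensions, made legitimate by Lemma~\ref{lemma: 3dim smoothness}.
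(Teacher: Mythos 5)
Your proposal matches the paper's proof essentially step for step: compactify the pencil to anticanonical sections of $T^\vee$, pass to the smooth crepant resolution $\widetilde T^\vee$ via Lemma~\ref{lemma: 3dim smoothness}, use Lemma~\ref{lemma:restrictions Minkowski} and Fact~\ref{fact: toric 4} to verify the base-locus hypotheses, and conclude with Proposition~\ref{proposition:CY compactification}. The argument and the supporting lemmas you invoke are exactly those of the paper, so there is nothing to correct.
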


\begin{proof}
Let $f$ be a Minkowski Laurent polynomial.
Recall that the Newton polytope $\Delta$ of $f$ is reflexive,
and the (singular Fano) toric variety whose fan polytope is $\nabla=\Delta^\vee$ is denoted by $T^\vee$.
The family of fibers of the map given by $f$ is a family $\{f=\lambda\}$, $\lambda\in \CC$.
Members of this family have natural compactifications to anticanonical sections of $T^\vee$. This family
(more precise, its compactification to a family $\{\lambda_0f=\lambda_1\}$ over $\PP[\lambda_0:\lambda_1]$)
is generated by a general member and the member that corresponds to the constant Laurent polynomial. The latter is nothing but
the boundary divisor $D$ of $T^\vee$ by Fact~\ref{fact: toric 4}. Denote the obtained pencil on $T^\vee$ by $f\colon Z_{T^\vee}\dashrightarrow \PP^1$
(we use the same notation $f$ for the Laurent polynomial, the corresponding pencil, and resolutions of this pencil for simplicity). By Lemma~\ref{lemma:restrictions Minkowski},
the base locus of $f$ on $Z_{T^\vee}$
is a union of smooth (rational) curves (possibly with multiplicities).
By Lemma~\ref{lemma: 3dim smoothness}, the variety $\widetilde T^\vee$ is a crepant resolution of $T^\vee$.
By definition of a Newton polytope, coefficients of the Minkowski Laurent polynomial at vertices of $\Delta$ are non-zero.
This means that the base locus does not contain any torus invariant strata of $T^\vee$ since it does not contain torus invariant
points by Fact $4$. % and intersects transversally torus invariant curves.
Thus we get a family $f\colon Z_{\widetilde T^\vee}\dashrightarrow \PP^1$, whose total space is smooth and a base locus is
a union of (transversally intersecting) smooth curves (possibly with multiplicities) again.
By Proposition~\ref{proposition:CY compactification}, there is a resolution $f\colon Z\to \PP^1$ of the base locus on
$Z_{\widetilde T^\vee}$ such that $Z$ is smooth and $-K_Z=f^{-1}(\infty)$.
Thus $Z$ is the required log Calabi--Yau compactification, and
$Y=Z\setminus f^{-1}(\infty)$ is a Calabi--Yau compactification.
\end{proof}

\begin{remark}
\label{remark: codimension 1}
The construction of Calabi--Yau compactification is not canonical: it depends on an order of blow ups of base locus components.
However all log Calabi--Yau compactifications are isomorphic in codimension one.
\end{remark}

\begin{proposition}[cf. Proposition~\ref{proposition:CY hyperelliptic-Laurent}]
\label{proposition:CY hyperelliptic-CY}
Fano threefolds $X_{1-1}$, $X_{1-11}$, $X_{2-1}$, $X_{2-2}$, $X_{2-3}$, $X_{9-1}$, and $X_{10-1}$
have toric Landau--Ginzburg models.
\end{proposition}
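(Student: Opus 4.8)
The plan is to isolate the three requirements of Definition~\ref{definition: toric LG} and reduce the statement to a single one. For each of the seven Laurent polynomials $f_{1-1},\ldots,f_{10-1}$ exhibited in the proof of Proposition~\ref{proposition:CY hyperelliptic-Laurent} the period condition is already known (by loc.\ cit., that is by~\cite{fanosearch}). The toric condition holds, after replacing $f_{i-j}$ by a mutation whose Newton polytope is reflexive, because each $X_{i-j}$ admits a Gorenstein toric degeneration; and since a mutation does not change the fibrewise birational type, it does not affect the existence of a Calabi--Yau compactification. Thus the whole content of the present proposition --- and the reason it is placed in this section --- is the Calabi--Yau condition: I must show that each $f_{i-j}$ admits a log Calabi--Yau compactification in the sense of Definition~\ref{definition: log CY}, from which a Calabi--Yau compactification $Y=Z\setminus f^{-1}(\infty)$ follows at once.

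First I would explain why Theorem~\ref{theorem: Minkowski CY} cannot simply be quoted: the polynomials $f_{i-j}$ are Givental-type models and are not of Minkowski type; in the presentations given their Newton polytopes are not even reflexive (for instance $N(f_{1-1})$ has a facet at lattice distance $3$ from the origin). Consequently the toric compactification $(\CC^*)^3\hookrightarrow T^\vee$ used in that theorem does not realise the fibres $\{f_{i-j}=\lambda\}$ as anticanonical sections, and the argument must be run by hand. The engine will be Proposition~\ref{proposition:CY compactification}: for a fixed $f_{i-j}$ I would produce a smooth projective threefold $W$ carrying a one-dimensional anticanonical linear system whose generic member is the closure of a fibre, whose member over $\infty$ is a reduced divisor $D$, and whose base locus $B\subset D$ is a union of smooth curves with $D_1\cap D_2\not\subset B$ for any two components $D_1,D_2$ of $D$. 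Granting such a $W$, Proposition~\ref{proposition:CY compactification} yields $f\colon Z\to\PP^1$ with $Z$ smooth and $-K_Z=f^{-1}(\infty)$, which is exactly the required log Calabi--Yau compactification.

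The threefold $W$ is read off from the geometry underlying each $f_{i-j}$. For the five cases $f_{1-1}$, $f_{1-11}$, $f_{2-1}$, $f_{2-2}$, $f_{2-3}$, which arise from complete intersections in products of (weighted) projective spaces via Givental's construction, I would take $W$ to be a smooth projective toric model dominating the relevant weighted toric compactification, chosen so that the weighted singularities are resolved and the compactified fibres sweep out an honest anticanonical pencil; the base curves are then extracted from the restrictions of the pencil to the toric boundary, using (the analogue of) Lemma~\ref{lemma:restrictions Minkowski} that these restrictions factor into smooth rational pieces. For the two product cases $X_{9-1}=\PP^1\times S_2$ and $X_{10-1}=\PP^1\times S_1$, the polynomial is a Thom--Sebastiani sum $g(x)+h(y,z)$ with $g=x+\tfrac1x$ the model of the $\PP^1$-factor; here I would build $W$ and the pencil by fibring the elliptic compactification of the del Pezzo model $h$ over the $\PP^1$-factor, as in Compactification Construction~\ref{compactification construction} (adapted to the degree $1$ and $2$ surfaces, which lie outside the range treated there).

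I expect the genuine obstacle to be exactly this construction and verification of $W$ in the five complete-intersection cases. Because $N(f_{i-j})$ is not reflexive there is no ready-made Gorenstein toric $T^\vee$ to start from, so one must choose the smooth (toric) compactification carefully, confirm that the closures of $\{f_{i-j}=\lambda\}$ are truly anticanonical and that the fibre at infinity is reduced, and check that the chosen resolution keeps the canonical class trivial along the central fibre. Equally delicate is ensuring that the base locus remains a transverse union of smooth curves throughout, so that the hypotheses of Proposition~\ref{proposition:CY compactification} hold and its sequence of blow-ups terminates; this has to be settled separately for each of the seven families.
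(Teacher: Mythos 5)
There is a genuine gap: your plan correctly isolates the Calabi--Yau condition as the real content, but the central step --- producing a smooth projective threefold $W$ in which the closures of the fibres $\{f_{i-j}=\lambda\}$ form an anticanonical pencil satisfying the hypotheses of Proposition~\ref{proposition:CY compactification} --- is exactly what you do not construct, and the toric route you propose for finding it does not work. Since the Newton polytopes are not reflexive, the toric variety attached to the normal fan of $N(f_{i-j})$ compactifies the fibres to divisors strictly larger than the anticanonical class (e.g.\ for $f_{1-1}$ one gets sextics in $\PP^3$, where $-K=\cO(4)$), and no toric blow-up ``dominating the weighted compactification'' repairs this: the discrepancy is in the divisor class of the fibres, not in the singularities of the ambient space. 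So the engine of Proposition~\ref{proposition:CY compactification} cannot be fed within the framework you describe, and you acknowledge as much without supplying the missing input.

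The paper escapes this by leaving the toric world entirely: for each of the seven polynomials it performs an explicit \emph{non-monomial} birational change of variables (e.g.\ $x=ab$, $y=ac$, $z=a-ab-ac-1$ for $f_{1-1}$, turning $(x+y+z+1)^6=\lambda xyz$ into the quartic family $a^4=\lambda bc(a-ab-ac-1)$) which re-embeds the pencil as a family of genuinely anticanonical hypersurfaces --- quartics in $\PP^3$ or bidegree $(2,3)$ divisors in $\PP^1\times\PP^2$ --- over $\Aff^1$. It then resolves not the base locus of a pencil in a fixed smooth ambient, but the \emph{total space} of this family, which has trivial canonical class and only compound du Val singularities along curves together with ordinary double points; repeated crepant blow-ups of the singular curves followed by a small resolution of the remaining nodes yield the Calabi--Yau compactification, into which the original torus $(\CC^*)^3$ embeds. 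Your Thom--Sebastiani suggestion for $X_{9-1}$ and $X_{10-1}$ is plausible but also not carried out (and Compactification Construction~\ref{compactification construction} does not cover the degree $1$ and $2$ del Pezzo surfaces you would need); the paper treats these two cases by the same change-of-variables mechanism. Without the explicit substitutions, or some substitute for them, the proposal does not establish the Calabi--Yau condition for any of the seven families.
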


\begin{proof}
By Proposition~\ref{proposition:CY hyperelliptic-Laurent} these varieties have weak Landau--Ginzburg models. By~\cite{IKKPS} and~\cite{DHKLP}
they satisfy the period condition.
In a spirit of~\cite{Prz13} compactify the family given by $f_{i-j}$ to a family of (singular)
anticanonical hypersurfaces in $\PP^1\times \PP^2$ or $\PP^3$
and then crepantly resolve singularities of a total space of the family.
Consider these cases one by one.

A weak Landau--Ginzburg model for $X_{2-1}$ is the polynomial
$$
f_{2-1}=\frac{(x+y+1)^6(z+1)}{xy^2}+\frac{1}{z},
$$
that is a function on $\TTT[x,y,z]$.

Consider a family $\{f_{2-1}=\lambda\}$, $\lambda\in \CC$.
Make a birational change of variables %(cf. the proof of~\cite[Theorem $18$]{Prz13})
$$
x=\frac{1}{b_1}-\frac{1}{b_1^2b_2}-1,\ \ y=\frac{1}{b_1^2b_2},\ \ z=\frac{1}{a_1}-1
$$
and multiply the obtained expression by the denominator. We see that the family is birational to
$$
\{(1-a_1)b_2^3=\left((1-a_1)\lambda-a_1\right)a_1(b_1b_2-b_1^2b_2-1)\}\subset \Aff[a_1,b_1,b_2]\times \Aff[\lambda].
$$
Now this family can be compactified to a family of hypersurfaces of bidegree $(2,3)$ in $\PP^1\times \PP^2$ using the embedding
$\TTT[a_1,b_1,b_2]\hookrightarrow \PP[a_0:a_1]\times \PP[b_0:b_1:b_2]$.
The (non-compact) total space of the family has trivial canonical class and its singularities are a union of (possibly) ordinary double points and rational
curves which are du Val along a line in general points. Blow up any of these curves. We get
singularities of the similar type again. After several crepant blow ups one approaches to a threefold with just ordinary double points;
these points admit a small resolution. This resolution completes the construction of the Calabi--Yau compactification.
%(Cf.~\cite{Prz13}.)
Note that the total space $(\CC^*)^3$ of the initial family is embedded to the resolution.

In the similar way one gets Calabi--Yau compactifications for the other varieties.
One has
$$
f_{1-1}=\frac{(x+y+z+1)^6}{xyz}.
$$
The change of variables
$$
x=ab,\ \ y=ac,\ \ z=a-ab-ac-1,
$$
applied to the family
$\{f_{1-1}=\lambda\}$, and the multiplication on the denominator give the family of quartics $$
a^4=\lambda bc(a-ab-ac-1).
$$
The embedding $\TTT[a,b,c]\hookrightarrow \PP[a:b:c:d]$ gives the compactification to the family of singular quartics over $\Aff^1$.

One has
$$
f_{1-11}=\frac{(x+y+1)^6}{xy^2z}+z.
$$
The change of variables
$$
x=a-ab-\frac{c}{b}-1,\ \ y=ab,\ \ z=\frac{c}{b},
$$
applied to the family $\{f_{1-11}=\lambda\}$ and the multiplication on the denominator give the family of quartics
$$
a^4=(\lambda b-c)(a-ab-1)c.
$$
The embedding $\TTT[a,b,c]\hookrightarrow \PP[a:b:c:d]$ gives the compactification to the family of singular quartics over $\Aff^1$.

One has
$$
f_{2-2}=\frac{(x+y+z+1)^2}{x}+\frac{(x+y+z+1)^4}{yz}.
$$
The change of variables
$$
x=ab,\ \ y=bc,\ \ z=c-ab-bc-1
$$
applied to the family $\{f_{2-2}=\lambda\}$ and the multiplication on a denominator give the family of singular quartics
$$
ac^3=(c-ab-bc-1)(\lambda ab-c^2).
$$
The embedding $\TTT[a,b,c]\hookrightarrow \PP[a:b:c:d]$ gives the compactification
to the family of singular quartics over $\Aff^1$.

One has %(up to shift $f_{2-1}\to f_{2-3}+\alpha$, $\alpha\in \CC$)
$$
f_{2-3}=\frac{(x+y+1)^4(z+1)}{xyz}+z+1.
$$
The change of variables
$$
x=ac,\ \ y=a-ac-1,\ \ z=\frac{b}{c}-1
$$
applied to the family $\{f_{2-3}=\lambda\}$ and the multiplication on the denominator give the family
$$
a^3b=(\lambda c-b)(b-c)(a-ac-1).
$$
The embedding $\TTT[a,b,c]\hookrightarrow \PP[a:b:c:d]$ gives the compactification
to the family of singular quartics over $\Aff^1$.

One has
$$
f_{9-1}=x+\frac{1}{x}+\frac{(y+z+1)^4}{yz}.
$$
The change of variables
$$
x=\frac{c}{b},\ \ y=ac,\ \ z=a-ac-1
$$
applied to the family $\{f_{9-1}=\lambda\}$ and the multiplication on the denominator give the family
$$
a^3b=(\lambda bc-b^2-c^2)(a-ac-1).
$$
The embedding $\TTT[a,b,c]\hookrightarrow \PP[a:b:c:d]$ gives the compactification
to the family of singular quartics over $\Aff^1$.

One has
$$
f_{10-1}=\frac{(x+y+1)^6}{xy^2}+z+\frac{1}{z}.
$$
The change of variables
$$
x=\frac{1}{b_1}-\frac{1}{b_1^2b_2}-1,\ \ y=\frac{1}{b_1^2b_2},\ \ z=a_1
$$
applied to the family $\{f_{10-1}=\lambda\}$ and a multiplication on the denominator give the family
$$
a_1b_2^3=(\lambda a_1-a_1^2-1)(b_1b_2-b_1^2b_2-1).
$$
The embedding $\TTT[a_1,b_1,b_2]\hookrightarrow \PP[a_0:a_1]\times\PP[b_0:b_1:b_2]$ gives the compactification
to the family of singular hypersurfaces of bidegree $(2,3)$ in $\PP^1\times \PP^2$ over $\Aff^1$.

In all cases total spaces of the families have crepant resolutions.
\end{proof}

In~\cite{DHKLP} and~\cite{IKKPS} it is shown that all Fano threefolds with very ample anticanonical classes have weak Landau--Ginzburg
models satisfying the toric condition.
Thus, summarizing Theorem~\ref{theorem: Minkowski CY}, Proposition~\ref{proposition:CY hyperelliptic-Laurent}, Proposition~\ref{proposition:CY hyperelliptic-CY}, and~\cite{DHKLP} with~\cite{IKKPS},
one gets the following assertion.

\begin{corollary}
\label{corollary: toric LG for threefolds}
A pair of a smooth Fano threefold $X$ and a trivial divisor on it has a toric Landau--Ginzburg model.
Moreover, if $-K_X$ is very ample, then any Minkowski Laurent polynomial satisfying the period condition for $(X,0)$
is a toric Landau--Ginzburg model.
\end{corollary}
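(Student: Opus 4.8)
The plan is to assemble the statement from the ingredients already established, splitting the argument according to whether $-K_X$ is very ample. First I would dispose of the seven families with non-very-ample anticanonical class, namely $X_{1-1}$, $X_{1-11}$, $X_{2-1}$, $X_{2-2}$, $X_{2-3}$, $X_{9-1}$, and $X_{10-1}$. For these, Proposition~\ref{proposition:CY hyperelliptic-Laurent} produces explicit Laurent polynomials $f_{i-j}$ satisfying the period condition for $(X,0)$, while Proposition~\ref{proposition:CY hyperelliptic-CY} (compactifying the pencils $\{f_{i-j}=\lambda\}$ inside $\PP^1\times\PP^2$ or $\PP^3$ and crepantly resolving the total spaces) supplies Calabi--Yau compactifications. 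Since the toric condition is recorded in the same propositions, all three conditions of Definition~\ref{definition: toric LG} hold, so these seven varieties already carry toric Landau--Ginzburg models.

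For the remaining $98$ families, where $-K_X$ is very ample, I would invoke \cite{CCGK16} to obtain a Minkowski Laurent polynomial $f$ satisfying the period condition for $(X,0)$, and \cite{DHKLP} together with \cite{IKKPS} to upgrade this to a polynomial satisfying also the toric condition, i.e. realizing a degeneration $X\rightsquigarrow T_X$ with $F(T_X)=N(f)$. The only condition left to verify is the Calabi--Yau condition, and here Theorem~\ref{theorem: Minkowski CY} applies verbatim: every Minkowski Laurent polynomial in three variables admits a log Calabi--Yau compactification, hence in particular a Calabi--Yau compactification. Combining the three conditions yields a toric Landau--Ginzburg model, which completes the first assertion.

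For the ``Moreover'' part I would fix an \emph{arbitrary} Minkowski Laurent polynomial $f$ satisfying the period condition for $(X,0)$ with $-K_X$ very ample, and check the two remaining conditions. The Calabi--Yau condition is again immediate from Theorem~\ref{theorem: Minkowski CY}. The toric condition is the real content: I would argue that $f$ has the same constant terms series as the distinguished Minkowski model furnished by \cite{DHKLP} and \cite{IKKPS} (both equal $\widetilde I^{X}_0$), so by Remark~\ref{remark:Minkowski_equivalence} the two are mutationally equivalent; since mutations of polynomials induce mutations of their Newton polytopes, and mutations of fan polytopes correspond to deformations of the associated toric Fano varieties (Remark~\ref{remark: maximally mutational} and \cite{IV12}), the toric variety $T_X$ with $F(T_X)=N(f)$ lies in the same deformation family as $X$ and is therefore a toric degeneration of it.

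The step I expect to be the genuine obstacle is precisely this last one: passing from the purely numerical fact that two period-matching Minkowski polynomials are mutationally equivalent to the geometric conclusion that the Newton polytope of \emph{every} such $f$ is the fan polytope of an honest degeneration of $X$. This requires knowing that the mutation equivalence is realized by an actual flat family deforming $T_{N(f)}$ to $X$, not merely by a chain of combinatorial operations; that geometric realization is exactly what must be extracted from \cite{DHKLP} and \cite{IKKPS}, and without it the toric condition would follow only for the specifically chosen $f$ and not for a general one, so the second assertion would fail.
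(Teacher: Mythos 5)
Your proposal is correct and follows essentially the same route as the paper, which simply summarizes Theorem~\ref{theorem: Minkowski CY}, Propositions~\ref{proposition:CY hyperelliptic-Laurent} and~\ref{proposition:CY hyperelliptic-CY}, and the toric condition established in \cite{DHKLP} and \cite{IKKPS}. Your explicit unpacking of the ``Moreover'' clause via Remark~\ref{remark:Minkowski_equivalence} and mutation equivalence, together with your identification of the gap between combinatorial mutation equivalence and an actual flat degeneration, is exactly the content the paper delegates wholesale to \cite{DHKLP} and \cite{IKKPS}.
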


\begin{remark}
\label{remark:Hodge purity}
The compactification construction implies $h^{i,0}(Z)=0$ for all $i>0$.
\end{remark}

\begin{remark}
\label{remark: generic 3-Laurent}
Let us recall that $\widetilde T$ is a smooth toric variety with $F(\widetilde T)=\Delta$.
Let $f$ be a \emph{general} Laurent polynomial with $N(f)=\Delta$. The Laurent polynomial $f$ is a toric Landau--Ginzburg model for a pair $(\widetilde T,D)$, where $D$ is a general $\CC$-divisor on $\widetilde T$.
Indeed, the period condition for it is satisfied by~\cite{Gi97b}.
Following the compactification procedure described in the proof of Theorem~\ref{theorem: Minkowski CY}, one can see that the base locus $B$ is a union of smooth transversally intersecting curves (not necessary rational). This means that in the same way as above the statement of Theorem~\ref{theorem: Minkowski CY} holds for $f$, so that $f$
satisfies the Calabi--Yau condition (cf.~\cite{Harder}). Finally the toric condition holds for $f$ tautologically. Thus $f$ is a toric Landau--Ginzburg model for $(\widetilde T,D)$.
\end{remark}

\begin{problem}
Prove this for smooth Fano threefolds and any divisor. A description of Laurent polynomials for all Fano threefolds and
any divisor is contained in~\cite{DHKLP}.
\end{problem}

\begin{question}
Is it true that the Calabi--Yau condition follows from the period and the toric ones?
If not, what conditions should be put on a Laurent polynomial to hold the implication?
\end{question}

Another advantage of the compactification procedure described in Theorem~\ref{theorem: Minkowski CY} is that
it enables one to describe
``fibers of compactified toric Landau--Ginzburg models over infinity''. These fibers play an important role, say,
for computing Landau--Ginzburg Hodge numbers, see Part~\ref{part:KKP}. We summarize these considerations in the following assertion.

\begin{corollary}[{cf.~\cite[Conjecture 2.3.13]{Harder}}]
\label{proposition: fibers over infinity}
Let $f$ be a Minkowski Laurent polynomial. %or simplicial generalized binomial Laurent polynomial.
%Let $\widetilde T^\vee$ be a (smooth) maximally triangulated toric variety such that $F(\widetilde T^\vee)=N(f)$,
%and let $D$ be a boundary divisor of $\widetilde T^\vee$.
There is a log Calabi--Yau compactification $f\colon Z\to \PP^1$ with
$-K_Z=f^{-1}(\infty)=D$, where $D$ consists of $\frac{\left(-K_{T_{N(f)}}\right)^3}{2}+2$ components combinatorially given by
a triangulation of a sphere. (This means that vertices of the triangulation correspond to components of $D$, edges correspond to intersections
of the components, and triangles correspond to triple intersection points of the components.)
\end{corollary}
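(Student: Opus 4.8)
The plan is to read the fibre over infinity directly off the toric geometry of $\widetilde T^\vee$, to verify that the base-locus resolution of Theorem~\ref{theorem: Minkowski CY} does not disturb its combinatorics, and finally to convert the combinatorics of the triangulated sphere $\partial\nabla$ into the stated numerical count by an Euler-characteristic computation. First I would recall that in Theorem~\ref{theorem: Minkowski CY} the fibre over infinity is the member of the pencil attached to the constant Laurent polynomial, that is the boundary divisor of $\widetilde T^\vee$, and that $Z$ is obtained from $\widetilde T^\vee$ by blowing up the base curves of the pencil. Thus, before any blow-up, $f^{-1}(\infty)$ is the full toric boundary of the smooth toric variety $\widetilde T^\vee$ (smooth by Lemma~\ref{lemma: 3dim smoothness}), whose fan is the maximal unimodular triangulation of $\nabla$. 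The dual intersection complex of this boundary is, tautologically, that triangulation restricted to $\partial\nabla$: its prime components correspond to the rays, i.e. to the integral points of $\partial\nabla$; the double curves $D_i\cap D_j$ correspond to the $2$-cones, i.e. to the edges; and the torus fixed points $D_i\cap D_j\cap D_k$ correspond to the $3$-cones, i.e. to the triangles. Since $\partial\nabla\cong S^2$, this is precisely a triangulation of the sphere with the incidences asserted in the corollary.

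Next I would check that the base-locus blow-ups do not disturb this combinatorics. By Fact~\ref{fact: toric 4} the coefficients of a Minkowski polynomial at the vertices of $\Delta$ are nonzero, so the general fibre, hence the base locus $B$, avoids the torus fixed points of $T^\vee$; and by Lemma~\ref{lemma:restrictions Minkowski} together with Proposition~\ref{proposition:CY compactification} the curves of $B$ are smooth and meet the boundary in its smooth locus, with no double curve contained in $B$. Consequently each blow-up in Proposition~\ref{proposition:CY compactification} modifies a single boundary component along a curve lying in it: the exceptional divisor is not contained in $-K_Z$ and so is not part of $f^{-1}(\infty)$; the strict transform of each component stays irreducible; the strict transforms of the double curves stay irreducible; and the triple points, being disjoint from $B$, survive unchanged. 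Hence the dual complex of $f^{-1}(\infty)\subset Z$ coincides with the triangulation of $\partial\nabla$.

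Finally I would count. Writing $V,E,F$ for the numbers of vertices, edges and triangles of the triangulation of $\partial\nabla\cong S^2$, Euler's formula gives $V-E+F=2$, while $2E=3F$ because every edge bounds two triangles; eliminating $E$ yields $V=\tfrac{F}{2}+2$. It remains to identify $F$ with $(-K_{T_{N(f)}})^3$. Coning each boundary triangle to the origin triangulates $\nabla$ into lattice simplices which are unimodular precisely because $\widetilde T^\vee$ is smooth (Lemma~\ref{lemma: 3dim smoothness}) and $\nabla$ is reflexive, so each has normalized volume $1$ and $F=\vol\nabla$; and for the Gorenstein toric Fano $T_{N(f)}=T_\Delta$, whose anticanonical polytope is the dual $\nabla=\Delta^\vee$, one has $(-K_{T_{N(f)}})^3=\vol\nabla$. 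Combining these, the number of components of $D$ is $V=\tfrac{(-K_{T_{N(f)}})^3}{2}+2$, as claimed.

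The step I expect to be the main obstacle is the middle one: making rigorous that the iterated blow-ups of Proposition~\ref{proposition:CY compactification} preserve the incidence structure of the boundary, in particular that no blow-up of a base curve crossing a double curve either splits a boundary component or creates a spurious triple point over infinity. I would handle this by tracking, blow-up by blow-up, that the centre is a smooth curve meeting the toric boundary transversally away from the fixed points, so that the strict transforms of the components and of their pairwise and triple intersections are all preserved; the numerical identity $F=\vol\nabla$ is then the only remaining computation and follows from reflexivity together with Lemma~\ref{lemma: 3dim smoothness}.
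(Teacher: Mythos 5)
Your proposal is correct and follows essentially the same route as the paper: identify the fibre over infinity with the toric boundary of the smooth resolution $\widetilde T^\vee$, whose dual complex is the unimodular triangulation of $\partial\nabla$, then apply $v-e+f=2$ and $2e=3f$ together with the identification of the number of triangles with $\vol\nabla=\bigl(-K_{T_{N(f)}}\bigr)^3$. The only difference is that you spell out why the base-locus blow-ups preserve the combinatorics of the boundary, a point the paper's proof passes over with the one-line assertion that the component counts of $D$ and $D'$ coincide.
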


\begin{proof}
Let $\widetilde T^\vee$ be a (smooth) maximally triangulated toric variety such that $F(\widetilde T^\vee)=N(f)$,
and let $D$ be a boundary divisor of $\widetilde T^\vee$. The numbers of components of $D$ and $D'$ coincide.
Let $v$ be a number of vertices in a triangulation of $\nabla$; in other words, $v$ is a number of integral points on the boundary of $\nabla$,
or, the same, the number of components of $D$. Let $e$ be a number of edges in the triangulation of $\nabla$, and let $f$ be a number
of triangles in the triangulation. As the triangulation is a triangulation of a sphere, one has $v-e+f=2$.
On the other hand one has $2e=3f$. This means that $v=\frac{f}{2}+2$. The assertion of the corollary follows from the fact that
both $\left(-K_{T_{N(f)}}\right)^3$ and $f$ are equal to a normalized volume of $\nabla$.
\end{proof}

\begin{remark}
  Let $g=\frac{\left(-K_{X}\right)^3}{2}+1$ be the genus of Fano threefold $X$; in particular, $D$ consists of $g+1$ components.
Then one has $g+1=\dim |-K_X|$.
\end{remark}

General fibers of compactified toric Landau--Ginzburg models are smooth K3 surfaces.
However some of them can be singular or even reducible. Our observations give almost no information about them.
However singular fibers are of special interest: they contain information about the derived category of singularities.
There is a lack of examples of descriptions of singular fibers. More computable invariant is the number of components of fibers, see Theorems~\ref{theotem:KKP conjecture-3} and~\ref{theorem: CI components}.

\section{Toric Landau--Ginzburg models}
\label{section: toric LG 3-folds}
As we have mentioned, in~\cite{DHKLP} and~\cite{IKKPS} the toric condition was proven for a huge number
of smooth Fano threefolds (in particular, for those we need).
The methods used in these papers are theory of toric degenerations and analysis of tangent bundles
to deformation spaces at the points on the spaces we need.
In this section we study in details toric degenerations of Picard rank one Fano threefolds.

Let us give examples of toric Landau--Ginzburg models (of Minkowski type) and prove the toric condition for them.

%\newpage

\begin{center}
\begingroup
\begin{longtable}{||c|c|c|p{5cm}|p{6cm}||}
%\caption{Weak Landau--Ginzburg models for Fano threefolds.}
  \hline
  % after \\: \hline or \cline{col1-col2} \cline{col3-col4} ...
  Var. & Index & Degree &
\begin{minipage}[c]{5cm}
\centering
\vspace{.1cm}

  Description

\vspace{.1cm}

\end{minipage}
  &
\begin{minipage}[c]{5cm}
\centering
\vspace{.1cm}

  Weak LG model

\vspace{.1cm}

\end{minipage}
\\
  \hline
  \hline
%  \endfirsthead
 $X_{1-1}$ & 1 & 2 &
\begin{minipage}[c]{5cm}
\vspace{.1cm}

A hypersurface of degree $6$ in $\PP(1,1,1,1,3)$.

\vspace{.1cm}

\end{minipage}
&
\begin{minipage}[c]{6cm}
\vspace{.1cm}
\centering

$\frac{(x+y+z+1)^6}{xyz}$

\vspace{.1cm}

\end{minipage}
    \\
  \hline
  $X_{1-2}$ & 1 & 4 &
\begin{minipage}[c]{5cm}
\vspace{.1cm}

  A general element of the family is quartic.

\vspace{.1cm}

\end{minipage}
&
\begin{minipage}[c]{6cm}
\vspace{.1cm}
\centering

  $\frac{(x+y+z+1)^4}{xyz}$

\vspace{.1cm}

\end{minipage}
  \\
  \hline
  $X_{1-3}$ & 1 & 6 &
\begin{minipage}[c]{5cm}
\vspace{.1cm}

A smooth complete intersection of quadric and cubic.

\vspace{.1cm}

\end{minipage}
&
\begin{minipage}[c]{6cm}
\vspace{.1cm}
\centering

$\frac{(x+1)^2(y+z+1)^3}{xyz}$

\vspace{.1cm}

\end{minipage}
\\
  \hline
  $X_{1-4}$ & 1 & 8 &
\begin{minipage}[c]{5cm}
\vspace{.1cm}

A smooth complete intersection of three quadrics.

\vspace{.1cm}

\end{minipage}
&
\begin{minipage}[c]{6cm}
\vspace{.1cm}
\centering

$\frac{(x+1)^2(y+1)^2(z+1)^2}{xyz}$

\vspace{.1cm}

\end{minipage}
\\
  \hline
  $X_{1-5}$ & 1 & 10 &
\begin{minipage}[c]{5cm}
\vspace{.1cm}

  A general element is a section of $G(2,5)$ by 2 hyperplanes in Pl\"{u}cker embedding and quadric.

%\vspace{.1cm}

\end{minipage}

&
%  $\frac{(x^2+x+z+zx+y+yx+yz)^2}{xyz}$ \\
\begin{minipage}[c]{6cm}
\vspace{.1cm}
\centering

$\frac{(1+x+y+z+xy+xz+yz)^2}{xyz}$

%\vspace{.1cm}

\end{minipage}
\\
  \hline
  $X_{1-6}$ & 1 & 12 &
\begin{minipage}[c]{5cm}
\vspace{.1cm}

  A linear section of the orthogonal Grassmannian $OG (5,10)$ of codimension $7$.

\vspace{.1cm}

\end{minipage}
&
\begin{minipage}[c]{6cm}
\vspace{.1cm}
\centering

  $\frac{(x+z+1)(x+y+z+1)(z+1)(y+z)}{xyz}$

\vspace{.1cm}

\end{minipage}
\\
  \hline
  $X_{1-7}$ & 1 & 14 &
\begin{minipage}[c]{5cm}
\vspace{.1cm}

  A section of $G(2,6)$ by 5 hyperplanes in Pl\"{u}cker embedding.

\vspace{.1cm}

\end{minipage}
  &

\begin{minipage}[c]{6cm}
\vspace{.1cm}
\centering

  $\frac{(x+y+z+1)^2}{x}$

  $+\frac{(x+y+z+1)(y+z+1)(z+1)^2}{xyz}$

\vspace{.1cm}

\end{minipage}
\\
  \hline
  $X_{1-8}$ & 1 & 16 &
\begin{minipage}[c]{5cm}
\vspace{.1cm}

  A linear section of symplectic Grassmannian $SGr(3,6)$ of codimension $3$.

  \vspace{.1cm}

\end{minipage}
  &
  \begin{minipage}[c]{6cm}
\vspace{.1cm}
\centering

$\frac{(x+y+z+1)(x+1)(y+1)(z+1)}{xyz}$

\vspace{.1cm}

\end{minipage}
\\
  \hline
  $X_{1-9}$ & 1 & 18 &
\begin{minipage}[c]{5cm}
\vspace{.1cm}

  A linear section of Grassmannian of the group $G_2$ of codimension $2$.

\vspace{.1cm}

\end{minipage}
  &
\begin{minipage}[c]{6cm}
\vspace{.1cm}
\centering

%  $\frac{((y+1)^2+z+yz+xy)(xy^2+yz+xz+xyz+x^2y)}{xyz}$
$\frac{(x+y+z)(x+xz+xy+xyz+z+y+yz)}{xyz}$
\vspace{.1cm}

\end{minipage}
\\
  \hline
  $X_{1-10}$ & 1 & 22 &
\begin{minipage}[c]{5cm}
\vspace{.1cm}

  A section of the vector bundle $\Lambda^2 \mathcal U^*\oplus \Lambda^2 \mathcal U^*\oplus \Lambda^2 \mathcal U^*$ on the Grassmannian $\G(3,7)$, where$\mathcal U$ is the tautological bundle

\vspace{.1cm}

\end{minipage}
  &
\begin{minipage}[c]{6cm}
\vspace{.1cm}
\centering

% $\frac{xy}{z}+\frac{y}{z}+\frac{x}{z}+x+y+\frac{1}{z}+4$ $+\frac{1}{x}+\frac{1}{y}+z+\frac{1}{xy}+\frac{z}{x}+\frac{z}{y}+
%\frac{z}{xy}$

$\frac{(z + 1) (x + y + 1) (x y + z)}{x y z} + \frac{x y}{z} + z + 3$

\vspace{.1cm}

\end{minipage}
\\
  \hline
  $X_{1-11}$ & 2 & $8\cdot 1$ &
\begin{minipage}[c]{5cm}
\vspace{.1cm}

 A hypersurface of degree $6$ in $\PP(1,1,1,2,3)$.

\vspace{.1cm}

\end{minipage}
&
\begin{minipage}[c]{6cm}
\vspace{.1cm}
\centering

%  $\frac{(x^2+y^2+z^2+1)^3}{xyz}$ \\
  $\frac{(x+y+1)^6}{xy^2z}+z$

\vspace{.1cm}

\end{minipage}
\\
  \hline
  $X_{1-12}$ & 2 & $8\cdot 2$ &
  \begin{minipage}[c]{5cm}
\vspace{.1cm}

 A hypersurface of degree $4$ in $\PP(1,1,1,1,2)$.

\vspace{.1cm}

\end{minipage}
&
\begin{minipage}[c]{6cm}
\vspace{.1cm}
\centering

$\frac{(x+y+1)^4}{xyz}+z$

\vspace{.1cm}

\end{minipage}
\\
  \hline
  $X_{1-13}$ & 2 & $8\cdot 3$ &
  \begin{minipage}[c]{5cm}
\vspace{.1cm}

Smooth cubic.

\vspace{.1cm}

\end{minipage}
&
\begin{minipage}[c]{6cm}
\vspace{.1cm}
\centering

$\frac{(x+y+1)^3}{xyz}+z$

\vspace{.1cm}

\end{minipage}
\\
  \hline
  $X_{1-14}$ & 2 & $8\cdot 4$ &
\begin{minipage}[c]{5cm}
\vspace{.1cm}

  Smooth intersection of two quadrics.

\vspace{.1cm}

\end{minipage}
&
\begin{minipage}[c]{6cm}
\vspace{.1cm}
\centering

  $\frac{(x+1)^2(y+1)^2}{xyz}+z$

\vspace{.1cm}

\end{minipage}
\\
  \hline
  $X_{1-15}$ & 2 & $8\cdot 5$ &
\begin{minipage}[c]{5cm}
\vspace{.1cm}

  A section of $G(2,5)$ by 3 hyperplanes in Pl\"{u}cker embedding.

\vspace{.1cm}

\end{minipage}
  &
\begin{minipage}[c]{6cm}
\vspace{.1cm}
\centering

  $x+y+z+\frac{1}{x}+\frac{1}{y}+\frac{1}{z}+xyz$

\vspace{.1cm}

\end{minipage}
\\
  \hline
  $X_{1-16}$ & 3 & $27\cdot 2$ &
\begin{minipage}[c]{5cm}
\vspace{.1cm}

  Smooth quadric.

\vspace{.1cm}

\end{minipage}
&
\begin{minipage}[c]{6cm}
\vspace{.1cm}
\centering

  $\frac{(x+1)^2}{xyz}+y+z$

\vspace{.1cm}

\end{minipage}
\\
  \hline
  $X_{1-17}$ & 4 & $64$ &
\begin{minipage}[c]{5cm}
\vspace{.1cm}

  $\PP^3$.

\vspace{.1cm}

\end{minipage}
&
\begin{minipage}[c]{6cm}
\vspace{.1cm}
\centering

  $x+y+z+\frac{1}{xyz}$

\vspace{.1cm}

\end{minipage}
\\
  \hline
  \hline

\caption[]{\label{table: Fano rank 1} Toric Landau--Ginzburg models for smooth Fano threefolds of Picard rank one}
\end{longtable}
\endgroup
\end{center}

Consider a projective variety $X\subset \PP^n$. Let it be defined by some homogeneous ideal $I\subset S=\CC[x_0,\ldots x_n]$. If $\prec$ is some monomial order for $S$, then there is a flat family degenerating $X$ to $X_\prec=V(\init_\prec (I))$, where $\init_\prec (I)$ is the initial ideal of $I$ with respect to the monomial order $\prec$. This is not of immediate help in finding toric degenerations of $X$, since in general, $X_\prec$ is highly singular with multiple components and thus cannot be equal to or degenerate to a toric variety.

Instead, the point is to consider toric varieties embedded in $\PP^n$ which also degenerate to $X_\prec$.
Consider such a toric variety $Z$, and let $\mathcal H$ be the Hilbert scheme of subvarieties of $\PP^n$ with Hilbert polynomial equal to that of $X$. If $X$ corresponds to a sufficiently general point of a component of $\mathcal H$ and $X_\prec$ lies only on this component, then $X$ must degenerate to $Z$. This is the geometric background for the following theorem; the triangulations which appear correspond to degenerations of toric varieties to certain special monomial ideals with unobstructed deformations.

\begin{theorem}[{\cite[Corollary 3.4]{CI14}}]
\label{theorem: triangulation-Ilten}
Consider a three-dimensional reflexive polytope $\nabla$ with $m$ lattice points, $7\leq m \leq 11$, which admits a regular unimodular triangulation with the origin contained in every full-dimensional simplex, and every other vertex having valency $5$ or $6$. Then the smooth Fano threefold of index $1$ and degree $2m-6$ admits a degeneration to $T_\Delta$, where $\Delta=\nabla^\vee$.
\end{theorem}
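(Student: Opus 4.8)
The plan is to realize both $X$ and $T_\Delta$ as points of a single irreducible component of a Hilbert scheme, exactly along the lines of the geometric background sketched above. Since $X$ has index $1$ and degree $(-K_X)^3=2m-6$, its genus is $g=m-2$ and $\dim|-K_X|=g+1=m-1$, so the anticanonical embedding places $X$ in $\PP^{m-1}$. On the toric side $T_\Delta$ has fan polytope $\Delta$ and dual polytope $\nabla$, so by Facts~\ref{fact: toric 1}--\ref{fact: toric 2} its anticanonical sections are indexed by the $m$ lattice points of $\nabla$ and $-K_{T_\Delta}$ embeds $T_\Delta$ into the same $\PP^{m-1}$. Let $\mathcal H=\mathrm{Hilb}^{P}(\PP^{m-1})$ be the Hilbert scheme for the common Hilbert polynomial $P$; the goal is to produce a point of $\mathcal H$ onto which both $X$ and $T_\Delta$ degenerate and over which $\mathcal H$ is smooth.

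First I would build the common central fiber from the combinatorics. The triangulation $\mathcal T$ of $\nabla$ is regular, hence induced by a height function, which supplies a weight vector and so a one-parameter Gr\"obner degeneration $T_\Delta\rightsquigarrow X_\prec$, where $X_\prec=V(\init_\prec I)$ for the anticanonical toric ideal $I$ of $T_\Delta$. Because $\mathcal T$ is unimodular and every full-dimensional simplex contains the origin (so that $\mathcal T$ is the cone over a triangulation of $\partial\nabla$, cf.\ Lemma~\ref{lemma: 3dim smoothness}), the initial ideal is squarefree, whence $X_\prec$ is the reduced Stanley--Reisner scheme of $\mathcal T$: a union of coordinate linear $3$-planes in $\PP^{m-1}$ glued according to the simplices of $\mathcal T$. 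Flatness of the Gr\"obner family shows $X_\prec$ has Hilbert polynomial $P$.

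The heart of the argument is the deformation theory of $X_\prec$. I would compute the cotangent cohomology $T^1$ and $T^2$ of the Stanley--Reisner scheme, which by the Altmann--Christophersen description decomposes into graded pieces indexed by the faces of $\mathcal T$; the link of each boundary vertex is a cycle whose length equals its valency, and the contributions to $T^1$ and, crucially, to the obstruction space $T^2$ are governed by this valency. The hypothesis that every non-origin vertex have valency $5$ or $6$ is precisely what forces the relevant graded pieces of $T^2$ to vanish; equivalently the toric boundary surface attached to such a vertex is a smooth (weak del Pezzo) surface of degree $12-\mathrm{val}\in\{6,7\}$, and these are the unobstructed ones. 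I expect this vanishing of obstructions, hence the smoothness of $\mathcal H$ at $[X_\prec]$ together with the existence of a smoothing, to be the main obstacle: it is the step in which the numerical hypotheses on $\nabla$ are actually consumed.

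Granting unobstructedness, $[X_\prec]$ lies on a unique component $\mathcal H_0$ of $\mathcal H$, and a general member of $\mathcal H_0$ is a smooth Fano threefold of index $1$ and degree $2m-6$. By the classification of Fano threefolds (\cite{IP99}), and a comparison of invariants in the range $7\le m\le 11$, the family of such (Picard rank one) threefolds is unique, so this general member is a deformation of $X$; thus $[X]\in\mathcal H_0$. Since $[T_\Delta]$ and $[X]$ both lie on the irreducible $\mathcal H_0$, with $X$ its general (smooth) point and $T_\Delta$ a special point, connecting $[X]$ to $[T_\Delta]$ inside $\mathcal H_0$ yields the desired degeneration $X\rightsquigarrow T_\Delta$.
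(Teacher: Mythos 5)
Your proposal is correct and follows essentially the same route as the cited argument of Christophersen--Ilten that the paper sketches in its ``geometric background'' paragraph: degenerate $T_\Delta$ via the regular unimodular triangulation to the Stanley--Reisner scheme, use the valency hypotheses to kill the obstructions (so the Hilbert scheme is smooth there and a unique component passes through that point), and identify the general member of that component with the unique Fano threefold of index $1$ and degree $2m-6$ via the classification. You have correctly located the real work in the cotangent-cohomology computation for the Stanley--Reisner scheme, which is exactly where the reference spends its effort.
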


\begin{example}[$X_{1-6}$]\label{ex:mon12}
Consider the Laurent polynomial $f$ from Table \ref{table: Fano rank 1} for the Fano threefold $V_{12}$.  The dual of the Newton polytope $\nabla=\Delta_f^*$ is the convex hull of the vectors $\pm e_1$, $\pm e_2$, $e_3$, $-e_1-e_2$, $e_2+e_3$, and $-e_1-e_2-e_3$, see Figure \ref{fig:v12poly}. The polytope $\nabla$ has only one non-simplicial facet, a parallelogram. Subdividing this facet by either one of its diagonals gives a triangulation of $\partial \nabla$, which naturally induces a triangulation of $\nabla$ with the origin contained in every full-dimensional simplex.  It is not difficult to check that this triangulation is in fact regular and unimodular; furthermore, all vertices (with the exception of the origin) have valency $5$ or $6$. Thus, by Theorem~\ref{theorem: triangulation-Ilten}, the variety $X_{1-6}$ degenerates to $T_{\Delta_f}$.
\end{example}

\begin{figure}
\begin{center}
%\dualpolyfig
\includegraphics{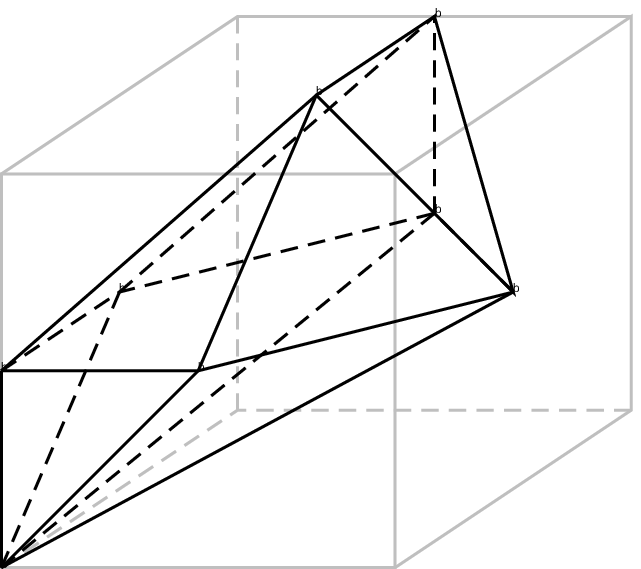}
\end{center}
\caption{The polytope $\Delta_f^\vee$ for $X_{1-6}$}\label{fig:v12poly}
\end{figure}

\begin{example}[$X_{1-4}$, $X_{1-5}$, $X_{1-7}$, and $X_{1-8}$]\label{ex:mongen}
Consider the Laurent polynomial $f$ from Table \ref{table: Fano rank 1} for $X_{1-i}$, $i\in\{4,5,7,8\}$.  Similar to the above example for $i=6$,
one can check by hand that the polytope
$\Delta_f^*$ satisfies the conditions of Theorem~\ref{theorem: triangulation-Ilten}. Thus, there is a degeneration of
$X_{1-i}$ to the toric variety $T_{\Delta}$ corresponding to the Landau--Ginzburg
model given by $f$.
\end{example}

\begin{example}[$X_{1-9}$]\label{ex:v18}
	Consider the Laurent polynomial $f$ from Table~\ref{table: Fano rank 1} for $X_{1-9}$. Here, $\nabla=\Delta_f^*$ has $12$ lattice points, so we cannot apply Theorem~\ref{theorem: triangulation-Ilten}, but similar techniques may be used to show the existence of the desired degeneration. Indeed, the dimension of the component $U$ corresponding to $X_{1-9}$ in the Hilbert scheme $\mathcal{H}_{X_{1-9}}$ of its anticanonical embedding is $153$, see~\cite[Proposition 4.1]{CI14}. The variety $T_\Delta$, where $\Delta=N(f)$, corresponds to a point $[T_\Delta]$ in $\mathcal{H}_{X_{1-9}}$ since its Hilbert polynomial agrees with that of $X_{1-9}$. A standard deformation-theoretic calculation shows that $[T_\Delta]$ is a smooth point on a component of dimension 153. It remains to be shown that this component is in fact $U$.

	Now, $\nabla=\Delta^\vee$ admits a regular unimodular triangulation such that the origin is contained in every full-dimensional simplex, one boundary vertex has valency $6$, and every other vertex has valency $4$ or $5$. The boundary of this triangulation is in fact the unique triangulation of the sphere with these properties. In any case, $T_\Delta$ degenerates to the Stanley--Reisner scheme $R$ corresponding to this triangulation, and $X_{1-9}$ does as well, see~\cite[Corollary 3.3]{CI14}. Furthermore, a standard deformation-theoretic calculation
%using \cite{ilten:11d}
shows that at the point $[R]$,
	$\mathcal{H}_{X_{1-9}}$ has only one $153$-dimensional component. Thus, $[T_\Delta]$ must lie on $U$, and $X_{1-9}$ must degenerate to $T_\Delta$.
\end{example}

Thus, independently from~\cite{DHKLP} and \cite{IKKPS}, we proved the following theorem (cf. Corollary~\ref{corollary: toric LG for threefolds}).

\begin{theorem}
\label{theorem: rank 1 toric 3-folds}
Each Fano threefold of rank 1 has a toric weak Landau--Ginzburg model.
\end{theorem}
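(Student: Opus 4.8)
The plan is to verify the \emph{toric condition} for each of the seventeen families of Picard rank one Fano threefolds, using the explicit weak Landau--Ginzburg models $f$ collected in Table~\ref{table: Fano rank 1}. The period condition for these Laurent polynomials is already established (see~\cite{fanosearch} and~\cite{CCGK16}), so what remains is to exhibit, in each case, a degeneration $X\rightsquigarrow T_\Delta$ with $\Delta=N(f)$; equivalently, a degeneration to the Gorenstein toric variety whose dual fan polytope is $\nabla=\Delta^\vee$. The whole argument is then combinatorial and deformation-theoretic, and in particular is independent of~\cite{DHKLP} and~\cite{IKKPS}.

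First I would dispose of the index one varieties of degree $8,10,12,14,16$, namely $X_{1-4},\ldots,X_{1-8}$. For these the dual polytope $\nabla=N(f)^\vee$ has $m=7,8,9,10,11$ lattice points respectively, so Theorem~\ref{theorem: triangulation-Ilten} applies once one checks that $\nabla$ carries a regular unimodular triangulation with the origin in every full-dimensional simplex and every boundary vertex of valency $5$ or $6$. This is the finite combinatorial verification already carried out in Examples~\ref{ex:mon12} and~\ref{ex:mongen}, and it produces the required degeneration to $T_\Delta$ directly.

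Next come the remaining families. The index one varieties whose dual polytope is too large for the clean criterion, that is $X_{1-9}$ and $X_{1-10}$ of degrees $18$ and $22$ (here $m=12$ and $m=14$), I would treat by the refined Hilbert scheme argument of Example~\ref{ex:v18}: produce a regular unimodular triangulation of $\nabla$, degenerate both $T_\Delta$ and $X$ further to the Stanley--Reisner scheme $R$ of that triangulation, and run a deformation-theoretic dimension count at $[R]$ to show that the Hilbert scheme $\mathcal H_X$ of the anticanonical embedding carries a single component of the expected dimension through that point. Identifying this component with the one containing the smooth Fano then forces $X\rightsquigarrow T_\Delta$. The low-degree index one varieties $X_{1-1},X_{1-2},X_{1-3}$ (a weighted sextic, a quartic, and a $(2,3)$-complete intersection) and the higher-index varieties $X_{1-11},\ldots,X_{1-17}$ are instead handled by explicit toric degenerations: $\PP^3=X_{1-17}$ is already toric, the quadric $X_{1-16}$ and the quintic del Pezzo section $X_{1-15}$ of $G(2,5)$ carry their classical Gorenstein toric degenerations, and the (weighted) hypersurfaces and complete intersections $X_{1-1},X_{1-2},X_{1-3},X_{1-11},\ldots,X_{1-14}$ degenerate to binomial, hence toric, models. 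In every such case one reads off the fan polytope of the degeneration from the chosen monomials via Facts~\ref{fact: toric 1}--\ref{fact: toric 4} and checks that it equals $N(f)$.

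The hard part is the second group. Once $m>11$ the combinatorial criterion of Theorem~\ref{theorem: triangulation-Ilten} no longer suffices, and one is forced into the genuinely subtle step of pinning down the correct irreducible component of $\mathcal H_X$. The delicate points are the computation of $H^0(X,N_{X/\PP^N})$ and of the obstruction space at the Stanley--Reisner point $[R]$, together with the proof that through $[R]$ there is a \emph{unique} component of the expected dimension --- it is precisely this uniqueness that licenses the conclusion that $[T_\Delta]$ and $[X]$ lie on one and the same component. For $X_{1-10}$, whose Newton polytope is not of the simplest $\mathrm{cDV}$ type (note the nonzero constant term in its Laurent polynomial), this deformation-theoretic computation is the most laborious, and it is where I expect the bulk of the work to concentrate.
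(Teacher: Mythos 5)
Your plan coincides with the paper's proof of Theorem~\ref{theorem: rank 1 toric 3-folds} in structure and in almost every case: the period condition is quoted from~\cite{CCGK16}, and the toric condition is verified family by family using the same three mechanisms --- the triangulation criterion of Theorem~\ref{theorem: triangulation-Ilten} via Examples~\ref{ex:mon12} and~\ref{ex:mongen}, the Hilbert-scheme/Stanley--Reisner refinement of Example~\ref{ex:v18}, and explicit binomial degenerations of (weighted) complete intersections via Theorem~\ref{theorem: CI are toric LG}. The one genuine divergence is $X_{1-10}$ (degree $22$, so $m=14$ lattice points in $\nabla$): the paper does not attempt any deformation-theoretic computation there, but instead uses the fact that $X_{1-10}$ (like $X_{1-15}$) admits a \emph{small} toric degeneration, i.e.\ a degeneration to a terminal Gorenstein toric variety, and invokes~\cite{Gal08} to conclude the toric condition at once. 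Your proposal to push the Stanley--Reisner argument of Example~\ref{ex:v18} from $m=12$ to $m=14$ is not backed by any computation in the paper or in~\cite{CI14}, and the two inputs it requires --- a regular unimodular triangulation of $\nabla$ with the right local structure, and uniqueness of the component of the expected dimension of $\mathcal{H}_X$ through the Stanley--Reisner point $[R]$ --- are precisely the points that become hardest to control as the polytope grows; since the cheaper route through small toric degenerations is available for exactly this case, that is the one you should take. (Two minor bookkeeping remarks: the paper files $X_{1-4}$, the intersection of three quadrics, under the complete-intersection route rather than the triangulation route, although Example~\ref{ex:mongen} shows your assignment is also valid; and the quadric $X_{1-16}$ is most cleanly handled as a hypersurface in $\PP^4$ via Theorem~\ref{theorem: CI are toric LG}, consistent with your grouping.)
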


\begin{proof}
According to~\cite{CCGK16}, Laurent polynomials from Table~\ref{table: Fano rank 1} are weak Landau--Ginzburg models
of the corresponding Fano varieties. According to Theorem~\ref{theorem: Minkowski CY}, they satisfy the Calabi--Yau condition.
Thus the last thing needed to check is the toric condition.
The varieties $X_{1-i}$, $i\in \{1,2,3,4,11,12,13,14\}$, are complete intersections in weighted projective spaces,
so the toric condition for them follows from Theorem~\ref{theorem: CI are toric LG}. The varieties $X_{1-10}$ and $X_{1-15}$
have small toric degenerations (i,\,e. degenerations to terminal Gorenstein toric varieties), so the toric condition
for them follows from~\cite{Gal08}. The toric condition for $X_{1-i}$, $i\in \{5,6,7,8\}$, follows from Examples~\ref{ex:mon12} and~\ref{ex:mongen}. The toric condition for $X_{1-9}$ follows from Example~\ref{ex:v18}. Finally, $X_{1-17}=\PP^3$ is toric.
\end{proof}

\section{Modularity}
\label{section:Modularity}
In this section we present results from~\cite{DHKLP}, see also~\cite{ILP13}.

Mirror Symmetry predicts that fibers of Landau--Ginzburg model for a Fano variety are Calabi--Yau varieties.
More precise, it is expected that these fibers are mirror dual to anticanonical sections of the Fano variety.
In the threefold case this duality is nothing but Dolgachev--Nikulin duality of K3 surfaces.

Let $H$ be a hyperbolic lattice, $\ZZ\oplus \ZZ$ with intersection form
$$
\left(
  \begin{array}{cc}
    0 & 1 \\
    1 & 0 \\
  \end{array}
\right).
$$
The intersection lattice on the second cohomology on any K3 surface is
$$N=H\oplus H\oplus H\oplus E_8(-1)\oplus E_8(-1).
$$
Consider a family $U_K$ of K3 surfaces whose lattice of algebraic cycles contains $K\subset N$ (and coincides with $K$ for general K3 surface).
Consider a lattice $L'=K^\bot$, the orthogonal to $K$ in $N$. Let $L'=H\oplus L$.

\begin{definition}[see~\cite{Do01}]
\label{definition: Dolgachev-Nikulin}
The family of K3 surfaces $U_L$ is called \emph{the Dolgachev--Nikulin dual} family to $U_K$.
\end{definition}

Consider a principally polarized family of anticanonical sections of a Fano threefold $X$ of index $i$ and degree $(-K_X)^3=i^3k$.
It is nothing but $U_{\langle 2n \rangle}$, $2n=ik$, where $\langle r \rangle$ is a rank $1$ lattice generated by
vector whose square is $r$. The lattice $U_{\langle 2n \rangle}$ is a sublattice of $H$.  Using this embedding to one of the $H$-summands
of $N$ we can see that Dolgachev--Nikulin dual lattice to $U_{\langle 2n \rangle}$ is the lattice
$$
M_n=H\oplus E_8(-1)\oplus E_8(-1)+\langle -2n \rangle.
$$

The surfaces with Picard lattices $M_n$ are \emph{Shioda--Inose}. They are resolutions of quotients of specific K3 surfaces $S$
by \emph{Nikulin involution}, the one keeping the transcendental lattice $T_S$; it changes two copies of $E_8(-1)$. Another description of Shioda--Inose surfaces is Kummer ones going back to products of elliptic curves with $n$-isogenic ones. $M_n$-polarized Shioda--Inose surfaces form an $1$-dimensional irreducible
family.

It turnes out that fibers of toric Landau--Ginzburg models from Table~\ref{table: Fano rank 1} can be compactified to  Shioda--Inose surfaces dual to anticanonical sections of Fano threefolds.
In this section we prove the following.

\begin{theorem}[\cite{DHKLP}]
\label{theorem: modularity}
Let $X$ be a Fano threefold of Picard rank $1$, index $i$ and let $(-K_X)^3=i^3k$. Then a general fiber of toric weak Landau--Ginzburg model from Table~\ref{table: Fano rank 1} is a Shioda--Inose surface with Picard lattice $M_{ik/2}$.
\end{theorem}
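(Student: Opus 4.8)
The plan is to verify, for each of the seventeen families $X_{1\text{-}j}$ in Table~\ref{table: Fano rank 1}, that a general fiber of the corresponding compactified toric weak Landau--Ginzburg model is a K3 surface whose Picard lattice is exactly $M_{ik/2}=H\oplus E_8(-1)\oplus E_8(-1)+\langle -ik\rangle$. By Theorem~\ref{theorem: Minkowski CY} (and the hyperelliptic cases of Proposition~\ref{proposition:CY hyperelliptic-CY}) we already know each Laurent polynomial admits a log Calabi--Yau compactification $f\colon Z\to\PP^1$ whose general fiber is a smooth K3 surface. So the whole content is the computation of the Picard lattice of that general fiber, together with the identification of it as Shioda--Inose. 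The natural strategy is to compute the rank and the discriminant form of the Picard lattice and then invoke the classification (via~\cite{Do01}) to recognize $M_n$: a K3 surface is Shioda--Inose, $M_n$-polarized, precisely when its Picard lattice is $H\oplus E_8(-1)^{\oplus 2}+\langle -2n\rangle$, which is a rank $19$ lattice of the stated discriminant.

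First I would compute the \emph{rank}. The general fiber $S_\lambda=\{f=\lambda\}$ compactified inside $\widetilde T^\vee$ acquires algebraic cycles from the toric boundary: each boundary divisor of $\widetilde T^\vee$ meets $S_\lambda$ in a curve, and these curves, together with the components coming from resolving the base locus, generate a large sublattice of $\Pic(S_\lambda)$. Since Mirror Symmetry predicts $\rho(S_\lambda)=19$ (the transcendental lattice of the Dolgachev--Nikulin mirror of a rank-one polarized K3 has rank $3$), I would show that these toric/exceptional cycles already span a rank $19$ lattice. The combinatorial bookkeeping here is controlled by Corollary~\ref{proposition: fibers over infinity}, which describes the boundary divisor over $\infty$ as a wheel/triangulation of the sphere with $\frac{(-K_{T_{N(f)}})^3}{2}+2=g+1$ components, and by the intersection pattern of $S_\lambda$ with the boundary of $\widetilde T^\vee$. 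The key identity is that the degree $(-K_{T^\vee})^3=\vol\nabla$ matches the lattice-theoretic predictions, so the anticanonical genus of $X$ pins down the determinant $\langle -ik\rangle$ of the polarization summand.

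Next I would identify the lattice precisely, not just its rank. Here the cleanest route is to exploit the Shioda--Inose structure directly: I would show that the general fiber carries the two $E_8(-1)$ summands and an $H$ summand visible from the elliptic fibration structure on $S_\lambda$ (which exists because each $f$ is, after the birational changes of Proposition~\ref{proposition:CY hyperelliptic-CY}, a fibration whose generic fiber is an anticanonical curve, i.e. elliptic). The Shioda--Inose surfaces are Kummer-type surfaces associated to products $E\times E'$ of isogenous elliptic curves, and $M_n$-polarized such surfaces form a $1$-parameter family; so I would match the $1$-parameter family $\{S_\lambda\}_\lambda$ against this modular family, computing the Picard--Fuchs operator $PF_f$ (Theorem~\ref{theorem: Picard--Fuchs}) and checking it agrees with the known Picard--Fuchs equation of the $M_n$-family. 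The period/monodromy match, combined with the rank count, forces $\Pic(S_\lambda)\cong M_{ik/2}$.

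\textbf{The main obstacle} will be the precise determination of the Picard lattice on the nose --- i.e. ruling out that the toric cycles generate a finite-index \emph{sub}lattice of the full $M_{ik/2}$ rather than $M_{ik/2}$ itself, and confirming that the transcendental lattice is exactly $H\oplus\langle ik\rangle$ with the correct discriminant form. The rank $19$ bound is relatively soft, but computing the discriminant form and matching it to $\langle -ik\rangle$ requires controlling the self-intersections and mutual intersections of the boundary curves and the resolution exceptional curves in each of the seventeen cases, and verifying the cycles are saturated. I expect this is handled essentially case by case, with the elliptic-fibration/Shioda--Inose interpretation doing the real work of pinning down the lattice isometry class; the uniform statement then follows by matching the single discrete invariant $n=ik/2$ across the table, exactly as organized in~\cite{DHKLP} (see also~\cite{ILP13}).
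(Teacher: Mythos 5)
Your overall strategy --- case-by-case over the seventeen families, exhibit a rank-$19$ sublattice of algebraic cycles on a compactified general fiber, compute its discriminant form, and invoke Nikulin-type uniqueness to recognize $M_{ik/2}$ --- is exactly the skeleton of the paper's proof, and your identification of the ``main obstacle'' (saturation and the integral discriminant form, as opposed to the soft rank bound) is the right one. Two points of divergence are worth noting. First, for these lattice computations the paper deliberately does \emph{not} use the compactification inside $\widetilde T^\vee$ from the Calabi--Yau compactification section: it recompactifies via $(\CC^*)^3\hookrightarrow \PP[x:y:z:w]$ (or $\PP^1\times\PP^2$, etc.) so that fibers become explicit singular quartics, on which one can list lines, conics and du Val exceptional curves by hand and compute the Gram matrix; in the remaining cases it uses torically induced elliptic fibrations, Weierstrass forms and Tate's algorithm to read off summands like $E_8(-1)$, $E_7(-1)$, $D_{10}(-1)$ directly from singular fiber types, with Mordell--Weil torsion controlling the index of the sublattice. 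Second, you lean on matching the Picard--Fuchs operator of $\{S_\lambda\}$ with that of the modular $M_n$-family as the device that ``forces'' the isometry; but a period/monodromy match only pins down the variation of Hodge structure over $\QQ$ (up to pullback and isogeny), not the integral Picard lattice --- indeed the paper stresses that the $\ZZ$-level computation needs more than the $\QQ$-level one. The paper uses Golyshev's modularity result only as a tiebreaker in the two cases ($X_{1-8}$ and $X_{1-17}$) where the discriminant computation leaves an ambiguity between $M_n$ and $M_{n'}$ with $n/n'$ a square; everywhere else the identification is forced by the explicit discriminant group and form together with Facts~\ref{uniquenessfact}--\ref{orthogonalfact}. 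So your plan is sound provided the Picard--Fuchs comparison is demoted to that auxiliary role and the intersection-theoretic computation (including the fractional local intersection indices on du Val surfaces) carries the main weight.
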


We say that toric Landau--Ginzburg model for $X$ satisfies the Dolgachev--Nikulin condition if the assertion of Theorem~\ref{theorem: modularity}
holds for it.
We call such toric Landau--Ginzburg model \emph{good}.

Thus compactifications of the Landau--Ginzburg models are, modulo coverings and the standard action of action of $PSL(2,\CC)$ on the base,
the unique families of corresponding Shioda--Inose surfaces. More precise, they are index-to-one coverings of the moduli spaces.

\begin{corollary}[cf.~\cite{DHNT17}]
\label{corollary:uniqueness}
A Calabi--Yau compactification of good weak Landau--Ginzburg model is unique up to flops.
\end{corollary}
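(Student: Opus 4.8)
The plan is to reduce the assertion to the classical theorem that two birational smooth Calabi--Yau threefolds which are isomorphic in codimension one are connected by a sequence of flops (Kollár, Kawamata). First I would fix a Fano threefold $X$ of Picard rank $1$, index $i$, with $(-K_X)^3=i^3k$, and consider two good weak Landau--Ginzburg models $f_1,f_2$ for it, with log Calabi--Yau compactifications $f_j\colon Z_j\to\PP^1$ and open Calabi--Yau parts $Y_j=Z_j\setminus f_j^{-1}(\infty)$, so that $K_{Y_j}=0$. By Theorem~\ref{theorem: modularity} the general fibers of both fibrations are $M_{ik/2}$-polarized Shioda--Inose surfaces; in particular both $Z_j\to\PP^1$ are K3 fibrations whose generic fiber has Picard rank $19$ and hence a single modulus.

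Next I would use that $M_n$-polarized Shioda--Inose surfaces form a $1$-dimensional irreducible family to identify the two fibrations. Each $Z_j\to\PP^1$ induces a period map $\PP^1\dashrightarrow\mathcal M_{M_n}$ to the one-dimensional moduli curve of $M_n$-polarized K3 surfaces, and, as recorded after Theorem~\ref{theorem: modularity}, this realizes the base as an index-to-one covering of $\mathcal M_{M_n}$. Since both families have the same generic member and the same covering degree, they agree as families of smooth Shioda--Inose surfaces up to the standard $PSL(2,\CC)$ action on $\PP^1$ and the finite covering dictated by the index. Matching the degenerate fibers (the nodal finite fibers and the wheel-type fiber over infinity described in Corollary~\ref{proposition: fibers over infinity}) fixes the reparametrization of the base. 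This yields an isomorphism of the two families over a dense open subset of $\PP^1$, hence a birational map $Z_1\dashrightarrow Z_2$ which is fiberwise and biregular over the generic point; restricting, one gets a birational map $Y_1\dashrightarrow Y_2$ of smooth Calabi--Yau threefolds.

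I would then upgrade ``birational'' to ``isomorphic in codimension one.'' For a single model this is exactly Remark~\ref{remark: codimension 1}; in general it follows because $K_{Y_j}$ is trivial, so the birational map $Y_1\dashrightarrow Y_2$ can neither contract nor extract a divisor without altering the canonical class, and is therefore an isomorphism outside a subset of codimension at least two. Finally I would invoke the flop theorem for threefolds: a birational map between smooth (quasi-projective) Calabi--Yau threefolds that is an isomorphism in codimension one decomposes into a sequence of flops. Applying this to $Y_1\dashrightarrow Y_2$ gives the statement, and the case $f_1=f_2$ recovers the uniqueness of the compactification of a single good model.

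The hard part will be the middle step: converting the abstract coincidence of the fiber families supplied by modularity into an honest fiberwise birational identification of the total spaces. Concretely, one must control the period map and the reparametrization of $\PP^1$ precisely enough---including the behaviour at the special fibers---to conclude that the families are isomorphic over a genuine open set, rather than merely sharing the same generic abstract K3 surface. Once the total spaces are known to be birational and $K$-trivial, the passage to flops through isomorphism in codimension one is standard.
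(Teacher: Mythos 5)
Your proposal is correct and follows essentially the same route the paper intends: rigidity of the one-dimensional family of $M_n$-polarized Shioda--Inose surfaces identifies the compactified families fiberwise up to reparametrization of the base, and then triviality of $K_Y$ forces any birational map between compactifications to be an isomorphism in codimension one, so Kawamata's flop theorem (applied in the relative setting over $\Aff^1$, since the $Y_j$ are only quasi-projective but are proper over the base) finishes the argument. Note only that for the literal statement --- two Calabi--Yau compactifications of a \emph{single} good model $f$ --- the modularity input is not needed to obtain birationality, because both total spaces contain $(\CC^*)^3$ fiberwise and are therefore tautologically birational over $\Aff^1$; the Shioda--Inose rigidity is what the paper really uses for the stronger follow-up claim that all good toric Landau--Ginzburg models for the same Fano threefold are birational to one another over $\Aff^1$.
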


Thus, if Homological Mirror Symmetry conjecture holds for Picard rank one Fano threefolds, then their Landau--Ginzburg models
(in the Homological Mirror Symmetry sense) up to flops are compactifications of toric ones from Table~\ref{table: Fano rank 1}.
Moreover, all other good toric Landau--Ginzburg models are birational (over $\Aff^1$) to them.

To prove Theorem~\ref{theorem: modularity} we study all $17$ one-by-one and compute Neron--Severi lattices of the compactified
toric Landau--Ginzburg models.

\begin{remark}
\label{remark: Golyshev modular}
In~\cite{Go07} Golyshev described Landau--Ginzburg models for Picard rank one Fano threefolds as universal families over $X_0(n)/\tau$, where $\tau$ is an Atkin--Lehner involution, with fibers that are Kummer surfaces associated with products of elliptic curves by an $n$-isogenic ones.
Golyshev's description of periods of these dual families as modular forms seems to be natural to expect from this point of view.
The variations of Hodge structures of our families of Shioda--Inose surfaces are the same (over $\QQ$) as the variations for the products of elliptic
curves and the same over $\ZZ$ as for the Kummer surfaces; this follows from the description of Shioda--Inose surfaces given above.
\end{remark}

\begin{remark}
Fibers of Landau--Ginzburg models are expected to be Dolgachev--Nikulin dual to anticanonical sections of Fano varieties of any Picard rank.
As the Picard rank of the Fano increase, the mirror K3 fibers will no longer be Shioda--Inose.  However they are still K3 surfaces of high Picard rank, so we can hope to find analogous modular-type properties (say, automorphic) in these cases as well.
Say, fibers of Landau--Ginzburg models are Kummer surfaces given by products of elliptic curves for the Picard rank $2$ case
and by abelian surfaces for the Picard rank $3$ case.
These lattices are computed over $\QQ$ in~\cite{CP18}; however the computations over $\ZZ$
need more deep methods.
\end{remark}

\subsection{Lattice facts}
\label{latticefacts}

If $L$ is a lattice and $ k $ a field, we will write $ L_k $ for $ L \otimes_\ZZ k $.  We will use $ \scr N, \scr M $ to denote two dual rank-three lattices.  Let $ f_{1-i} $ denote the Laurent polynomial defining the Landau--Ginzburg model from Table~\ref{table: Fano rank 1}
that correspond to $X_{1-i}$, let~$\mbox{$\Delta_{f_{1-i}}^* \subset \scr M_\RR$}$ be its Newton polytope, and let $ \nabla_{f_{1-i}} \subset \scr N_\RR $ be its polar.

%We will write $ \langle r \rangle $ for a one-dimensional lattice generated by an element of square $r$.
Via $ A_n, D_n, E_n $ we denote the %{\em negative-definite}
root lattices of the corresponding Dynkin diagrams. Via  $M$ we denote the rank 18 lattice $ H \oplus E_8(-1) \oplus E_8(-1) $, and via $ M_n $ the rank 19 lattice $ M \oplus \langle -2n \rangle $.

We will use $ (x:y:z:w) $ as homogeneous coordinates on $ \PP^3 $.  For distinct, non-empty subsets $ I, J, K \subset \{ 1, 2, 3, 4 \} $, we will write $H_I $ for the hyperplane defined by setting the sum of coordinates in $I$ equal to zero. Thus, for example, $H_{\{ 1\}} $ is the coordinate hyperplane $ x = 0 $, while $H_{\{2,4\}} $ is the hyperplane defined by $ y+w=0$.  We write $ L_{I,J} = H_I \cap H_J $, and $ p_{I,J,K} = H_I \cap H_J \cap H_K $.

In many cases, we will use Calabi--Yau compactifications that are different from those from Section~\ref{section: Calabi--Yau 3-folds}.
That is, we use compactifications given by
$$
(\CC^*)^3\hookrightarrow \PP[x:y:z:w],
$$
cf. Section~\ref{subsection:KKP-3}.
This gives precise descriptions of fibers of compactifications as quartics in $\PP^3$ with ordinary double points.
In those cases, we will identify some curves on the minimal resolutions of these singular quartics (which will be K3 surfaces) and compute the intersection matrix of the identified curves, then checking that this matrix has rank 19.  In the interest of not boring the reader to death, we will omit the details of these computations.  In other cases, we will use elliptic fibrations as described below.

Because we will use them later, we recall a few (perhaps not terribly well-known) facts about lattices.  Most are due to \cite{Nikulin}; a very readable reference is \cite{Be02}.  Let $ L$ be a lattice, and $ \langle \cdot, \cdot \rangle $ the bilinear pairing on $L$.  Denote by $ L^* $ the dual lattice $ \mathrm{Hom}(L,\ZZ)  $.  Since the pairing induces an isomorphism $ L_{\QQ} \simeq \mathrm{Hom}(L_\QQ, \QQ) $, we may think of $ L^* \subset L_\QQ $.  The pairing $ \langle \cdot, \cdot \rangle $ induces a quadratic form $ q_L $ on the discriminant group $ D(L) = L^*/L$ by $ q_L(\phi) = \langle \phi, \phi \rangle $.  {\em A priori}, $ q_L $ takes values in $ \QQ/\ZZ $, but if $L$ is an even lattice, it will take values in $ \QQ/(2\ZZ) $.

Fixing a basis $ e_1, \ldots e_r $ for $ L $, we may form the {\em Gram matrix} $ I_L $ whose $(i,j)$-th entry is $ \langle e_i, e_j \rangle $.  We call $ d(L) = \det(I_L) $ the {\em determinant} of $L$.

\begin{fact} \label{uniquenessfact} Let $L$ be an even, indefinite lattice of rank $r$ and signature $(s,r-s)$, and let $ d $ be the minimal number of generators of $ L^*/L $.  If $ r > d+2 $, then $ q_L $ and $s$ uniquely determine $L$.  \end{fact}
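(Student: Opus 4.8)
The statement is a theorem of Nikulin \cite{Nikulin}; the plan is to reduce the uniqueness of $L$ first to the uniqueness of its \emph{genus}, and then to the uniqueness of the isometry class \emph{within} that genus, the latter being where indefiniteness and the rank bound $r>d+2$ enter. Throughout write $D(L)=L^*/L$ for the discriminant group, so that $d$ is the minimal number of generators of $D(L)$ and $q_L$ is the discriminant form on it. For the first reduction I would argue that the genus of $L$ — the collection of isometry classes of the localizations $L\otimes_\ZZ \ZZ_p$ over all primes $p$, together with the real signature $(s,r-s)$ — is already determined by the pair $(s,q_L)$. Indeed, each $L\otimes\ZZ_p$ splits as an orthogonal sum of a $p$-adic unimodular part and a part supported on the $p$-component of $D(L)$; the latter is read off from $q_L$, while the former is pinned down, once its rank and determinant are fixed, by the $p$-adic data that $q_L$ and the signature already carry, the only global constraint being the Gauss--Milgram congruence relating $\mathrm{sign}(L)$ and $q_L$ modulo $8$. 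Invoking Nikulin's local computations \cite{Nikulin} (see also \cite{Be02}) one concludes that two even lattices sharing $(s,r-s)$ and $q_L$ have isometric localizations at every place, i.e. lie in one genus.

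It then remains to show that this genus contains a single isometry class. Here I would use Eichler's theorem on indefinite forms: an indefinite lattice of rank $\ge 3$ is determined within its genus by its spinor genus, so the isometry classes in the genus correspond to its proper spinor genera. The task is thus to rule out more than one spinor genus, and this is exactly what $r>d+2$ should buy, via the splitting result that under this hypothesis $L$ contains the hyperbolic plane $H$ as an orthogonal direct summand, $L\cong H\oplus L'$. Granting this, $H\otimes\ZZ_p$ is a unimodular hyperbolic plane at every prime, so the local spinor-norm groups $\theta(O^+(L\otimes\ZZ_p))$ are as large as possible (they contain all $p$-adic units); this collapses the idelic obstruction and forces the genus to contain a single proper spinor genus, whence a single class by Eichler.

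The splitting $L\cong H\oplus L'$ is the structural heart and, I expect, the main obstacle. The idea is that $r>d+2$ forces $L$ to be ``mostly unimodular'': the discriminant is carried by a summand needing only $d$ generators, leaving a unimodular part of rank at least $r-d\ge 3$. Since $L$ is indefinite, $L\otimes\QQ$ is isotropic by Hasse--Minkowski — the local conditions being met precisely because the unimodular rank is large and the form is indefinite — so $L$ contains a primitive isotropic vector. The rank bound is what lets one choose such a vector $v$ with $\langle v,L\rangle=\ZZ$, i.e. a \emph{split} isotropic vector, which together with a suitable partner spans a copy of $H$ that splits off orthogonally. I would carry this out following Nikulin's argument \cite{Nikulin}, controlling the divisibility of the functional $\langle v,-\rangle$ by adjusting $v$ modulo $D(L)$.

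Assembling the pieces: by the first step any two even lattices with the same $(s,q_L)$ share a genus, and by the second and third steps that genus contains a single isometry class, so the two lattices are isometric and $L$ is determined by $(s,q_L)$. The delicate points are the production of a split isotropic vector in the splitting step and the local spinor-norm computation that turns the $H$-summand into a single-class statement; the hypothesis that $L$ is indefinite is genuinely essential, since for definite lattices a genus generically contains many isometry classes and no such uniqueness holds.
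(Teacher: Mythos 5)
The paper does not actually prove this statement: it is quoted from Nikulin \cite{Nikulin} (it is his Theorem 1.13.2/Corollary 1.13.3, which holds under the weaker hypothesis $r\ge d+2$), with \cite{Be02} as a readable reference. So the only comparison available is with the argument in the literature, and your overall architecture is indeed that argument: (1) for even lattices the signature and discriminant form determine the genus (Nikulin, Corollary 1.9.4; no rank hypothesis needed); (2) Eichler's theorem reduces the class number of an indefinite genus of rank $\ge 3$ to the number of proper spinor genera; (3) the relation between $r$ and $d$ is used to show there is only one spinor genus. (One small reading point, which you handle correctly but silently: the statement must be understood as fixing the whole signature $(s,r-s)$ and not just $s$, since e.g. $H$ and $H\oplus E_8(-1)$ share $s=1$ and $q_L=0$.)

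The gap is in how you propose to carry out step (3). You route it through a global orthogonal splitting $L\cong H\oplus L'$. That splitting is true under $r>d+2$, but its standard proof is a \emph{corollary} of the uniqueness theorem: one first shows that an even lattice $L'$ of signature $(s-1,r-s-1)$ with discriminant form $q_L$ exists (this is where the bound $r\ge d+3$, rather than Nikulin's sharp $r\ge d+2$, actually gets used), observes that $H\oplus L'$ then lies in the genus of $L$ by step (1), and concludes $L\cong H\oplus L'$ \emph{because the genus has one class}. Using the splitting as an ingredient is therefore circular unless you supply an independent proof, and that independent proof --- producing a primitive isotropic $v$ with $\langle v,L\rangle=\ZZ$ by Eichler transvections/strong approximation --- is essentially as hard as the theorem itself; moreover your appeal to Hasse--Minkowski for isotropy is only automatic in rank $\ge 5$ and needs genuine local arguments in ranks $3$ and $4$. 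The repair is that no global splitting is needed: the hypothesis gives, at every prime $p$, that the unimodular Jordan component of $L\otimes\ZZ_p$ has rank at least $r-d_p\ge 2$, where $d_p$ is the minimal number of generators of the $p$-primary part of $D(L)$, and already this \emph{local} unimodular piece of rank $\ge 2$ forces $\theta(O^+(L\otimes\ZZ_p))\supseteq \ZZ_p^{*}(\QQ_p^{*})^2$ for all $p$, which is all that the spinor-genus count requires. This local route is shorter, avoids the circularity, and explains why the sharp hypothesis is $r\ge d+2$ while the $H$-splitting genuinely wants $r\ge d+3$.
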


\begin{fact}  \label{indexfact} Let $ L \subset M $ be even lattices of the same rank.  Then $ [M:L]^2 = d(L)/d(M) $. \end{fact}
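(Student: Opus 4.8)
The plan is to reduce the statement to a single determinant computation, exploiting the crucial hypothesis that $L$ and $M$ have the same rank $r$: this forces the quotient $M/L$ to be a finite abelian group, so that $[M:L]$ is a well-defined positive integer and the change-of-basis matrix between the two lattices is nonsingular. First I would choose compatible bases via the elementary divisor theorem (Smith normal form applied to the inclusion $L\subset M$): there is a basis $w_1,\ldots,w_r$ of $M$ and positive integers $a_1\mid a_2\mid\cdots\mid a_r$ such that $a_1w_1,\ldots,a_rw_r$ is a basis of $L$. In particular $M/L\simeq\bigoplus_{i}\ZZ/a_i\ZZ$, so $[M:L]=\prod_i a_i$.

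With $v_i=a_iw_i$ as the chosen basis of $L$, let $A$ be the matrix expressing the $v_i$ in terms of the $w_j$, namely $A=\mathrm{diag}(a_1,\ldots,a_r)$. A direct computation from bilinearity of $\langle\cdot,\cdot\rangle$ relates the two Gram matrices by
$$
I_L = A\, I_M\, A^{T}.
$$
Taking determinants and recalling $d(L)=\det(I_L)$, $d(M)=\det(I_M)$, this yields
$$
d(L)=(\det A)^2\, d(M)=\Bigl(\prod_i a_i\Bigr)^{2} d(M)=[M:L]^{2}\, d(M),
$$
which is exactly the asserted equality $[M:L]^2=d(L)/d(M)$. (One may equally well work with arbitrary bases of $L$ and $M$: the same Gram relation $I_L=A I_M A^{T}$ holds for the integral change-of-basis matrix $A$, and a separate Smith-normal-form argument identifies $|\det A|$ with the cokernel order $[M:L]$, since $M/L$ is the cokernel of $A^{T}\colon\ZZ^r\to\ZZ^r$.)

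The argument presents no genuine obstacle; it is purely linear-algebraic bookkeeping. The only points requiring a moment's care are, first, that finiteness of $[M:L]$—and hence $\det A\neq 0$—must be invoked at the outset, which is precisely what equal rank guarantees; and second, the orientation convention in the Gram identity (whether one writes $A I_M A^{T}$ or $A^{T} I_M A$), which is immaterial here because the determinant is invariant under transposition. I would therefore expect the proof to occupy only a few lines once the elementary divisor normal form is in place.
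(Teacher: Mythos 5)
Your argument is correct, and it is the standard proof of this classical fact. Note that the paper itself gives no proof: it lists this as one of several ``facts'' about lattices quoted from the literature (attributed to Nikulin, with Belcastro as a readable reference), so there is nothing to compare your route against. Your reduction via the elementary divisor theorem to the Gram identity $I_L = A\, I_M\, A^{T}$, followed by taking determinants, is exactly how this is proved in those references. Two minor remarks: the evenness hypothesis plays no role here (the identity holds for any nondegenerate integral lattices of equal rank; nondegeneracy of $M$ is what lets you divide by $d(M)$), and your parenthetical observation that one may instead work with arbitrary bases and identify $|\det A|$ with $\#\,\mathrm{coker}(A)$ is a valid alternative packaging of the same computation.
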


\begin{fact} \label{discriminantfact} Let $ L \subset M $ be even lattices of the same rank, and let $ G = M/L \subset L^*/L = D $.  Note since $ L \subset M \subset M^* \subset L^* $, we have $ G \subset M^*/L \subset D $ and $ (M^*/L)/G \simeq M^*/M $.  Now let $ G^\perp = \{ a \in D \: | \: q_L(a+H) = q_L(a) \} $. Since $M$ is even, $ q_L|_G = 0 $, and hence $ G \subset G^\perp $.  Moreover, given $ a \in D $, choose $ \tilde{a} \in L^* $ such that $ a = \tilde{a}+L$.  Then $ a \in G^\perp $ if and only if $ \langle \tilde{a}, M \rangle \subset \ZZ $, i.e. $ G^\perp = M^*/L $.  Thus we see that the quadratic form $ q_M $ is nothing but $ q_L |_{G^\perp} $ descended to $ G^\perp/G $.

Conversely, given a subgroup $ G \subset D $ such that $ q_L(G) = 0 $, there exists a lattice $M$ containing $ L $ such that $ M/L = G $.
\end{fact}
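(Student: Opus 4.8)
The plan is to read everything off directly from the definition of the discriminant form: verify the chain of inclusions and the quotient identifications, pin down $G^\perp$ concretely, check that $q_L$ descends to $q_M$, and then produce $M$ in the converse direction as a preimage. Every step is an elementary computation, so the only thing demanding genuine care is tracking whether a congruence is taken modulo $\ZZ$ (for the bilinear pairing on $D$) or modulo $2\ZZ$ (for $q_L$). First I would establish $L \subset M \subset M^* \subset L^*$. Since $M$ is even it is in particular integral, so the pairing embeds $M \hookrightarrow M^*$ inside $M_\QQ$; and from $L \subset M$ (so that $L_\QQ = M_\QQ$) the condition $\langle v, M\rangle \subset \ZZ$ is stronger than $\langle v, L\rangle \subset \ZZ$, whence $M^* \subset L^*$. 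Taking quotients by $L$ inside $L_\QQ$ then yields $G = M/L \subset M^*/L \subset D$, and the third isomorphism theorem gives $(M^*/L)/G \simeq M^*/M = D(M)$.

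Next I would treat the orthogonality and the descent. For $h \in G$ with lift $m \in M$, evenness of $M$ gives $\langle m, m\rangle \in 2\ZZ$, so $q_L(h) = 0$, while integrality gives $\langle m, m'\rangle \in \ZZ$ for lifts of two classes in $G$, so the bilinear form vanishes on $G$; hence $G \subset G^\perp$. For $a \in D$ with lift $\tilde a \in L^*$ and $h \in G$ with lift $m \in M$, the expansion $q_L(a+h) = q_L(a) + 2\langle \tilde a, m\rangle + q_L(h)$ in $\QQ/2\ZZ$, combined with $q_L(h) = 0$, shows that $a \in G^\perp$ holds precisely when $\langle \tilde a, m\rangle \in \ZZ$ for every $m \in M$, i.e. exactly when $\tilde a \in M^*$; thus $G^\perp = M^*/L$. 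Since $\langle \tilde a, m\rangle \in \ZZ$ whenever $a \in G^\perp$, the value $q_L(a+h) = q_L(a)$ is constant on each coset of $G$ in $G^\perp$, so $q_L|_{G^\perp}$ descends to $G^\perp/G = M^*/M$, and comparing the two definitions this descended form is exactly $q_M$.

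For the converse, suppose $G \subset D$ satisfies $q_L(G) = 0$. I would first upgrade this to isotropy for the bilinear form: for $g, g' \in G$ with lifts $\tilde g, \tilde{g}' \in L^*$, the identity $\langle \tilde g + \tilde{g}', \tilde g + \tilde{g}'\rangle = \langle \tilde g, \tilde g\rangle + 2\langle \tilde g, \tilde{g}'\rangle + \langle \tilde{g}', \tilde{g}'\rangle$ together with $q_L(g) = q_L(g') = q_L(g+g') = 0$ forces $\langle \tilde g, \tilde{g}'\rangle \in \ZZ$. Now set $M = \pi^{-1}(G)$, where $\pi\colon L^* \to D$ is the quotient map; then $L \subset M \subset L^*$ are of the same rank and $M/L = G$. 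For $m, m' \in M$ their classes lie in $G$, so $\langle m, m'\rangle \in \ZZ$ (integrality) and $\langle m, m\rangle \in 2\ZZ$ (evenness), exactly by the isotropy just proved. Hence $M$ is an even lattice of the same rank as $L$ with $M/L = G$, as required.

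I do not expect a conceptual obstacle anywhere: the entire argument is bookkeeping with the discriminant form. The one point that must be handled correctly is precisely the $\QQ/\ZZ$ versus $\QQ/2\ZZ$ distinction — namely, extracting the vanishing of the bilinear pairing on $G$ from the vanishing of $q_L$ on $G$ (the hidden factor of $2$), and confirming that the form descended to $G^\perp/G$ lands in $\QQ/2\ZZ$ and coincides with $q_M$.
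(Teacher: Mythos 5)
Your argument is correct and follows the same route the paper (and its source, Nikulin) takes: the chain $L\subset M\subset M^*\subset L^*$, the identification $G^\perp=M^*/L$ via the cross-term $2\langle\tilde a,m\rangle$, the descent of $q_L$ to $G^\perp/G\simeq M^*/M$, and the converse by taking $M=\pi^{-1}(G)$ with evenness and integrality extracted from $q_L(G)=0$ by polarization. You also correctly read the paper's $q_L(a+H)$ as $q_L(a+h)$ for all $h\in G$ (a typo in the statement), and you handle the $\QQ/\ZZ$ versus $\QQ/2\ZZ$ bookkeeping exactly where it matters.
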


\begin{fact} \label{orthogonalfact} Let $ L $ be a sublattice of a unimodular lattice $ \Lambda $.  Then  $ D(L) \simeq D(L^\perp) $ and $ q_L = - q_{L^\perp} $. \end{fact}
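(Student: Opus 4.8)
The plan is to build an explicit anti-isometry between the two discriminant forms out of the orthogonal projections of $\Lambda$ onto $L_{\QQ}$ and onto $(L^{\perp})_{\QQ}$; the only place where unimodularity really enters is a surjectivity statement, and that will be the crux. Throughout I assume $L$ is a primitive nondegenerate sublattice (as it is in all our applications, where $L$ is a N\'eron--Severi lattice inside the unimodular $K3$ lattice); this primitivity is genuinely needed for the equality of discriminant forms to hold. Write $M=L^{\perp}$. Since $L$ is nondegenerate we have $L\cap M=0$ and $L_{\QQ}\oplus M_{\QQ}=\Lambda_{\QQ}$, so every $x\in\Lambda$ has a unique decomposition $x=x_L+x_M$ with $x_L\in L_{\QQ}$ and $x_M\in M_{\QQ}$.

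First I would check that these projections land in the dual lattices: for $\ell\in L$ one has $\langle x_L,\ell\rangle=\langle x,\ell\rangle\in\ZZ$ because $\langle x_M,\ell\rangle=0$, so $x_L\in L^{*}$, and symmetrically $x_M\in M^{*}$. This produces a homomorphism $\Lambda\to L^{*}\oplus M^{*}$ and, after reduction, a map $\psi\colon\Lambda\to D(L)\oplus D(M)$, $x\mapsto(\overline{x_L},\overline{x_M})$. Its kernel is exactly $L\oplus M$ (if both projections are integral then $x\in L\oplus M$), so $\psi$ identifies $\Lambda/(L\oplus M)$ with a subgroup $\Gamma\subset D(L)\oplus D(M)$.

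The heart of the argument is to show that each projection $\Gamma\to D(L)$ and $\Gamma\to D(M)$ is an isomorphism. Here I would use that $\Lambda$ is unimodular: the restriction map $\Lambda=\Lambda^{*}\to L^{*}$ is surjective precisely because $\Lambda/L$ is torsion-free (this is where primitivity of $L$ is used), and its kernel is $L^{\perp}=M$. Hence $\Lambda/M\xrightarrow{\sim}L^{*}$, and passing to the quotient by $L$ gives $\Lambda/(L\oplus M)\xrightarrow{\sim}L^{*}/L=D(L)$; under the identification $L^{*}\subset L_{\QQ}$ this isomorphism is exactly $\overline{x}\mapsto\overline{x_L}$. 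The same argument with $M$ in place of $L$ (using $M^{\perp}=L$, the double complement of a primitive sublattice) shows $\Gamma\xrightarrow{\sim}D(M)$. Composing the inverse of the first isomorphism with the second yields the desired group isomorphism $\gamma\colon D(L)\xrightarrow{\sim}D(L^{\perp})$.

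Finally I would compare the quadratic forms. For $x\in\Lambda$ the orthogonality of the decomposition gives $\langle x,x\rangle=\langle x_L,x_L\rangle+\langle x_M,x_M\rangle$, and the left-hand side lies in $2\ZZ$ since $\Lambda$ is even. Reducing modulo $2\ZZ$ yields $q_L(\overline{x_L})=-q_M(\overline{x_M})=-q_{L^{\perp}}(\gamma(\overline{x_L}))$, which is precisely $q_L=-q_{L^{\perp}}$. I expect the genuine obstacle to be the surjectivity of $\Lambda^{*}\to L^{*}$, that is, isolating exactly the hypotheses (unimodularity together with primitivity) that make $\Gamma\to D(L)$ onto; once that is in hand, the compatibility of the forms is a one-line orthogonality computation.
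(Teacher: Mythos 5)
Your argument is correct and complete. The paper itself gives no proof of this Fact \ref{orthogonalfact} --- it is quoted from Nikulin's paper (with Belcastro's survey as a readable reference) --- so there is no internal proof to compare against; your write-up is the standard argument one finds in those sources. Two things you do are worth keeping: first, you correctly isolate the hypotheses that the paper's statement leaves implicit, namely that $L$ must be nondegenerate and \emph{primitive} in $\Lambda$ (otherwise $\Lambda^{*}\to L^{*}$ fails to be surjective and the discriminant groups need not be isomorphic), and that $\Lambda$ must be even for the identity $q_L=-q_{L^\perp}$ to make sense modulo $2\ZZ$ rather than modulo $\ZZ$ --- all of which hold in the paper's applications, where $L$ sits primitively inside the even unimodular K3 lattice. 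Second, your identification of the crux is accurate: the surjectivity of $\Lambda\simeq\Lambda^{*}\to L^{*}$ follows from the exact sequence $0\to(\Lambda/L)^{*}\to\Lambda^{*}\to L^{*}\to\mathrm{Ext}^{1}(\Lambda/L,\ZZ)\to 0$ together with torsion-freeness of $\Lambda/L$, its kernel is $L^{\perp}$, and everything else (the graph subgroup $\Gamma\subset D(L)\oplus D(L^{\perp})$ projecting isomorphically to both factors, and the one-line orthogonality computation for the forms) is routine. No gaps.
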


For convenience, we also include the discriminant groups and forms of some of the lattices that play a role in the present study.
In Table~\ref{discriminanttable} we present the discriminant form by giving its values on generators of the discriminant group.  Note that this description is not unique.  For example, if the discriminant group is $ \ZZ/(8) $ and the form is listed as $ 1/8 $, this means that a generator $ g $ of the group has $ q(g) = 1/8 $.  Of course, $ 3g$ is also a generator, and it has $ q(3g) = 9/8 $.

\begin{table}[h]
\label{discriminanttable}
\centering
\begin{tabular}{|l|l|l|} \hline
Lattice $L$  & Group $D(L) $ & Form $ q_L$ \\ \hline
$H$ & $\{1\}$ & $0$ \\ \hline
$ \langle -2n \rangle $ & $ \ZZ/(2n) $ & $ -1/(2n) $ \\ \hline
$ A_1 $ & $\ZZ/(2) $ & $ -1/2 $ \\
$ A_2 $ & $ \ZZ/(3) $ & $ 4/3 $ \\
$ A_3 $ & $ \ZZ/(4) $ & $ 5/4 $ \\
$ A_4 $ & $ \ZZ/(5) $ & $ 4/5 $ \\
$ A_5 $ & $ \ZZ/(6) $ & $  $ \\
$ A_6 $ & $ \ZZ/(7) $ & $ 2/7 $ \\
$ A_7 $ & $ \ZZ/(8) $ & $ 1/8 $ \\
$ A_8 $ & $ \ZZ/(9) $ & $ 4/9 $ \\
$ A_9 $ & $ \ZZ/(10) $ & $ -9/10 $ \\
$ A_{10} $ & $ \ZZ/(11) $ & $ 4/11 $ \\
$ A_{11} $ & $ \ZZ/(12) $ & $ -11/12 $ \\ \hline
$ D_5 $ & $ \ZZ/(4) $ & $-5/4$ \\
$ D_8 $ & $ \ZZ/(2) \oplus \ZZ/(2) $ & $0, 1$ \\
$ D_{10} $ & $ \ZZ/(2) \oplus \ZZ/(2) $ & $ 1, 1 $ \\ \hline
$ E_6 $ & $ \ZZ/(3) $ & $ 2/3 $ \\
$ E_7 $ & $ \ZZ/(2) $ & $1/2$ \\
$ E_8 $ & $ \{1\} $ & $ 0 $ \\ \hline
 \end{tabular}
\caption{Some Discriminant Groups and Forms}
\end{table}

\subsection{Elliptic fibrations on K3 surfaces} \label{fibrationsec}
We briefly recall a few facts about elliptic fibrations with section on K3 surfaces.

\begin{definition} An {\em elliptic K3 surface with section} is a triple $ (X, \pi, \sigma) $, where $X$ is a K3 surface and $ \pi: X \to \PP^1 $ and $ \sigma: \PP^1 \to X $ are morphisms with the generic fiber of $ \pi $ an elliptic curve and $ \pi \circ \sigma = \mathrm{id}_{\PP^1} $. \end{definition}

Any elliptic curve over the complex numbers can be realized as a smooth cubic curve in $ \PP^2 $ in {\em Weierstrass normal form}
\begin{equation} \label{Weierstrass} y^2 z = 4x^3 -g_2 x z^2 - g_3 z^3. \end{equation}
Conversely, the equation~\eqref{Weierstrass} defines a smooth elliptic curve provided $ \Delta = g_2^3 -27g_3^2 \neq 0 $.

Similarly, an elliptic K3 surface with section can be embedded into the $ \PP^2 $ bundle $ \PP(\scr O_{\PP^1} \oplus \scr O_{\PP^1}(4) \oplus \scr O_{\PP^1}(6)) $ as a subvariety defined by equation~\eqref{Weierstrass}, where now $ g_2, g_3 $ are global sections of $ \scr O_{\PP^1}(8) $, $ \scr O_{\PP^1}(12) $ respectively (i.e. they are homogeneous polynomials of degrees 8 and 12).  The singular fibers of $ \pi $ are the roots of the degree 24 homogeneous polynomial $ \Delta = g_2^3 -27g_3^2 \in H^0(\scr O_{\PP^1}(24)) $.  Tate's algorithm can be used to determine the type of singular fiber over a root $ p$ of $ \Delta $ from the orders of vanishing of $ g_2$, $g_3$, and $\Delta $ at $p$.

\begin{proposition} \label{fibprop} \cite[Lemma 3.9]{CD07} A general fiber of $ \pi $ and the image of $ \sigma $ span a copy of $H$ in $ \mathrm{Pic}(X) $.  Further, the components of the singular fibers of $ \pi$ that do not intersect $ \sigma $ span a sublattice $ S$ of $ \mathrm{Pic}(X) $ orthogonal to this $H$,  and $ \mathrm{Pic}(X)/(H \oplus S) $ is isomorphic to the Mordell--Weil group $MW(X,\pi)$ of sections of $ \pi $.  \end{proposition}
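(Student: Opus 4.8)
The plan is to prove the three assertions in turn: the first two by direct intersection-theoretic computation, and the third by the Shioda--Tate argument that identifies the Mordell--Weil group as a quotient of $\Pic(X)$ by the "trivial lattice". First I would fix notation, writing $F$ for the class of a general fiber of $\pi$ and $O$ for the class of the image $\sigma(\PP^1)$. Since all fibers are algebraically equivalent one has $F^2=0$; since $\sigma$ meets each fiber in a single point one has $F\cdot O=1$; and since $O\cong\PP^1$ is a smooth rational curve on a K3 surface, adjunction together with $K_X=0$ gives $O^2=-2$. Thus $\{F,O\}$ has Gram matrix $\left(\begin{smallmatrix}0&1\\1&-2\end{smallmatrix}\right)$ of determinant $-1$; replacing $O$ by $O+F$ produces the standard hyperbolic basis, so $\langle F,O\rangle\cong H$, and being unimodular it is automatically a primitive sublattice of $\Pic(X)$.

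Next, for each reducible singular fiber the irreducible components not meeting $\sigma$ are smooth rational curves whose intersection form is the negative of the corresponding Cartan matrix (of type $A_n$, $D_n$, or $E_n$ according to the Kodaira type of the fiber), hence negative definite, while components of distinct fibers are disjoint. These components therefore span a negative definite lattice $S$. Any such component $\Theta$ lies in a fiber, so $\Theta\cdot F=0$, and by hypothesis $\Theta\cap\sigma(\PP^1)=\varnothing$, so $\Theta\cdot O=0$; hence $S\perp H$ and $T:=H\oplus S$ is an orthogonal direct sum sitting inside $\Pic(X)$. I would also record that the identity component of each fiber lies in $T$, being linearly equivalent to $F$ minus the $\ZZ$-combination of the non-identity components that appears in the fiber class.

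For the final assertion I would invoke the Shioda--Tate isomorphism. Let $E_\eta$ be the generic fiber, an elliptic curve over $\CC(t)$ with origin given by $\sigma$; its group of $\CC(t)$-points is $MW(X,\pi)$, and the base-point identification $P\mapsto [P]-[O_\eta]$ gives $\mathrm{Pic}^0(E_\eta)\cong MW(X,\pi)$. Restriction to the generic fiber followed by projection defines a homomorphism $\phi\colon\Pic(X)\to MW(X,\pi)$, $\phi(D)=D|_{E_\eta}-(D\cdot F)[O_\eta]$. Surjectivity holds because every $\CC(t)$-point, being a rational map from the smooth curve $\PP^1$ to the projective surface $X$ over $\PP^1$, extends to a genuine section whose class maps to the given point. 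For the kernel, $F$, $O$ and all non-identity fiber components visibly map to $0$, so $T\subseteq\ker\phi$; conversely, if $\phi(D)=0$ then after subtracting $(D\cdot F)\,O$ I may assume $D|_{E_\eta}\sim 0$, so $D|_{E_\eta}=\mathrm{div}(g)$ for a rational function $g$ on $E_\eta$, which is also a rational function on $X$ since $\CC(E_\eta)=\CC(X)$. Then $D-\mathrm{div}(g)$ restricts to $0$ on the generic fiber, hence is vertical, hence a $\ZZ$-combination of fiber components, which lies in $T$ by the previous paragraph. Therefore $\ker\phi=T$ and $\Pic(X)/(H\oplus S)\cong MW(X,\pi)$.

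The main obstacle is exactly the kernel computation in this last step: the passage from $\phi(D)=0$ to "$D$ is linearly equivalent to a vertical divisor" rests on extending a rational function off the generic fiber and then checking that a class restricting to $0$ on $E_\eta$ is supported on fibers, while surjectivity rests on the properness-based extension of $\CC(t)$-points to sections. Both inputs are standard for relatively minimal elliptic surfaces, and since a K3 surface carries no $(-1)$-curves its elliptic fibrations are automatically relatively minimal, so they apply directly here.
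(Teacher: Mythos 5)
The paper does not actually prove this proposition: it is imported verbatim as \cite[Lemma 3.9]{CD07} (Clingher--Doran) and used as a black box, so there is no in-paper argument to compare yours against. What you have written is the standard Shioda--Tate theorem specialized to elliptic K3 surfaces with section, and it is correct and complete: the computation $F^2=0$, $F\cdot O=1$, $O^2=-2$ (adjunction plus $K_X=0$) followed by the base change $O\mapsto O+F$ does exhibit a unimodular, hence primitively embedded, copy of $H$; the orthogonality and negative definiteness of $S$ follow as you say from Zariski's lemma and the disjointness of distinct fibers; and the kernel/surjectivity analysis of the map $\phi\colon\Pic(X)\to MW(X,\pi)$ is the standard one. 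The two points where care is genuinely needed are exactly the ones you flag: that every $\CC(t)$-point spreads out to an honest section (properness of $X$ over $\PP^1$ plus smoothness of the base curve), and that a divisor whose restriction to the generic fiber is trivial is, after subtracting a principal divisor, vertical and hence lies in the trivial lattice $T=H\oplus S$ --- for the latter your observation that the identity component of each fiber is expressible through $F$ and the non-identity components is the needed ingredient. Your remark that relative minimality is automatic on a K3 (no $(-1)$-curves) correctly justifies the use of the Kodaira classification for the ADE description of $S$. In short: a correct self-contained proof of a statement the paper only cites.
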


\begin{proposition} \label{mordellprop} \cite[Corollary VII.3.1]{Mi89}   The torsion subgroup of $MW(X, \pi) $ embeds in $ D(S) $. \end{proposition}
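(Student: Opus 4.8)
The plan is to deduce the statement almost formally from Proposition~\ref{fibprop}; the only genuinely geometric input is the shape of the sublattice spanned by a general fibre and the zero section. Write $O=\sigma(\PP^1)$ for the image of the section and $F$ for the class of a general fibre. On the K3 surface $X$ one has $F^2=0$ and $F\cdot O=1$, and by the adjunction (genus) formula, since $O\simeq\PP^1$ and $K_X=0$, one gets $2g(O)-2=O^2+K_X\cdot O$, hence $O^2=-2$. Thus the Gram matrix of $\{O,F\}$ is $\left(\begin{smallmatrix}-2&1\\1&0\end{smallmatrix}\right)$, of determinant $-1$, so the copy of $H$ furnished by Proposition~\ref{fibprop} is a \emph{nondegenerate unimodular} sublattice of $\Pic(X)$. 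First I would invoke the standard fact that a nondegenerate unimodular sublattice splits off as an orthogonal direct summand: setting $W=H^{\perp}\subset\Pic(X)$, one obtains $\Pic(X)=H\oplus W$. Since Proposition~\ref{fibprop} tells us that $S$ is orthogonal to $H$, we have $S\subset W$, and therefore
\[
MW(X,\pi)\;\cong\;\Pic(X)/(H\oplus S)\;\cong\;W/S .
\]

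Next I would identify the torsion subgroup of $W/S$. A coset $w+S$ is torsion exactly when $nw\in S$ for some $n\geq 1$, that is, when $w$ lies in the saturation $\overline{S}:=(S\otimes\QQ)\cap W$ of $S$ inside $W$. Consequently $MW(X,\pi)_{\mathrm{tors}}=\overline{S}/S$, which is finite because $\overline{S}$ and $S$ have the same rank.

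Finally I would produce the embedding into $D(S)=S^{*}/S$. For $w\in\overline{S}$ one has $w\in S\otimes\QQ$, so $w$ may be viewed as an element of $S_{\QQ}$; and for every $s\in S\subset W$ the value $\langle w,s\rangle$ is an integer, since $W$ is an integral lattice. Hence $\overline{S}\subseteq S^{*}$, and the chain $S\subseteq\overline{S}\subseteq S^{*}$ induces the injection
\[
MW(X,\pi)_{\mathrm{tors}}=\overline{S}/S\;\hookrightarrow\;S^{*}/S=D(S),
\]
which is the assertion of Proposition~\ref{mordellprop}. Equivalently, one may define the orthogonal projection $\mathrm{pr}\colon W\to S^{*}$, $w\mapsto\langle w,\cdot\rangle|_{S}$, which descends to $W/S\to D(S)$; on $\overline{S}/S$ the projection is the identity, so the torsion injects, while $\ker(\mathrm{pr}\bmod S)$ is the torsion-free narrow Mordell--Weil group.

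Granting Proposition~\ref{fibprop}, the argument is essentially formal and I do not expect a serious obstacle; the only points demanding care are the unimodularity of $H$ (equivalently $O^2=-2$, which is where the K3 hypothesis enters) and the resulting orthogonal splitting $\Pic(X)=H\oplus W$, together with the integrality check $\langle\overline{S},S\rangle\subseteq\ZZ$. It is worth stressing that the full group $MW(X,\pi)=W/S$ need \emph{not} embed in the finite group $D(S)$ when the Mordell--Weil rank is positive (then $S$ has infinite index in $W$, and $\mathrm{pr}$ acquires a large kernel); the torsion hypothesis is precisely what confines the relevant classes to the finite saturation quotient $\overline{S}/S$.
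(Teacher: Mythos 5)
Your argument is correct. The paper itself offers no proof of this proposition --- it is quoted verbatim from Miranda \cite[Corollary VII.3.1]{Mi89} --- so there is nothing internal to compare against; what you have written is essentially the standard Shioda--Tate-style derivation that underlies Miranda's statement. Granting Proposition~\ref{fibprop}, your chain of reductions is sound: $O^2=-2$ by adjunction on a K3 makes $\langle O,F\rangle$ a unimodular hyperbolic plane, which therefore splits off orthogonally, giving $MW(X,\pi)\cong W/S$ with $W=H^\perp$; the torsion subgroup is the saturation quotient $\overline{S}/S$; and integrality of the pairing on $W$ places $\overline{S}$ inside $S^*$. The one step you leave implicit is that the identification $S^*\subset S_\QQ$ (and hence the injectivity of $\overline{S}/S\hookrightarrow S^*/S$ as subgroups of $S_\QQ$) requires the pairing on $S$ to be nondegenerate; this holds because $S$, being spanned by fiber components disjoint from the zero section, is negative definite (Zariski's lemma), and is worth a sentence. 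Your closing remark that the full Mordell--Weil group does not embed in $D(S)$ when the rank is positive is also correct and clarifies why the torsion hypothesis is essential.
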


When K3 surfaces are realized as hypersurfaces in toric varieties, one can construct elliptic fibrations combinatorially.  As before, let $ \Delta \subset \mathcal N_\QQ $ be a reflexive polytope, and suppose $ P \subset \mathcal N $ is a plane such that $ \Delta \cap P $ is a reflexive polygon $ \nabla $.  Let $ m \in \mathcal M=\mathcal N^\vee $ be a normal vector to $P$.  Then $P$ induces a torus-invariant map $ \PP(\Delta^*) \to \PP^1 $  with generic fiber $ \PP_{\nabla} $, given in homogeneous coordinates by
\begin{equation*}\pi_m:  (z_1, \ldots z_r) \mapsto \left[ \prod_{\langle v_i, m \rangle >0} z_i^{\langle v_i, m \rangle}, \prod_{\langle v_i, m \rangle <0} z_i^{-\langle v_i, m \rangle} \right]. \end{equation*}
Restricting $ \pi_m $ to an anticanonical K3 surface, we get an elliptic fibration.  If $ \nabla  $ has an edge without interior points, this fibration will have a section as well.  See~\cite{KS02} for more details.

\subsection{Picard lattices of fibers of the Landau--Ginzburg models}

\begin{enumerate}
%1
\item[$X_{1-1}$]
Recall that a Landau--Ginzburg model of Givental type for $X_{1-1}$ is

$$
\left\{%
\begin{array}{ll}
y_0y_1y_2y_3y_4^3=1\\
y_1+y_2+y_3+y_4=1
\end{array}%
\right.
$$
with superpotential
$$
w=y_0.
$$
Consider the change of variables
$$
y_1=\frac{x}{x+y+z+t}, \ \ y_2=\frac{y}{x+y+z+t},\ \ y_3=\frac{z}{x+y+z+t},\ \ y_4=\frac{t}{x+y+z+t},
$$
where $x,y,z,t$ are projective coordinates.
We get the Landau--Ginzburg model
$$
y_0xyzt^3=(x+y+z+t)^6, \ \ \ \ w=y_0.
$$
Thus in the local chart, say, $t\neq 0$ we get the toric Landau--Ginzburg model from Table~\ref{table: Fano rank 1}
$$
f_{1-1}=\frac{(x+y+z+1)^6}{xyz}.
$$
A general element of the pencil that correspond to $f_{1-1}$ is birational to the general element of the initial Landau--Ginzburg model.
Inverse the superpotential:~$\mbox{$u=1/w$}$. We get the pencil given by
$$
y_1y_2y_3y_4^3=u,\ \ \ \ y_1+y_2+y_3+y_4=1.
$$
This is the Landau--Ginzburg model for weighted projective space $\PP(1:1:1:3)$, see~\cite[(2)]{CG11}.
(In particular, by~\cite[Theorem 1.15]{CG11} its general element is birational to a K3 surface.)
However make another change of variables in the Givental's Landau--Ginzburg model putting
 $ x = y_1$, $ y = y_2 $, $ z = y_4 $.  We get the family given by
\begin{equation*} \widetilde{f}_{1-1} = x + y + z + \frac{w}{xyz^3} - 1 = 0 \end{equation*}
Let $ \widetilde{\Delta}_{f_{1-1}}$ be a Newton polytope of the polynomial $ \widetilde{f}_{1-1} $ and let $ \widetilde{\nabla}_{f_{1-1}} =\widetilde{\Delta}_{f_{1-1}}^\vee$.
Then fibers of the pencil $\{ \widetilde{f}_{1-1} = 0 \}$ can be compactified inside $T_{\widetilde \nabla_{f_{1-1}}}$,
cf. Section~\ref{section: Calabi--Yau 3-folds}.
The normal vector $ (1,2,3) $ induces an elliptic fibration with a section.  The Weierstrass form of the fibers of the elliptic fibration is
\begin{equation*} -\frac{t^4 u}{48}+\frac{1}{864} t^5 \left(864 t^2+1728 t \lambda -t+864 \lambda ^2\right)+u^3+v^2 = 0. \end{equation*}
Hence by Tate's algorithm there are singular fibers of type $ II^* $ at $ t= 0, \infty $ and $ I_2 $ at $ t = -\lambda $.  Therefore, the K3 surfaces in question are polarized by $ H \oplus E_8(-1) \oplus E_8(-1) \oplus A_1(-1) = M_1.$

There is also another fibration induced by the normal $ (1,0,1)$ which gives a polarization by
$$
H \oplus E_7(-1) \oplus D_{10}(-1).
$$

%2
\item[$X_{1-2}$]
Compactify this family to the family of quartics $ (x+y+z+w)^4 - \lambda x y z w =0 $ in  $\PP^3 $.
Intersecting the quartic with the pencil of planes containing one of lines lying on it gives a pencil of divisors with the line as base locus.  Subtracting the line gives a pencil of cubics.  Blowing up the base points of this pencil gives an elliptic fibration with section, which gives a polarization of the K3 surfaces by $ H \oplus E_6(-1) \oplus A_{11}(-1) $.  This fibration has a 3-torsion section, and it can have no other torsion sections by Proposition \ref{mordellprop}.  Thus by Fact \ref{indexfact}, the generic fiber $ X $ of $ f_{1-2} $ has
$ d(NS(X)) = 4 $.  As we shall see, fibers of the Landau--Ginzburg model $X_{1-17}=\PP^3$ have fibrations of this type as well, and comparing parameters of the two Weierstrass equations, we see that fibers of compactified toric Landau--Ginzburg models for $X_{1-1}$ and $X_{1-17}$
are the same.  Because generic fibers of compactifications for $f_{1-17}$ are $M_2$-polarized (as we will see soon), $ NS(X) \simeq M_2 $.

%3
\item[$X_{1-3}$]
Compactify the fibers of $ f_{1-3} $ as a family of anticanonical divisors in $ \PP^1 \times \PP^2 $ via $ (x,y,z) \mapsto ((x:1) \times (y:z:1))$. Explicitly, $ f_{1-3}^{-1}(\lambda) $ compactifies to the K3 surface
\begin{equation*} Y_\lambda = \{ ((x: x_0),(y:z:w)) \in \PP^1 \times \PP^2 \: | \:  (x+x_0)^2(y+z+w)^3 - \lambda x x_0 y z w = 0 \}. \end{equation*}
The projection $ \PP^1 \times \PP^2 \to \PP^1 $ induces an elliptic fibration on $ Y_\lambda $ for generic $ \lambda $.  The map $ (x: x_0) \mapsto ( (x: x_0) , (1:-1:0)) $ gives a section of this elliptic fibration.  Putting the fiber over $ (1:a) $ into Weierstrass form
\begin{equation*} \frac{a^3 \lambda^3(24(1+a)^2-a\lambda)}{48} X -\frac{a^4 \lambda^4 (36 (1 + a)^2 (6 (1 + a)^2 - a s) + a^2 s^2)}{864}+X^3+Y^2 = 0 \end{equation*}
and using Tate's algorithm, we see singular fibers of Kodaira type $IV^* $ at $ a =0, \infty $; $ I_6 $ at $ a = -1 $; and $ I_1 $ where $ 27(a+1)^2 - \lambda a = 0 $.  Hence the rank 19 lattice $ H \oplus E_6(-1) \oplus E_6(-1) \oplus A_5(-1) $ embeds in the Picard lattice of $ Y_\lambda $.

As we will see later, the fibers of $ f_{1-16} $ also have fibrations of this type and are $M_3$-polarized.  Matching the Weierstrass equations, we conclude that fibers for $ f_{1-3} $ are isomorphic to fibers for $ f_{1-16} $, and hence fibers for $ f_{1-3} $ must also be $ M_3$-polarized.

%4
\item[$X_{1-4}$]
Similar to the case above, we compactify the family as anticanonical K3 surfaces in  $ \PP^1 \times \PP^1 \times \PP^1 $.  Projection onto one of the $ \PP^1 $ factors gives the generic K3 fiber an elliptic fibration with section.  Putting this into Weierstrass form and running Tate's algorithm give us an embedding of the rank 19 lattice $\mbox{$ H \oplus A_7(-1) \oplus D_5(-1) \oplus D_5(-1) $}$ into the Picard lattice of the generic fiber.  Moreover, the Mordell--Weil group is isomorphic to $ \ZZ/(4) $.  Applying the lattice facts above, with $ L =  H \oplus A_7 \oplus D_5 \oplus D_5 $, $ M = NS(X) $, $ G = MW(X) \simeq \ZZ/(4) $, and $\mbox{$ D = L^*/L \simeq \ZZ/(8) \oplus \ZZ/(4) \oplus \ZZ/(4) $}$, we have that $ d(M) = 8 $.  Examining the possibilities for $ G \subset D $, we conclude $ M^*/M \simeq \ZZ/(8) $, and that $ q_M $ of a generator is $ 7/8$.

Now we claim that $ M \simeq M_4 $.  Let $ e $ be a generator of the $ \langle -8 \rangle $ direct summand of $ M_4 $.  Since $ H$ and $ E_8 $ are unimodular, the group $ M_{4}^*/M_{4} \simeq \ZZ/(8) $ is generated by $ \epsilon = \frac{1}{8} e $, and $ q_{M_4}(\epsilon) = -1/8 $.  Note that
the element $ 3 \epsilon $ also generates $ M_4^*/M_4 $, and $ q_{M_4}(3 \epsilon) = -9/8 \equiv 7/8 \: (mod\ 2\ZZ) $.  Thus we see that $M$ and $M_4$ have the same discriminant form, and their ranks are sufficiently large relative to the number of generators of the discriminant groups.  Hence by Fact~\ref{uniquenessfact} one gets that $M$ and $M_4$ must be isomorphic.

%5
\item[$X_{1-5}$]
Compactify fibers to singular quartics. There are singularities at $ p_{ \{i \}, \{j \}, \{4\}}$ for $1 \leq i \neq j \leq 3$ of type $ \sD_4 $ and at $  p_{\{i \} \{ j \}, \{k,4\}}$ where $\{i,j,k\} = \{1,2,3\}$ of type $\sA_1$.  Thus the exceptional curves generate a sublattice of rank 15.  The quartics also contain lines $ L_{ \{i\}, \{j, 4\}} $ and conics $ C_{\{i,j,4\}} $ for $ 1 \leq i \neq j \leq 3 $, subject to relations  from
\begin{eqnarray*}
\nonumber H_{\{1\}} & = & 2 L_{\{1\} , \{2,4 \}} + 2 L_{\{1 \}, \{3,4 \}}, \\
\nonumber H_{\{2\}} &=& 2 L_{\{2\},\{1,4\}} + 2 L_{\{2\},\{3,4\}}, \\
\nonumber H_{\{3\}} &=& 2 L_{\{3\},\{1,4\}} + 2 L_{\{3\},\{2,4\}}, \\
 H_{124} &=& L_{\{1\}\{2,4\}} + L_{\{2\},\{1,4\}} + C_{\{1,2,4\}}, \\
\nonumber H_{\{1,3,4\}} &=& L_{\{1\},\{3,4\}} + L_{\{3\},\{1,4\}} + C_{\{1,3,4\}}, \\
\nonumber H_{\{2,3,4\}} &=& L_{\{2\},\{3,4\}} + L_{\{3\},\{2,4\}} + C_{\{2,3,4\}},
\end{eqnarray*}
which leave a lattice of rank 19.

Explicitly computing the intersection matrix for the identified curves (one needs to blow up singular curves for this and
figure out how strict images of the curves intersect exceptional curves, see Proposition~\ref{proposition:du-Val-intersection}) shows that they generate a lattice with determinant 10, discriminant group $ \ZZ/(10) $ with a generator $ \alpha $ having $ q(\alpha) =11/10 $.  Choosing instead the generator $ \beta = 3 \alpha $, we have
$$ q(\beta) = 99/10 \equiv -1/10 \ (mod \ 2 \ZZ). $$  Hence this lattice is isomorphic to $ M_5 $.

We have just show that for $ X $ a general K3 in this pencil, $ NS(X) $ contains $ M_5 $.  To see that $NS(X) $ actually {\em equals} $M_5 $, we note that since $ M $ is unimodular and contained in $NS(X) $, it must be a direct summand.  Because $NS(X) $ is an even  lattice of signature (1,18), the orthogonal complement of $ M \subset NS(X) $ must be even, negative definite, and rank 1 and hence be equal to $ \langle -2n \rangle $ for some $n$.  From Fact \ref{indexfact}, $ 10/(\det NS(X)) = 5/n $ must be a square, and hence $n=5$.

Alternately, the intersection of one of the singular quartics with a plane containing $ L_{ \{1\}, \{2,4\}} $ consists of $ L_{ \{1\}, \{2,4\}} $ and a (generically) smooth cubic.  The pencil of these cubics, with base points blown up, give an elliptic fibration on the minimal resolution of the quartic.  This fibration has singular fibers of types $ I_2^* $, $I_1^* $, $ I_6 $, and 3 ones of type $I_1 $.  It also has a section of infinite order and a 2-torsion section.  Hence the Picard lattice of the generic member of this family is a rank 19 lattice containing $$
H \oplus D_6(-1) \oplus D_5(-1) \oplus A_5(-1)
$$ with quotient $ \ZZ \oplus \ZZ/(2) $.

%6
\item[$X_{1-6}$]
Again, we can compactify the fibers for $ f_{1-6} $ to singular quartics in the standard way.  There are $\sA_1 $ singularities at $ (1:-1:0:0)$, $(1:0:-1:0)$, and $ (0:1:-1:0)$; $ \sA_2$ singularities at $ (1:0:0:0) $ and $(0:0:1:-1)$; and $\sA_3$ singularities at $ (0:1:0:0) $ and $ (1:0:0:-1)$.  These quartics also contain twelve lines:
\begin{align*}  L_{ \{1\}, \{2,3\}},
 L_{ \{1\}, \{3,4\}},
 L_{ \{1\}, \{2,3,4\}},
 L_{ \{2\}, \{3\}},
 L_{ \{2\}, \{3,4\}},
 L_{ \{2\}, \{1,3,4\}}, \\
 L_{ \{3\}, \{4\}},
 L_{ \{3\}, \{1,4\}},
 L_{ \{3\}, \{1,2,4\}},
 L_{ \{4\}, \{1,3\}},
 L_{ \{4\}, \{2,3\}},
 L_{ \{4\}, \{1,2,3\}} \end{align*}
subject to relations coming from setting equal the hyperplane sections $ H_{\{1\}}$, $ H_{\{2\}}$, $ H_{\{3\}}$, $ H_{\{4\}}$, $ H_{\{1,3,4\}}$, $ H_{\{1,2,3,4\}}$, $ H_{\{3,4\}}$, and $ H_{\{2,3\}}$.  These relations show that only six of these twelve lines are linearly independent.  Hence the exceptional locus and strict transforms of lines generate a sublattice of the Picard lattice of the minimal resolutions of K3 surfaces of rank 13+6 =19.

By explicitly computing the intersection matrix for the 25 rational curves identified, we conclude that the lattice they generate has determinant $ \pm 12 $, discriminant group $ \ZZ/(12) $, and discriminant form $ 23/12 \equiv -1/12 \ ({mod} \ 2 \ZZ) $.  Hence this lattice is isomorphic to $ M_6 $.  Similar to the argument in the case for $X_{1-5}$, Fact \ref{indexfact} shows that the Picard lattice must be equal to $ M_6 $.

%7
\item[$X_{1-7}$]
Again, we compactify fibers to singular quartics.  The quartics are defined by
\begin{equation*} (x+y+z+w)(yz(x+y+z+w)+(y+z+w)(z+w)^2) - \lambda xyzw=0. \end{equation*}
The singularities are: type $\sA_1$ at $(0:1:0:-1)$, type $\sA_2 $ at $(1:0:0:0)$, $ (0:1:-1:0)$, and $(\lambda:0:-1:1)$, type $ \sA_3$ at $(0:0:1:-1)$, and type $ \sA_4$ at $(1:-1:0:0)$.  The quartics contain eight lines
\begin{align*}
L_{\{i\}, \{1,2,3,4\}} \ (1 \leq i \leq 4), \ L_{\{2\},\{3,4\}}, \  L_{\{3\}, \{ 2,4 \}},\\ L_{\{3\}, \{4\}}, L_{\{234\},*} = \{ y+z+w= x - \lambda w = 0 \}
\end{align*}
and two conics
\[ C_1 = \{ x=yz+(z+w)^2 = 0 \}, \ C_4 = \{ w =xy+(y+z)^2=0 \} \]
subject to relations coming from setting equal the hyperplane sections $ H_{\{1\}}$, $ H_{\{2\}}$, $ H_{\{3\}}$, $ H_{\{4\}}$, $ H_{\{2,3,4\}} $, and $ H_{\{1,2,3,4\}}$.  These relations show that these 10 rational curves on the quartic generate a sublattice of rank 5 in the Picard lattice.  Hence the exceptional locus and the strict transforms of these 10 curves generate a rank 19 sublattice of the Picard lattice of the minimal resolution.

Explicitly computing the intersection matrix for the curves identified shows they generate a lattice isomorphic to $M_7$.  Hence as in the cases above, the Picard lattice of the general fiber is $M_7 $.

%8
\item[$X_{1-8}$]
Compactifying to singular quartics gives singularities of type $ \sA_1 $ at
\[ (-1:0:0:1), (0:-1:0:1), (0:0:-1:1),\] \[ (1:-1:0:0), (1:0:-1:0),(0:1:-1:0), \]
and of type $\sA_2$ at
\[ (1:0:0:0), (0:1:0:0), (0:0:1:0).\]
 There are also 13 lines
\[ L_{\{i\}, \{1,2,3,4\}}, L_{\{j\}, \{4\}}, L_{\{j\},\{k,4\}} \: \mathrm{for} \: 1 \leq i \leq 4, \:  1 \leq j \neq k \leq 3, \]
subject to relations from setting equal the hyperplane sections by $ H_{\{i \}}$, $H_{\{j,4\}}$, and $H_{\{1,2,3,4\}}$ for $ 1 \leq i \leq 4 $, $ 1 \leq j \leq 3 $.  These relations show that the lattice generated by the 13 lines has rank 7.  Hence the strict transforms  of the lines and the exceptional locus generate a lattice of rank 19.

By explicitly computing the intersection matrix for the 25 rational curves identified, we conclude that the lattice they generate has determinant $ \pm 16 $, and discriminant group $ \ZZ/(16) $ with a generator $ \alpha $ such that $ q(\alpha) = 23/16 $.  Taking $ \beta = 5 \alpha $ as generator, we have $ q(\beta) = 575/16 \equiv -1/16 \ (\mathrm{mod} \ 2 \ZZ) $.  Hence this lattice is isomorphic to $ M_8 $.  In this case, Fact \ref{indexfact} shows that the Picard lattice of the generic K3 in the pencil is either $M_8$ or $M_2$.  We now use the results of \cite{Go07}, which implies that this pencil has the same variation of Hodge structure as the $M_8$ pencil, and hence a different variation from the $M_2$ variation.  Thus we conclude that this pencil must be $M_8$-polarized.

%9
\item[$X_{1-9}$]
Compactify fibers to quadrics in $ \PP^3 $ in the standard way.
These quartics have an elliptic fibration with a section coming from intersections with planes containing $ L_{ \{4\}, \{1,2,3\}} $ that gives a polarization of the Picard lattice of the minimal resolution by the rank 19 lattice $$ H \oplus A_8(-1) \oplus A_2(-1) \oplus A_1(-1) \oplus E_6(-1) .$$  By Proposition~\ref{mordellprop} there can be no sections of this fibration other than the zero section, and so the Picard lattice must be equal to
$$ H \oplus A_8(-1) \oplus A_2(-1) \oplus A_1(-1) \oplus E_6(-1)
=M_9.$$

%10
\item[$X_{1-10}$]
The quartic compactification contains lines
\[ L_{ \{1\}, \{3\}},
L_{ \{1\}, \{4 \} },
L_{ \{1\}, \{2,4\}},
L_{\{1\}, \{3,4\}},
L_{\{2\}, \{3\}},
L_{ \{2 \}, \{4\} },
L_{\{2\}, \{1,4\}},
L_{\{2\},\{3,4\}},
L_{\{3\}, \{1,4\}}, \]
\[ L_{\{3\}, \{2,4\}},
L_{ \{1,3 \}, \{4\} }
L_{ \{2,3 \}, \{4 \} },
L_{ \{1,4\}, *} = \{ x+w=(s-2)x+y =0 \},
\]\[L_{\{2,4\},*} = \{ y+w = (s-2)y+x = 0 \}
\]
and conics
\[ C_{\{3,4\}} = \{ z+w = xy +(\lambda-2)z^2 =0 \},\] \[C_{\{1,2,4\}} =\{ x+y+w= x y + (\lambda - 3) (x + y) z + z^2=0\} , \] \[ C = \{ z = (\lambda+1)w, (\lambda+1)w^2+xy=0 \},\] \[ C' = \{ z = (\lambda+1)w, 2 w (w + x + y) + \lambda w (x + y) + x y=0 \} \]
subject to relations coming from $ H_{\{i\}} $,
and singularities of type $ \sA_3 $ at $(1:0:0:0)$ and $ (0:1:0:0)$, type $ \sA_2 $ at $ (0:0:1:0) $, and type $ \sA_1 $ at $ (-1:0:0:1)$ and $ (0:-1:0:1)$.
The lines are subject to relations from setting equal $ H_{\{1\}} $, $H_{\{2\}}$, $ H_{\{3\}} $, $ H_{\{4\}} $, $ H_{\{1,3\}} $, $H_{\{2,3\}} $, $ H_{\{1,4\}} $, $ H_{\{2,4\}} $, and $ H_{\{3,4\}} $.

Computing the intersection matrix shows that the Picard lattice is $M_{10}$.

%11
\item[$X_{1-11}$]
By Proposition~\ref{proposition:CY hyperelliptic-CY}, the fibers we are interested in are birational to quartics
\begin{equation*} %\tilde{f}_{1-11} =
\{x^4 - (\lambda y -z)(x w -xy-w^2)z = 0\}. \end{equation*}
We may consider the elliptic fibration on the fibers for the family of quartics induced from intersections with planes containing $ L_{\{1\}, \{3\}} $.  Putting this fibration into Weierstrass form and applying Tate's algorithm gives a polarization by $ H \oplus E_7(-1) \oplus D_{10}(-1) $.  Comparing the Weierstrass form of this fibration to the Weierstrass form for the similar fibration for $f_{1-1}$, we conclude that the Picard lattice must be $ M_1 $.

%12
\item[$X_{1-12}$]
Compactify the fibers of the pencil for $f_{1-12}$ to quartics in $\PP^3$.
Intersecting the quartics with the pencil of planes containing $ L_{\{1\}, \{2,4\}} $, subtracting this line, and blowing up base points gives an elliptic fibration with section on the K3 surfaces.  The induced polarization is by the rank 19 lattice $ H \oplus E_6(-1) \oplus A_{11}(-1) $.  Comparing with the similar fibration for $X_{1-17}$, the generic lattice must be $ M_2 $.

%13
\item[$X_{1-13}$]
Compactify the fibers of the pencil for $f_{1-13}$ to quartics in $\PP^3$.
Intersecting the quartics with planes containing the line $ L_{\{1\}, \{4\}} $ gives an elliptic fibration that results in a polarization by $ H\oplus E_6(-1) \oplus E_6(-1) \oplus A_5(-1).
$  The Mordell--Weil group of this fibration is $ \ZZ/(3)$.  Hence, applying lattice facts, $ d(NS(X)) = \pm 6 $.   In fact, by matching parameters with the similar fibrations for $X_{1-3}$ and $X_{1-16}$, we conclude that $ NS(X) \simeq M_3 $.

%14
\item[$X_{1-14}$]
We can compactify the pencil in the toric variety $ T_{\nabla_{f_{1-14}}}$ and consider the elliptic fibration with section induced by $ (0,0,1) $.  This yields a fibration with fibers of type $ I_8 $ at $ \infty $ and $ I_1^* $ at $ t = \frac{1}{2} \left(\lambda \pm \sqrt{\lambda ^2+16}\right) $.  Hence the fibers carry a polarization by $ H \oplus A_7(-1) \oplus D_5(-1) \oplus D_5(-1) $.  Moreover, the Mordell--Weil group is isomorphic to $ \ZZ/(4) $. So, as for $X_{1-4}$, these K3 surfaces are $ M_4$-polarized.

%15
%\item
\item[$X_{1-15}$]
Compactify the fibers for $ f_{1-15} $ in $ T_{\nabla_{f_{1-15}}} $. The vector $ m = (1,1,0) $ induces an elliptic fibration on the generic compactified fiber $Y_\lambda$.  The Weierstrass form of this fibration is
\begin{equation*}
-\frac{1}{48} t^2 P(s,t) u+\frac{1}{864} t^3 \left(s^2 (-t)+4 t^2+12 t+8\right) \left( P(s,t) + 24(1+t)^2 \right)+u^3+v^2= 0,
\end{equation*}
where $ P(s,t) = s^4 t^2-8 s^2 t^3-24 s^2 t^2-16 s^2 t+16 t^4+24 t^3-8 t^2-24 t-8 $.  This fibration has a section of infinite order given by
\begin{equation*}
t \mapsto \left( -\frac{1}{12} t \left(s^2 t+8 t^2+12 t+4\right),-\frac{1}{2} s t^2 (t+1)^2 \right) = (u,v)
\end{equation*}
and a 2-torsion section given by
\begin{equation*}
t \mapsto \left( \frac{1}{12} \left(-s^2 t+4 t^2+12 t+8\right), 0 \right) = (u,v). \end{equation*}

Hence by Proposition \ref{fibprop}, the lattice $ NS(X) $ is a rank 19 lattice containing $ H \oplus D_6(-1) \oplus D_5(-1) \oplus A_5(-1) $ with the quotient $ \ZZ \oplus \ZZ/(2) $.  Matching this elliptic fibration with the one for $X_{1-5}$, we conclude that fibers for $ f_{1-15} $ are isomorphic to fibers of $f_{1-5}$, and hence these K3 surfaces are $M_5$-polarized.

%16
\item[$X_{1-16}$]
The vector $ m = (1,2,1)$ defines an elliptic fibration with section on the generic fiber $Y_\lambda$ of the Landau--Ginzburg model.  The Weierstrass form of this fibration is
\begin{multline*}-\frac{1}{48} s t^3 u \left(s^3 t+48 t+48\right)+\frac{1}{864} t^5 \left(s^6 (-t)-\right.\\
\left.-72 s^3 t-72 s^3+864 t^2+1728 t+864\right)+u^3+v^2 = 0, \end{multline*}
and there are singular fibers of types $III^* $ at $ t=0$, $ II^* $ at $ t= \infty $, and $ I_3 $ at $ t=-1$.  Hence the K3 fiber is polarized by the rank 19 lattice $ N \oplus A_2(-1) $.  By Proposition~\ref{mordellprop}, there can be no torsion sections (the discriminant groups of the two singular fibers have coprime orders), so $ NS(Y_\lambda) = K \oplus A_2(-1) $, where $ K=H \oplus E_8(-1) \oplus E_7(-1)$.

Now note that $ D(K \oplus A_2(-1)) = \ZZ/(2) \oplus \ZZ/(3) \simeq \ZZ/(6) $. If we write the isomorphism $ \ZZ/(6) \to \ZZ/(2) \oplus \ZZ/(3) $ as $ 1 \mapsto (1,1) $, then we can write the form $ q_{N \oplus A_2}: \ZZ/(6) \to \QQ/(2 \ZZ) $ by specifying
$$ q_{N \oplus A_2(-1)}(1) = 1/2 + 4/3 = 11/6 \equiv -1/6 \ (mod\ 2\ZZ). $$  Thus $ q_{N \oplus A_2} \simeq q_{\langle -6 \rangle} \simeq  q_{M_{3}} $.  Hence by Fact \ref{uniquenessfact}, $ NS(X) \simeq M_3 $.

Because it is useful for cases $X_{1-3}$ and $X_{1-13}$, we note that $ m = (1,0,0) $ gives a fibration with lattice $ H \oplus E_6(-1) \oplus E_6(-1) \oplus A_5(-1) $ plus additional sections.

%17
\item[$X_{1-17}$]
Anticanonical K3's in $ \PP^3 $ have generic Picard lattice $ \langle 4 \rangle $ generated by the hyperplane section.  We claim that the mirror family has rank 19 Picard lattice $ M_2 $.  We can see this explicitly from the toric  fibration on $T_{\nabla_{f_{1-17}}}$ defined by the normal vector $m= (1,-1,-2) $.  Restricting this fibration to the generic fiber of the Landau--Ginzburg model gives the fiber the structure of an elliptic surface with Weierstrass equation
\begin{equation*} -\frac{1}{48} \left(s^4+144\right) t^4 u+\frac{1}{864} t^5 \left(s^6 (-t)+648 s^2 t+864 t^2+864\right)+u^3+v^2 =0. \end{equation*}
Applying Tate's algorithm we see singular fibers of Kodaira type $ II^* $ at $ t= 0, \infty $, and hence the K3 surfaces are $ M $-polarized.  Moreover,
\begin{equation*} (u,v) = \left( -\frac{4 s^4+120 s^2+108}{12 s^2},  \frac{3 \left(4 s^4+30 s^2+18\right)}{2 s^3}\right) \end{equation*}
gives a section of infinite order in $ MW(\pi_m) $, enhancing the polarization to rank 19.  Since these are $M$-polarized rank 19 K3 surfaces, they must be $M_n$ polarized for some $n$, and as in the case for $X_{1-8}$, we now appeal to \cite{Go07} to conclude that the Picard lattice must be $ M_2 $.

\end{enumerate}

\begin{remark}[cf. Remark~\ref{remark: Golyshev modular}] In \cite{Go07} it is shown that the Landau--Ginzburg models for the cases under consideration have the same variation of Hodge structures (up to pullbacks) as modular variations associated to products of elliptic curves with isogeny.  Explicitly, for $X$ one of the Fano threefolds under consideration, let $ (N,d) = \left(\frac{\deg(X)}{2 \cdot \mathrm{ind}(X)^2}, \mathrm{ind}(X)\right)$.  Let $ X_0(N)+N $ denote the modular curve $ \overline{(\Gamma_0(N)+N) \backslash \mathbb{H}} $, and let $ t_N $ be a hauptmodul for $ X_0(N)+N $ such that $t_N = 0 $ at the image of the cusp $ i \infty $.  The Picard--Fuchs equation for the Landau--Ginzburg model of $X$ is now the pullback of the symmetric square of the uniformizing differential equation for $ X_0(N)+N $ by $ \lambda = t_N^d $.

We can check that the pullback part of Golyshev's theorem follows in a straightforward way from the geometry of the fibers of the Landau--Ginzburg model.

\begin{itemize}

\item {\bf Cases $X_{1-1}$ and $X_{1-11}$:} Both have polarizations by $ H \oplus E_7(-1) \oplus D_{10}(-1) $.  Clearly, since the moduli space of $ H \oplus E_7(-1)  \oplus D_{10}(-1) $ polarized  K3 surfaces is 1-dimensional, we see {\em a posteriori} that the Landau--Ginzburg models $ f_{1-1} $ and $ {f}_{1-11} $ have isomorphic K3-compactified fibers.

\item {\bf Cases $X_{1-2}$ $X_{1-12}$, and $X_{1-17}$:}    Similarly, since the moduli space of K3 surfaces polarized by $H \oplus E_6(-1) \oplus A_{11}(-1)$ is 1-dimensional, we see {\em a posteriori} that the Landau--Ginzburg models $ f_{1-2} $, $ f_{1-12} $, and $ f_{1-17} $ have isomorphic fibers.    Writing the Weierstrass forms for the elliptic fibrations that give this polarization in each case, we can match the fibrations fiberwise to check that indeed fibers for $X_{1-12}$ are given from fibers for $X_{1-2}$ by pullback $ \lambda \mapsto \lambda^2 $ , and similarly the compactification for $X_{1-17}$ is a pullback $ \lambda \mapsto \lambda^4 $ of the compactification for $X_{1-17}$.
\item {\bf $X_{1-3}$, $X_{1-13}$, and $X_{1-16}$:} Similar to the previous cases, using the polarizations by $ H \oplus E_6(-1) \oplus E_6(-1) \oplus A_5(-1) $.
\item {\bf Cases $X_{1-4}$ and $X_{1-14}$:} Similar to the previous cases, using the polarizations by $ H \oplus A_7(-1) \oplus D_5(-1) \oplus D_5(-1) $.
\item {\bf Cases $X_{1-5}$ and $X_{1-15}$:}  In this case, the pullback was used to derive the polynomial $ f_{1-15} $.
\end{itemize}

\end{remark}

\part{Katzarkov--Kontsevich--Pantev conjectures}
\label{part:KKP}
This part is based on papers~\cite{KKP17},~\cite{LP18}, and~\cite{CP18}.
We study here Katzarkov--Kontsevich--Pantev conjectures about Hodge numbers of Landau--Ginzburg models
and prove them in the cases of dimension 2 and 3.
\label{section: KKP}

\section{Formulation}

Let us recall some numerical conjectures from~\cite{KKP17} which are supposed
to follow from the conjectural Homological Mirror Symmetry between Fano manifolds and Landau--Ginzburg models.

\begin{definition} \label{def-1}\emph{A Landau--Ginzburg model} is a pair $(Y,w)$, where

\begin{enumerate}
\item $Y$ is a smooth complex quasi-projective variety with trivial canonical bundle $K_Y$;

\item $w\colon Y\to \bbA ^1$ is a morphism with a compact critical locus $crit(w)\subset Y$.
\end{enumerate}

\end{definition}

\begin{remark}
Note that there are no conditions on singularities of fibers.
\end{remark}

Following~\cite{KKP17} we assume that there exists a {\it tame} compactification of the Landau--Ginzburg model as defined below (cf. Definition~\ref{definition: log CY}).

\begin{definition} \label{def-3}\emph{A tame compactified Landau--Ginzburg model} is the data $((Z,f),D_Z)$, where

\begin{enumerate}
\item $Z$ is a smooth projective variety and $f\colon Z\to \bbP ^1$ is a flat morphism.

\item $D_Z=(\cup _i D^h_i)\cup (\cup _jD_j^v)$ is a reduced normal crossings divisor such that

\begin{itemize}
\item[(i)] $D^v=\cup _jD^v_j$ is a scheme-theoretical pole divisor of $f$, i.e. $f^{-1}(\infty)=D^v$. In particular $ord _{D^v_j}(f)=-1$ for all $j$;

\item[(ii)] each component $D_i^h$ of $D^h=\cup _iD^h_i$ is smooth and horizontal for $f$, i.e. $f\vert _{D^h_i}$ is a flat morphism;

\item[(iii)] The critical locus $crit(f)\subset Z$ does not intersect $D^h$.

\end{itemize}

\item $D_Z$ is an anticanonical divisor on $Z$.

\noindent One says that $((Z,f),D_Z)$ is \emph{a compactification of the Landau--Ginzburg model}
$(Y,w)$ if in addition the following holds:

\item $Y=Z\setminus D_Z$, $f\vert _Y=w$.
\end{enumerate}

\end{definition}

\begin{remark} In~\cite{KKP17} the authors require in above definitions an additional choice of
compatible holomorphic volume forms on $Z$ and $Y$. Since these forms will play no role
in this paper we omitted them.
\end{remark}

Assume that we are given a Landau--Ginzburg model $(Y,w)$ with a tame compactification~$((Z,f),D_Z)$ as above.
We denote by $n=\dim Y=\dim Z$ the (complex) dimension of~$Y$ and $Z$. Choose a point $b\in  \bbA ^1$ which is near $\infty$ and such that the fiber $Y_b=w^{-1}(b)\subset Y$ is smooth.
In~\cite{KKP17} the authors define geometrically three sets of what they call ``Hodge numbers''  $i^{p,q}(Y,w)$, $h^{p,q}(Y,w)$, $f^{p,q}(Y,w)$. Let us recall the definitions.

\subsection{The numbers $f^{p,q}(Y,w)$}\label{subs-def-fpq}
Recall the definition of the logarithmic de Rham complex~$\Omega^* _Z(log\,  D_Z)$.
Namely, $\Omega ^s _Z(log\, D_Z)=\wedge ^s \Omega^1 _Z(log\,D_Z )$ and $\Omega^1 _Z(log\,D_Z )$ is a locally free~$\mbox{$\cO _Z$-module}$ generated locally by
$$\frac{dz_1}{z_1},\ldots,\frac{dz_k}{z_k},dz_{k+1},\ldots,dz_n$$
if $z_1\cdot \ldots\cdot z_k=0$ is a local equation of the divisor $D_Z$. Hence in particular $\Omega ^0 _Z(log\, D_Z)=\cO _Z$.

The numbers $f^{p,q}(Y,w)$ are defined using the  subcomplex $\Omega^* _Z(log\, D_Z ,f)\subset \Omega^* _Z(log\, D_Z)$ of~$f$-{\it adapted forms}, which we recall next.

\begin{definition}[{\cite[Definition 2.11]{KKP17}}] For each $a\geq 0$ define \emph{a sheaf $\Omega ^a _Z(log\, D_Z ,f)$ of $f$-adapted logarithmic forms} as a subsheaf of $\Omega ^a _Z(log\, D_Z)$ consisting of forms which stay logarithmic after multiplication by $df$. Thus
$$\Omega ^a _Z(log\, D_Z ,f)=\{\alpha \in \Omega ^a _Z(log\, D_Z)\ \vert \ df\wedge \alpha \in \Omega ^{a+1} _Z(log\, D_Z )\},$$
where one considers $f$ as a meromorphic function on $Z$ and $df$ is viewed as a meromorphic~$\mbox{1-form}$.
\end{definition}

\begin{definition}[{\cite[Definition 3.1]{KKP17}}] \emph{The Landau--Ginzburg Hodge numbers} $f^{p,q}(Y,w)$ are defined as follows:
$$f^{p,q}(Y,w)=\dim H^p(Z,\Omega ^q _Z(log\, D_Z ,f)).$$
\end{definition}

\subsection{The numbers $h^{p,q}(Y,w)$}
Let $N\colon V\to V$ be a nilpotent operator on a finite dimensional vector space $V$ such that $N^{m+1}=0$. Recall that this data defines a canonical (monodromy) {\it weight filtration centered at $m$}, $W=W_* (N,m)$ of $V$
$$0\subset W_0(N,m)\subset W_1(N,m)\subset \ldots\subset W_{2m-1}(N,m)\subset W_{2m}(N,m)=V$$
with the properties
\begin{enumerate}
\item $N(W_i)\subset W_{i-2}$,
\item the map $N^l\colon gr ^{W,m}_{m+l}V\to gr ^{W,m}_{m-l}V$ is an isomorphism for all $l\geq 0$.
\end{enumerate}

Let $S^1\simeq C\subset \bbP ^1$ be a smooth loop passing through the point $b$ that goes once around $\infty$ in the counter clockwise direction in such a way that there are no singular points of $w$ on or inside~$C$. It gives
the monodromy transformation
\begin{equation*} \label{monodromy-1}T\colon H^* (Y_b)\to H^*(Y_b)
\end{equation*}
and also the corresponding monodromy transformation on the relative cohomology
\begin{equation}\label{monodromy-2} T\colon H^* (Y,Y_b)\to H^* (Y,Y_b)
\end{equation}
in such a way that the sequence
\begin{equation*}\label{long-exact-seq}
\ldots\to H^m(Y,Y_b)\to H^m(Y)\to H^m(Y_b)\to H^{m+1}(Y,Y_b)\to \ldots
\end{equation*}
 is $T$-equivariant, where $T$ acts
trivially on $H^* (Y)$. (See Subsection~\ref{subsection:monodromy} for the construction and the discussion of the
monodromy transformation $T\colon H^* (Y,Y_b)\to H^* (Y,Y_b)$.) Since we assume that the infinite fiber $f^{-1}(\infty)\subset Z$ is a reduced divisor with normal crossings, by Griffiths--Landman--Grothendieck Theorem  (see~\cite{Ka70}) the operator $T\colon H^m (Y_b)\to H^m (Y_b)$ is unipotent and $\left(T-\id\right)^{m+1}=0$. It follows that the transformation \eqref{monodromy-2} is also unipotent. Denote by $N$ the logarithm of the transformation \eqref{monodromy-2}, which is therefore a nilpotent operator on $H^* (Y,Y_b)$.
One has $N^{m+1}=0$.

\begin{definition}[{\cite[Definition 7]{LP18}}] \label{def-Fano-type} We say that the Landau--Ginzburg model $(Y,w)$ is {\it of Fano type} if the operator~$N$ on the relative cohomology $H^{n+a} (Y,Y_b)$ has the following properties:
\begin{enumerate}
\item $N^{n-|a|}\neq 0$,

\item $N^{n-|a|+1}=0$.
\end{enumerate}
\end{definition}

The above definition is motivated by the expectation that
the Landau--Ginzburg model of Fano type usually appears as a mirror of a projective Fano
manifold %$X$
(see Subsection~\ref{subsection: Hodge numbers conjectures}).

\begin{definition}[{see~\cite[Definition 3.2]{KKP17} and~\cite[Definition 8]{LP18}}]
\label{definition:hpq} Assume that $(Y,w)$ is a Landau--Ginzburg model of Fano type. Consider the relative cohomology $H^*(Y,Y_b)$ with the nilpotent operator $N$ and the induced canonical filtration $W$. \emph{The Landau--Ginzburg Hodge numbers} $h^{p,q}(Y,w)$ are defined as follows:
$$h^{p,n-q}(Y,w)=\dim gr _{2(n-p)}^{W,n-a}H^{n+p-q}(Y,Y_b)\ \ \text{if $a=p-q\geq 0$},$$
$$h^{p,n-q}(Y,w)=\dim gr _{2(n-q)}^{W,n+a}H^{n+p-q}(Y,Y_b)\ \ \text{if $a=p-q< 0$}.$$

\end{definition}

\begin{remark}
\label{remark:correction}
Our Definition~\ref{definition:hpq} differs from~\cite[Definition~3.2]{KKP17}
\begin{equation}\label{wrong-def}h^{p,q}(Y,w)=\dim gr _{p}^{W,p+q}H^{p+q}(Y,Y_b)
\end{equation}
by the indices of the grading. The equation \eqref{wrong-def} seems not to be what the authors had in mind. For example according to the equation \eqref{wrong-def} the index $p$ is allowed to vary from $0$ to $2n$ and $q$ is allowed to be negative (details see in Subsection~\ref{subsection: Hodge numbers conjectures}).
\end{remark}

\subsection{The numbers $i^{p,q}(Y,w)$}
\label{subsection: i-numbers}
Recall that for each $\lambda \in \bbA ^1$ one has the corresponding sheaf~$\phi _{w-\lambda}\bbC _Y$ of vanishing cycles for the fiber $Y_\lambda$. The sheaf $\phi _{w-\lambda}\bbC _Y$ is supported on the fiber $Y_\lambda$ and is equal to zero if $\lambda $ is not a critical value of $w$. From the works of Schmid, Steenbrink, and Saito
it is classically known that the constructible complex $\phi _{w-\lambda}\bbC _Y$ carries a structure of a mixed Hodge module and so its hypercohomology inherits a mixed Hodge structure. For a mixed Hodge module $S$ we will denote by $i^{p,q}S$ the $(p,q)$-Hodge numbers of the $p+q$ weight graded piece $gr ^W_{p+q}S$.

\begin{definition}[{\cite[Definition 3.4]{KKP17}}]

\begin{enumerate}
\item
Assume that the horizontal divisor $D^h\subset Z$ is empty, i.e. assume that the map $w\colon Y\to \bbA ^1$ is proper. Then \emph{the Landau--Ginzburg Hodge numbers}~$i^{p,q}(Y,w)$ are defined as follows:
$$
i^{p,q}(Y,w)=\sum _{\lambda \in \bbA ^1}\sum _ki^{p,q+k}\bbH ^{p+q-1}(Y_\lambda ,
\phi _{w-\lambda}\bbC _Y).
$$

\item
In the general case denote by $j\colon Y\hookrightarrow Z$ the open embedding and define
similarly
$$
i^{p,q}(Y,w)=\sum _{\lambda \in \bbA ^1}\sum _ki^{p,q+k}\bbH ^{p+q-1}(Y_\lambda ,
\phi _{w-\lambda}{\bf R}j_{*}\bbC _Y).
$$
\end{enumerate}
\end{definition}

\subsection{Conjectures}
\label{subsection: Hodge numbers conjectures}
It is proved in~\cite{KKP17} that for every $m$ the above numbers satisfy the equalities
\begin{equation} \label{sum-of-hodge}
\dim H^m(Y,Y_b;\bbC)=\sum _{p+q=m}i^{p,q}(Y,w)=\sum _{p+q=m}f^{p,q}(Y,w).
\end{equation}
The authors state several conjectures which together refine the
equalities \eqref{sum-of-hodge}. The next is a modification of~\cite[Conjecture $3.6$]{KKP17}, see Remark~\ref{remark:correction}.

\begin{conjecture}\label{conj-1} Assume that $(Y,w)$ is a Landau--Ginzburg model of Fano type. Then for every $p,q$ there are  equalities
\begin{equation*}\label{equat-of-indiv-hodge-numbers}
h^{p,q}(Y,w)=f^{p,q}(Y,w)=i^{p,q}(Y,w).
\end{equation*}
\end{conjecture}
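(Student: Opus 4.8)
The plan is to realize all three families $h^{p,q}(Y,w)$, $f^{p,q}(Y,w)$ and $i^{p,q}(Y,w)$ as the Hodge--Deligne numbers of a single mixed Hodge structure carried by the relative cohomology $H^{n+\bullet}(Y,Y_b)$, and then to verify the precise matching of gradings in the only cases where this is accessible, namely $n=2$ and $n=3$, using the compactifications built earlier in the paper. By \cite{KKP17} (equation \eqref{sum-of-hodge}) the total dimensions already agree, $\sum_{p+q=m} f^{p,q}=\sum_{p+q=m} i^{p,q}=\dim H^m(Y,Y_b)$, and the graded pieces of the weight filtration $W_\bullet(N)$ sum to the same dimensions; so all three families agree in each total degree, and it suffices to prove two of the three pointwise equalities.

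First I would treat $f^{p,q}=i^{p,q}$. Both are intrinsic Hodge-theoretic invariants of the data $((Z,f),D_Z)$: the $f^{p,q}$ arise from the naive Hodge filtration on the $f$-adapted logarithmic complex $\Omega^\bullet_Z(\log D_Z, f)$, while the $i^{p,q}$ come from the vanishing-cycle mixed Hodge modules $\phi_{w-\lambda}\mathbf{R}j_*\CC_Y$ summed over critical values. Using Saito's theory one assembles the local vanishing contributions, via the Steenbrink/limit spectral sequence, into the global mixed Hodge structure on $H^\bullet(Y,Y_b)$ whose Hodge filtration is exactly the one computed by $\Omega^\bullet_Z(\log D_Z, f)$; matching gradings (with the shift recorded in Remark~\ref{remark:correction}) then gives $f^{p,q}=i^{p,q}$. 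This step is essentially formal once the comparison, implicit in \cite{KKP17}, between the $f$-adapted complex and Saito's Hodge filtration is in place.

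The harder equality is $h^{p,q}=f^{p,q}$. Here $h^{p,q}$ is defined through the monodromy weight filtration $W_\bullet(N)$ (centered as in Definition~\ref{definition:hpq}), whereas $f^{p,q}$ sees only the Hodge filtration. By the theory of limit mixed Hodge structures the pair $(F,W(N))$ is a mixed Hodge structure, and the Fano-type hypothesis of Definition~\ref{def-Fano-type}---namely $N^{n-|a|}\neq 0$ and $N^{n-|a|+1}=0$ on $H^{n+a}(Y,Y_b)$---forces the nilpotency index of $N$ to be maximal in each degree, pinning down the weight filtration so that the weight-graded Hodge numbers align with the $f^{p,q}$ under the index shift in Definition~\ref{definition:hpq}. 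The main obstacle is precisely this interaction: controlling $N$ and showing that the weight spectral sequence degenerates so that $\mathrm{gr}^W H^\bullet(Y,Y_b)$ carries the predicted Hodge numbers. It is for this reason that the argument can only be pushed through in low dimension, where the relevant cohomology is concentrated in few degrees.

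Finally I would verify both equalities concretely. For del Pezzo surfaces $Z\to\PP^1$ is the elliptic surface of Compactification Construction~\ref{compactification construction}, with a wheel of rational curves over $\infty$; one computes $H^\bullet(Y,Y_b)$, the monodromy $N$, and the logarithmic de Rham cohomology directly and checks that the three tables coincide. For Fano threefolds $Z\to\PP^1$ is the family of K3 surfaces of Theorem~\ref{theorem: Minkowski CY}, whose fibre over infinity is the sphere-triangulation divisor of Corollary~\ref{proposition: fibers over infinity} and whose general fibre is the Shioda--Inose surface of Theorem~\ref{theorem: modularity}; the explicit Picard lattices and boundary combinatorics from Part~\ref{part:Fano 3-folds} render all three sets of numbers computable, and one verifies the equalities case by case.
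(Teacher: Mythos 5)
The statement you are addressing is stated in the paper as a \emph{conjecture}, and the paper does not prove it; in fact it disproves part of it. Your first step --- the claim that $f^{p,q}(Y,w)=i^{p,q}(Y,w)$ is ``essentially formal'' once Saito's theory is invoked --- is precisely the part that fails. Remark~\ref{remark:i-numbers} exhibits a counterexample: for the Landau--Ginzburg model of $\PP^2$ (the rational elliptic surface with $d=9$, having exactly three singular fibers away from infinity, each with a single node), every $i^{p,q}(Y,w)$ is divisible by $3$, because the three critical values contribute identical local vanishing-cycle data; but $f^{1,1}(Y,w)=h^{1,1}(Y,w)=10-d=1$. The equality of \emph{sums} \eqref{sum-of-hodge} is perfectly compatible with this ($1+1+1=3=3+0+0$), so knowing that the totals agree in each degree gives you nothing pointwise. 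The obstruction is that the $i^{p,q}$ are assembled from local invariants at the critical points of $w$ and do not refine to the global filtrations in the way your sketch assumes; no adjustment of grading shifts repairs this, and the paper explicitly states that it is not known how to redefine the $i^{p,q}$ so as to make the conjecture true.

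The part of the conjecture the paper actually establishes is the equality $h^{p,q}(Y,w)=f^{p,q}(Y,w)$, and only in the surface case (Theorem~\ref{theorem:main-Hodge-2}(i)). This is done not by a general identification of mixed Hodge structures but by computing both sides explicitly: the $h^{p,q}$ via the topology of the rational elliptic surface and the Jordan structure of the monodromy on $H^2(Y,Y_b)$ (Proposition~\ref{proposition:h-numbers}), and the $f^{p,q}$ via sheaf cohomology of the $f$-adapted logarithmic complex (Propositions~\ref{prop-fpq}, \ref{proposition: del Pezzo f}, and \ref{proposition:f-numbers}). In dimension $3$ the paper proves only Conjecture~\ref{conj-2} (comparison with the Hodge numbers of the mirror Fano threefold), again by case-by-case computation. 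Your outline of the $h=f$ comparison via limit mixed Hodge structures is a reasonable heuristic and roughly parallels what the explicit computations confirm in low dimension, but as written it is not a proof, and the $f=i$ leg of your argument is false as stated.
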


The Landau--Ginzburg model $(Y,w)$ of Fano type (together with a tame compactification) typically arises as a mirror of a
projective Fano manifold $X$, $\dim X=\dim Y$.

The following is~\cite[Conjecture $3.7$]{KKP17}, see Remark~\ref{remark:correction}.

\begin{conjecture} \label{conj-2} In the above mirror situation for each $p,q$ we have the equality
$$f^{p,q}(Y,w)=h^{p,n-q}(X),$$
where $h^{p,q}(X)$'s are the usual Hodge numbers for $X$.
\end{conjecture}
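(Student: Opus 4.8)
The plan is to establish this mirror equality case by case, exploiting the fact that in the dimensions we control (del Pezzo surfaces and Fano threefolds) we already have at our disposal explicit log Calabi--Yau compactifications $f\colon Z\to \PP^1$ with $-K_Z=f^{-1}(\infty)=D_Z$, together with the classification of the Fano varieties involved. Since the numbers $f^{p,q}(Y,w)=\dim H^p(Z,\Omega^q_Z(\log D_Z,f))$ are intrinsic to a fixed $Z$, while the right-hand side $h^{p,n-q}(X)$ is tabulated, the entire problem reduces to computing the hypercohomology of the $f$-adapted logarithmic de Rham sheaves on the specific $Z$ produced by Theorem~\ref{theorem: Minkowski CY} (or the Compactification Construction~\ref{compactification construction} in the surface case) and matching against the very sparse Hodge diamond of a Fano.

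First I would cut down the number of nonzero terms on both sides. On the Fano side $h^{p,q}(X)=0$ unless $p=q$ or $(p,q)$ lies on the single off-diagonal governed by $b_n(X)$ (so $h^{2,1}$ in the threefold case), and $h^{p,0}(X)=0$ for $p>0$; correspondingly I expect $f^{p,q}(Y,w)$ to vanish outside a predictable range, which I would check using the Hodge-purity statement $h^{i,0}(Z)=0$ for $i>0$ of Remark~\ref{remark:Hodge purity} together with the combinatorial description of $D_Z$ from Corollary~\ref{proposition: fibers over infinity}, where $D_Z$ appears as a normal-crossings divisor whose dual complex is a triangulated sphere with $g+1$ vertices. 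The crucial algebraic step is then to compare $\Omega^q_Z(\log D_Z,f)$ with the ordinary log complex $\Omega^q_Z(\log D_Z)$: the $f$-adapted subsheaf differs only along the pole divisor $D^v=f^{-1}(\infty)$, and there one can use the local generators $\frac{dz_1}{z_1},\ldots,\frac{dz_k}{z_k},dz_{k+1},\ldots,dz_n$ to read off exactly which log forms remain logarithmic after wedging with $df$. This local model pins down the sheaves, after which their global cohomology becomes accessible through the combinatorics of $D_Z$ and the known cohomology of $Z$.

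Second, I would assemble the individual $f^{p,q}$ from the total relative Betti numbers. The identity \eqref{sum-of-hodge} supplies the row sums $\sum_{p+q=m}f^{p,q}(Y,w)=\dim H^m(Y,Y_b;\CC)$ for free, so the remaining content is to distribute each row sum across $q$. For this I would compute $H^m(Y,Y_b)$ topologically from the long exact sequence relating $H^*(Y)$, $H^*(Y_b)$, and $H^*(Y,Y_b)$, using that $Y_b$ is a smooth Calabi--Yau fiber (an elliptic curve for $n=2$, and a Shioda--Inose K3 surface of the Dolgachev--Nikulin type identified in Section~\ref{section:Modularity} for $n=3$) and that $Y=Z\setminus D_Z$. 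Matching the distributed numbers with $h^{p,n-q}(X)$ then follows by direct comparison, the nontrivial entries being $f^{1,n-1}(Y,w)=h^{1,1}(X)$ and its mirror, which carry the Picard-rank information, and the single off-diagonal entry recording $h^{n-1,1}(X)$.

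The hard part will be the local-to-global analysis of the $f$-adapted complex along the divisor at infinity. Away from $D^v$ the sheaves $\Omega^q_Z(\log D_Z,f)$ agree with the usual log forms and the cohomology is controlled by standard mixed Hodge theory, but the $f$-adaptedness condition is genuinely delicate precisely where several components of $D_Z$ meet, and this is exactly where the combinatorial data of the triangulated sphere enters. I expect that controlling these boundary contributions---rather than the interior computation---is what forces the argument to be carried out separately in each dimension, and why only a scheme of proof, rather than a uniform one, is available in the threefold case.
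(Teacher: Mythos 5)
Your first step (pinning down the sheaves $\Omega^q_Z(\log D_Z,f)$ by local analysis along $D^v$) is the right direction and is exactly what the paper does in dimension $2$: Proposition~\ref{proposition: del Pezzo f} identifies $\Omega^0_Z(\log D,f)\simeq\omega_Z$, $\Omega^2_Z(\log D,f)\simeq\cO_Z$, and exhibits $\Omega^1_Z(\log D,f)$ as an extension of $\cO_D$ by $\Omega^1_Z(\log D)(-D)$, after which Proposition~\ref{proposition:f-numbers} is a short cohomology computation. But you have left this step entirely open in dimension $3$ while deferring to it as ``the hard part,'' and your proposed substitute --- distributing the row sums $\sum_{p+q=m}f^{p,q}(Y,w)=\dim H^m(Y,Y_b)$ across $q$ via the long exact sequence of the pair $(Y,Y_b)$ --- cannot close the gap. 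The row sums are precisely the content of the already-known identity \eqref{sum-of-hodge}; the entire point of Conjecture~\ref{conj-2} is the refinement of these sums into individual $(p,q)$-pieces, and no amount of topological information about $H^m(Y,Y_b)$ alone determines that refinement. ``Direct comparison'' at the end is circular.

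The missing idea in the threefold case is the reduction supplied by Harder's theorem (\cite[Theorem 10]{Ha17}), which converts the sheaf-cohomological numbers $f^{p,q}(Y,w)$ into two concrete geometric invariants of the compactified fibration: $k_Y=\sum_{s\in\Sigma}(\rho_s-1)$, the excess count of irreducible components of reducible fibers, and $ph$, the corank of the restriction $H^2(Z,\RR)\to H^2(V,\RR)$ to a general fiber. Once this is in place, the actual labor of the paper's proof of Theorem~\ref{theotem:KKP conjecture-3} is the case-by-case computation of $k_Y$ and $ph$ for each family: one compactifies the toric Landau--Ginzburg model to a pencil of quartics (or bidegree $(2,3)$ hypersurfaces), resolves the base locus while tracking the ``defects'' of base curves and of singular points of the fibers, and computes intersection matrices of base curves on du Val surfaces using Propositions~\ref{proposition:du-Val-intersection} and~\ref{proposition:du-Val-self-intersection}, comparing the result with $h^{1,2}(X)$ and $h^{1,1}(X)$ from the classification. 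Your proposal never mentions counting components of reducible fibers or the rank of the restriction map on $H^2$, which is where essentially all of the work lies; without either Harder's reduction or a from-scratch local-to-global analysis of the $f$-adapted complex (which you explicitly do not carry out), the argument does not get off the ground in dimension $3$. A minor additional point: the identification of the fibers as Shioda--Inose surfaces in Section~\ref{section:Modularity} holds only for Picard rank one, so it cannot be used uniformly across all $105$ families as your second step suggests.
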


We refer the interested reader to~\cite{KKP17} for a detailed description of the motivation for Conjectures \ref{conj-1} and \ref{conj-2}. Basically the motivation comes from Homological Mirror Symmetry,  Hochschild homology identifications, and identification of the monodromy operator with the Serre functor. Namely, assume that the Landau--Ginzburg model $(Y,w)$ as above (together with a tame compactification) is of Fano type and is a mirror of a projective Fano manifold $X$, $\dim X=\dim Y$. Then by Homological Mirror Symmetry conjecture one expects an equivalence of categories
\begin{equation}\label{eq-of-cat}D^b(coh\ X)\simeq FS((Y,w),\omega_Y),
\end{equation}
where %$D^b(coh\ X)$ is the bounded derived category of coherent sheaves on $X$ and
$FS((Y,w),\omega_Y)$ is the Fukaya--Seidel category of the Landau--Ginzburg model $(Y,w)$ with an appropriate  symplectic form $\omega_Y$. This equivalence induces for each $a$ an isomorphism of the Hochschild homology spaces
\begin{equation*}\label{eq-of-hoch-hom}
HH_a(D^b(coh\ X))\simeq HH_a(FS((Y,w),\omega_Y)).
\end{equation*}
It is known that
\begin{equation}\label{eq-of-hoch-derham}HH_a(D^b(coh\ X))\simeq \bigoplus _{p-q=a}H^p(X,\Omega _X^q)
\end{equation}
and it is expected that
\begin{equation}\label{eq-of-hoch-fuk}HH_a(FS((Y,w),\omega_Y))\simeq H^{n+a}(Y,Y_b).
\end{equation}
The equivalence \eqref{eq-of-cat} and isomorphisms \eqref{eq-of-hoch-derham}, \eqref{eq-of-hoch-fuk} suggest an isomorphism
\begin{equation*}\label{summary-isom}
H^{n+a}(Y,Y_b)\simeq\bigoplus _{p-q=a}H^p(X,\Omega _X^q).
\end{equation*}
Moreover, the equivalence \eqref{eq-of-cat} identifies the Serre functors $S_X$ and $S_Y$ on the two categories. The
functor $S_X$ acts on the cohomology $H^* (X)$ and the logarithm of this operator is equal (up to a sign) to
the cup-product with $c_1(K_X)$. Since $X$ is Fano, the operator $c_1(K_X)\cup (\cdot )$ is a Lefschetz operator on the
space
$$\bigoplus _{p-q=a}H^p(X,\Omega _X^q)$$
for each $a$.
On the other hand, the Serre functor $S_Y$ induces an operator on the space~$H^{n+a}(Y,Y_b)$ which is the inverse
of the monodromy transformation $T$. This suggests that the weight filtration for the nilpotent operator $c_1(K_X)\cup (\cdot )$ on
the space $\bigoplus _{p-q=a}H^p(X,\Omega _X^q)$ should coincide with the similar filtration for the logarithm $N$ of the operator $T$ on
$H^{n+a}(Y,Y_b)$.
First notice that the operator
$c_1(K_X)\cup (\cdot )$ on
the space $\bigoplus _{p-q=a}H^p(X,\Omega _X^q)$ satisfies
$(c_1(K_X)\cup (\cdot ))^{n-|a|}\neq 0$ by the Hard Lefschetz theorem and~$\left(c_1(K_X)\cup (\cdot )\right)^{n-|a|+1}= 0$. This explains our Definition \ref{def-Fano-type}.
Moreover, the induced filtration $W$ on $\bigoplus _{p-q=a}H^p(X,\Omega _X^q)$ has the properties:
$$h^{p,q}(X)=gr^{W,n-a}_{2(n-p)}\left[\bigoplus _{p-q=a}H^p(X,\Omega _X^q)\right]\ \ \text{if $a\geq 0$}$$
and
$$h^{p,q}(X)=gr^{W,n+a}_{2(n-q)}\left[\bigoplus _{p-q=a}H^p(X,\Omega _X^q)\right]\ \ \text{if $a< 0$}.$$
Thus one expects the equality of Hodge numbers
$$h^{p,n-q}(Y,w)=h^{p,q}(X),$$
which is a combination of the above conjectures.

\section{Del Pezzo surfaces}
\label{section: KKP for surfaces}
Mirror symmetry conjecture we are interested in this section is Homological Mirror Symmetry conjecture.
It (more precise, its half) was proven for del Pezzo surfaces in~\cite{AKO06}.
A Landau--Ginzburg model for del Pezzo surface of degree $d$ is constructed there as a pencil of elliptic curves
whose fiber over infinity is a wheel of $12-d$ curves, while the rest singular fibers are $d$ fibers having
a single ordinary double point (node). Such pencil is a Landau--Ginzburg model for the del Pezzo surface
with a general symplectic form on the model. However a Fukaya--Seidel category is invariant under deformations
of pencils, so to study mirror symmetry it is enough to consider the case of a general form.
Moreover, the results of the section do not depend on singular fibers away from infinity.
Finally note that Landau--Ginzburg models studied here correspond to all del Pezzo surfaces, not only
of degree greater than $2$ as in Part~\ref{part: del Pezzo surfaces}.

Following~\cite{LP18}, we correct a bit and prove Conjectures~\ref{conj-1} and~\ref{conj-2} for del Pezzo surfaces.

Consider tame compactified Landau--Ginzburg model $(Z,f)$ of dimension $2$. More precisely, consider a rational elliptic surface $f\colon Z\to \bbP ^1$  with $f^{-1}(\infty)$ being a reduced divisor which is a wheel of $d$ rational curves, $1\leq d\leq 9$ (it is a nodal rational curve if $d=1$). In this case the horizontal divisor $D^h$ is empty, so $D=D^v$. In the paper \cite{AKO06} it is proved that the corresponding Landau--Ginzburg model $(Y,w)$ appears as a (homological) mirror of a del Pezzo surface $S_d$ of degree $d$. The authors also establish Homological Mirror Symmetry for the case $d=0$: in this case $f^{-1}(\infty)$ is a smooth elliptic curve and $(Y,w)$ is mirror to the blowup $S_0$ of $\bbP ^2$ in $9$ points of intersection of two cubic curves. Note that such~$S_0$ is not Fano, hence one expects that the corresponding Landau--Ginzburg model $(Y,w)$ is not of Fano type. We confirm this prediction. The next theorem summarizes the main results of this section.

\begin{theorem}[{\cite[Theorem 11]{LP18}}]
\label{theorem:main-Hodge-2} Let $f\colon Z\to \bbP ^1$ be an elliptic surface with the reduced infinite fiber
$D=f^{-1}(\infty)$ which is a wheel of $d$ rational curves for $1\leq d\leq 9$ or is a smooth elliptic curve for $d=0$. We assume that $f$ has a section. As before put $(Y,w)=(Z\setminus D,f\vert _{Z\setminus D})$.

\begin{enumerate}
\item[(i)] If $1\leq d\leq 9$, then the Landau--Ginzburg model $(Y,w)$ is of Fano type and there are equalities of Hodge numbers $$f^{p,q}(Y,w)=h^{p,q}(Y,w).$$

\item[(ii)] Let $1\leq d\leq 9$ and let $X$ be a del Pezzo surface
which is a mirror
in the sense of~\cite{AKO06} to the Landau--Ginzburg model $(Y,w)$. There are equalities of Hodge numbers $$f^{p,q}(Y,w)=h^{p,2-q}(X).$$

\item[(iii)] If $d=0$, then $(Y,w)$ is not of Fano type.
\end{enumerate}
\end{theorem}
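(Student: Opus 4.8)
The plan is to treat the three statements in turn, with the single genuine difficulty concentrated in the Fano-type assertion of (i). Throughout $n=\dim Z=2$, the surface $Z$ is a rational elliptic surface, $D=f^{-1}(\infty)$ is a reduced $I_d$-fibre (a wheel of $d$ rational curves) lying in $|-K_Z|$, and $D^h=\emptyset$, so $w$ is proper and $Y_b$ is a smooth compact elliptic curve. First I would compute the numbers $f^{p,q}(Y,w)$ from the sheaves of $f$-adapted forms. Since $\dim Z=2$, wedging with $df$ lands in $\Omega^3_Z=0$, so $\Omega^2_Z(\log D,f)=\Omega^2_Z(\log D)=\omega_Z(D)=\mathcal O_Z$ because $D\sim -K_Z$; an elementary local computation at a smooth point of $D$ (if $f=1/z_1$ then $df\wedge\alpha$ acquires a double pole unless $\alpha$ vanishes on $D$) shows that a function is $f$-adapted iff it vanishes on $D$, whence $\Omega^0_Z(\log D,f)=\mathcal O_Z(-D)=\omega_Z$. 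Using $q(Z)=p_g(Z)=0$ and Serre duality one gets $f^{p,2}=\dim H^p(Z,\mathcal O_Z)=(1,0,0)$ and $f^{p,0}=\dim H^p(Z,\omega_Z)=(0,0,1)$ for $p=0,1,2$. I would not compute the middle sheaf $\Omega^1_Z(\log D,f)$ by hand: instead I invoke the Katzarkov--Kontsevich--Pantev identity \eqref{sum-of-hodge}, $\dim H^m(Y,Y_b)=\sum_{p+q=m}f^{p,q}$, reducing everything to the topological input $\dim H^2(Y,Y_b)=12-d$.

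This input I would obtain from the long exact sequence of the pair together with $H^1(Y)=0$ and the surjectivity of $H^2(Y)\to H^2(Y_b)$ (a section restricts to a degree-one class on the fibre), which force $H^m(Y,Y_b)=0$ for $m\neq 2$; then $\chi(Y,Y_b)=e(Y)-e(Y_b)=(12-d)-0$ gives $\dim H^2(Y,Y_b)=12-d$. Combined with the first paragraph this yields $f^{0,2}=f^{2,0}=1$, $f^{1,1}=10-d$, and $f^{p,q}=0$ otherwise. Statement (ii) is then immediate: the del Pezzo surface $X$ of degree $d$ is rational with $h^{0,0}(X)=h^{2,2}(X)=1$, $h^{1,1}(X)=10-d$ and all other $h^{p,q}(X)=0$, so $f^{p,q}(Y,w)=h^{p,2-q}(X)$ holds entry by entry.

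For (i) it remains to compute the $h$-numbers, and here the real content is the Fano-type property, namely $N^2\neq 0$ on $H^2(Y,Y_b)$. The monodromy sequence $0\to H^1(Y_b)\xrightarrow{\delta}H^2(Y,Y_b)\to V\to 0$ with $V=\ker\big(H^2(Y)\to H^2(Y_b)\big)$ is $T$-equivariant, $T$ acts trivially on $V$, and on $H^1(Y_b)$ the operator $N_{Y_b}=\log T$ has rank one since the fibre at infinity is of type $I_d$ with $d\geq 1$. Writing $N$ in block form shows that $N^2\neq 0$ is equivalent to the off-diagonal block $B\colon V\to H^1(Y_b)$ having image not contained in $\ker N_{Y_b}$; and, crucially, once $N^2\neq 0$ is known the exact values $\operatorname{rank}N=2$, $\operatorname{rank}N^2=1$ are \emph{forced}, so the Jordan type of $N$ is a single block of size $3$ together with $9-d$ blocks of size $1$. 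The monodromy weight filtration centred at $2$ then satisfies $\dim gr^W_0=\dim gr^W_4=1$, $\dim gr^W_2=10-d$ and $gr^W_{\mathrm{odd}}=0$, which by Definition~\ref{definition:hpq} gives exactly $h^{2,0}=h^{0,2}=1$ and $h^{1,1}=10-d$, matching $f^{p,q}$. I expect proving $N^2\neq 0$ to be the main obstacle. The plan is to analyse $B$ through the limiting mixed Hodge structure of the elliptic degeneration at infinity: for an $I_d$-fibre, $H^1(Y_b)$ carries weights $0$ and $2$ with $\ker N_{Y_b}$ equal to the weight-$0$ vanishing-cycle line, so one must exhibit a relative $2$-class whose variation $B(v)$ leaves this line. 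I would produce it from the Lefschetz thimbles over the $12-d$ nodal fibres: since $H^1(Z)=0$ their vanishing cycles span $H_1(Y_b)$, and the product of the corresponding Picard--Lefschetz transvections equals the nontrivial unipotent $I_d$-monodromy, forcing a length-three Jordan chain and hence $N^2\neq 0$ (equivalently, via Clemens--Schmid at infinity).

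For (iii) the same block description applies, but now $D=f^{-1}(\infty)$ is a smooth elliptic curve, i.e. an $I_0$-fibre, so the monodromy around $\infty$ is trivial and $N_{Y_b}=0$. Then $N=\begin{pmatrix}0&B\\0&0\end{pmatrix}$ satisfies $N^2=0$ on $H^2(Y,Y_b)$, so the Fano-type condition $N^{\,n-|a|}=N^2\neq 0$ fails and $(Y,w)$ is not of Fano type; this confirms the expectation that the non-Fano surface $S_0$ corresponds to a Landau--Ginzburg model that is not of Fano type.
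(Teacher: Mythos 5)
Your overall architecture matches the paper's: compute $f^{p,q}$ from the adapted logarithmic sheaves, compute $H^*(Y,Y_b)$ and the monodromy Jordan type for $h^{p,q}$, and read off (ii) from the Hodge numbers of a del Pezzo surface. One genuine (and legitimate) difference: for $f^{1,1}$ you bypass the paper's explicit analysis of $\Omega^1_Z(\log D,f)$ — the paper uses the short exact sequence $0\to \Omega^1_Z(\log D)(-D)\to\Omega^1_Z(\log D,f)\to\cO_D\to 0$ together with the spectral sequence of the resolution of $j_!\CC_Y$ — and instead invoke the KKP identity \eqref{sum-of-hodge} plus the Euler-characteristic count $\dim H^2(Y,Y_b)=12-d$. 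That is a clean shortcut; it buys brevity at the cost of outsourcing the sheaf-theoretic content to \cite{KKP17}, whereas the paper's computation is self-contained and also yields the individual cohomologies of the adapted sheaves.

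The genuine gap is exactly where you predict it: the proof that $N^2\neq 0$ for $1\le d\le 9$. Your block decomposition reduces this to showing $\operatorname{im}(B)\not\subseteq\ker N_{Y_b}$, and your proposed route — vanishing cycles of the nodal fibres span $H_1(Y_b)$, the product of the Picard--Lefschetz transvections equals the nontrivial $I_d$-monodromy, hence a length-three Jordan chain — does not close this. Knowing that some thimble $\Delta$ has boundary $\delta$ with $T_{Y_b}\delta\neq\delta$ gives $(T-\id)^2\Delta=\gamma L\bigl(T_{Y_b}\delta-\delta\bigr)$, where $L\colon H_1(Y_b)\to H_2(M,Y_b)$ is the tube map and $\gamma$ the map to $H_2(Y,Y_b)$; one still has to show that $\gamma L$ does not annihilate the nonzero class $T_{Y_b}\delta-\delta$. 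This cannot be detected by applying $\partial$ again, since $T_{Y_b}\delta-\delta$ already lies in $\operatorname{im}(T_{Y_b}-\id)=\ker(T_{Y_b}-\id)$; so the injectivity of $\gamma L$ is unavoidable and nontrivial. The paper supplies it as Proposition~\ref{injective} (injectivity of $L_i$) together with Proposition~\ref{prop-sum}(ii) (injectivity of $\gamma_2$ from $H^1(Y)=0$), which yield Corollary~\ref{cor-of-general}: $\ker N$ equals exactly the image of $H_2(Y)$, i.e.\ $\operatorname{rank}N=2$, whence $\operatorname{im}N$ is all of the embedded $H^1(Y_b)$ and $N^2=N_{Y_b}\circ N\neq 0$. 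Note also that your logic runs backwards at one point: you derive $\operatorname{rank}N=2$ from $N^2\neq 0$, whereas the workable argument establishes $\operatorname{rank}N=2$ first (from the invariants computation) and only then concludes $N^2\neq 0$ using $T_{Y_b}\neq\id$. Parts (ii) and (iii) and the $f^{p,q}$ computation are fine as written.
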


The proof of Theorem \ref{theorem:main-Hodge-2} is contained in~Proposition~\ref{proposition:h-numbers}, Proposition~\ref{proposition:f-numbers}, and Remark~\ref{remark: Hodge for del Pezzo}.

Thus Conjecture \ref{conj-1} about the numbers $f^{p,q}(Y,w)$, $h^{p,q}(Y,w)$ and Conjecture~\ref{conj-2} hold in case~$(Y,w)$ is of Fano type ($1\leq d\leq 9$). We will also show that in the context of Theorem~\ref{theorem:main-Hodge-2} the numbers
$i^{p,q}(Y,w)$ are {\it not} equal to the numbers $f^{p,q}(Y,w)$ (or to the numbers
$h^{p,q}(Y,w)$, or $h^{p,2-q}(X)$), therefore providing a
counterexample to Conjecture \ref{conj-1}, see Remark~\ref{remark:i-numbers}.
We do not know how to define the ``correct'' numbers $i^{p,q}(Y,w)$, which would make Conjecture~\ref{conj-1} true.

\subsection{Monodromy action on relative cohomology}
\label{subsection:monodromy}

Let $V$ be a smooth complex algebraic variety
of dimension $n$ with a proper morphism~$w\colon V\to \bbC$. Let $b \in \bbC $ be a regular value of $w$.
In this section we construct the monodromy action on the relative homology $H_* (V,V_b)$, which by duality will induce the desired action on~$H^* (V,V_b)$.

Let $C\simeq S^1\subset \bbP ^1$ be a smooth loop passing through the point $b$ that goes once around the $\infty$ in the counter clockwise direction in such a way that there are no singular values of $w$ on or inside $C$. Denote by $M$ the preimage $w^{-1}(C)\subset Y$. Then $M$ is a compact oriented smooth manifold which contains the fiber $V_b$.
The (real) dimensions of $M$ and $V_b$ are $2n-1$ and~$2n-2$ respectively.
By Ehresmann's Lemma the map $w\colon M\to C$ is a locally trivial fibration of smooth manifolds with the fibers diffeomorphic to $V_b$.
Hence there exists a diffeomorphism~$T\colon V_b\to V_b$ such that $M$ is diffeomorphic to the quotient
$$M=V_b\times [0,1]/\{ (a,0)=(T(a),1)\ \text{for all $a\in V_b$}\}.$$
For the pair $(M,V_b)$ we have the corresponding long exact homology sequence
\begin{equation}\label{les}\ldots\to H_i(V_b)\stackrel{\alpha _i}{\to} H_i(M)\stackrel{\beta _i}{\to} H_i(M,V_b)\stackrel{\partial _i}{\to} H_{i-1}(V_b)\to \ldots
\end{equation}
The diffeomorphism $T\colon V_b\to V_b$ induces an automorphism~$T\colon H_i(V_b)\to H_i(V_b)$ for each~$i$.

\begin{lemma} \label{lemma-formula} For each $i\geq 0$, there exists a homomorphism $L_i\colon H_i(V_b)\to H_{i+1}(M,V_b)$ such that for all $x\in H_i(V_b)$ we have
$$\partial _{i+1}L_i(x)=T(x)-x.$$
\end{lemma}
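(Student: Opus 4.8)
The plan is to exploit the mapping torus structure of $M$ already recorded in the statement, namely the identification $M\cong V_b\times[0,1]/\{(a,0)=(T(a),1)\}$, and to produce $L_i$ by a prism (cylinder) construction. Intuitively, given a cycle $z$ in the fiber $V_b$ representing a class $x$, I sweep $z$ once around the loop $C$; the resulting $(i+1)$-chain in $M$ is a relative cycle whose boundary is the difference between the swept-off and initial positions of $z$, which is exactly $T(x)-x$.

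Concretely, let $j\colon V_b\times[0,1]\to M$ be the quotient map, and let $i_0\colon V_b\hookrightarrow M$, $i_0(a)=j(a,0)$, be the inclusion of the reference fiber. Recall the classical prism operator $P\colon C_\bullet(V_b)\to C_{\bullet+1}(V_b\times[0,1])$ satisfying the chain homotopy identity
$$\partial P+P\partial=(\,\cdot\times\{1\}\,)_*-(\,\cdot\times\{0\}\,)_*.$$
Pushing forward by $j$ and writing $\widetilde P=j_*\circ P\colon C_\bullet(V_b)\to C_{\bullet+1}(M)$, the gluing relation $j(\cdot,1)=i_0\circ T^{-1}$ turns this into
$$\partial\widetilde P+\widetilde P\,\partial=(i_0)_*\circ(T^{-1})_*-(i_0)_*.$$
For a cycle $z$ with $\partial z=0$ this shows $\partial\widetilde P(z)$ lies in the image of $(i_0)_*$, so $\widetilde P(z)$ is a relative cycle for the pair $(M,V_b)$; I define $L_i$ by sending the class of $z$ to the class of $\widetilde P(z)$ in $H_{i+1}(M,V_b)$. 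Well-definedness is immediate from the same identity: if $z=\partial u$, then $\widetilde P(z)=\widetilde P\,\partial u=(i_0)_*(T^{-1})_*u-(i_0)_*u-\partial\widetilde P(u)$ is a sum of a chain supported on $V_b$ and a boundary, hence zero in relative homology; since $\widetilde P$ is linear, the class $[\widetilde P(z)]$ depends only on $x$.

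With $L_i$ so defined, the connecting homomorphism $\partial_{i+1}$ of the sequence \eqref{les} takes $[\widetilde P(z)]$ to the class in $H_i(V_b)$ of $\partial\widetilde P(z)=(T^{-1})_*z-z$, so up to identifying the monodromy with its inverse (which is only a matter of the orientation chosen for $C$) one obtains $\partial_{i+1}L_i(x)=T(x)-x$. The main obstacle is precisely this orientation/inverse bookkeeping: one must check that the counterclockwise loop around $\infty$, which defines $T$ in the statement, matches the direction in which the prism is traversed, so that $T$ rather than $T^{-1}$ appears. I would fix this once and for all by replacing $P$ by the prism traversed in the orientation of $C$ (equivalently, by tracking the sign in the gluing $(a,0)=(T(a),1)$), after which the formula $\partial_{i+1}L_i(x)=T(x)-x$ holds on the nose. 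Everything else — that the prism of a singular cycle is a well-defined relative cycle, and that $\partial_{i+1}$ on relative homology is induced by the chain-level boundary — is routine.
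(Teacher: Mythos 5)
Your proposal is correct and follows essentially the same route as the paper: the paper's proof also takes the relative cycle $z\times[0,1]$ in the mapping torus and reads off its boundary as $T(z)-z$, simply stating that the construction yields the required homomorphism. Your version merely makes explicit, via the prism operator, the well-definedness on homology classes and the $T$-versus-$T^{-1}$ bookkeeping coming from the gluing $(a,0)=(T(a),1)$ — points the paper leaves implicit.
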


\begin{proof} Let $z$ be an $i$-dimensional cycle in $V_b$. Consider the $(i+1)$-dimensional relative cycle~$z\times [0,1]$ in $(V_b\times [0,1],V_b\times \{0\}\cup V_b\times \{1\})$ with boundary $z\times \{1\}-z\times \{0\}$.
Its image~$L_i(z)$ in $M$ is a relative $(i+1)$-cycle with boundary $T(z)-z$ in $V_b$.
This construction yields the required homomorphism $L_i\colon H_i(V_b)\to H_{i+1}(M,V_b)$. Given $x\in H_i(V_b)$ the
%equality $$\partial _{i+1}L_i(x)=T(x)-x$$
assertion of the lemma
is clear from the construction.
\end{proof}

\begin{proposition}[{\cite[Proposition 13]{LP18}}]\label{injective} The map $L_i\colon H_i(V_b)\to H_{i+1}(M,V_b)$ is injective for each $i\geq 0$ .
\end{proposition}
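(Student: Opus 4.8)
The plan is to prove the stronger statement that $L_i$ is in fact an \emph{isomorphism}, by recognising it as a composite of canonical isomorphisms built from the mapping torus description of $M$. I would set $N=V_b\times[0,1]$ and $\partial N=V_b\times\{0,1\}$, and let $q\colon (N,\partial N)\to (M,V_b)$ be the quotient map defining the mapping torus, which identifies $(a,0)$ with $(T(a),1)$. By the relative Künneth formula applied to the interval rel its endpoints (whose only nonzero relative homology is $H_1([0,1],\{0,1\})=\ZZ$, which is free, so no $\mathrm{Tor}$ terms intervene), the cross product with the fundamental class of the interval gives an isomorphism $H_i(V_b)\cong H_{i+1}(N,\partial N)$ sending $z$ to $z\times[0,1]$. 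By the very construction of $L_i$ in Lemma~\ref{lemma-formula}, the map $L_i$ is $q_*$ precomposed with this isomorphism, so it suffices to show that $q_*\colon H_{i+1}(N,\partial N)\to H_{i+1}(M,V_b)$ is injective.

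The first thing I would check is that $q$ is a relative homeomorphism, i.e.\ that it restricts to a homeomorphism of $N\setminus \partial N$ onto $M\setminus V_b$. This is immediate, since the only identifications $q$ performs occur on $\partial N$; on the interiors $q$ carries $V_b\times(0,1)$ bijectively and locally homeomorphically onto $M\setminus V_b$. Consequently $q$ descends to a continuous bijection $\bar q\colon N/\partial N\to M/V_b$, and because $N/\partial N$ is compact while $M/V_b$ is Hausdorff, $\bar q$ is a homeomorphism.

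To conclude I would invoke that both $(N,\partial N)$ and $(M,V_b)$ are good pairs: the former is a CW pair, and for the latter $V_b$ is a closed submanifold of the closed manifold $M$ and hence a neighbourhood deformation retract. Thus $H_*(\,\cdot\,,\,\cdot\,)\cong \tilde H_*(\,\cdot/\,\cdot\,)$ in each case, and under these identifications $q_*$ becomes $\bar q_*\colon \tilde H_{i+1}(N/\partial N)\to \tilde H_{i+1}(M/V_b)$, which is an isomorphism because $\bar q$ is a homeomorphism. Therefore $q_*$ is an isomorphism, and $L_i$ is an isomorphism, in particular injective, for every $i\geq 0$.

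I expect the \textbf{main obstacle} to be purely a matter of bookkeeping: verifying that the composite of the standard isomorphisms is honestly the geometrically defined $L_i$ (and not its negative, nor a version twisted by $T_*$), together with the $i=0$ case, where reduced and unreduced $H_0$ must be handled with care. Passing through $q_*$ is precisely what avoids the naive route via the formula $\partial_{i+1}L_i(x)=T(x)-x$, which by itself yields only $\ker L_i\subseteq \ker(T_*-\id)$ and is genuinely insufficient — already for $i=0$ and $T=\id$ the class of a point satisfies $\partial_1 L_0([\mathrm{pt}])=0$ while $L_0([\mathrm{pt}])\neq 0$. The quotient-space argument is what closes this gap uniformly in $i$.
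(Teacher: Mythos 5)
Your proof is correct, and it establishes more than the proposition asks: $L_i$ is in fact an isomorphism. Note that the present survey states Proposition~\ref{injective} only by citation to \cite[Proposition 13]{LP18} and does not reproduce an argument, so there is no in-text proof to compare against; judged on its own, your factorization $L_i=q_*\circ(\,\cdot\times[0,1]\,)$ is exactly the standard identification $H_{i+1}(M,V_b)\cong H_i(V_b)$ that underlies the Wang exact sequence of the fibration $w\colon M\to C$, under which $\partial_{i+1}$ becomes $T_*-\mathrm{id}$, consistent with Lemma~\ref{lemma-formula}. Each step checks out: the relative K\"unneth map has no Tor correction because $H_*([0,1],\{0,1\})$ is free; the chain-level assignment $z\mapsto z\times[0,1]$ does descend to homology and realizes that isomorphism, and by the construction in Lemma~\ref{lemma-formula} its composition with $q_*$ is $L_i$ (up to the sign and choice-of-slice conventions you flag, which are irrelevant to injectivity); $N/\partial N\to M/V_b$ is a continuous bijection from a compact space ($V_b$ is compact since $w$ is proper) to a Hausdorff one, hence a homeomorphism; and both pairs are good ($\partial N$ has a collar, and $V_b$ is a closed submanifold of the closed manifold $M$, hence has a tubular neighbourhood), so the passage to reduced homology of quotients is legitimate and natural in the pair. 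Your closing remark is also apt: the identity $\partial_{i+1}L_i(x)=T(x)-x$ by itself cannot yield injectivity (already for $T=\mathrm{id}$ it gives no information), so some global input such as your quotient-space argument — or, alternatively, an intersection-pairing argument producing a left inverse of $L_i$ — is genuinely required.
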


\begin{definition} For each $i$ define the endomorphism $T\colon H_{i}(M,V_b)\to H_{i}(M,V_b)$ as~$\mbox{$T=\id +L_{i-1}\partial _{i}$}$ and the endomorphism $T\colon H_i(M)\to H_i(M)$ as $T=\id $.
(In particular~$T=\id $ on $H_0(M,V_b)$.)
\end{definition}

The inclusion of the pairs $(M,V_b)\subset (V,V_b)$ induces a morphism of the
homology sequences
$$\begin{array}{ccccccc}

\ldots \to & H_i(M) & \to & H_i(M,V_b)& \stackrel{\partial _i}{\to } & H_{i-1}(V_b) & \to \ldots\\
         & \downarrow & & \downarrow \gamma _i &  & \parallel & \\
\ldots\to & H_i(V) & \to & H_i(V,V_b) & \stackrel{\partial _i}{\to } & H_{i-1}(V_b)  & \to  \ldots
\end{array}
$$

\begin{definition} Let us define for each $i\geq 0$ the endomorphism $T\colon H_i(V,V_b)\to H_i(V,V_b)$ as the composition
$$T(y)=y+\gamma _iL_{i-1}\partial _i(y)$$
for $y\in H_i(V,V_b)$. In particular, $T=\id $ on $H_0(V,V_b)$. We also define $T\colon H_i(V)\to H_i(V)$ to be the identity.

By duality this defines the operators $T$ on the cohomology $H^i(V_b)$, $H^i(V,V_b)$, $H^i(V)$.
\end{definition}

\begin{corollary} The sequence
\begin{equation*}\label{seq2}
\ldots\to H_i(V) \to  H_i(V,V_b) \to   H_{i-1}(V_b)   \to  \ldots
\end{equation*}
is compatible with the endomorphisms $T$. Hence also the dual cohomology sequence
\begin{equation*}\label{seq3}
\ldots\to H^{i-1}(V_b) \to  H^i(V,V_b)  \to   H^i(V)   \to  \ldots
\end{equation*}
is compatible with $T$.
\end{corollary}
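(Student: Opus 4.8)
The plan is to check that the endomorphism $T$ commutes with each of the three arrows occurring in a segment
$$
H_i(V)\xrightarrow{j_i} H_i(V,V_b)\xrightarrow{\partial_i} H_{i-1}(V_b)\xrightarrow{k_{i-1}} H_{i-1}(V)
$$
of the long exact homology sequence of the pair $(V,V_b)$, where $j_i$ and $k_{i-1}$ denote the natural maps, and then to transfer the resulting compatibility to cohomology by dualizing. Recall the definitions that will be used: $T=\id$ on every $H_*(V)$, whereas on the relative groups $T(y)=y+\gamma_i L_{i-1}\partial_i(y)$ for $y\in H_i(V,V_b)$, and on $H_*(V_b)$ the operator $T$ is the geometric monodromy automorphism. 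The three structural inputs I would invoke are: the exactness relations $\partial_i\circ j_i=0$ and $k_{i-1}\circ\partial_i=0$; the fact that $\gamma_i$ is a morphism of the two homology sequences, so that the boundary maps are identified, i.e. $\partial_i\circ\gamma_i=\partial_i$ (this is the commuting square in the displayed ladder, in which the rightmost vertical arrow is the identity); and Lemma~\ref{lemma-formula}, asserting $\partial_i L_{i-1}(x)=T(x)-x$ for $x\in H_{i-1}(V_b)$.

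First I would dispose of the two squares adjacent to $H_*(V)$, where $T$ acts as the identity. For $j_i$ one must verify $T\circ j_i=j_i$; indeed $T(j_i x)=j_i x+\gamma_i L_{i-1}\partial_i(j_i x)=j_i x$ since $\partial_i j_i=0$. For $k_{i-1}$ one must verify $k_{i-1}\circ T=k_{i-1}$, i.e. $k_{i-1}(Tx-x)=0$ for $x\in H_{i-1}(V_b)$; by Lemma~\ref{lemma-formula} the element $Tx-x=\partial_i L_{i-1}(x)=\partial_i\bigl(\gamma_i L_{i-1}(x)\bigr)$ lies in the image of the relative boundary map, which is annihilated by $k_{i-1}$ by exactness. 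Hence both squares commute.

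The crux is the middle square, commutation of $T$ with $\partial_i$. I would compute, for $y\in H_i(V,V_b)$,
$$
\partial_i(Ty)=\partial_i(y)+\partial_i\gamma_i L_{i-1}\partial_i(y)=\partial_i(y)+\partial_i L_{i-1}\bigl(\partial_i(y)\bigr),
$$
where the second equality uses the identification $\partial_i\gamma_i=\partial_i$; applying Lemma~\ref{lemma-formula} with $x=\partial_i(y)$ gives $\partial_i L_{i-1}(\partial_i y)=T(\partial_i y)-\partial_i y$, whence $\partial_i(Ty)=T(\partial_i y)$. This is precisely the point at which the geometric content of the monodromy enters, through the formula of Lemma~\ref{lemma-formula}; it is the only non-formal step, the remainder being bookkeeping with exact sequences. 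With all three squares verified, the homology sequence is compatible with $T$.

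Finally I expect the cohomology assertion to follow formally. Working with field coefficients (as in the ambient setup), the operators $T$ on $H^*(V_b)$, $H^*(V,V_b)$, $H^*(V)$ are by definition the duals of the homology operators, and the cohomology sequence is the dual of the homology sequence; thus applying $\Hom(-,\CC)$, which is exact over a field and reverses arrows, carries the commuting squares established above to the commuting squares for the dual arrows. The main obstacle is the middle square, and there is no essential difficulty beyond keeping the index shifts in Lemma~\ref{lemma-formula} straight; the genuinely substantive analytic content, namely the construction and injectivity of $L_i$, has already been isolated in Lemma~\ref{lemma-formula} and Proposition~\ref{injective} and is not revisited here.
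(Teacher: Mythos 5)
Your proposal is correct and follows the same route as the paper, which proves the corollary in one line by appealing to the definitions of the operators $T$ and to the formula $\partial_{i+1}L_i(x)=T(x)-x$ of Lemma~\ref{lemma-formula}; you have simply written out the three commuting squares (the two trivial ones via $\partial\circ j=0$ and $k\circ\partial=0$, and the middle one via $\partial_i\gamma_i=\partial_i$ together with the lemma) that the paper leaves implicit. The dualization step over $\CC$ is likewise the intended argument.
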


\begin{proof} This follows directly from the definition of the operators $T$ together with
the formula in Lemma \ref{lemma-formula}.
\end{proof}

\begin{proposition}[{\cite[Proposition 18]{LP18}}] \label{prop-sum}
\begin{itemize}
\item[(i)] Assume that the morphism
$$\gamma _i\colon H_i(M,V_b)\to H_i(V,V_b)$$
is injective. Then the image of the morphism $H_i(V)\to H_i(V,V_b)$ is the space $H_i(V,V_b)^T$
of $T$-invariants.

\item[(ii)] If $H^{2n-i-1}(V)=0$, then the map $H_i(M,V_b)\to H_i(V,V_b)$ is injective.
Hence by (i) the image of the morphism $H_i(V)\to H_i(V,V_b)$ is the space $H_i(V,V_b)^T$
of \mbox{$T$-invariants}.
\end{itemize}
\end{proposition}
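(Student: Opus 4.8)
The plan is to deduce part (i) purely formally from the bottom row of the displayed morphism of long exact sequences together with the explicit formula $T(y)=y+\gamma_i L_{i-1}\partial_i(y)$, and then to establish the injectivity hypothesis of (i) in the situation of (ii) by a Lefschetz duality argument.

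For part (i), I would first recall that by exactness of the lower sequence the image of $H_i(V)\to H_i(V,V_b)$ equals $\ker\bigl(\partial_i\colon H_i(V,V_b)\to H_{i-1}(V_b)\bigr)$, so it suffices to identify $H_i(V,V_b)^T$ with $\ker\partial_i$. From the definition of $T$, an element $y$ is $T$-invariant if and only if $\gamma_i L_{i-1}\partial_i(y)=0$. If $\partial_i(y)=0$ this holds trivially, giving $\ker\partial_i\subseteq H_i(V,V_b)^T$. Conversely, suppose $\gamma_i L_{i-1}\partial_i(y)=0$; since $\gamma_i$ is assumed injective we get $L_{i-1}\partial_i(y)=0$, and since $L_{i-1}$ is injective by Proposition~\ref{injective} we conclude $\partial_i(y)=0$. (For $i=0$ the claim is trivial, as $T=\id$ and $H_{-1}(V_b)=0$.) This gives the reverse inclusion, hence (i).

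For part (ii), the goal is to produce the injectivity of $\gamma_i$ needed in (i). I would first replace $V$ by a compact model: since $w$ is proper and has all its critical values inside the disk $\overline{D}\subset\bbC$ bounded by $C$, Ehresmann's lemma shows $w$ is a locally trivial fibration over $\bbC\setminus D$, and lifting the radial retraction of $\bbC$ onto $\overline{D}$ yields a deformation retraction of $V$ onto $W=w^{-1}(\overline{D})$. As $b\in C\subset\overline{D}$, this retraction fixes $V_b$, so $H_i(V,V_b)\cong H_i(W,V_b)$, and under this identification $\gamma_i$ becomes the map $H_i(M,V_b)\to H_i(W,V_b)$ induced by the inclusion $M=\partial W\hookrightarrow W$. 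Now $W$ is a compact oriented $2n$-manifold with boundary $M$, so (with field coefficients) Lefschetz duality gives $H_{i+1}(W,M)\cong H^{2n-i-1}(W)\cong H^{2n-i-1}(V)$, which vanishes by hypothesis. Feeding this into the long exact sequence of the triple $(W,M,V_b)$,
\[
\cdots\to H_{i+1}(W,M)\to H_i(M,V_b)\xrightarrow{\ \gamma_i\ }H_i(W,V_b)\to H_i(W,M)\to\cdots,
\]
exactness forces $\gamma_i$ to be injective, and part (i) then applies verbatim.

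The homological algebra of (i) is routine; I expect the main obstacle to be the topological bookkeeping in (ii). One must justify carefully that $V$ deformation retracts onto $w^{-1}(\overline{D})$ compatibly with the fiber $V_b$, and verify that the connecting map in the triple sequence genuinely coincides with the $\gamma_i$ appearing in the morphism of long exact sequences. One should also fix a coefficient field so that the duality isomorphism $H_{i+1}(W,M)\cong H^{2n-i-1}(W)$ and the passage between the cohomological hypothesis and the homological vanishing are unambiguous.
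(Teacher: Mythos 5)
Your argument is correct and is essentially the intended one: the survey only cites \cite{LP18} for this statement, and the proof there proceeds the same way --- part (i) from exactness of the bottom row together with the injectivity of $L_{i-1}$ (Proposition~\ref{injective}) and of $\gamma_i$, and part (ii) by retracting $V$ onto the compact oriented manifold-with-boundary $W=w^{-1}(\overline{D})$ and using Poincar\'e--Lefschetz duality $H_{i+1}(W,M)\cong H^{2n-i-1}(W)\cong H^{2n-i-1}(V)=0$ to kill the connecting map in the sequence of the triple $(W,M,V_b)$. Your closing caveat about coefficients is harmless: the duality isomorphism already holds with integral coefficients for the compact oriented $W$, so no choice of field is required.
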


\subsection{Topology of rational elliptic surfaces}
\label{subsection:topology}
Now we use the notation of the beginning of the section for the special case which we will consider in the rest of the section.
Fix a number $0\leq d\leq 9$ and let $f\colon Z\to \PP^1$ be a rational elliptic surface such that $D=D^v=f^{-1}(\infty)$ is a wheel $I_d$
of $d$ smooth rational curves for $d\geq 2$, a rational curve with one node $I_1$ for $d=1$, and a smooth elliptic curve $I_0$ for $d=0$.
Assume in addition that there exists a section $\PP^1\to E\subset Z$.
Recall that $Y=Z\setminus D$.

Since $Z$ is rational, $\chi (\cO _Z)=1$. One has $-K_Z=D$, see, for instance,~\cite[\S10.2]{ISh89}. Hence~$c_1^2(Z)=0$, so by Noether's formula the topological Euler characteristic of $Z$ is equal to $12$. This means that
$$
h^i(Z)=\left\{
                     \begin{array}{ll}
                       1, & i=0,4; \\
                       10, & i=2; \\
                       0, & \hbox{otherwise.}
                     \end{array}
                   \right.
$$
By the
adjunction formula $\left(K_Z+E\right)\cdot E=2g(E)-2=-2$, so
$E^2=-1$.

\begin{lemma}
\label{lemma:C_D}
\begin{itemize}
\item[(i)]
If $d=0$, then
$$
h^i(D)=\left\{
                     \begin{array}{ll}
                       1, & i=0,2; \\
                       2, & i=1; \\
                       0, & \hbox{otherwise.}
                     \end{array}
                   \right.
$$
\item[(ii)]
If $d>0$, then
$$
h^i(D)=\left\{
                     \begin{array}{ll}
                       1, & i=0,1; \\
                       d, & i=2; \\
                       0, & \hbox{otherwise.}
                     \end{array}
                   \right.
$$
\end{itemize}
\end{lemma}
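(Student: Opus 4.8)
The plan is to compute $H^\bullet(D,\CC)$ from the topology of $D$ directly, treating the smooth case $d=0$ by hand and the degenerate cases $d\geq 1$ through the normalization.

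For $d=0$ the fiber $D$ is a smooth elliptic curve, i.e. a compact Riemann surface of genus $1$; its Betti numbers are the classical $h^0=h^2=1$, $h^1=2g=2$, and $h^i=0$ for $i\geq 3$, which is exactly (i). I would dispose of this case in one line. For $d\geq 1$ the fiber is the Kodaira cycle $I_d$: a nodal curve with $d$ irreducible rational components and $d$ nodes (for $d=1$ a single rational curve with one self-node). Let $\nu\colon \tilde D\to D$ be the normalization, so that $\tilde D=\bigsqcup_{i=1}^{d}\PP^1$ (for $d=1$ the two preimages of the node lie on the single component). Then $H^0(\tilde D)\simeq H^2(\tilde D)\simeq \CC^{d}$ and $H^1(\tilde D)=0$.

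Writing $\Sigma\subset D$ for the set of $d$ nodes, the central tool will be the short exact sequence of sheaves on $D$
$$0\to \CC_D\to \nu_*\CC_{\tilde D}\to \bigoplus_{p\in\Sigma}\CC_p\to 0,$$
in which the skyscraper quotient $Q=\bigoplus_{p\in\Sigma}\CC_p$ records, at each node, the difference of the values of a function on the two branches. Since $Q$ has cohomology only in degree $0$, with $H^0(Q)=\CC^{d}$, the long exact sequence immediately yields isomorphisms $H^i(D)\simeq H^i(\tilde D)$ for $i\geq 2$, hence $h^2(D)=d$ and $h^i(D)=0$ for $i\geq 3$, together with the four-term exact sequence
$$0\to H^0(D)\to \CC^{d}\xrightarrow{\ \phi\ }\CC^{d}\to H^1(D)\to 0.$$

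It then remains to analyze the single map $\phi\colon H^0(\tilde D)\to H^0(Q)$, which sends a tuple of values on the components to the collection of branch-differences at the nodes; this is precisely the coboundary map of the dual graph $\Gamma$ of $D$ — a cycle with $d$ vertices and $d$ edges for $d\geq 2$, and a single vertex with one loop for $d=1$. Its kernel is the space of functions descending to $D$, which is $H^0(D)=\CC$ because $D$ is connected; hence $\rk\phi=d-1$ and $H^1(D)\simeq\operatorname{coker}\phi$ has dimension $d-(d-1)=1$. This gives $h^0(D)=h^1(D)=1$ and completes (ii). The one point requiring genuine care — and the only real obstacle — is the uniform identification of $\phi$ with the graph coboundary together with the verification that $\operatorname{coker}\phi$ is one-dimensional, equivalently that $b_1(\Gamma)=1$; this must be arranged so as to cover the self-node case $d=1$ (where $\phi=0$) on the same footing as the cycles $d\geq 2$. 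I would phrase it entirely in terms of the connectedness of $\Gamma$ and its single independent loop, so that no separate computation for $d=1$ is needed.
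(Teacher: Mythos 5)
Your proof is correct and follows essentially the same route as the paper: the smooth case is dispatched as the cohomology of an elliptic curve, and for $d\geq 1$ one uses the normalization sequence $0\to \CC_{D}\to \pi_*\pi^*\CC_{D}\to \oplus_{i}\CC_{p_i}\to 0$ together with the long exact sequence and the connectedness of $D$ (which forces $H^0(D)=\CC$ and hence $\dim\operatorname{coker}\phi=1$). Your explicit identification of the connecting map with the coboundary of the dual graph is a slightly more detailed bookkeeping of the same argument, and correctly covers the self-node case $d=1$.
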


\begin{proof}

The part (i) is clear. Prove the part (ii).
Let $p_1,\ldots p_d$ be the intersection points of the components of $D$.
Let $\pi\colon \widetilde{D}\to D$ be the normalization. Then $\widetilde{D}$ is a disjoint union of $d$ copies of $\PP^1$.
Consider an exact sequence of sheaves on $D$
\begin{equation}\label{resol-seq}
0\to \CC_{D}\to \pi_*\pi^*\CC_{D}\to \oplus_{i=1}^d \CC_{p_i}\to 0,
\end{equation}
where $\CC_{p_i}$ is a skyscraper sheaf supported at $p_i$.
Notice that
$$
\dim H^i(D,\pi_*\pi^*\CC_{D})=\dim H^i(\widetilde{D})=\left\{
                                                    \begin{array}{ll}
                                                      d, & i=0,2; \\
                                                      0, & i=1.
                                                    \end{array}
                                                  \right.
$$
Notice also that $H^0(D,\CC_{D})=\CC$ and the map $H^0(D,\CC_{D})\to H^0(D,\pi_*\pi^*\CC_{D})$ is injective.
The lemma now follows from the long exact sequence of cohomology applied to the short exact sequence~\eqref{resol-seq}.
\end{proof}

\begin{lemma}[{\cite[Lemma 20]{LP18}}]
\label{lemma:ZtoD}
The restriction map $s\colon H^2(Z)\to H^2(D)$ is surjective.
\end{lemma}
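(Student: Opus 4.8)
The plan is to reduce the claim to a statement about intersection numbers of divisors on $Z$, which can then be checked directly using the cycle structure of $D$ and the section $E$.

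First I would note that since $Z$ is a rational surface, $H^2(Z,\CC)=H^{1,1}(Z)$ is spanned by classes of divisors; concretely, $Z$ is a blow-up of $\PP^2$, so $H^2(Z)$ is generated by a hyperplane class and nine exceptional classes, all algebraic. Hence it suffices to show that restrictions of divisor classes already fill up $H^2(D)$. Next I would identify $H^2(D)$ using the normalization sequence~\eqref{resol-seq} from the proof of Lemma~\ref{lemma:C_D}: writing $\pi\colon\widetilde D\to D$ for the normalization, the map $\pi^*\colon H^2(D)\to H^2(\widetilde D)=\bigoplus_i H^2(D_i)$ is an isomorphism. Under this identification the composite $H^2(Z)\to H^2(D)\xrightarrow{\ \pi^*\ }\bigoplus_i H^2(D_i)$ sends a divisor class $\beta$ to the tuple of restrictions $\beta|_{D_i}$, that is, to the intersection numbers $(\beta\cdot D_1,\ldots,\beta\cdot D_d)$ under the identification $H^2(D_i)\cong\CC$ by the fundamental class. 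So the lemma is equivalent to surjectivity of the map $\Phi\colon\pic(Z)_\CC\to\CC^d$, $\beta\mapsto(\beta\cdot D_i)_i$.

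For $d\ge 2$ this is the heart of the argument. The components $D_1,\ldots,D_d$ of the $I_d$ fiber are $(-2)$-curves forming a cycle, so their intersection matrix $M=(D_i\cdot D_j)$ is the negative of the affine Cartan matrix of type $\widetilde A_{d-1}$. It is symmetric, negative semidefinite, with one-dimensional kernel spanned by $(1,\ldots,1)$, because $\sum_j D_i\cdot D_j=D_i\cdot F=0$, where $F=\sum_j D_j$ is the fiber class. Hence $\Phi$ already maps the span of the $D_j$ onto the image of $M$, which is the hyperplane $\{v\in\CC^d\mid\sum_i v_i=0\}$ of dimension $d-1$. It remains to produce one class landing off this hyperplane, and here I would use the section: $E$ meets the fiber $F$ in a single point lying on exactly one component, say $D_1$, so $\Phi(E)=(1,0,\ldots,0)$, whose coordinates sum to $1\neq 0$. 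Thus $\Phi(E)$ together with $\Phi(D_1),\ldots,\Phi(D_d)$ span all of $\CC^d$, proving surjectivity.

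The degenerate cases $d\in\{0,1\}$ are easier: $D$ is irreducible (a smooth elliptic curve, respectively a nodal rational curve), so $H^2(D)\cong\CC$ and $\Phi$ is just $\beta\mapsto\beta\cdot D=\beta\cdot F$; taking $\beta=E$ gives $E\cdot F=1$, so $\Phi$ is onto. The main obstacle I anticipate is purely bookkeeping: verifying that the section meets exactly one component of the reducible fiber transversally (a standard property of sections of relatively minimal elliptic surfaces) and confirming that the normalization sequence identifies the restriction map with the intersection pairing compatibly across all components. Once these are in place the linear algebra is immediate.
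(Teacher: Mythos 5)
Your proof is correct, and it supplies essentially the argument that the survey omits (the lemma is only quoted from~\cite{LP18}): identify $H^2(D)$ with $\bigoplus_i H^2(D_i)\cong\CC^d$ via the normalization sequence, observe that the composite with restriction is $\beta\mapsto(\beta\cdot D_i)_i$ on the algebraic group $H^2(Z)$, and then note that the negative affine $\widetilde A_{d-1}$ intersection matrix of the $I_d$ fiber hits the hyperplane $\sum v_i=0$ while the section $E$, meeting exactly one component since $E\cdot D_i\ge 0$ and $\sum_i E\cdot D_i=E\cdot F=1$, supplies the missing direction. The degenerate cases $d=0,1$ are handled correctly as well, so nothing is missing.
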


Next we compute the cohomology $H_c^i(Y)$ of $Y$ with compact support.

\begin{lemma}[{\cite[Lemma 21]{LP18}}]
\label{lemma:C_Y}
The following equalities hold.
$$h^i_c(Y)=
h^i(Z,j_!\CC_Y)=\left\{
                     \begin{array}{ll}
                       0, & i=0,1,3; \\
                       11-d, & i=2; \\
                       1, & i=4.
                     \end{array}
                   \right.
$$
\end{lemma}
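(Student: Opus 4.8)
The plan is to compute $H^\bullet_c(Y)$ from the localization long exact sequence attached to the closed complement $D$. Writing $j\colon Y\hookrightarrow Z$ for the open embedding and $\iota\colon D\hookrightarrow Z$ for the complementary closed embedding, I would start from the short exact sequence of sheaves on $Z$
\begin{equation*}
0\to j_!\CC_Y\to \CC_Z\to \iota_*\CC_D\to 0.
\end{equation*}
Since $Z$ is projective, taking hypercohomology identifies $H^k(Z,j_!\CC_Y)$ with $H^k_c(Y)$, the term $H^k(Z,\CC_Z)$ with $H^k(Z)$, and $H^k(Z,\iota_*\CC_D)$ with $H^k(D)$, producing the long exact sequence
\begin{equation*}
\cdots\to H^k_c(Y)\to H^k(Z)\xrightarrow{\,s_k\,}H^k(D)\to H^{k+1}_c(Y)\to\cdots,
\end{equation*}
in which $s_k$ is the restriction map. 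All the input I need is already at hand: the Betti numbers of $Z$ computed above ($h^0=h^4=1$, $h^2=10$, odd ones vanishing), those of $D$ from Lemma~\ref{lemma:C_D}, and the surjectivity of $s_2$ from Lemma~\ref{lemma:ZtoD}.

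First I would treat the low degrees. The map $s_0\colon H^0(Z)\to H^0(D)$ is the restriction of locally constant functions between two one-dimensional spaces, hence an isomorphism; as $Y$ is connected and non-compact, $H^0_c(Y)=0$, consistent with the sequence. Because $s_0$ is surjective, the connecting map $H^0(D)\to H^1_c(Y)$ vanishes, so $H^1_c(Y)$ injects into $H^1(Z)=0$ and is therefore zero. This disposes of $i=0,1$.

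The heart of the computation is degree $2$. Here $s_1=0$ since $H^1(Z)=0$, so the relevant portion of the sequence reads
\begin{equation*}
0\to H^1(D)\to H^2_c(Y)\to H^2(Z)\xrightarrow{\,s_2\,}H^2(D),
\end{equation*}
presenting $H^2_c(Y)$ as an extension of $\ker s_2$ by $H^1(D)$. Invoking the surjectivity of $s_2$ (Lemma~\ref{lemma:ZtoD}) gives $\dim\ker s_2=10-\dim H^2(D)$, so counting dimensions with Lemma~\ref{lemma:C_D} yields $\dim H^2_c(Y)=\dim H^1(D)+\bigl(10-\dim H^2(D)\bigr)=11-d$ uniformly in $d$ (for $d>0$ one has $\dim H^1(D)=1$, $\dim H^2(D)=d$; for $d=0$ one has $\dim H^1(D)=2$, $\dim H^2(D)=1$). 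The same surjectivity forces the connecting map $H^2(D)\to H^3_c(Y)$ to vanish, whence $H^3_c(Y)\hookrightarrow H^3(Z)=0$ is zero. Finally, since $H^3(D)=H^4(D)=0$, the tail of the sequence gives $H^4_c(Y)\cong H^4(Z)\cong\CC$, while $H^k_c(Y)=0$ for $k>4$ for dimension reasons.

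The only non-formal ingredient is the surjectivity of $s_2\colon H^2(Z)\to H^2(D)$, which is exactly Lemma~\ref{lemma:ZtoD}; it both pins down the dimension in degree $2$ and kills $H^3_c(Y)$, so I expect this to be the single load-bearing step. Everything else is a direct diagram chase through the localization sequence using the previously recorded Betti numbers, and I anticipate no further difficulty.
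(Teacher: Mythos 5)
Your proposal is correct and follows exactly the route the paper indicates: the long exact sequence of $H^*(Z,-)$ applied to $0\to j_!\CC_Y\to\CC_Z\to\CC_D\to 0$, combined with the Betti numbers of $Z$ and $D$ and the surjectivity of $H^2(Z)\to H^2(D)$ from Lemma~\ref{lemma:ZtoD}. The dimension count in degree $2$ and the vanishing of $H^3_c(Y)$ are handled precisely as intended, so nothing further is needed.
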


\begin{proof}[Idea of the proof]
This follows from the long exact sequence of cohomology $H^*(Z,-)$ for the short exact sequence
\begin{equation*}
0\to j_!\CC_{Y}\to \CC_{Z}\to \CC_{D}\to 0.
\qedhere
\end{equation*}
\end{proof}

\begin{corollary}\label{coho-of-y}
By Poincare duality for $Y$ one has
\begin{equation*}h^i(Y)=\left\{ \begin{array} {rl}  1, & \text{if $i=0$;} \\
                       11-d, & \text{if $i=2$;}\\
                        0, & \text{if $i=1,3,4$.}
                        \end{array}
                        \right.
\end{equation*}
\end{corollary}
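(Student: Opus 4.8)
The plan is to deduce the ordinary cohomology of $Y$ from its compactly supported cohomology, which was already determined in Lemma~\ref{lemma:C_Y}, by invoking Poincar\'e duality. First I would record that $Y=Z\setminus D$ is a smooth quasi-projective surface, hence a smooth oriented real $4$-manifold of complex dimension $n=2$; complex manifolds carry a canonical orientation, so orientability is automatic and no extra hypothesis is needed.

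For any smooth oriented manifold of real dimension $2n$, Poincar\'e duality provides a natural isomorphism of $\CC$-vector spaces
$$
H^i(Y;\CC)\simeq H_c^{2n-i}(Y;\CC)^{\vee},
$$
obtained by composing the duality isomorphism $H^{2n-i}_c(Y;\CC)\simeq H_{i}(Y;\CC)$ with the universal coefficient identification $H_i(Y;\CC)\simeq H^i(Y;\CC)^{\vee}$ over the field $\CC$. Taking dimensions gives $h^i(Y)=h_c^{2n-i}(Y)=h_c^{4-i}(Y)$.

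It then remains only to substitute the values computed in Lemma~\ref{lemma:C_Y}: from $h^0_c(Y)=h^1_c(Y)=h^3_c(Y)=0$, $h^2_c(Y)=11-d$, and $h^4_c(Y)=1$ one reads off $h^0(Y)=h^4_c(Y)=1$, $h^1(Y)=h^3_c(Y)=0$, $h^2(Y)=h^2_c(Y)=11-d$, $h^3(Y)=h^1_c(Y)=0$, and $h^4(Y)=h^0_c(Y)=0$, which is exactly the asserted table. There is essentially no obstacle here beyond correctly bookkeeping the index reversal $i\mapsto 4-i$ in the duality; the entire nontrivial content has already been carried out in the computation of $H_c^*(Y)$ in Lemma~\ref{lemma:C_Y}.
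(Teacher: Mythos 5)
Your proof is correct and is exactly the argument the paper intends: the corollary is stated as an immediate consequence of Lemma~\ref{lemma:C_Y} via Poincar\'e duality $h^i(Y)=h^{4-i}_c(Y)$ on the smooth (canonically oriented) surface $Y$, and your index bookkeeping reproduces the stated table.
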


\subsection{Landau--Ginzburg Hodge numbers for rational elliptic surfaces}
\label{subsection:LG-Hodge-numbers-for-surfaces}

\subsubsection{The numbers $h^{p,q}(Y,w)$}
We keep the notation of Subsection~\ref{subsection:topology}.

Consider the long exact sequence of homology
\begin{equation*} \label{seq4} \ldots\to  H_2(Y)  \to  H_2(Y,Y_b)  \to   H_{1}(Y_b)   \to  \ldots
\end{equation*}
Recall that there is a compatible action of the monodromy $T$ on each term of this sequence as explained in Subsection~\ref{subsection:monodromy}.

\begin{corollary}\label{cor-of-general} The image of the map $H_2(Y)  \to  H_2(Y,Y_b)$ coincides with the space $H_2(Y,Y_b)^T$
of $T$-invariants.
\end{corollary}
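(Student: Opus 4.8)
The plan is to deduce this directly from Proposition~\ref{prop-sum}(ii), which was stated in the general setting of Subsection~\ref{subsection:monodromy} for a smooth variety $V$ of dimension $n$ equipped with a proper morphism $w\colon V\to \CC$. First I would check that the pair $(Y,w)$ fits this setting with $V=Y$ and $n=\dim Y=2$. The one thing to verify is properness of $w=f\vert_Y$: since we are in the surface case the horizontal divisor $D^h$ is empty, so $D=D^v=f^{-1}(\infty)$ and hence $Y=Z\setminus D=f^{-1}(\bbA^1)$. As $f\colon Z\to \PP^1$ is proper, its restriction $w\colon Y\to \bbA^1\cong\CC$ is proper as well, and $Y_b=w^{-1}(b)$ is a smooth fiber for the regular value $b$.

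With this in place I would apply Proposition~\ref{prop-sum}(ii) with $i=2$. Its hypothesis is the vanishing $H^{2n-i-1}(Y)=H^{4-2-1}(Y)=H^1(Y)=0$, which is exactly what Corollary~\ref{coho-of-y} provides, since there $h^1(Y)=0$. The conclusion of part (ii) is then precisely the assertion of the corollary: the image of the map $H_2(Y)\to H_2(Y,Y_b)$ coincides with the subspace $H_2(Y,Y_b)^T$ of $T$-invariants.

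There is essentially no obstacle here, as all of the real work has already been carried out: the construction and mutual compatibility of the monodromy operators $T$ on $H_*(Y)$, $H_*(Y,Y_b)$ and $H_*(Y_b)$ (Subsection~\ref{subsection:monodromy}), the general homological criterion of Proposition~\ref{prop-sum}, and the computation of the cohomology of the open surface $Y$ (Corollary~\ref{coho-of-y}). The only points requiring care are bookkeeping: one must use $n=2$ so that the degree shift in the hypothesis of Proposition~\ref{prop-sum}(ii) reads $H^1(Y)=0$ rather than a vanishing in some other degree, and one must confirm that properness of $w$ genuinely holds, which is what forces the restriction to the surface case where $D^h=\emptyset$.
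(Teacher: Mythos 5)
Your proposal is correct and follows exactly the paper's own argument: apply Proposition~\ref{prop-sum}(ii) with $n=2$, $i=2$, using the vanishing $H^{2n-i-1}(Y)=H^1(Y)=0$ from Corollary~\ref{coho-of-y}. The extra check of properness of $w$ (via $D^h=\emptyset$ in the surface case) is a sensible verification of the standing hypotheses, though the paper leaves it implicit.
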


\begin{proof} In the notation of Proposition~\ref{prop-sum} we have $n=2$, $i=2$, and by Corollary \ref{coho-of-y} we have $H^{2n-i-1}(Y)=H^1(Y)=0$. Hence the assertion follows from
 Proposition \ref{prop-sum}(ii).
\end{proof}

\begin{proposition}
\label{proposition:h-numbers}
\begin{enumerate}
\item[(i)]
We have \begin{equation}\label{fact-coh}H^k(Y,Y_b)=
\left\{\begin{array}{rl}
                       \CC^{12-d}, & k=2; \\
                       0, & \hbox{otherwise.}
\end{array}\right.
\end{equation}

\item[(ii)] For $d>0$ the Landau--Ginzburg model $(Y,w)$ is of Fano type and
\begin{equation}\label{eq-hpq} h^{p,q}(Y,w)=\left\{\begin{array}{rl}
                       1, & (p,q)=(0,2),(2,0); \\
                       10-d, & (p,q)=(1,1); \\
                       0, & \hbox{otherwise.}
\end{array}\right.
\end{equation}

\item[(iii)] For $d=0$ the Landau--Ginzburg model $(Y,w)$ is not of Fano type. More precisely, the $T$-action on $H^2(Y,Y_b)$ has 2 Jordan blocks of size 2 and 8 blocks of size 1.
(So no blocks of size 3).
\end{enumerate}
\end{proposition}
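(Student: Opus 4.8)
The plan is to read off everything from the $T$-equivariant long exact sequence of the pair $(Y,Y_b)$, using the cohomology of $Y$ from Corollary~\ref{coho-of-y} and the fact that, since $D^h=\emptyset$, the fiber $Y_b=f^{-1}(b)$ is a \emph{compact} smooth elliptic curve, so that $h^0(Y_b)=h^2(Y_b)=1$ and $h^1(Y_b)=2$. For part (i) I would run the sequence $\ldots\to H^{k-1}(Y_b)\to H^k(Y,Y_b)\to H^k(Y)\to H^k(Y_b)\to\ldots$. Since $Y$ and $Y_b$ are connected, $H^0(Y)\to H^0(Y_b)$ is an isomorphism, which kills $H^0(Y,Y_b)$ and, together with $H^1(Y)=0$, forces $H^1(Y,Y_b)=0$. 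The only subtle point is the restriction $r\colon H^2(Y)\to H^2(Y_b)$: it is surjective because the section $E$ meets $Y_b$ in one point, so already $H^2(Z)\to H^2(Z_b)=H^2(Y_b)$ is onto and $r$ factors through it. Surjectivity of $r$ gives $H^3(Y,Y_b)=0$, and from $0\to H^1(Y_b)\to H^2(Y,Y_b)\to\ker r\to 0$ one gets $\dim H^2(Y,Y_b)=2+(11-d-1)=12-d$; all higher groups vanish for degree reasons, proving \eqref{fact-coh}.

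For parts (ii) and (iii) the point is to determine the Jordan type of $T$, equivalently of $N=\log T$, on $H^2(Y,Y_b)$; by duality this agrees with the Jordan type on the homology $H_2(Y,Y_b)$, where $T$ is given explicitly by $T-\id=\gamma_2L_1\partial_2$. The key step is the operator identity obtained by iterating this formula: since $\partial_2\gamma_2=\partial^{M}_2$ and $\partial^{M}_2L_1=T_b-\id$ on $H_1(Y_b)$ by Lemma~\ref{lemma-formula}, one obtains $(T-\id)^2=\gamma_2L_1\,(T_b-\id)\,\partial_2$ and $(T-\id)^3=\gamma_2L_1\,(T_b-\id)^2\,\partial_2$, where $T_b$ is the monodromy on $H_1(Y_b)$ around $\infty$. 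This $T_b$ is the local monodromy of a Kodaira fiber of type $I_d$: a single unipotent $2\times 2$ Jordan block for $d\ge 1$ (so $T_b-\id\ne 0$ and $(T_b-\id)^2=0$) and the identity for $d=0$.

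Now I would combine three inputs: $\partial_2$ is surjective (because $H_1(Y)=0$), $L_1$ is injective (Proposition~\ref{injective}), and $\gamma_2$ is injective (Proposition~\ref{prop-sum}(ii), as $H^1(Y)=0$). These give $\operatorname{rank}(T-\id)^2=\operatorname{rank}(T_b-\id)$ and $(T-\id)^3=0$. Hence for $d\ge 1$ we have $N^2=(T-\id)^2\ne 0$ with $\operatorname{rank}N^2=1$ and $N^3=0$, so $(Y,w)$ is of Fano type and there is exactly one Jordan block of size $3$. To count the remaining blocks I use $\dim\ker N=\dim H_2(Y,Y_b)^T$, which by Corollary~\ref{cor-of-general} equals $\dim\operatorname{Im}(H_2(Y)\to H_2(Y,Y_b))=(12-d)-\dim\operatorname{Im}\partial_2=10-d$; thus there are $10-d$ blocks in total, forcing exactly one block of size $3$ and $9-d$ blocks of size $1$. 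Feeding this into the monodromy weight filtration centered at $n=2$ (a size-$3$ block contributes one dimension to each of $gr^W_0,gr^W_2,gr^W_4$, a size-$1$ block one dimension to $gr^W_2$) and applying Definition~\ref{definition:hpq} yields $h^{2,0}=h^{0,2}=1$ and $h^{1,1}=10-d$, which is \eqref{eq-hpq}. For $d=0$ instead $T_b=\id$ gives $(T-\id)^2=0$, so $N^2=0$ and $(Y,w)$ is not of Fano type; here $\dim\ker N=10$ and $\dim H_2(Y,Y_b)=12$ force $\operatorname{rank}N=2$, i.e.\ two blocks of size $2$ and eight of size $1$.

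The main obstacle is the Fano-type assertion for $d\ge 1$, namely showing $N^2\ne 0$: a priori the nontrivial monodromy on $H^1(Y_b)$ and the trivial $T$-action on the quotient $\ker r$ need not interact, and whether they do is exactly the content of the extension $0\to H^1(Y_b)\to H^2(Y,Y_b)\to\ker r\to 0$. The identity $(T-\id)^2=\gamma_2L_1(T_b-\id)\partial_2$ together with surjectivity of $\partial_2$ is what resolves this, as it shows that the size-$2$ block of $T_b$ is genuinely propagated, through $\partial_2$ and the injective maps $L_1$ and $\gamma_2$, into a size-$3$ block on relative cohomology; everything else is then bookkeeping with ranks and the weight filtration.
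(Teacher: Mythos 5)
Your proposal is correct and follows essentially the same route as the paper: part (i) from the cohomology long exact sequence together with surjectivity of $H^2(Y)\to H^2(Y_b)$, and parts (ii)--(iii) from the $T$-equivariant homology sequence, Corollary~\ref{cor-of-general} for $\dim\ker(T-\id)$, and the (non)triviality of the monodromy on $H^1(Y_b)$. Your explicit identity $(T-\id)^2=\gamma_2L_1(T_b-\id)\partial_2$, combined with the injectivity of $L_1$ and $\gamma_2$ and the surjectivity of $\partial_2$, is a welcome elaboration of the step the paper leaves implicit (namely why the nontrivial $I_d$-monodromy forces exactly one Jordan block of size $3$), and your use of the section $E$ in place of an ample line bundle for Lemma~\ref{lemma-surj} is an equally valid minor variant.
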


This proposition proves Theorem~\ref{theorem:main-Hodge-2}(iii) and computes the right hand side of the equality of Theorem~\ref{theorem:main-Hodge-2}(i).

The proof of the proposition will occupy the rest of this subsection.

\begin{lemma} \label{lemma-surj} The restriction map $H^2(Y)\to H^2(Y_b)$ is surjective. Hence the map $\mbox{$H_2(Y_b)\to H_2(Y)$}$
is injective.
\end{lemma}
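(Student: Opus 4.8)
The plan is to exhibit inside $H^2(Y)$ a class whose restriction to the smooth compact fiber $Y_b$ generates $H^2(Y_b)$; the surjectivity is then immediate, and the ``hence'' clause will follow by duality over $\CC$. The essential input is the existence of the section $E$, which is exactly why that hypothesis appears in Theorem~\ref{theorem:main-Hodge-2}.

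First I would pin down the target. Since $b$ is a regular value with $b\neq \infty$, the fiber $Y_b=f^{-1}(b)$ is a smooth elliptic curve disjoint from $D=f^{-1}(\infty)$, so $Y_b\subset Y$ is a compact complex torus and $H^2(Y_b)\cong \CC$, generated by the cohomology class of a point. Thus surjectivity of the restriction $i^*\colon H^2(Y)\to H^2(Y_b)$ reduces to showing that this single generator lies in the image of $i^*$. Note also that $H^2(Y)=\CC^{11-d}$ by Corollary~\ref{coho-of-y}, so the range is finite-dimensional and nothing is subtle there.

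The key step uses the section $E\subset Z$. For general $b$ the curve $E$ meets $Y_b$ transversally in the single point $E\cap Y_b$ (because $E\cdot Y_b=1$), and this point lies in $Y$ since it sits over $b\neq\infty$. Consider the Poincar\'e dual class $[E]\in H^2(Z)$. By functoriality of restriction along the inclusions $Y_b\hookrightarrow Y\hookrightarrow Z$, the restriction $H^2(Z)\to H^2(Y_b)$ factors through $H^2(Z)\to H^2(Y)$ followed by $i^*\colon H^2(Y)\to H^2(Y_b)$. The compatibility of Poincar\'e duality with restriction to a transverse submanifold then gives that $[E]$ restricts on $Y_b$ to the Poincar\'e dual (in $Y_b$) of the reduced point $E\cap Y_b$, that is, to the generator of $H^2(Y_b)\cong\CC$. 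Hence this generator lies in the image of $i^*$, and $i^*$ is surjective.

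For the final assertion I would invoke the universal coefficient theorem over the field $\CC$: cohomology is the linear dual of homology, and for $i\colon Y_b\hookrightarrow Y$ the map $i^*\colon H^2(Y)\to H^2(Y_b)$ is the transpose of $i_*\colon H_2(Y_b)\to H_2(Y)$ under the evaluation pairing. Since all the spaces involved are finite-dimensional, surjectivity of the transpose $i^*$ is equivalent to injectivity of $i_*$, which is precisely the claim. The only point that genuinely requires care is the identification of $i^*[E]$ with the point class, i.e.\ the fact that restricting a Poincar\'e dual to a transverse fiber computes the Poincar\'e dual of the intersection; I expect this to be the main (though standard) obstacle, and I would handle it by choosing $b$ generic so that $E$ meets $Y_b$ transversally in one reduced point.
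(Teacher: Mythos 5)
Your proof is correct, but it takes a different route from the paper. The paper's argument is even shorter: since $Y_b$ is a smooth projective curve, $H^2(Y_b)\cong\CC$ is spanned by $c_1(L)$ for any ample line bundle $L$ on $Y_b$; one then takes any ample line bundle $M$ on the quasi-projective surface $Y$ and observes that $M|_{Y_b}$ is ample, so $c_1(M)\in H^2(Y)$ restricts to a generator. You instead use the section $E$ and restrict the class $[E]\in H^2(Z)$ down to $Y_b$, where it becomes the point class because $E\cdot Y_b=1$. Both arguments are valid. What the paper's version buys is that it does not use the section at all — only quasi-projectivity of $Y$ and compactness of $Y_b$ — so the hypothesis on $E$ is not actually needed for this lemma (it is used elsewhere, e.g.\ to compute $E^2=-1$). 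What your version buys is a concrete geometric generator, and in fact you can drop the genericity-of-$b$ caveat at the end: since $E$ is a section, $f|_E\colon E\to\PP^1$ is an isomorphism, so $E$ meets \emph{every} fiber transversally in a single reduced point, and no choice of generic $b$ is required. The duality step (surjectivity of $i^*$ over $\CC$ is equivalent to injectivity of $i_*$) is the same in both treatments.
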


\begin{proof} Since $Y_b$ is a smooth projective curve, $H^2(Y_b)$ has dimension one and is spanned by the first Chern class $c_1(L)$ of any ample line bundle $L$ on $Y_b$. It suffices to take any ample line bundle $M$ on $Y$, so that its restriction
$L=M\vert _{Y_b}$ is also ample and $c_1(M)\in H^2(Y)$ restricts to $c_1(L)\in H^2(Y_b)$.
\end{proof}

The equation \eqref{fact-coh} now follows from the long exact sequence of cohomology
$$\ldots\to H^i(Y,Y_b)\to H^i(Y)\to H^i(Y_b)\to \ldots $$ using Corollary~\ref{coho-of-y}, the fact that $Y_b$ is an elliptic curve, and
Lemma \ref{lemma-surj}. This proves part (i) of the proposition.

To prove parts (ii) and (iii) it remains to understand the action of the monodromy $T$ on~$H_2(Y,Y_b)$.

Consider the part of the long exact sequence of homology
\begin{equation*}\label{piece-of-long-homology-1}
H_3(Y,Y_b)\to H_2(Y_b)\to H_2(Y)\to H_2(Y,Y_b)\to H_1(Y_b)\to H_1(Y).
\end{equation*}
We know that the map $H_2(Y_b)\to H_2(Y)$ is injective and that $H_1(Y)=H^1(Y)^\vee=0$. Hence the sequence
\begin{equation}\label{piece-of-long-homology-2}
0\to H_2(Y_b)\to H_2(Y)\to H_2(Y,Y_b)\to H_1(Y_b)\to 0
\end{equation}
is also exact. We have $H_2(Y_b)=\CC$, $H_1(Y_b)=\CC ^2$, $H_2(Y)=\CC^{11-d}$, hence the sequence
\eqref{piece-of-long-homology-2} is isomorphic to
\begin{equation*}\label{piece-of-long-homology-3}
0\to \bbC \to \bbC ^{11-d}\to \bbC ^{12-d}\to \bbC ^{2} \to 0.
\end{equation*}

These sequences are $T$-equivariant, where $T$ acts trivially on $H_2(Y_b)$ and $H_2(Y)$. By Landman's theorem
$T$ acts unipotently on $H_1(Y_b)$.

For $d=0$ the fiber $f^{-1}(\infty)$ is smooth, hence the action of $T$ on $H_1(Y_b)$ is trivial.
Therefore the exact sequence~\eqref{piece-of-long-homology-2} and Corollary \ref{cor-of-general} imply that the $T$-action on $H_2(Y,Y_b)$ is unipotent with two Jordan blocks of size $2$ and eight blocks of size $1$. This means that the Landau--Ginzburg model $(Y,w)$ is not of Fano type, which proves (iii).

For $d>0$ the fiber $f^{-1}(\infty)$ is singular, so the $T$-action on $H_1(Y_b)$ is nontrivial (see~\cite[Table $1$]{Ko63}).
Therefore the exact sequence~\eqref{piece-of-long-homology-2} and Corollary \ref{cor-of-general} imply that the $T$-action on~$H_2(Y,Y_b)$ is unipotent with one Jordan block of size $3$ and $9-d$ blocks of size $1$. Therefore~$(Y,w)$ is of Fano type and equations \eqref{eq-hpq} hold. This completes the proof of  Proposition~\ref{proposition:h-numbers}.

\subsubsection{The numbers $f^{p,q}(Y,w)$}
Recall that we have the open embedding $j\colon Y\hookrightarrow Z$.

\begin{lemma}
\label{lemma:log_cohomology} We have
$$\Omega^0_Z(log\, D)\simeq\cO_Z \quad \text{and} \quad \Omega^2_Z(log\, D)\simeq\cO_Z.$$
Hence
$$\Omega^0_Z(log\, D)(-D)\simeq\Omega^2_Z(log\, D)(-D)\simeq\omega _Z.$$
\end{lemma}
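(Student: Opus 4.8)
I would first dispose of $\Omega^0_Z(log\, D) \simeq \cO_Z$, which is immediate from the definition of the logarithmic de Rham complex recalled in Subsection~\ref{subs-def-fpq}, where by convention $\Omega^0_Z(log\, D) = \cO_Z$. The substance of the lemma is the identification of the top logarithmic forms $\Omega^2_Z(log\, D)$.

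The plan is to establish the standard residue isomorphism $\Omega^2_Z(log\, D) \simeq \omega_Z \otimes \cO_Z(D) = \omega_Z(D)$, valid for any reduced normal crossings divisor $D$ on a smooth surface $Z$, by a local computation. In analytic (or \'etale) coordinates in which $D = \{z_1 z_2 = 0\}$ --- the situation at a node of the wheel, or at the node of the nodal rational curve in the case $d = 1$ --- the sheaf $\Omega^2_Z(log\, D)$ is generated by
$$\frac{dz_1}{z_1} \wedge \frac{dz_2}{z_2} = \frac{1}{z_1 z_2}\, dz_1 \wedge dz_2;$$
along a smooth point of $D$, with $D = \{z_1 = 0\}$, it is generated by $\frac{1}{z_1}\, dz_1 \wedge dz_2$; and away from $D$ by $dz_1 \wedge dz_2$. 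In every chart the local generator equals the inverse of a local equation of $D$ times the generator $dz_1 \wedge dz_2$ of $\omega_Z$, i.e. a local generator of $\omega_Z \otimes \cO_Z(D)$. Gluing these trivializations gives the desired isomorphism; the case $d = 0$, where $D$ is smooth, is covered by the same computation.

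Next I would invoke the fact, recalled at the start of Subsection~\ref{subsection:topology}, that $D = f^{-1}(\infty)$ is anticanonical, i.e. $-K_Z = D$, whence $\cO_Z(D) \simeq \omega_Z^{-1}$. Substituting into the previous step yields
$$\Omega^2_Z(log\, D) \simeq \omega_Z \otimes \omega_Z^{-1} \simeq \cO_Z,$$
the second claimed isomorphism. The final assertions are then formal: since $-D = K_Z$ we have $\cO_Z(-D) \simeq \omega_Z$, so that
$$\Omega^0_Z(log\, D)(-D) \simeq \cO_Z(-D) \simeq \omega_Z \quad \text{and} \quad \Omega^2_Z(log\, D)(-D) \simeq \cO_Z(-D) \simeq \omega_Z.$$

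I do not expect a genuine obstacle here; the only point demanding care is the local verification of $\Omega^2_Z(log\, D) \simeq \omega_Z(D)$ when $d = 1$, where $D$ is an irreducible nodal curve rather than a simple normal crossings divisor. Since $D$ is nonetheless normal crossings in the analytic/\'etale topology, the generator computation above applies without change, so this causes no real difficulty.
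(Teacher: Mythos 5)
Your proof is correct and follows essentially the same route as the paper, which simply notes that the lemma "follows from the definition of the logarithmic complex in Subsection \ref{subs-def-fpq} and the fact that $D$ is the anticanonical divisor." Your local computation of the generator of $\Omega^2_Z(log\, D)$ just makes explicit the standard identification $\Omega^2_Z(log\, D)\simeq\omega_Z(D)$ that the paper leaves implicit.
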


\begin{proof} This follows from the definition of the logarithmic complex in Subsection \ref{subs-def-fpq}
and the fact that $D$ is the anticanonical divisor.
\end{proof}

\begin{proposition}[{\cite[Proposition 27]{LP18}}]
\label{prop-fpq}
The following equalities hold.
\begin{equation}\label{first-two}
h^i(Z,\Omega^0_{Z}(log\, D)(-D))=h^i(Z,\Omega^2_{Z}(log\, D)(-D))
=\left\{                                                                   \begin{array}{ll}
                                                                              0, & \hbox{i=0,1;} \\
                                                                                        1, & \hbox{i=2,}
                                                                                      \end{array}                                                                                  \right.
\end{equation}
\begin{equation}\label{last-one}
h^i(Z,\Omega^1_Z(log\, D)(-D))=\left\{
                     \begin{array}{ll}
                       0, & \hbox{i=0,2;} \\
                       10-d, & \hbox{i=1.}
                     \end{array}
                   \right.
\end{equation}
\end{proposition}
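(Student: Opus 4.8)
The plan is to treat the three sheaves $\Omega^p_Z(log\, D)(-D)$, $p=0,1,2$, separately: the two extreme cases reduce to the cohomology of $\omega_Z$, while the middle one is forced by a dimension count coming from the cohomology with compact support of $Y=Z\setminus D$.

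First I would establish \eqref{first-two}. By Lemma~\ref{lemma:log_cohomology} one has $\Omega^0_Z(log\, D)(-D)\simeq\Omega^2_Z(log\, D)(-D)\simeq\omega_Z$, so it is enough to compute $h^i(Z,\omega_Z)$. By Serre duality $h^i(Z,\omega_Z)=h^{2-i}(Z,\cO_Z)$. Since $Z$ is a rational (elliptic) surface, $h^1(Z,\cO_Z)=h^2(Z,\cO_Z)=0$ and $h^0(Z,\cO_Z)=1$ (recall $\chi(\cO_Z)=1$ from Subsection~\ref{subsection:topology}). Hence $h^i(Z,\omega_Z)$ equals $0$ for $i=0,1$ and $1$ for $i=2$, which is exactly \eqref{first-two}.

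For \eqref{last-one} I would invoke the Hodge--de Rham spectral sequence computing the cohomology of $Y$ with compact support. In every case $0\le d\le9$ the divisor $D$ is a reduced normal crossings divisor (a wheel of rational curves for $d\ge2$, a nodal rational curve for $d=1$, a smooth elliptic curve for $d=0$), so Deligne's mixed Hodge theory identifies $\bbH^*\bigl(Z,\Omega^\bullet_Z(log\, D)(-D)\bigr)$ with $H^*_c(Y)$ and guarantees that the spectral sequence
\[
E_1^{p,q}=H^q\bigl(Z,\Omega^p_Z(log\, D)(-D)\bigr)\ \Longrightarrow\ H^{p+q}_c(Y)
\]
degenerates at $E_1$. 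Writing $a_{p,q}=h^q(Z,\Omega^p_Z(log\, D)(-D))$, degeneration gives $\dim H^m_c(Y)=\sum_{p+q=m}a_{p,q}$ for every $m$. From the previous paragraph the only nonzero values among the $a_{0,q}$ and $a_{2,q}$ are $a_{0,2}=a_{2,2}=1$, and Lemma~\ref{lemma:C_Y} provides $h^m_c(Y)$, namely $0$ for $m=0,1,3$, $11-d$ for $m=2$, and $1$ for $m=4$. Comparing the rows $m=1,2,3$ then yields
\begin{align*}
a_{1,0}&=h^1_c(Y)-a_{0,1}=0,\\
a_{1,1}&=h^2_c(Y)-a_{0,2}-a_{2,0}=(11-d)-1=10-d,\\
a_{1,2}&=h^3_c(Y)-a_{2,1}=0,
\end{align*}
which is precisely \eqref{last-one}; the rows $m=0,4$ reproduce \eqref{first-two} and serve as a consistency check.

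The main obstacle is one of justification rather than computation: one must correctly cite (or verify) that the \emph{twisted} logarithmic complex computes $H^*_c(Y)$ and that its Hodge--de Rham spectral sequence degenerates at $E_1$. Both are standard consequences of Deligne's theory once one checks that $D$ is a reduced normal crossings divisor, which holds throughout $0\le d\le9$; equivalently the degeneration can be deduced by Serre duality from the classical degeneration for the untwisted complex $\Omega^\bullet_Z(log\, D)$ computing $H^*(Y)$. Once these are in place the argument is pure bookkeeping, so the real substance of the proof lies in the inputs already available, Lemma~\ref{lemma:log_cohomology} and the compact-support computation of Lemma~\ref{lemma:C_Y}. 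I would deliberately avoid the more elementary-looking residue sequence $0\to\Omega^1_Z(-D)\to\Omega^1_Z(log\, D)(-D)\to\bigoplus_k\cO_{D_k}\otimes\cO_Z(-D)\to0$: although its residue quotient is easy to analyse (for $d\ge2$ each $D\cdot D_k=0$, so $\cO_{D_k}\otimes\cO_Z(-D)\cong\cO_{\PP^1}$), it would force a direct evaluation of $h^*(Z,\Omega^1_Z(-D))=h^*(Z,\Omega^1_Z\otimes\omega_Z)$, which is surface-specific and strictly harder than the count above.
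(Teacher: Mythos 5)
Your proof is correct and follows essentially the same route as the paper: \eqref{first-two} via Lemma~\ref{lemma:log_cohomology} and Serre duality, and \eqref{last-one} via the spectral sequence of the twisted logarithmic complex (a resolution of $j_!\CC_{Y}$) converging to $H^{*}_c(Y)$, combined with Lemma~\ref{lemma:C_Y}. You are in fact more explicit than the paper's sketch on the one point that genuinely needs justification --- the $E_1$-degeneration, without which convergence alone only yields relations between $h^1(Z,\Omega^1_Z(log\, D)(-D))$ and $h^2(Z,\Omega^1_Z(log\, D)(-D))$ rather than their values --- and your duality argument for that degeneration is sound.
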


\begin{proof}[Idea of the proof]
The equalities \eqref{first-two} follows from Serre duality and Lemma \ref{lemma:log_cohomology}.
The equality \eqref{last-one} follows from the analysis of the complex
$$
\Omega^0_{Z}(log\,D)(-D)\to\Omega^1_{Z}(log\,D)(-D)\to \Omega^2_{Z}(log\,D)(-D)\to 0,
$$
which is a resolvent of the sheaf $j_!\CC_{Y}$, see, for instance,~\cite[p. 268]{DI87}.
This complex gives the spectral sequence
$$E_1^{pq}=H^p(Z,\Omega^q_{Z}(log\,D)(-D))$$
which converges to $H^{p+q}(Z,j_!\CC_{Y})$.
\end{proof}

\begin{proposition}[{\cite[Proposition 28]{LP18}}]
\label{proposition: del Pezzo f}
There are the isomorphisms
\begin{itemize}
  \item[(i)] $\Omega^0_Z(log\, D,f)\simeq\cO_Z(-D)\simeq\omega _Z;$
  \item[(ii)]
        $\Omega^2_Z(log\, D,f)\simeq\Omega ^2_Z(log\, D)\simeq\cO_Z.$
  \item[(iii)] There exists a short exact sequence of sheaves on $Z$
       $$0\to \Omega^1_Z(log\, D)(-D)\to \Omega^1_Z(log\, D,f)\to \cO _D\to 0.$$
\end{itemize}
\end{proposition}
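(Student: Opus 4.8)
Proposition~\ref{proposition: del Pezzo f} asserts three identifications of $f$-adapted logarithmic sheaves on the rational elliptic surface $Z$:
\begin{itemize}
\item[(i)] $\Omega^0_Z(log\, D,f)\simeq\cO_Z(-D)\simeq\omega_Z$;
\item[(ii)] $\Omega^2_Z(log\, D,f)\simeq\Omega^2_Z(log\, D)\simeq\cO_Z$;
\item[(iii)] a short exact sequence $0\to \Omega^1_Z(log\, D)(-D)\to \Omega^1_Z(log\, D,f)\to \cO_D\to 0$.
\end{itemize}

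Let me think about how to approach each part.

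The setup is: $Z$ is a rational elliptic surface, $f\colon Z\to\PP^1$, with $D=D^v=f^{-1}(\infty)$ a reduced anticanonical divisor (a wheel of rational curves, or nodal/smooth elliptic). There is no horizontal divisor, so $D_Z=D$. The key definition is $\Omega^a_Z(log\, D,f)=\{\alpha\in\Omega^a_Z(log\, D)\mid df\wedge\alpha\in\Omega^{a+1}_Z(log\, D)\}$, where $f$ is viewed as a meromorphic function with pole along $D$.

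Let me reason through the mechanism. Near a point of $D$ I can pick local coordinates so that $f$ has the form of a reciprocal of the local equation of $D$. In the $d\geq 2$ case a node of the wheel is a normal crossing $z_1 z_2=0$ and locally $f=1/(z_1 z_2)$ (since $\mathrm{ord}_{D_j^v}(f)=-1$ for each branch); at a smooth point of $D$ one has $f=1/z_1$. Then $df=-\,dz_1/z_1^2$ (smooth point) or $df=-(dz_1/z_1+dz_2/z_2)/(z_1z_2)$ (node). The plan is to compute, in these local models, the subsheaf of $\Omega^a_Z(log\, D)$ annihilated-into-log by wedging with $df$, and then glue.

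**Outline of the proof I would give.**

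\emph{Parts (i) and (ii).} These are the clean extremes. For (ii), since $\Omega^2_Z(log\, D)$ is already the top logarithmic sheaf, wedging a top-degree form with $df$ lands in $\Omega^3_Z(log\, D)=0$, so the condition $df\wedge\alpha\in\Omega^3_Z(log\, D)$ is vacuous; hence $\Omega^2_Z(log\, D,f)=\Omega^2_Z(log\, D)$, and the latter is $\cO_Z$ by Lemma~\ref{lemma:log_cohomology}. For (i), $\Omega^0_Z(log\, D,f)$ consists of functions $g$ (sections of $\cO_Z$) with $g\,df\in\Omega^1_Z(log\, D)$. Using the local model $df=-dz_1/z_1^2$ at a smooth point of $D$, the form $g\,df$ is logarithmic iff $g$ vanishes on $D$, i.e. $g\in\cO_Z(-D)$; at a node $f=1/(z_1z_2)$ the same computation forces vanishing along both branches, again $\cO_Z(-D)$. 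Away from $D$ the condition is empty and $f$ is a regular function, so the sheaf is just $\cO_Z$ there, matching $\cO_Z(-D)$ off $D$. This identifies $\Omega^0_Z(log\, D,f)\simeq\cO_Z(-D)$, and $\cO_Z(-D)\simeq\omega_Z$ since $D$ is anticanonical; I would cite Lemma~\ref{lemma:log_cohomology} for the last isomorphism.

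\emph{Part (iii).} This is the substantive part. The inclusion $\Omega^1_Z(log\, D)(-D)\hookrightarrow\Omega^1_Z(log\, D,f)$ should come from observing that any logarithmic $1$-form multiplied by a local equation of $D$ becomes $f$-adapted: if $\alpha$ vanishes to order one along $D$ then $df\wedge\alpha$ picks up at worst a simple pole, hence stays logarithmic. The cokernel is then a sheaf supported on $D$, and the plan is to show it is exactly $\cO_D$. Concretely, I would build the quotient map $\Omega^1_Z(log\, D,f)\to\cO_D$ as a residue-type map: an $f$-adapted log form, restricted to $D$ and paired against $df$, produces a function on $D$. The local computation at a smooth point of $D$ (coordinates with $z_1=0$ the divisor, $f=1/z_1$) shows $\Omega^1_Z(log\, D,f)$ is generated over $\cO_Z$ by $dz_1/z_1$ together with $z_1\,dz_2$ (the "extra" adaptedness forces the $dz_2$ component to vanish along $D$), while $\Omega^1_Z(log\, D)(-D)$ is generated by $z_1\cdot dz_1/z_1=dz_1$ and $z_1\,dz_2$; the quotient is one-dimensional over each point of $D$, spanned by the class of $dz_1/z_1$, giving $\cO_D$. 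I would then check compatibility at the nodes of the wheel, where the normal-crossing structure makes the two logarithmic poles interact, and verify the quotient is still the structure sheaf of the (nodal) curve $D$ rather than its normalization.

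**Where the difficulty lies.** The routine parts are (i) and (ii) and the \emph{existence} of the left inclusion in (iii). The genuine obstacle is pinning down the cokernel in (iii) at the \emph{nodes} of $D$: there the sheaf $\Omega^1_Z(log\, D)$ has two independent logarithmic generators $dz_1/z_1,\,dz_2/z_2$, and the $f$-adaptedness condition with $df=-(dz_1/z_1+dz_2/z_2)/(z_1z_2)$ couples them, so one must check carefully that the $f$-adapted subsheaf is large enough for the quotient to be $\cO_D$ (and not, say, the push-forward from the normalization or a sheaf with torsion at the nodes). I expect the cleanest route is to identify the quotient map with the composite of restriction to $D$ and the residue along $D$, verify it is surjective with the asserted kernel fibrewise in the two local models, and then argue that both sheaves in the quotient are $\cO_Z$-coherent so the locally computed cokernel glues to the global $\cO_D$. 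Since the excerpt cites \cite[Proposition 28]{LP18} for this statement, I would lean on the explicit local computations there for the node case and present the smooth-point computation in full as the representative case.
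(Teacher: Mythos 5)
The paper states this proposition without proof, citing \cite[Proposition 28]{LP18}, so there is no in-text argument to compare against; your local-coordinate computation is the natural proof and is correct. The one step you flag as unfinished — the node case of (iii) — does work out exactly as you hope: choosing coordinates with $f=1/(z_1z_2)$ one gets $df\wedge\bigl(a\,\tfrac{dz_1}{z_1}+b\,\tfrac{dz_2}{z_2}\bigr)=\tfrac{a-b}{z_1z_2}\,\tfrac{dz_1}{z_1}\wedge\tfrac{dz_2}{z_2}$, so adaptedness is the single condition $a\equiv b\ (\mathrm{mod}\ z_1z_2)$; the two residues of an adapted form therefore agree at each node, so the total residue map lands in $\cO_D\subset\nu_*\cO_{\widetilde D}$, surjects onto it, and has kernel $\Omega^1_Z(log\,D)(-D)$, giving the quotient $\cO_D$ rather than the pushforward from the normalization.
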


\begin{proposition}
\label{proposition:f-numbers}
One has
$$f^{p,q}(Y,w)=\left\{\begin{array}{rl}
                       1, & (p,q)=(0,2),(2,0); \\
                       10-d, & (p,q)=(1,1); \\
                       0, & \hbox{otherwise.}
\end{array}\right.
$$
\end{proposition}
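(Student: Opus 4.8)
The plan is to compute $H^p(Z,\Omega^q_Z(log\, D,f))$ for each of $q=0,1,2$ separately, using the explicit descriptions of the $f$-adapted sheaves from Proposition~\ref{proposition: del Pezzo f}. The cases $q=0$ and $q=2$ are immediate. For $q=0$, part~(i) gives $\Omega^0_Z(log\, D,f)\simeq\omega_Z$, so $f^{p,0}(Y,w)=h^p(Z,\omega_Z)$; as $Z$ is a rational surface one has $h^p(\cO_Z)=(1,0,0)$ for $p=0,1,2$, and Serre duality yields $h^p(\omega_Z)=(0,0,1)$, i.e. $f^{2,0}=1$ and $f^{0,0}=f^{1,0}=0$. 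For $q=2$, part~(ii) gives $\Omega^2_Z(log\, D,f)\simeq\cO_Z$, so $f^{p,2}(Y,w)=h^p(Z,\cO_Z)=(1,0,0)$, i.e. $f^{0,2}=1$ and $f^{1,2}=f^{2,2}=0$. These already produce every entry of the claimed table except those in the column $q=1$.

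For $q=1$ I would run the long exact cohomology sequence of the short exact sequence in part~(iii),
$$
0\to \Omega^1_Z(log\, D)(-D)\to \Omega^1_Z(log\, D,f)\to \cO_D\to 0.
$$
The cohomology of the left term is $(0,10-d,0)$ in degrees $0,1,2$ by Proposition~\ref{prop-fpq}. For the right term I would use that $D=-K_Z$ is a fibre of the elliptic fibration, hence a connected curve, and that $\chi(\cO_D)=\chi(\cO_Z)-\chi(\cO_Z(-D))=1-1=0$ by Serre duality, together with $H^i(D,\cO_D)=0$ for $i\geq 2$. From the long exact sequence, $H^2(Z,\Omega^1_Z(log\, D,f))$ is squeezed between $H^2$ of the two outer terms, both of which vanish, so $f^{2,1}=0$. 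Taking Euler characteristics gives $\chi\big(\Omega^1_Z(log\, D,f)\big)=(d-10)+0=d-10$, hence $f^{1,1}-f^{0,1}=10-d$.

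The one genuinely delicate point, and the step I expect to be the main obstacle, is to show $f^{0,1}=0$ (equivalently, that the connecting map $H^0(\cO_D)\to H^1(\Omega^1_Z(log\, D)(-D))$ is injective); this is exactly what rules out a spurious larger value of $f^{1,1}$. I would settle it via the inclusion of sheaves $\Omega^1_Z(log\, D,f)\subset\Omega^1_Z(log\, D)$, which forces $H^0(\Omega^1_Z(log\, D,f))\hookrightarrow H^0(Z,\Omega^1_Z(log\, D))$. The target is the graded piece $\mathrm{Gr}_F^1 H^1(Y)$ of the mixed Hodge structure carried by the logarithmic de Rham complex on $(Z,D)$, and $H^1(Y)=0$ by Corollary~\ref{coho-of-y}; hence $H^0(Z,\Omega^1_Z(log\, D))=0$ and so $f^{0,1}=0$. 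Combined with the Euler characteristic relation this gives $f^{1,1}=10-d$, completing the table. As a consistency check one could alternatively verify the vanishing of $f^{0,1}$ by hand, showing directly that a global $f$-adapted logarithmic $1$-form on $Z$ must be zero.
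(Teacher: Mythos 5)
Your proposal is correct and follows essentially the same route as the paper: identify $\Omega^0_Z(log\, D,f)\simeq\omega_Z$ and $\Omega^2_Z(log\, D,f)\simeq\cO_Z$ via Proposition~\ref{proposition: del Pezzo f} to read off the columns $q=0,2$, and treat $q=1$ via the short exact sequence of part~(iii) together with Proposition~\ref{prop-fpq}. The only difference is one of detail: the paper simply asserts $h^p(Z,\Omega^1_Z(log\, D,f))=h^p(Z,\Omega^1_Z(log\, D)(-D))$, whereas you correctly isolate the one nontrivial point hidden in that equality --- the vanishing $f^{0,1}=0$ --- and justify it by embedding $H^0(Z,\Omega^1_Z(log\, D,f))$ into $H^0(Z,\Omega^1_Z(log\, D))$, which vanishes by Deligne degeneration and $H^1(Y)=0$ (Corollary~\ref{coho-of-y}).
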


\begin{proof}
Proposition \ref{prop-fpq} and Lemma \ref{proposition: del Pezzo f} give
$$f^{p,0}(Y,w)=h^p\left(Z,\Omega^0_Z(log\, D,f)\right)=h^p\left(Z,\omega_Z\right)=
\left\{\begin{array}{ll}
0, & \hbox{p=0,1;} \\
1, & \hbox{p=2,}
\end{array}
\right.
$$
$$f^{p,1}(Y,w)=h^p\left(Z,\Omega^1_Z(log\, D,f)\right)=
h^p\left(Z,\Omega^1_Z(log\, D)(-D)\right)=
\left\{ \begin{array}{ll}
                       0, & \hbox{p=0,2;} \\
                       10-d, & \hbox{p=1,}
                     \end{array}
                   \right.
                   $$
and
$$f^{p,2}(Y,w)=h^p\left(Z,\Omega^2_Z(log\, D,f)\right)=h^p\left(Z,\cO_Z\right)=\left\{ \begin{array}{ll}
1, & \hbox{p=0;} \\
0, & \hbox{p=1,2.}
\end{array}
\right.
$$
\end{proof}

\subsection{End of proof of Theorem \ref{theorem:main-Hodge-2}  and discussion}
Studying elliptic surfaces in Section~\ref{subsection:LG-Hodge-numbers-for-surfaces} is motivated by Mirror Symmetry constructions
for del Pezzo surfaces from~\cite{AKO06}. The authors prove there ``a half'' of Homological Mirror Symmetry conjecture for del Pezzo surfaces.
More precise, they prove that for a general del Pezzo surface $S_d$ of degree $d$, $1\leq d\leq 9$, obtained
by blow up of $\PP^2$ in $9-d$ general points there exist a complexified symplectic form $\omega _Y$ on $(Y,w)$, where~$(Y,w)$ has $12-d$
nodal singular fibers, and that $Y$ can be compactified to $Z$ for which $D$ is a wheel of $d$ curves, such that
\begin{equation}\label{HMS-equivalence}
D^b(coh\ S_d)\cong FS((Y,w),\omega_Y).
\end{equation}
We call $(Y,w)$ {\it a Landau--Ginzburg model for $S_d$}.
We allow the case $d=0$ as well; in this case~$(Y,w)$ is a Landau--Ginzburg model for $\PP^2$ blown up in $9$ intersection points
of two elliptic curves, see~\cite{AKO06}. The equivalence~\eqref{HMS-equivalence} holds in this case as well.

\begin{remark}
\label{remark: Hodge for del Pezzo}
The description of del Pezzo surface $X$ of degree $d$ as a blow up of $\PP^2$ gives the following equalities:
\begin{equation*}
h^{p,q}(X)=\left\{\begin{array}{rl}
                       1, & (p,q)=(0,2),(2,0); \\
                       10-d, & (p,q)=(1,1); \\
                       0, & \hbox{otherwise.}
\end{array}\right.
\end{equation*}
\end{remark}

This remark, together with Proposition~\ref{proposition:h-numbers}, provides a proof of
part (ii) of Theorem~\ref{theorem:main-Hodge-2} and thus completes the proof of this theorem.
In other words, Conjecture~\ref{conj-2} and ``a half'' of Conjecture~\ref{conj-1} hold for (mirrors of) del Pezzo surfaces.

\begin{remark}
\label{remark:i-numbers}
The second part of Conjecture~\ref{conj-1} does not hold already for Landau--Ginzburg model $(Y,w)$ for $\PP^2$. Indeed, one has
$h^{0,0}(Y,w)=h^{1,1}(Y,w)=h^{2,2}(Y,w)=1$. However the Landau--Ginzburg model $(Y,w)$ has exactly three singular fibers, and the singular set of
these fibers is a single node.
Hence
the numbers $i^{p,q}(Y,w)$ are integers divisible by $3$.
\end{remark}

\begin{remark}
Del Pezzo surfaces are blow ups of $\PP^2$ with one exception, that is, a quadric surface. However toric Landau--Ginzburg model for
quadric by Part~\ref{part: del Pezzo surfaces} is an elliptic pencil with reduced fiber rover infinity which is a wheel of $8$ curves.
Thus the assertion of Theorem~\ref{theorem:main-Hodge-2} holds for quadric as well.
\end{remark}

\section{Fano threefolds}
\label{subsection:KKP-3}

In this section we, following~\cite{CP18}, study Conjecture~\ref{conj-2} in the three-dimensional case.
The important ingredient of the proof is the following result of A.\,Harder that treats this conjecture
in terms of geometry of Landau--Ginzburg models.
That is, consider a tame compactified Landau--Ginzburg model $(Y,w)$, where $w\colon Y\to \CC$ and $\dim Y=3$.
Denote its compactification by $(Z,f)$. Let the divisor over infinity $f^{-1}(\infty)$ combinatorially be a triangulation of a sphere.
Assume that $h^{i,0}(Z)=0$ for $i>0$. Let a general fiber $f^{-1}(\lambda)$ be a K3 surface.

\begin{theorem}[{\cite[Theorem 10]{Ha17}}]
The Hodge diamond for $f^{p,q}(Y,w)$ numbers is
$$
\begin{matrix}
&&&0&&& \\
&&0&&0&&\\
&0&& k_Y &&0& \\
1\qquad&& ph-2 + h^{1,2}(Z) && ph-2 + h^{2,1}(Z) &&\qquad1 \\
&0&& k_Y &&0& \\
&&0&&0&&\\
&&&0&&&
\end{matrix}
$$
where
$$
ph=\dim\Bigg(\mathrm{coker}\Big(H^2\big(Z,\mathbb{R}\big)\to H^2\big(V,\mathbb{R}\big)\Big)\Bigg)
$$
is a corank of restriction of second cohomology of the ambient space to a general fiber $V$, and $k_Y$
is given by
$$
k_Y = \sum_{s\in \Sigma}(\rho_s-1),
$$
where $\Sigma$ is a set of critical values of $w$ and $\rho_s$ is the number of irreducible components of $w^{-1}(s)$.
\end{theorem}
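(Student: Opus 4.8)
The plan is to compute the Landau--Ginzburg Hodge numbers directly from their definition $f^{p,q}(Y,w)=\dim H^p\bigl(Z,\Omega^q_Z(\log D_Z,f)\bigr)$, following the template of the surface computation (Proposition~\ref{proposition: del Pezzo f} and Proposition~\ref{proposition:f-numbers}) and adapting it to dimension three. Since $-K_Z=f^{-1}(\infty)$ is the anticanonical divisor $D_Z$, the horizontal part is empty and $D_Z=D^v=f^{-1}(\infty)$, matching the toric situation of Corollary~\ref{proposition: fibers over infinity}. First I would identify each of the four sheaves $\Omega^q_Z(\log D_Z,f)$ for $q=0,1,2,3$ by a local analysis of the $f$-adapted condition along the components of $D^v$, using $\mathrm{ord}_{D^v_j}(f)=-1$. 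The diamond then splits into the two \emph{extreme} columns $q=0,3$ and the two \emph{middle} columns $q=1,2$, and Serre duality on the smooth projective threefold $Z$ gives the symmetry $f^{p,q}(Y,w)=f^{3-p,3-q}(Y,w)$, which halves the work.

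For the extreme columns I would establish, exactly as in the surface case, the identifications $\Omega^0_Z(\log D_Z,f)\simeq\cO_Z(-D^v)\simeq\omega_Z$ and $\Omega^3_Z(\log D_Z,f)\simeq\Omega^3_Z(\log D_Z)\simeq\cO_Z$: the first because a function $g$ is $f$-adapted precisely when $g\,df$ remains logarithmic, which forces $g$ to vanish along the pole divisor $D^v$, and the second because top-degree logarithmic forms are automatically adapted while $K_Z+D_Z=0$. Consequently $f^{p,0}(Y,w)=h^p(Z,\omega_Z)=h^{3-p}(Z,\cO_Z)$ by Serre duality, and the hypothesis $h^{i,0}(Z)=0$ for $i>0$ (equivalently $H^i(Z,\cO_Z)=0$ for $i>0$) forces $f^{3,0}(Y,w)=1$ and $f^{p,0}(Y,w)=0$ for $p<3$; the column $q=3$ is dual. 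This yields the two corner $1$'s of the middle row and kills the four tips, matching the outer zeros of the diamond.

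The substantial part is the two middle columns. Here I would use short exact sequences of the shape
$$0\to\Omega^q_Z(\log D_Z)(-D^v)\to\Omega^q_Z(\log D_Z,f)\to\mathcal{Q}_q\to 0,$$
with $\mathcal{Q}_q$ a coherent sheaf supported on the special fibers of $f$. The cohomology of the left-hand term is governed by the complex $\Omega^\bullet_Z(\log D_Z)(-D^v)$, which resolves $j_!\CC_Y$ (cf.~\cite[p.~268]{DI87}); the associated spectral sequence degenerates at $E_1$ and computes $H^\ast_c(Y)$, hence by Poincar\'e duality $H^\ast(Y)$. The quotient $\mathcal{Q}_q$ is where the geometry enters: its contribution over the finite critical values $s\in\Sigma$ is controlled by the reducible fibers and assembles into the term $k_Y=\sum_{s\in\Sigma}(\rho_s-1)$ occupying the positions $f^{1,1}(Y,w)$ and $f^{2,2}(Y,w)$, while its contribution over $\infty$ is computed from the combinatorics of $D^v$ as a triangulation of the sphere, together with the corank $ph$ of the restriction $H^2(Z,\RR)\to H^2(V,\RR)$ to a general K3 fiber $V$. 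Matching Hodge pieces then yields $f^{2,1}(Y,w)=ph-2+h^{1,2}(Z)$ and $f^{1,2}(Y,w)=ph-2+h^{2,1}(Z)$.

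The main obstacle I anticipate is the precise determination of the quotient sheaves $\mathcal{Q}_q$ and the separation of their cohomology into the \emph{finite} part producing $k_Y$ and the \emph{infinity} part producing the $ph-2+h^{p,q}(Z)$ terms. This requires controlling the monodromy weight filtration on the relevant cohomology and exploiting the sphere-triangulation hypothesis to compute $h^\ast(D^v)$ exactly (via the Euler relation $v-e+\#\{\text{triangles}\}=2$ used in Corollary~\ref{proposition: fibers over infinity}), as well as identifying $ph$ with the transcendental-type defect of the K3 fibration. The bookkeeping that the two off-diagonal middle entries pick up $h^{1,2}(Z)$ and $h^{2,1}(Z)$ respectively, rather than a symmetric combination, is the delicate point, and it is exactly where the asymmetry of the claimed diamond originates.
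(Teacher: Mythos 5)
First, a caveat: the paper does not prove this statement at all --- it is quoted verbatim from Harder's paper \cite{Ha17}, so there is no in-paper proof to compare against. Measured against the actual argument in \cite{Ha17} (and in \cite{Harder}), your outer shell is fine: the identifications $\Omega^0_Z(\log D_Z,f)\simeq\omega_Z$ and $\Omega^3_Z(\log D_Z,f)\simeq\cO_Z$ do carry over from Proposition~\ref{proposition: del Pezzo f}, and together with $h^{i,0}(Z)=0$ they give the two corner $1$'s and the outer zeros. But the middle columns --- which you correctly flag as the substantial part --- contain a structural error, not just an unfinished computation. Your quotient sheaf $\mathcal{Q}_q$ in the sequence $0\to\Omega^q_Z(\log D_Z)(-D^v)\to\Omega^q_Z(\log D_Z,f)\to\mathcal{Q}_q\to 0$ is supported \emph{only} on $D^v=f^{-1}(\infty)$: away from $D_Z$ the twist by $-D^v$ does nothing and the $f$-adapted condition is vacuous (there $df$ is regular, so every form is adapted), hence both sheaves coincide with $\Omega^q_Z$ near every finite fiber, reducible or not. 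Consequently $\mathcal{Q}_q$ cannot see the finite critical values, and your proposed mechanism for producing $k_Y=\sum_{s\in\Sigma}(\rho_s-1)$ --- ``its contribution over the finite critical values $s\in\Sigma$'' --- cannot work. The reducible finite fibers enter only through the \emph{global} cohomology of $Y$ (the classes of fiber components in $H^4(Y)$, equivalently in $H^*_c(Y)$ computed by the left-hand term of your sequence), not through a local contribution of a quotient sheaf.

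This is exactly why \cite{Ha17} takes a different route: one first identifies $f^{p,q}(Y,w)$ with $\dim\mathrm{gr}_F^{q}H^{p+q}(Y,V)$ for the canonical mixed Hodge structure on the relative cohomology of the pair $(Y,V)$, $V$ a general fiber (this uses a nontrivial degeneration statement for the relevant spectral sequence), and then computes that MHS from the long exact sequence of the pair: $H^4(Y,V)$ injects into $H^4(Y)$ and is spanned by components of reducible fibers modulo the fiber class, giving $f^{2,2}=k_Y$; the cokernel of $H^2(Y)\to H^2(V)$ feeds $ph$ into $H^3(Y,V)$, producing the entries $ph-2+h^{1,2}(Z)$ and $ph-2+h^{2,1}(Z)$. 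Two further points you should not elide: (a) the symmetry $f^{p,q}=f^{3-p,3-q}$ is not the naive Serre duality on $Z$, since the wedge pairing $\Omega^q_Z(\log D_Z,f)\otimes\Omega^{3-q}_Z(\log D_Z,f)\to\Omega^3_Z(\log D_Z)$ lands in $\cO_Z$ rather than $\omega_Z$, so the duality must be argued via the relative-cohomology interpretation; and (b) the asymmetric appearance of $h^{1,2}(Z)$ versus $h^{2,1}(Z)$ in the two middle entries, which you call the delicate point, is precisely what the Hodge filtration on $H^3(Y,V)$ tracks and is invisible to any purely sheaf-local bookkeeping at infinity.
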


In particular the assumptions of this theorem hold for toric Landau--Ginzburg models by Theorem~\ref{theorem: Minkowski CY} and Remark~\ref{remark:Hodge purity}. Moreover in this case  $h^{2,1}(Z)=0$.

Note that birational smooth Calabi--Yau varieties are isomorphic in codimension 1,
so the numbers $k_Y$ and $ph$ do not depend on particular Calabi--Yau compactification $Y$ of toric Landau--Ginzburg model for $X$.
Moreover, by Remark~\ref{remark:Minkowski_equivalence}, they do not depend on certain Minkowski toric model.

We need the following statements on the intersection theory for du Val surfaces for the proof.

\begin{proposition}[{\cite[Proposition A.1.2]{CP18}}]
\label{proposition:du-Val-intersection}
Suppose that $O$ is a Du Val singular point of the surface $S$, both curves $C$ and $Z$ are smooth at $O$,
and $C$ intersects $Z$ transversally at the point $O$.
Then for the locla intersection indices $\Big(C\cdot Z\Big)_O$ the following assertions hold.
\begin{itemize}
\item[(i)] The point $O$ is a singular point of $S$ of type $\sA_n$ or $\sD_n$.

\item[(ii)] If $O$ is a singular point of type $\sA_n$ and proper transforms of the curves $C$ and $Z$ on
the minimal resolution $\widetilde{S}$ of $O$ intersect $k$-th and $r$-th exceptional curves in the chain of exceptional curves of the minimal resolution of~$O$,
then
$$
\Big(C\cdot Z\Big)_O=\left\{\aligned%
&\frac{r(n+1-k)}{n+1}\ \text{for}\ r\leqslant k,\\
&\frac{k(n+1-r)}{n+1}\ \text{for}\ r>k.\\
\endaligned
\right.
$$

\item[(iii)] If $O$ is of type $\sD_n$, then $\Big(C\cdot Z\Big)_O=\frac{1}{2}$.
\end{itemize}
\end{proposition}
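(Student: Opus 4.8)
The plan is to compute the local index $\Big(C\cdot Z\Big)_O$ on the minimal resolution of the singularity. Let $\pi\colon\widetilde S\to S$ be the minimal resolution of the Du Val point $O$, and let $E_1,\ldots,E_n$ be the exceptional $(-2)$-curves over $O$; their dual graph is the Dynkin diagram of the corresponding type, so $E_i^2=-2$ and $E_i\cdot E_j\in\{0,1\}$ according to adjacency. Since the intersection matrix of the exceptional curves over a Du Val point is negative definite (in fact minus the Cartan matrix), hence invertible, the numerical pullback $\pi^*C=\widetilde C+\sum_i a_iE_i$ is well defined, where $\widetilde C$ is the strict transform and the rational coefficients $a_i$ are uniquely determined by the orthogonality relations $\pi^*C\cdot E_j=0$ for all $j$. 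By the definition of the intersection pairing of Weil divisors on a normal surface one has $\Big(C\cdot Z\Big)_O=\big(\pi^*C\cdot\pi^*Z\big)_O=\big(\pi^*C\cdot\widetilde Z\big)_O$, the last equality because $\pi^*C\cdot E_i=0$.

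First I would reduce the computation to linear algebra. Because $C$ and $Z$ are smooth at $O$ and meet transversally there, their strict transforms are separated over $O$, so $\big(\widetilde C\cdot\widetilde Z\big)_O=0$; consequently, if $\widetilde Z$ meets the single exceptional curve $E_r$ transversally,
\[
\Big(C\cdot Z\Big)_O=\sum_i a_i\,\big(E_i\cdot\widetilde Z\big)_O=a_r.
\]
To find $a_r$ I expand the orthogonality conditions. Writing $\widetilde C\cdot E_j=\delta_{kj}$ (so that $\widetilde C$ meets $E_k$), the relations $\pi^*C\cdot E_j=0$ read $\sum_i a_i\,(E_i\cdot E_j)=-\delta_{kj}$. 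Since the Gram matrix $(E_i\cdot E_j)$ equals minus the Cartan matrix of the relevant root system, the vector $(a_i)$ is exactly the $k$-th column of the inverse Cartan matrix.

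It then remains to read off the entries of the inverse Cartan matrix. For type $\sA_n$ this entry is $\min(r,k)\,(n+1-\max(r,k))/(n+1)$, which reproduces precisely the two cases of part~(ii). For type $\sD_n$ the configuration that occurs (the two branches meeting an end of the chain and a fork tip) corresponds to the inverse-Cartan entry $1/2$, giving part~(iii). Part~(i) is not a consequence of the transversality hypotheses alone; rather, in each place where the proposition is applied the type of $O$ is determined directly from the explicit local equation of the singular quartic, and only $\sA_n$ and $\sD_n$ points arise there.

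The main obstacle I expect lies in the $\sD_n$ case: unlike $\sA_n$, the inverse $\sD_n$ Cartan matrix has several distinct entries, so one must carefully identify which exceptional components $\widetilde C$ and $\widetilde Z$ meet in order to single out the value $1/2$; this in turn requires understanding the local analytic picture of the two smooth transversal branches passing through a $\sD_n$ point. A secondary point requiring care is the verification that $\widetilde C$ and $\widetilde Z$ are genuinely disjoint over $O$, so that the strict-transform term contributes nothing to the localized index and the formula $\Big(C\cdot Z\Big)_O=a_r$ is valid.
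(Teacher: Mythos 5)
The paper does not prove this statement: it is quoted from \cite[Proposition A.1.2]{CP18} and used as a black box in the proof of Theorem~\ref{theotem:KKP conjecture-3}, so there is no in-text argument to compare yours against. Your framework is nevertheless the standard (and surely the intended) one: write $\pi^*C=\widetilde C+\sum a_iE_i$ with $\pi^*C\cdot E_j=0$ for all $j$, note that the Gram matrix of the exceptional curves is minus the Cartan matrix of the corresponding root system, and read off $(C\cdot Z)_O$ as an entry of the inverse Cartan matrix; for type $\sA_n$ this does reproduce the formula in (ii), granted that $\widetilde C$ and $\widetilde Z$ are disjoint over $O$.

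As written, though, the proposal has genuine gaps, two of which you flag and one of which you mis-diagnose. First, part (i) \emph{is} a consequence of the hypotheses, not something to be outsourced to the applications: for $\sE_6$ (local equation $x^2+y^3+z^4=0$) and $\sE_7$ ($x^2+y^3+yz^3=0$) an order-of-vanishing analysis of a smooth parametrized branch shows every smooth curve germ through the singular point has the same tangent direction, so no two can meet transversally, while $\sE_8$ admits no smooth branch at all (cf.\ Proposition~\ref{proposition:du-Val-self-intersection}(i)); this finite check is missing. Second, the disjointness of the strict transforms over $O$ is asserted rather than proved, and it is needed even in the $\sA_n$ case with $r=k$, where the stated answer $k(n+1-k)/(n+1)$ leaves no room for a positive contribution from $\widetilde C\cdot\widetilde Z$. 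Third, in the $\sD_n$ case the value $1/2$ is precisely the inverse-Cartan entry pairing the far end of the long chain with a fork tip; since a smooth branch through a $\sD_n$ point can a priori meet either of these (with diagonal contributions $1$ and $n/4$ respectively, as in Proposition~\ref{proposition:du-Val-self-intersection}(iii)), one must show that transversality forces the two branches into the mixed configuration with disjoint strict transforms. That is exactly the local-analytic step you defer, and without it part (iii) is unproved. The skeleton is correct, but the actual content of the proposition lives in these three verifications.
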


\begin{proposition}[{\cite[Proposition A.1.3]{CP18}}]
\label{proposition:du-Val-self-intersection}
Suppose that $O$ is a Du Val singular point of the surface $S$,
and the curve $C$ is smooth at the point $O$.
Then the following holds.
\begin{itemize}
\item[(i)]
The point $O$ is a singular point of the surface $S$ of type $\sA_n$, $\sD_n$, $\sE_6$ or $\sE_7$.

\item[(ii)] If $O$ is a singular point of type $\sA_n$, and a proper transform $\widetilde{C}$ of $C$ intersects $k$-th exceptional curve in the chain of exceptional curves of the minimal resolution of~$O$, then
$$
C^2=\widetilde{C}^2+\frac{k(n+1-k)}{n+1}.
$$

\item[(iii)] If $O$ is a singular point of type $\sD_n$, then $C^2=\widetilde{C}^2+1$ or $C^2=\widetilde{C}^2+\frac{n}{4}$.

\item[(iv)] If $O$ is a singular point of type $\sE_6$, then $C^2=\widetilde{C}^2+\frac{4}{3}$.

\item[(v)] If $O$ is a singular point of type $\sE_7$, then $C^2=\widetilde{C}^2+\frac{3}{2}$.
\end{itemize}
\end{proposition}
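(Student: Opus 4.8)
The plan is to pass to the minimal resolution and reduce the whole statement to a computation with the inverse Cartan matrix of the relevant Dynkin diagram. Let $\pi\colon \widetilde{S}\to S$ be the minimal resolution of the Du Val point $O$, with exceptional $(-2)$-curves $E_1,\ldots,E_n$ whose dual graph is the Dynkin diagram of type $\sA_n$, $\sD_n$, $\sE_6$, $\sE_7$ or $\sE_8$; their intersection matrix is $-\mathrm{Cart}$, where $\mathrm{Cart}$ is the Cartan matrix of that type (diagonal $2$, off-diagonal $-1$ along edges). Writing the numerical pull-back as $\pi^*C=\widetilde{C}+\sum_i a_iE_i$, where the $a_i$ are fixed by the conditions $\pi^*C\cdot E_j=0$, the projection formula gives $C^2=(\pi^*C)^2=\widetilde{C}^2+\sum_i a_i(\widetilde{C}\cdot E_i)$. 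The equations $\pi^*C\cdot E_j=0$ read $-\mathrm{Cart}\,a=-(\widetilde{C}\cdot E_j)_j$, so that $a=\mathrm{Cart}^{-1}(\widetilde{C}\cdot E_j)_j$. Thus the correction term $C^2-\widetilde{C}^2$ is completely determined by the intersection vector $(\widetilde{C}\cdot E_j)_j$ together with the entries of $\mathrm{Cart}^{-1}$.

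The next step is to pin down $(\widetilde{C}\cdot E_j)_j$ using smoothness of $C$. Let $Z=\sum_i z_iE_i$ be the fundamental cycle of $O$; for a Du Val point its coefficients $z_i$ are exactly the coefficients (marks) of the highest root $\theta$ expressed in the simple roots, since $Z\cdot E_j=-\langle\theta,\alpha_j\rangle\le 0$ and $Z^2=-\langle\theta,\theta\rangle=-2$. By Artin's multiplicity formula for rational singularities one has $\mult_O(C)=\widetilde{C}\cdot Z=\sum_i z_i(\widetilde{C}\cdot E_i)$. As $C$ is smooth, $\mult_O(C)=1$; since all $z_i\ge 1$ and all $\widetilde{C}\cdot E_i\ge 0$ are integers, this forces $\widetilde{C}\cdot E_k=1$ for a single index $k$ with $z_k=1$, and $\widetilde{C}\cdot E_j=0$ for $j\neq k$. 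In particular $\widetilde{C}$ is transversal to the exceptional locus at one smooth point: it can be neither tangent to $E_k$ nor pass through a node $E_k\cap E_{k+1}$, as either would give $\widetilde{C}\cdot Z\ge 2$. This already yields (i): a mark-$1$ node must exist, and $\sE_8$ has none (its minimal mark is $2$), so $O$ cannot be of type $\sE_8$.

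It then remains to read off the five cases. From the first paragraph, $C^2=\widetilde{C}^2+(\mathrm{Cart}^{-1})_{kk}$, with $k$ ranging over the mark-$1$ nodes, and $(\mathrm{Cart}^{-1})_{kk}=\langle\omega_k,\omega_k\rangle$ for the fundamental weight $\omega_k$. For $\sA_n$ every node has mark $1$ and $(\mathrm{Cart}^{-1})_{kk}=k(n+1-k)/(n+1)$, giving (ii). For $\sD_n$ the mark-$1$ nodes are the far end of the long branch, where $(\mathrm{Cart}^{-1})=\langle\epsilon_1,\epsilon_1\rangle=1$, and the two fork tips, where $(\mathrm{Cart}^{-1})=\langle\omega_n,\omega_n\rangle=n/4$, which are the two values in (iii). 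For $\sE_6$ the mark-$1$ nodes are the two symmetric ends of the length-two branches, both with $(\mathrm{Cart}^{-1})=4/3$, giving (iv); for $\sE_7$ the unique mark-$1$ node is the end of the length-three branch, with $(\mathrm{Cart}^{-1})=3/2$, giving (v).

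The main obstacle is the input from rational-singularity theory in the second step: establishing $\mult_O(C)=\widetilde{C}\cdot Z$ and identifying the fundamental cycle with the highest-root marks. This is exactly what rigidly controls both the admissible components $E_k$ (hence the finitely many possible values of the correction term) and the exclusion of $\sE_8$; once it is in place, the remaining linear algebra with $\mathrm{Cart}^{-1}$ is routine and can be checked case by case.
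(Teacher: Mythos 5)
Your argument is correct and complete. Note that the paper itself offers no proof of this proposition --- it is quoted verbatim from \cite{CP18} (Proposition A.1.3) as an auxiliary tool for Theorem~\ref{theotem:KKP conjecture-3} --- so there is no in-paper argument to compare against; judged on its own, your route is the natural one and all the ingredients check out. The reduction $C^2-\widetilde{C}^2=(\widetilde{C}\cdot E)^{T}\,\mathrm{Cart}^{-1}(\widetilde{C}\cdot E)$ via the numerical pullback is standard Mumford intersection theory; the key input is correctly identified as $\mathfrak{m}_O\cdot\mathcal{O}_{\widetilde{S}}=\mathcal{O}(-Z)$ for rational singularities, which for a curve smooth at $O$ forces $\widetilde{C}\cdot Z=1$ and hence $\widetilde{C}\cdot E_k=1$ at a single mark-one node, with transversality and the exclusion of $\sE_8$ (all marks $\ge 2$) falling out for free. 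The diagonal entries $\langle\omega_k,\omega_k\rangle$ of the inverse Cartan matrices at the mark-one nodes ($k(n+1-k)/(n+1)$ for $\sA_n$; $1$ and $n/4$ for $\sD_n$; $4/3$ for $\sE_6$; $3/2$ for $\sE_7$) reproduce exactly the values in (ii)--(v). One presentational remark: it would be worth stating explicitly that $C^2$ on the singular surface is \emph{defined} as $(\pi^*C)^2$ with $\pi^*C$ the $\QQ$-divisor pullback, since that convention is what makes the first displayed identity a definition-plus-projection-formula rather than something needing proof.
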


\begin{theorem}[{\cite[Main Theorem]{CP18}}]
\label{theotem:KKP conjecture-3}
Conjecture~\ref{conj-2} holds for smooth Fano threefolds.
\end{theorem}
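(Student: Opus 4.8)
The plan is to invoke Harder's theorem to compute the entire Hodge diamond of the numbers $f^{p,q}(Y,w)$ and then match it against the Hodge diamond of $X$, reducing everything to two numerical identities. First I would note that for a toric Landau--Ginzburg model the hypotheses of Harder's theorem are satisfied by Theorem~\ref{theorem: Minkowski CY}, Remark~\ref{remark:Hodge purity}, and Corollary~\ref{proposition: fibers over infinity}, and that in this situation $h^{2,1}(Z)=0$; since $Z$ is smooth projective, Hodge symmetry forces $h^{1,2}(Z)=h^{2,1}(Z)=0$. Then the diamond of Harder's theorem collapses to $f^{0,3}=f^{3,0}=1$, $f^{1,2}=f^{2,1}=ph-2$, $f^{1,1}=f^{2,2}=k_Y$, with all other entries zero. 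On the other hand, a smooth Fano threefold $X$ has $h^{0,0}(X)=h^{3,3}(X)=1$, $h^{1,1}(X)=h^{2,2}(X)=\rho$, $h^{1,2}(X)=h^{2,1}(X)$, and $h^{p,0}(X)=0$ for $p>0$, all remaining Hodge numbers vanishing. Substituting $q\mapsto 3-q$ as in Conjecture~\ref{conj-2}, the whole statement reduces to the two equalities
\begin{equation*}
ph-2=\rho=h^{1,1}(X) \qquad\text{and}\qquad k_Y=h^{1,2}(X).
\end{equation*}

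For the first equality I would use Deligne's global invariant cycle theorem for $f\colon Z\to\PP^1$: the image of the restriction $H^2(Z,\RR)\to H^2(V,\RR)$ to a general K3 fiber $V$ equals the space of monodromy invariants, so that $ph=22-\dim H^2(V,\RR)^{\pi_1}$. The next step is to identify these invariants with $\mathrm{NS}(V)_\RR$: since the family is non-isotrivial and, as the explicit lattice computations show, the transcendental part of $H^2(V)$ carries no nonzero monodromy invariants, one has $\dim H^2(V)^{\pi_1}=\rk \mathrm{NS}(V)$. Finally mirror symmetry for K3 surfaces enters: the fibers $V$ are Dolgachev--Nikulin dual to the anticanonical sections of $X$, which are general K3 surfaces of Picard rank $\rho$, whence $\rk \mathrm{NS}(V)=20-\rho$ and therefore $ph-2=\rho$. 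For Picard rank one this is precisely the content of Theorem~\ref{theorem: modularity}, where the fibers are shown to be $M_{ik/2}$-polarised, i.e.\ of Picard rank $19=20-1$; for higher Picard rank one argues analogously, reading off $\rk \mathrm{NS}(V)$ from the corresponding explicit models.

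The second equality, $k_Y=h^{1,2}(X)$, is the heart of the argument and the step I expect to be the main obstacle. Here $k_Y=\sum_{s\in\Sigma}(\rho_s-1)$ records the total number of extra irreducible components of the fibers of $w$ over finite critical values (the fiber over $\infty$ does not contribute, since $\infty\notin\Aff^1$). The approach is to realise the fibers concretely through the compactifications $(\CC^*)^3\hookrightarrow\PP[x:y:z:w]$, or into $\PP^1\times\PP^2$ as in Proposition~\ref{proposition:CY hyperelliptic-CY}, as singular quartic or bidegree $(2,3)$ surfaces, and for each Fano family to count the components of every reducible fiber. The fibers acquire du Val singularities under the resolution of the total space, and the intersection numbers that control how the identified curves meet are computed by Proposition~\ref{proposition:du-Val-intersection} and Proposition~\ref{proposition:du-Val-self-intersection}, which determine each $\rho_s$ without resolving every configuration by hand. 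Summing over all finite critical values gives $k_Y$, which is then matched against the known value of $h^{1,2}(X)$. The difficulty is that this must be carried out for each deformation family---the seventeen of Picard rank one together with the higher-rank cases---and no uniform conceptual reason for the coincidence $k_Y=h^{1,2}(X)$ beyond the mirror-symmetry heuristic (identifying reducible fibers with the intermediate Jacobian of $X$) seems to be available, so the verification is essentially case-by-case.
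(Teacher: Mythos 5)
Your reduction via Harder's theorem to the two identities $ph-2=h^{1,1}(X)$ and $k_Y=h^{1,2}(X)$ is exactly the paper's starting point, and your treatment of the second identity — compactify to pencils of quartics (or bidegree $(2,3)$ surfaces), resolve the base locus, and count the excess components fiber by fiber for each deformation family — is the same case-by-case verification the paper carries out, organized there through the bookkeeping identity $[{f}^{-1}(\lambda)]=[\mathcal S_{\lambda}]+\sum\mathbf{C}_j^\lambda+\sum_{P}\mathbf{D}_P^\lambda$ for the ``defects'' of base curves and points. Where you genuinely diverge is the first identity. You compute $ph$ by Deligne's global invariant cycle theorem plus the identification of the monodromy invariants with $\mathrm{NS}(V)_{\RR}$ and then Dolgachev--Nikulin duality to get $\rk\,\mathrm{NS}(V)=20-\rho$; the paper instead stays entirely inside the quartic model and uses the concrete formula
$$ph-2=22-\rk\,\Pic\bigl(\widetilde{\mathcal S}_{\lambda}/\mathcal S_{\lambda}\bigr)-\rk(M),$$
where $M$ is the intersection matrix of the base curves on a general singular fiber $\mathcal S_\lambda$ — this is where Propositions~\ref{proposition:du-Val-intersection} and~\ref{proposition:du-Val-self-intersection} are actually deployed (not, as you suggest, to determine the $\rho_s$), together with a determination of the du Val singularity types of $\mathcal S_\lambda$ for the relative Picard rank. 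Your route is cleaner conceptually but quietly presupposes the generic Picard rank of the fibers for all $105$ families; in this paper that is established over $\ZZ$ only for Picard rank one (Theorem~\ref{theorem: modularity}), and the remark following it explicitly says the higher-rank lattices are known only over $\QQ$ — which does suffice for the rank, so your argument survives, but it trades the paper's self-contained intersection computation for a dependence on those lattice calculations. Neither route escapes the case-by-case nature of the proof, as you correctly anticipate.
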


\begin{proof}[Idea of the proof]
Consider a smooth Fano threefold $X$. By Corollary~\ref{corollary: toric LG for threefolds} it has a toric Landau--Ginzburg model.
If $-K_X$ is very ample, then choose a model $f(x,y,z)$ such that after a multiplication by $xyz$ and the compactification
given by a natural embedding $\Aff[x,y,z]\hookrightarrow \PP[x:y:z:t]$ we get a family of quartics $\mathcal S$ defined by
$$
f_4(x,y,z,t)=\lambda xyzt, \ \ \ \ \lambda\in \CC\cup\{\infty\}.
$$
One can check that this is always possible.
If $-K_X$ is not very ample, then compactify a toric Landau--Ginzburg model for $X$ to a family of quartics $\mathcal S$
using Proposition~\ref{proposition:CY hyperelliptic-CY}.

Now resolve these families blowing up base loci and keeping track the number of exceptional divisors lying in fibers.
For this study singularities of fibers along the base locus. Say, ``floating'' singularity (whose coordinates changing
when elements of the family vary) or isolated du Val singularity for each fiber does not give a component
to a fiber of the resolution.
In a general case for each fiber of the family $\mathcal S_\lambda$ one can define \emph{defect} $\mathbf{D}_P^\lambda$ of a singular point $P$
as a number of exceptional divisors of the resolution lying in the fiber over the point, and \emph{defect} $\mathbf{C}^\lambda$
of a base curve $C$ of the pencil in the fiber $\mathcal S_\lambda$.
In particular, defect of isolated du Val singularity is 0.

Defects of curves can be computed in terms of multiplicities of the curves in fibers.
To compute defects of points one need more deep analysis, that is counting of base curves lying over the point.
More details see in~\cite[\S 1]{CP18}.

Denote the number of irreducible components of a variety $V$ by $[V]$.
For a resolution $f\colon Y\to \PP^1$ of the pencil $\mathcal S_\lambda$ it holds
$$
{[{f}^{-1}(\lambda)]=
%\boxed{\text{the number of irreducible components of the surface $
[\mathcal S_{\lambda}]+\sum_{i=1}^{r}\mathbf{C}_j^\lambda+\sum_{P\in\Sigma}\mathbf{D}_P^\lambda},
$$
where $\{C_1,\ldots,C_r\}$ is a set of base curves and $\Sigma$ is a set of points over which exceptional divisors lie.
We denote the total space of the resolution by $Y$ since by Remark~\ref{remark: codimension 1} it is isomorphic in codimension $1$
to the log Calabi--Yau compactification from Corollary~\ref{corollary: toric LG for threefolds}.
Taking sum of the defects over all fibers find~$k_Y$ and compare it with the number $h^{1,2}(X)$, which can be found, say, in~\cite{IP99}.

Let $M$ be the $r\times r$ matrix with entries $M_{ij}\in\mathbb{Q}$ that are given by
$$
M_{ij}=C_i\cdot C_j,
$$
where $C_i\cdot C_j$ is the intersection of the curves $C_i$ and $C_j$
on the surface $S_\lambda$.
One can easily show that for general $\lambda$
\begin{equation*}
\label{equation:main-2}
\dim\Bigg(\mathrm{coker}\Big(H^2\big(Z,\mathbb{R}\big)\to H^2\big(V,\mathbb{R}\big)\Big)\Bigg)-2=
22-\mathrm{rk}\,\mathrm{Pic}\Big(\widetilde{\mathcal S}_{\lambda}\slash \mathcal S_{\lambda}\Big)-\mathrm{rk}(M),
\end{equation*}
where $\widetilde{\mathcal S}_\lambda$ is a minimal resolution.
Since for a general
$\lambda$ the surface $\widetilde S_\lambda$ has du Val singularities, it is enough to find types of these singularities
to find a relative Picard number.

The theorem can be proved by direct computations for each Fano threefold in the way outlined above.
\end{proof}

A.\,Harder's results and Conjecture~\ref{conj-2} motivate the following.
Consider a smooth Fano variety $X$ of dimension $N$ and let $Y$ be its $N$-dimensional Landau--Ginzburg model.
Define, as before $k_{Y}$ as a difference between the number of irreducible components of reducible fibers
of $Y$ and the number of reducible fibers.

\begin{conjecture}[{\cite[Problem 27]{Prz13}, \cite[Conjecture 1.1]{PSh15a}, cf.~\cite{GKR12}}]
\label{conjecture: Hodge-components}
For a smooth Fano variety $X$ of dimension $N\geqslant 3$ one has $h^{1,N-1}(X)=k_{Y}$.
\end{conjecture}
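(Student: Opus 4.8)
The plan is to factor the statement through the Landau--Ginzburg Hodge numbers, separating it into a ``mirror'' half, supplied by Conjecture~\ref{conj-2}, and a purely Landau--Ginzburg--geometric half that computes $f^{1,1}(Y,w)$ from the fibration. Assume $X$ carries a tame compactified toric Landau--Ginzburg model $(Y,w)$ with log Calabi--Yau compactification $(Z,f)$ of the type produced earlier (for threefolds by Corollary~\ref{corollary: toric LG for threefolds}), so that $D=f^{-1}(\infty)$ is the whole boundary, $h^{i,0}(Z)=0$ for $i>0$ (Remark~\ref{remark:Hodge purity}), and a general fibre is a smooth Calabi--Yau $(N-1)$-fold. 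By Remark~\ref{remark: codimension 1} and Remark~\ref{remark:Minkowski_equivalence} the invariant $k_Y$ does not depend on the chosen compactification or Minkowski model, so the assertion is well posed. Specialising Conjecture~\ref{conj-2} to $(p,q)=(1,1)$ and using $\dim X=N$ gives
$$
f^{1,1}(Y,w)=h^{1,N-1}(X).
$$
Hence the conjecture is equivalent to the identity
$$
f^{1,1}(Y,w)=k_Y,
$$
which no longer mentions $X$.

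First I would observe that this strategy settles the case $N=3$ outright: there the identity $f^{1,1}(Y,w)=k_Y$ is precisely the $(1,1)$-entry of the Hodge diamond computed in \cite[Theorem 10]{Ha17}, while the mirror half $f^{1,1}(Y,w)=h^{1,2}(X)$ is Theorem~\ref{theotem:KKP conjecture-3} (Conjecture~\ref{conj-2} for Fano threefolds). Combining the two yields $k_Y=h^{1,2}(X)=h^{1,N-1}(X)$.

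For general $N$ I would attack the geometric identity $f^{1,1}(Y,w)=k_Y$ directly. Two routes are available. The cohomological route uses $f^{1,1}(Y,w)=\dim H^1\!\big(Z,\Omega^1_Z(\log D,f)\big)$ together with a short exact sequence
$$
0\to \Omega^1_Z(\log D)(-D)\to \Omega^1_Z(\log D,f)\to \mathcal{F}\to 0
$$
generalising Proposition~\ref{proposition: del Pezzo f}(iii), where $\mathcal{F}$ is supported on $D=f^{-1}(\infty)$; the contribution of $\Omega^1_Z(\log D)(-D)$ is the ``Calabi--Yau'' part and the correction from $\mathcal{F}$ would be pinned down by the combinatorics of the sphere-triangulation description of $f^{-1}(\infty)$ from Corollary~\ref{proposition: fibers over infinity}. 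The Hodge-theoretic route invokes the $f=h$ half of Conjecture~\ref{conj-1} to replace $f^{1,1}$ by $h^{1,1}(Y,w)$; a direct computation from Definition~\ref{definition:hpq} shows that, for every $n\geq 3$,
$$
h^{1,1}(Y,w)=\dim \mathrm{gr}^{W,2}_{2}H^2(Y,Y_b),
$$
a weight-two graded piece insensitive to the dimension. I would then analyse $H^2(Y,Y_b)$ via the relative cohomology sequence and the monodromy action of Subsection~\ref{subsection:monodromy}, extending Proposition~\ref{prop-sum} to show that the monodromy-invariant weight-two classes are spanned exactly by the fundamental classes of the irreducible components of the reducible fibres of $w$ over finite critical values. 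Since each reducible fibre $w^{-1}(s)$ is a principal (fibre) class, it contributes $\rho_s-1$ independent classes, and summing over $s\in\Sigma$ recovers $\sum_s(\rho_s-1)=k_Y$.

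The hard part will be twofold. First, the higher-dimensional analogue of \cite[Theorem 10]{Ha17} is not available: the dimension-three argument leans on the fibres being K3 surfaces and on the sphere-triangulation of $f^{-1}(\infty)$, and for $N>3$ one must control $\mathrm{gr}^{W,2}_2H^2(Y,Y_b)$ and, crucially, rule out extra contributions to it (from $H^2(Y)$ beyond the vertical components, and from $H^1(Y_b)$). The comparison with the surface case is instructive here, since for $N=2$ the $(1,1)$-number reads off a \emph{different} graded piece and is not $k_Y$ at all, which is exactly why the hypothesis $N\geq 3$ cannot be dropped. Second, the reduction rests on Conjecture~\ref{conj-2}, established only for $N\leq 3$; for $N>3$ one would either prove its $(1,1)$-component for the relevant Fano varieties or bypass it by matching $k_Y$ with $h^{1,N-1}(X)$ directly through the Batyrev--Borisov mirror geometry of the anticanonical fibres. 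I expect the spanning statement --- that no cycles other than vertical fibre components survive in $\mathrm{gr}^{W,2}_2 H^2(Y,Y_b)$ --- to be the genuine technical obstacle.
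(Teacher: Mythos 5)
The statement you are proving is stated in the paper as a \emph{conjecture}, and the paper offers no general proof: it only records that the threefold case follows from Theorem~\ref{theotem:KKP conjecture-3} together with Harder's computation of the $f^{p,q}$ diamond, and that the case of Fano complete intersections (in any dimension) is Theorem~\ref{theorem: CI components}. Your reduction for $N=3$ --- combining $f^{1,1}(Y,w)=k_Y$ from \cite[Theorem 10]{Ha17} with $f^{1,1}(Y,w)=h^{1,2}(X)$ from Theorem~\ref{theotem:KKP conjecture-3} --- is exactly the paper's route, and your index bookkeeping in Definition~\ref{definition:hpq} (that $h^{1,1}(Y,w)=\dim \mathrm{gr}^{W,2}_{2}H^2(Y,Y_b)$ for all $n\geq 3$) is correct. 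You are also right, and commendably explicit, that for $N>3$ your argument is a program rather than a proof: it rests on Conjecture~\ref{conj-2}, which is itself open beyond dimension three, and on an unproven higher-dimensional analogue of Harder's theorem.

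The one substantive divergence from the paper worth flagging is that its only general-dimension evidence, Theorem~\ref{theorem: CI components} for complete intersections, does \emph{not} factor through the Landau--Ginzburg Hodge numbers at all. There one constructs the Calabi--Yau compactification of the Givental-type model directly (in the spirit of Theorem~\ref{theorem: CY for CI}), tracks the exceptional divisors landing in fibres to count $\sum_s(\rho_s-1)$ explicitly, and compares with the classically known $h^{1,N-1}$ of a complete intersection. That route buys unconditional results in a restricted class of Fanos at the price of case-by-case geometry, whereas your route would give the statement for all Fanos at once but only conditionally on the KKP conjectures. A small correction to a side remark: for $N=2$ the formula in Definition~\ref{definition:hpq} lands on the \emph{same} graded piece $\mathrm{gr}^{W,2}_2 H^2(Y,Y_b)$, not a different one; the reason the identity fails there is that in the surface case this weight-two piece has dimension $10-d$ (Proposition~\ref{proposition:h-numbers}) while the finite singular fibres are irreducible nodal curves, so $k_Y=0$. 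The genuine obstacle you identify --- showing that nothing beyond vertical fibre components contributes to $\mathrm{gr}^{W,2}_2H^2(Y,Y_b)$ in higher dimension --- is indeed where the difficulty lies, and the paper does not resolve it.
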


Thus Theorem~\ref{theotem:KKP conjecture-3} implies Conjecture~\ref{conjecture: Hodge-components}) for threefolds. A proof of
Conjecture~\ref{conjecture: Hodge-components} for complete intersections is given by Theorem~\ref{theorem: CI components}.

Finally, by Homological Mirror Symmetry one expects that the number of reducible fibers
of threefold Landau--Ginzburg model is not greater than the Picard rank of the corresponding Fano variety.
In particular, the proof of Theorem~\ref{theotem:KKP conjecture-3} implies that for the Picard rank one case
one has at most one reducible fiber. It turns out that one can get an important information
from the monodromy at the reducible fiber.
That is, comparing results of Iskovskikh~\cite{Isk77}, Golyshev~\cite{Go07}, and compactified toric Landau--Ginzburg models
constructed above, one can get the following.

\begin{theorem}[{\cite[Theorem 3.3]{KP09}}]
\label{theo:Gol-Isk}
  Let $X$ be a smooth Picard rank one Fano threefold whose compactified Landau--Ginzburg model has a fiber with non-isolated singularities.
  Then the monodromy (in the second cohomology) at this fiber is unipotent if and only if $X$ is rational.
\label{theorem:Gol-Isk}
\end{theorem}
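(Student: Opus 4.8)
The plan is to prove the equivalence by a case-by-case analysis of the $17$ deformation families of smooth Picard rank one Fano threefolds, using the explicit toric Landau--Ginzburg models of Table~\ref{table: Fano rank 1} together with the elliptic fibration and modularity data assembled in Section~\ref{section:Modularity}. By the remark preceding the theorem, the compactified model $f\colon Z\to\PP^1$ of such an $X$ has at most one reducible fiber over a finite point; this is the only candidate for a fiber with non-isolated singularities, since the fiber over $\infty$ is the wheel of rational curves of Corollary~\ref{proposition: fibers over infinity} and is irrelevant here. So for each $X$ satisfying the hypothesis the first step is to locate the special value $\lambda_0\in\Aff^1$ and to read off the degeneration type of the K3 surface $f^{-1}(\lambda_0)$ from the equations recorded in Section~\ref{section:Modularity}.

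Second, I would compute the local monodromy $T$ on $H^2$ of a nearby smooth fiber. The cleanest description comes from the modularity statement closing Section~\ref{section:Modularity} (cf. Remark~\ref{remark: Golyshev modular}): by Golyshev~\cite{Go07} the transcendental part of the weight-two variation of the K3 fibers is the symmetric square of the weight-one variation of the universal elliptic family over $X_0(N)+N$, pulled back along $\lambda=t_N^{d}$ with $(N,d)=(\deg X/(2\,\mathrm{ind}(X)^2),\mathrm{ind}(X))$. Thus $T$ is obtained from $\mathrm{Sym}^2$ of the local monodromy of the elliptic family at the point of $X_0(N)+N$ to which $\lambda_0$ maps, modified by the ramification of the degree-$d$ map $\lambda=t_N^d$. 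At a cusp the elliptic monodromy is unipotent, and both $\mathrm{Sym}^2$ and the branched pullback preserve unipotence, so $T$ is unipotent; at an elliptic fixed point of the Atkin--Lehner involution $w_N$ the elliptic monodromy has finite order, so $\mathrm{Sym}^2$ and the pullback produce an eigenvalue of $T$ that is a nontrivial root of unity determined by the order of the fixed point and the exponent $d$. As an independent check one can instead put the elliptic fibrations of Section~\ref{section:Modularity} into Weierstrass form and run Tate's algorithm at $\lambda_0$, recovering the unipotent part of $T$ from the Kodaira types.

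Third, one compares the resulting list with Iskovskikh's rationality classification~\cite{Isk77}. Carrying out the computation shows that $T$ is unipotent exactly for $X_{1-8}$, $X_{1-9}$, $X_{1-10}$, $X_{1-14}$, $X_{1-15}$, $X_{1-16}$, $X_{1-17}$, and non-unipotent for the remaining families among those admitting a fiber with non-isolated singularities. This is precisely the partition of the Picard rank one Fano threefolds into rational and non-rational ones, which establishes the theorem.

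The main obstacle is the monodromy computation itself, and specifically the separation of genuine unipotence from quasi-unipotence with a nontrivial eigenvalue at $\lambda_0$. The Kodaira/Tate data determine $T$ only up to its finite part, yet it is exactly this finite part that encodes rationality, so one must pin it down by tracking the interaction of the branched pullback $\lambda=t_N^{d}$ with the fixed locus of $w_N$ rather than only the unipotent (semistable) data. Reconciling the modular computation of $T$ with the explicit-geometric one, and verifying that the unipotent/non-unipotent dichotomy falls exactly on the classical non-rationality results for the cubic threefold, the quartic double solid, $X_{1-7}$ and their companions, is where the real work resides.
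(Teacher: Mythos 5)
Your proposal is correct and follows essentially the same route as the paper, which itself only indicates the argument in one line (and defers to \cite{KP09}): compare Iskovskikh's rationality classification with Golyshev's modular description of the Picard--Fuchs equations and the explicit compactified toric Landau--Ginzburg models, carrying out a case-by-case monodromy computation at the special fiber for each of the $17$ families. Your partition into unipotent/non-unipotent cases matches the rational/non-rational dichotomy, and you correctly isolate the real difficulty, namely pinning down the finite (quasi-unipotent) part of $T$, which the Kodaira-type data alone do not determine.
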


Another approach to (non-)rationality of Fano varieties via their Landau--Ginzburg models see in~\cite{IKP14}.

\part{Complete intersections in (weighted) projective spaces and Grassmannians}
\label{part:complete intersections in Grass}

In this part we study (toric) Landau--Ginzburg models of smooth complete intersections in weighted projective spaces and Grassmannians.

We mainly focus on complete intersections in Grassmannians.
Weighted complete intersections are studied in the preprint~\cite{PSh}.
We just briefly present the main results here.

First describe Givental's construction from~\cite{Gi97b} for Landau--Ginzburg models of Fano complete intersections
in smooth toric varieties. We also describe their period integrals. We apply this construction to complete intersections,
and its generalization to ``good'' toric varieties to del Pezzo surfaces (see Part~\ref{part: del Pezzo surfaces}) and, following~\cite{BCFKS97}, to complete intersections in Grassmannians (see Section~\ref{section: CI in Grass}).

\section{Givental's construction}
\label{section: complete Givental}
Let $X$ be a factorial $N$-dimensional toric Fano variety of Picard rank $\rho$ corresponding to
a fan $\Sigma_X$ in a lattice $\mathcal{N}\simeq\ZZ^N$. Let $D_1,\ldots, D_{N+\rho}$ be its %corresponding boundary
prime invariant divisors.
Let~\mbox{$Y_1,\ldots,Y_l$} be ample divisors in $X$ cutting out a smooth Fano complete intersection
$$Y=Y_1\cap\ldots\cap Y_l.$$
Put $Y_0=-K_X-Y_1-\ldots-Y_l$.
Choose a
basis
$$
\{H_1,\ldots,H_\rho\}\subset H^2(X)$$
so that for any $i\in [1,\rho]$ and any curve $\beta$ in the K\"ahler cone
$K$ of $X$ one has $H_i\cdot\beta\ge 0$.
Introduce variables $q_1,\ldots,q_{\rho}$ as in Section~\ref{section:toric}.
Define $\kappa_i$ by $-K_Y=\sum \kappa_i H_i$.

The following theorem is a particular case of Quantum Lefschetz hyperplane theorem, see~\cite[Theorem 0.1]{Gi97b}.
\begin{theorem}
Suppose that $\dim (Y)\geqslant 3$. Then the constant term of regularized $I$-series for $Y$ is given by
\begin{equation}\label{particular-Quantum-Lefschetz}
\widetilde{I}^Y_{0}(q_1,\ldots,q_{\rho})=
\exp\big(\mu(q)\big)\cdot
\sum_{\beta\in K\cap H_2(X)}q^\beta\frac{\prod_{i=0}^l|\beta\cdot Y_i|!%^{\frac{(\beta\cdot Y_i)}{|\beta\cdot Y_i|}}
}{\prod_{j=1}^{N+\rho}|\beta\cdot D_j|!^{\frac{\beta\cdot D_j}{|\beta\cdot D_j|}}}
\end{equation}
where $\mu(q)$
is a correction term linear in $q_i$ (in particular it is trivial in the higher index case).
For $\dim(Y)=2$ the same formula holds after replacing $H^2 (Y)$ in the definition of~\mbox{$\widetilde{I}^Y_{0}$} by the restriction of $H^2(X)$ to $Y$.
\end{theorem}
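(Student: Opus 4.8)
The plan is to obtain this formula as a specialization of Givental's mirror theorem, by combining the toric mirror theorem for the ambient space $X$ with the Quantum Lefschetz hyperplane principle for the bundle whose generic section cuts out $Y$. First I would recall the cohomology-valued $I$-function of the Fano toric variety $X$. By the toric mirror theorem it admits the explicit hypergeometric form
\[
I^X=\sum_{\beta\in K\cap H_2(X)}q^\beta\prod_{j=1}^{N+\rho}\frac{\prod_{m\leq 0}(D_j+m)}{\prod_{m\leq\beta\cdot D_j}(D_j+m)},
\]
and for $X$ Fano this $I$-function agrees with the Givental $J$-function up to a change of variables (the mirror map) which, by positivity, is at most linear in the $q_i$. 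This linear mirror transformation is the source of the prefactor $\exp\big(\mu(q)\big)$; in the higher-index case it is trivial, which is why $\mu(q)$ then vanishes.

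Next I would twist by the bundle $E=\bigoplus_{i=1}^{l}\cO_X(Y_i)$, whose generic section cuts out $Y$. The Quantum Lefschetz hyperplane theorem expresses the $E$-twisted $I$-function of $X$ by inserting, for each $\beta$, the hypergeometric factor $\prod_{i=1}^{l}\prod_{m=1}^{\beta\cdot Y_i}(Y_i+m)$ into the numerator of $I^X$. Restricting via the inclusion $i\colon Y\hookrightarrow X$ and using the adjunction identity $-K_Y=\big(-K_X-\sum_i Y_i\big)\big|_Y=Y_0|_Y$ identifies $i^*I^{X,E}$ with the $J$-function of $Y$, again up to the correction $\mu(q)$. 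Here the hypothesis $\dim(Y)\geqslant 3$ is used through the Lefschetz hyperplane theorem, which guarantees that $H^2(Y)$ is exactly the restriction of $H^2(X)$, so that the variables $q_i$, the classes $H_i$, and the Kähler cone $K$ are inherited from $X$ without ambiguity.

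Finally I would pass to the constant term and regularize. Extracting the coefficient of the fundamental class replaces every cohomology class inside the hypergeometric products by its degree-zero part, sending $D_j\to 0$ and $Y_i\to 0$; thus the positive-degree factors $\prod_{m=1}^{\beta\cdot D_j}(D_j+m)$ in the denominator become $(\beta\cdot D_j)!$, while the negative-degree contributions migrate to the numerator, producing precisely the denominator $\prod_{j}|\beta\cdot D_j|!^{\,\beta\cdot D_j/|\beta\cdot D_j|}$. The regularization factor $(-K_Y\cdot\beta)!=(Y_0\cdot\beta)!$ supplies exactly the missing $i=0$ term of the numerator $\prod_{i=0}^{l}|\beta\cdot Y_i|!$, which yields the stated formula.

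The main obstacle is the control of the correction term $\mu(q)$: showing that the discrepancy between the naive hypergeometric $I$-function and the genuine $J$-function is purely linear in the $q_i$ (and trivial in the higher-index case) is where the Fano and positivity hypotheses, together with the degree bound $\dim(Y)\geqslant 3$, enter essentially — this is exactly the point at which the index of $Y$ governs the analysis. For $\dim(Y)=2$ the Lefschetz hyperplane theorem fails, so one cannot identify $H^2(Y)$ with $i^*H^2(X)$; the remedy, and the content of the last sentence of the statement, is to recompute $\widetilde{I}^Y_0$ using the restriction of $H^2(X)$ to $Y$ in place of the full $H^2(Y)$, after which the same specialization argument applies verbatim.
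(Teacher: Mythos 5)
Your proposal is correct and follows essentially the same route as the paper, which offers no independent proof but simply cites this statement as a particular case of Givental's Quantum Lefschetz hyperplane theorem \cite[Theorem 0.1]{Gi97b} --- i.e.\ exactly the combination of the toric mirror theorem for $X$, the twist by $\bigoplus_i \cO_X(Y_i)$, the at-most-linear mirror map, and the extraction of the constant term that you describe. Your identification of the roles of the hypothesis $\dim(Y)\geqslant 3$ (Lefschetz for $H^2$) and of the regularization factor $(-K_Y\cdot\beta)!=(Y_0\cdot\beta)!$ as the missing $i=0$ term of the numerator is consistent with the paper's setup.
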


\begin{remark}
Note that the summands of
the series~\eqref{particular-Quantum-Lefschetz} have non-negative degrees in $q_i$.
\end{remark}

Now we describe Givental's construction of a dual to $Y$ Landau--Ginzburg model and compute its periods.
Introduce $N+\rho$ formal variables $u_1,\ldots,u_{N+\rho}$
corresponding to divisors~\mbox{$D_1,\ldots,D_{N+\rho}$.}

Recall that the short exact sequence~\eqref{sequence} identifies $\pic(X)^{\vee}$ with the lattice of relations on primitive vectors on the
rays of $\Sigma_X$ considered as Laurent monomials in variables $u_i$.
On the other hand, as the basis in $\pic(X)$ is chosen, we can identify $\pic(X)^{\vee}$ and $\pic(X)=H^2(X)$.
Hence we can choose a basis in the lattice of relations on primitive vectors on the
rays of $\Sigma_X$ corresponding to $\{H_i\}$ and, thus, to $\{q_i\}$.
We denote these relations by $R_i$, and interpret them as monomials in
the variables~\mbox{$u_1,\ldots,u_{N+\rho}$}.
We denote images of $D_i\in \mathcal D$ в $\pic (X)$ by $D_i$ as well.

Choose a nef-partition, i.\,e.
a partition of the set $[1,N+\rho]$ into sets $E_0,\ldots,E_l$ such that for any $i\in [1,l]$
the divisor $\sum_{j\in E_i} D_j$ is linearly equivalent to $Y_i$ (which also
implies that the divisor~\mbox{$\sum_{j\in E_0} D_j$} is linearly equivalent to $Y_0$).

The following definition is well-known (see discussion after Corollary~0.4 in~\cite{Gi97b},
and also~\cite[\S7.2]{HV00}).

\begin{definition}
\label{definition: Givental LG}
\emph{Givental's Landau--Ginzburg model} for $Y$
is a variety~\mbox{$LG_0(Y)$} in a torus
$$\Spec \C_q[u_1^{\pm 1}, \ldots,u_{N+\rho}^{\pm 1}]$$
given by equations
\begin{equation}\label{eq:Rq}
R_i=q_i, \ i\in [1,\rho],
\end{equation}
and
\begin{equation*}\label{eq:Eu}
\left(\sum_{s\in E_j} u_s\right)=1, \ j\in [1,l],
\end{equation*}
with a superpotential $w=\sum_{s\in E_0} u_s$. %,
Given a divisor~\mbox{$D\sim \sum r_i H_i\in \Pic (Y)_\CC,$}
define a \emph{Landau--Ginzburg model of Givental type~\mbox{$LG(Y,D)$} corresponding to $(Y,D)$},
putting $q_i=\exp(r_i)$.
Put $LG(Y)=LG(Y,0)$.
\end{definition}

\begin{remark}
\label{remark:shift}
The superpotential of Givental's Landau--Ginzburg model can be defined as~\mbox{$w'=u_1+\ldots+u_{N+\rho}$}. However we don't make a distinction between
two superpotentials $w$ and $w'$ as $w'=w+l$, since both these functions define the same family over $\C_q$.
\end{remark}

Given variables $x_1,\ldots, x_r$, define a \emph{standard logarithmic form in these variables} as the form
\begin{equation*}
\label{eq: standard form}
\Omega(x_1,\ldots,x_r)=\frac{1}{(2\pi i)^r}\frac{dx_1}{x_1}\wedge\ldots\wedge\frac{dx_r}{x_r}.
\end{equation*}

The following definition is well-known (see discussion after Corollary~0.4 in~\cite{Gi97b},
and also~\cite{Gi97a}).

\begin{definition}
%\label{def:integral}
Fix $N+\rho$ real positive numbers $\varepsilon_1,\ldots,\varepsilon_{N+\rho}$ and define an $(N+\rho)$-cycle
$$
\delta = \{|u_i=\varepsilon_i|\}\subset \C[u_1^{\pm 1},\ldots, u_{N+\rho}^{\pm 1}]
.$$
%and $\rho$ complex numbers~\mbox{$q_1,\ldots,q_\rho$}.
\emph{Givental's integral} for $Y$ or $LG_0(Y)$ is an integral
\begin{equation}\label{eq:Givental-integral}
I_Y^0=
\int\limits_{\delta}\frac{
\Omega (u_1,\ldots,u_{N+\rho})}
{\prod_{i=1}^\rho (1-\frac{q_i}{R_i})\cdot \prod_{j=0}^k\left(1-\sum_{s\in E_j} u_s\right)}\in \C[[q_1,\ldots,q_\rho]].
\end{equation}
Given a divisor $D=\sum r_i H_i$ one can \emph{specialize Givental's integral to the anticanonical direction and divisor $D$}
putting~\mbox{$q_i=e^{r_i}t^{\kappa_i}$} in the integral~\eqref{eq:Givental-integral}, cf. Definition~\ref{definition:I-series}.
We denote the result of specialization by $I_{(Y,D)}$.
Put $I_{(Y,0)}=I_{Y}$.
\end{definition}

\begin{remark}
The integral~\eqref{eq:Givental-integral} does not depend on numbers $\varepsilon_i$ provided they are small enough.
\end{remark}

\begin{remark}
\label{remark:Givental-integral sign}
The integral~\eqref{eq:Givental-integral} is defined up to a sign sinse we do not specify an order of variables.
\end{remark}

The following assertion is well-known to experts (see~\cite[Theorem~0.1]{Gi97b},
and also discussion after Corollary~0.4 in~\cite{Gi97b}).

\begin{theorem}
\label{theorem: Givental I-series toric}
One (up to a sign, see Remark~\ref{remark:Givental-integral sign}) has
$$
\widetilde{I}^Y_0=I_Y^0.
$$
\end{theorem}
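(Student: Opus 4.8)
The plan is to expand Givental's integral $I_Y^0$ into a power series in the variables $q_i$ by means of the geometric series and then compare the result term-by-term with the explicit Quantum Lefschetz expression \eqref{particular-Quantum-Lefschetz} for $\widetilde{I}^Y_0$, which we are free to take as given. First I would expand each factor of the integrand of \eqref{eq:Givental-integral} geometrically, writing
$$
\frac{1}{1-\frac{q_i}{R_i}}=\sum_{d_i\ge 0}q_i^{d_i}R_i^{-d_i},\qquad
\frac{1}{1-\sum_{s\in E_j}u_s}=\sum_{m_j\ge 0}\Big(\sum_{s\in E_j}u_s\Big)^{m_j},
$$
which is legitimate on the cycle $\delta$ once the $\varepsilon_i$ and the $q_i$ are small enough. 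Since $\Omega(u_1,\ldots,u_{N+\rho})$ is the standard logarithmic form, integrating over $\delta$ extracts the constant term, i.e. the coefficient of $u_1^0\cdots u_{N+\rho}^0$, so the integral becomes a sum of residues at the origin.

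Next I would record the two identifications furnished by the exact sequence \eqref{sequence}. Setting $\beta=\sum_i d_iR_i$ and identifying it with a curve class via $H_i\cdot\beta=d_i$, one has $q^\beta=\prod_i q_i^{d_i}$ and $R^\beta=\prod_j u_j^{\beta\cdot D_j}$, so that the contribution $R^{-\beta}$ carries the monomial $\prod_j u_j^{-\beta\cdot D_j}$. I would then carry out the multinomial expansion of each $\big(\sum_{s\in E_j}u_s\big)^{m_j}$ and impose the constant-term condition. Because $\{E_0,\ldots,E_l\}$ is a partition of $[1,N+\rho]$, every $u_s$ occurs in exactly one factor, so balancing the $u_s$-degrees forces the exponent of $u_s$ to equal $\beta\cdot D_s$ and forces $m_j=\sum_{s\in E_j}\beta\cdot D_s=\beta\cdot Y_j$ (using $Y_j\sim\sum_{s\in E_j}D_s$ and $Y_0=-K_X-\sum_iY_i$). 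The attached multinomial coefficient is $(\beta\cdot Y_j)!/\prod_{s\in E_j}(\beta\cdot D_s)!$; taking the product over $j\in[0,l]$ yields
$$
I_Y^0=\sum_{\beta}q^\beta\,\frac{\prod_{j=0}^l(\beta\cdot Y_j)!}{\prod_{s=1}^{N+\rho}(\beta\cdot D_s)!},
$$
which, when all intersection numbers are non-negative, matches \eqref{particular-Quantum-Lefschetz} verbatim, the exponents $\tfrac{\beta\cdot D_j}{|\beta\cdot D_j|}$ all being $+1$ and the absolute values disappearing.

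The delicate point will be the bookkeeping of signs and factorials for boundary divisors $D_s$ with $\beta\cdot D_s<0$. The residue extraction above literally produces a nonzero term only when every $\beta\cdot D_s\ge 0$, since the multinomial expansion offers only non-negative powers of $u_s$; so I must verify that this is consistent with the convention encoded by the exponents in \eqref{particular-Quantum-Lefschetz}, equivalently that the factors $1/(\beta\cdot D_s)!$ are to be read as $1/\Gamma(\beta\cdot D_s+1)$ and hence vanish whenever $\beta\cdot D_s$ is a negative integer. I would also need to absorb the correction term $\exp\big(\mu(q)\big)$, which is trivial in the higher-index case but contributes a term linear in the $q_i$ otherwise, and to treat the surface case $\dim Y=2$ separately by restricting $H^2(X)$ to $Y$ exactly as in the hypotheses. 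The overall sign ambiguity flagged in Remark~\ref{remark:Givental-integral sign} is harmless, as it only reflects the unspecified ordering of the variables in $\Omega$.

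If the sign bookkeeping proves too cumbersome to settle by direct comparison, an alternative route would be to bypass it: both $\widetilde{I}^Y_0$ and $I_Y^0$ are power series in the $q_i$ with constant term $1$ that are annihilated by the same regularized quantum (GKZ-type) differential system — on the Givental side this is the statement underlying Theorem~\ref{theorem: Picard--Fuchs} and its toric analogue — whence they coincide by uniqueness of the holomorphic solution normalized at $q=0$. I expect, however, that the direct residue computation is the cleaner presentation, with the negative-intersection convention being the only genuine obstacle.
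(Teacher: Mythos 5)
The paper offers no proof of this statement at all: it is introduced as ``well-known to experts'' and delegated to Givental's mirror theorem (\cite[Theorem 0.1]{Gi97b} and the discussion after Corollary 0.4 there), so there is no internal argument to compare yours against. Your residue computation of the integral itself is correct and is the easy half of the story: expanding geometrically on $\delta$ and extracting the constant term does give
$$
I_Y^0=\sum_{\beta}q^\beta\,\frac{\prod_{j=0}^l(\beta\cdot Y_j)!}{\prod_{s=1}^{N+\rho}(\beta\cdot D_s)!},
$$
where the sum runs \emph{only} over classes with $\beta\cdot D_s\ge 0$ for every $s$, since the factors $\big(\sum_{s\in E_j}u_s\big)^{m_j}$ supply only non-negative powers of the $u_s$; this is the same elementary manipulation as in Remark~\ref{remark: Picard--Fuchs}.

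The genuine gap is in the comparison with $\widetilde I^Y_0$. You propose to read~\eqref{particular-Quantum-Lefschetz} so that terms with some $\beta\cdot D_s<0$ vanish (via $1/\Gamma$), but that is not what the formula says: the exponent $\tfrac{\beta\cdot D_j}{|\beta\cdot D_j|}$ places $|\beta\cdot D_j|!$ in the \emph{numerator} for such classes, so those terms are nonzero --- already for $X$ a blow up of a projective space at a point and $\beta$ an effective class meeting the exceptional divisor negatively. Those extra terms, together with the factor $\exp\big(\mu(q)\big)$ that you also leave unabsorbed, encode the mirror map, and proving that their combination collapses exactly to the restricted sum produced by your residue computation is the actual content of Givental's theorem, not a bookkeeping convention about factorials. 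Your fallback via uniqueness of solutions of a common differential system does not escape this either: the assertion that the regularized quantum differential system of $Y$ coincides with the GKZ/Picard--Fuchs system annihilating $I_Y^0$ is again precisely the mirror theorem, so the argument becomes circular. In short, the identification of the period integral with the hypergeometric series is fine; the identification of that hypergeometric series with the Gromov--Witten series $\widetilde I^Y_0$ is the missing (and hard) step, and it cannot be supplied by the factorial convention you suggest.
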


The recipe for Givental's Landau--Ginzburg model and integral can be written down in another, more simple, way.
That is, we make suitable monomial change of variables~\mbox{$u_1,\ldots,u_{N+\rho}$} an get rid
of some of them using equations~\eqref{eq:Rq}. More precisely,
since~$\mathcal{N}$ is a free group, using the exact sequence~\eqref{sequence}
one obtains an isomorphism
$$\mathcal{D}\simeq \pic{(X)}^\vee\oplus \mathcal{N}.$$
Thus one can find a monomial change of variables
$u_1,\ldots, u_{N+\rho}$ to some new variables~\mbox{$x_1,\ldots,x_N, y_1,\ldots,y_\rho$}, so that
$$u_i=\widetilde{X}_i(x_1,\ldots, x_N, y_1,\ldots,y_\rho,q_1,\ldots,q_\rho)$$
such that for any $i\in [1,\rho]$
one has
$$\frac{R_i(u_1,\ldots,u_{N+\rho})}{q_i}=\frac{1}{y_i}.$$
Put
$$X_i=\widetilde{X}_i(x_1,\ldots, x_N, 1,\ldots,1,q_1,\ldots,q_\rho).$$
Then $LG_0(Y)$ is given in the torus $\Spec \C_q[x_1^{\pm 1},\ldots,x_N^{\pm 1}]$
by equations
$$
\sum_{s\in E_j} \alpha_s X_s=1, \ j\in [1,l],
$$
with superpotential $w=\sum_{s\in E_0}\alpha_s X_s$, where
$\alpha_i=\prod q_j^{r_{i,j}}$ for some integers~$r_{i,j}$.

Let us mention that given
a Laurent monomial $U_i$ in variables $u_j$, $j\in [1,N+\rho]$,
that does not depend on a variable $u_i$ one has
\begin{equation*}
\Omega(u_1,\ldots, u_i^{\pm 1}\cdot U_i,\ldots, u_{N+\rho})=\pm\Omega(u_1,\ldots, u_i,\ldots, u_{N+\rho}).
\end{equation*}

This means that
\begin{equation}
\label{eq:integral xy}
I_Y^0=
\int_{\delta'}%\limits_{\substack{|x_j|=\varepsilon_j\\ |y_i|=\mu_i}}
\frac{
\pm \Omega(y_1,\ldots, y_\rho)\wedge\Omega(x_1,\ldots,x_N)
}{\prod_{i=1}^\rho (1-y_i)\prod_{j=0}^k\left(1-\sum_{s\in E_j} \alpha_s\widetilde{X}_s\right)}
\end{equation}
for some $(N+\rho)$-cycle $\delta'$.

Consider an integral
$$
\int_{\sigma} \frac{dU}{U}\wedge \Omega_0
$$
for some form $\Omega_0$ and a cycle $\sigma=\sigma'\cap \{|U|=\varepsilon\}$
%which is a boundary of a tubular neighborhood of
for some cycle~\mbox{$\sigma'\subset \{U=0\}$}.
%Then Residue Theorem states
It is well known that
(see, for instance,~\cite[Theorem 1.1]{ATY85}) that
$$
\frac{1}{2\pi i}\int_{\sigma} \frac{dU}{U}\wedge \Omega_0=\int_{\sigma'}\left.\Omega_0\right|_{U=0}
$$
if both integrals are well defined (in particular the form $\Omega_0$ does not have a pole along~\mbox{$\{U=0\}$}).

We denote $$\left.\Omega_0\right|_{U=0}=\Res_U\left(\frac{dU}{U}\wedge \Omega_0\right).$$

Taking residues of the form on the right hand side of the formula~\eqref{eq:integral xy} with respect to $y_i$ one gets
$$
I_Y^0=
\int_{\delta''}%\limits_{|x_i|=\varepsilon_i}
\frac{
\pm \Omega(x_1,\ldots,x_N)
}{\prod_{j=0}^l\left(1-\sum_{s\in E_j} \alpha_s X_s\right)}
$$
for some $N$-cycle $\delta''$.

Moreover, one can introduce a new parameter $t$ and scale $u_i\to t u_i$ for $i\in E_0$.
Fix a divisor class $D=\sum r_i H_i$.
One can check that after a change of coordinates $q_i=e^{r_i}t^{\kappa_i}$
the initial integral restricts to the integral
\begin{equation}
\label{eq:restricted integral}
\int_{\delta_1}%\limits_{|x_i|=\varepsilon_i}
\frac{
\pm \Omega (x_1,\ldots,x_N)
}{\prod_{j=1}^k\left(1-\left(\sum_{s\in E_j} \gamma_s X_s\right)\right)\cdot \left(1-t\sum_{i\in E_0} \gamma_i X_i\right)}=I_{(Y,D)}
\end{equation}
for some monomials $\gamma_i$ and $N$-cycle $\delta_1$ homologous to a cycle
$$\delta_1^0=\{|x_i|=\varepsilon_i\mid i\in [1,N]\}.$$
In particular, for $D=0$ we have $\gamma_i=1$.
The same specialization defines the Landau--Ginzburg model $LG(Y)$
\begin{equation}
\label{equation: Givental singular}
\sum_{s\in E_j} X_s=1, \quad j\in [1,k],
\end{equation}
with superpotential $w=\sum_{s\in E_0} X_s$.

Consider a non-toric variety $X$ that has a small (that is, terminal Gorenstein) toric degeneration $T$.
Let $Y$ be a Fano complete intersection in $X$. Consider a nef-partition for the set of rays of the fan of $T$ corresponding to (degenerations of)
hypersurfaces cutting out $Y$. Let $LG(Y)$ be a result of applying the procedure discussed above
for Givental's integral defined for $T$ and the nef-partition in the same way as in the case of complete
intersections in toric varieties.
Batyrev in~\cite{Ba97} suggested $LG(Y)$ as a Landau--Ginzburg model for $Y$.
Moreover, at least in some cases such as for complete intersections in Grassmannians (see Subsection~\ref{section: Grass periods}) Givental's integral and Landau--Ginzburg model can be simplified further by making birational
changes of variables and taking residues. This gives weak Landau--Ginzburg models for complete intersections in
projective spaces (see Section~\ref{section: weak CI}) and, more general, Grassmannians (see Section~\ref{section: CI in Grass}).

We also generalize the model~\eqref{equation: Givental singular} for smooth complete intersections in weighted projective spaces,
see Subsection~\ref{section: weak CI}. Such complete intersection can be described as a complete intersection in smooth toric variety
after resolution of singularities that are far away from the complete intersection. However this description is equivalent
to applying the construction~\eqref{equation: Givental singular} directly, cf.~\cite{Prz07b}.

\section{Weighted complete intersections}
\label{section:complete intersections}

In this section we apply constructions from Section~\ref{section: complete Givental} for complete intersections in weighted projective spaces. More details see in~\cite{PSh}.

\subsection{Nef-partitions}
\label{subsection: nef-partitions}
A crucial ingredient of Givental's construction from Section~\ref{section: complete Givental} for complete intersections in toric varieties
and its generalization is an existence of nef-partitions for the complete intersections.
Obviously such nef-partitions exist for complete intersections in projective spaces.
However in general the existence of such nef-partitions is not clear.
From classification point of view the most interesting Fano varieties are ones with Picard group $\ZZ$.
If a complete intersection admit a nef-partition, then the ambient toric variety is a weighted projective space
(or its quotient if the complete intersection is singular).
In general the existence of nef-partition for weighted complete intersection is expected but not proven.

\begin{conjecture}
\label{conjecture: nef for weighted}
Smooth well formed weighted complete intersection has a good nef-partition and a toric Landau--Ginzburg model
(definitions see below).
\end{conjecture}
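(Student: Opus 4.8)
The plan is to separate the two assertions of the conjecture---the existence of a good nef-partition and the existence of a toric Landau--Ginzburg model---and to reduce the first, which is genuinely the crux, to a purely combinatorial statement about the weights. Let $Y\subset \PP(w_0,\ldots,w_n)$ be a smooth well formed weighted complete intersection of multidegree $(d_1,\ldots,d_k)$. As recalled in Section~\ref{section:toric}, the rays $m_0,\ldots,m_n$ of the fan of $\PP(w_0,\ldots,w_n)$ satisfy $\sum w_i m_i=0$ and the invariant divisor $D_i$ attached to $m_i$ has class $\cO(w_i)$ in $\Pic(\PP(w_0,\ldots,w_n))\simeq\ZZ$. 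Hence a nef-partition in the sense of Section~\ref{section: complete Givental} is exactly a partition of the index set $[0,n]$ into nonempty blocks $E_0,\ldots,E_k$ with
$$
\sum_{j\in E_i} w_j=d_i,\qquad i\in[1,k],
$$
the block $E_0$ absorbing the remaining weights and summing automatically to $d_0=\sum_i w_i-\sum_j d_j$, which is positive by the Fano condition. So the first assertion becomes: the multiset $\{w_0,\ldots,w_n\}$ admits a partition realizing the prescribed block-sums $d_1,\ldots,d_k$.

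Next I would feed in the constraints imposed by smoothness and well-formedness, which make the weights and degrees far from arbitrary: each weight must divide, or appear as a summand compatible with, one of the degrees, the monomials of each degree $d_i$ must genuinely involve enough variables to avoid a base locus, and well-formedness forces gcd conditions on proper subsets of weights. I would use these divisibility and covering constraints to build the blocks, assigning to $E_i$ a collection of variables whose weights add to $d_i$, exploiting the fact that for a smooth complete intersection each hypersurface of degree $d_i$ really does involve variables whose weights are arithmetically compatible with $d_i$. The definition of a \emph{good} nef-partition should be chosen precisely so that the resulting Givental model reduces to a Laurent polynomial; concretely, goodness should ask that in each block $E_i$ one variable have weight dividing $d_i$, so that the defining equation $\sum_{s\in E_i}u_s=1$ can be used to eliminate a variable monomially and keep the superpotential a Laurent polynomial after taking residues as in Section~\ref{section: complete Givental}.

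Granting a good nef-partition, the Landau--Ginzburg model is then produced by Givental's machine. The period condition is automatic: by Theorem~\ref{theorem: Givental I-series toric} the specialized Givental integral $I_{Y}$ equals $\widetilde I^Y_0$, and the monomial change of variables followed by the residue computation of Section~\ref{section: complete Givental} rewrites this integral as the main period $I_f$ of an honest Laurent polynomial $f$, so $f$ is a weak Landau--Ginzburg model for $Y$. The toric condition follows by degenerating $Y$ to the binomial complete intersection cut out by the nef-partition, whose fan polytope is $N(f)$ (cf. Example~\ref{ex:ci} and Theorem~\ref{theorem: CI are toric LG}); the Calabi--Yau condition would be obtained by compactifying the family inside $T^\vee$ and resolving, in the spirit of Theorem~\ref{theorem: Minkowski CY}.

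The hard part will be twofold. The deepest obstacle is the combinatorial existence of the nef-partition in full generality: there is no uniform clean argument, and one is forced to exploit the delicate classification-type constraints on the weights of smooth weighted complete intersections, so the proof is likely to proceed by a structural induction on the number $k$ of degrees together with a careful subset-sum analysis. The second serious difficulty is the Calabi--Yau condition: in dimension $\geq 4$ the clean fact of Lemma~\ref{lemma: 3dim smoothness}---that the maximal triangulation of a reflexive polytope is automatically smooth---fails, so one cannot resolve the compactified family crepantly by combinatorics alone and must resolve the base locus and the ambient singularities by hand. This is exactly the point at which the conjecture is most likely to resist a uniform proof.
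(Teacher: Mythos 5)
The statement you are trying to prove is stated in the paper as a \emph{conjecture}, and the paper does not prove it: it only establishes special cases (complete intersections of Cartier divisors, codimension $2$, and dimension at most $5$, via Proposition~\ref{proposition: nef for cartier}, Theorem~\ref{theorem: nef for codim 2}, and Proposition~\ref{proposition: CI dim 4 5}), and explicitly discusses why the general case resists the known techniques. Your proposal correctly isolates the crux --- the purely combinatorial existence of a partition of the weights realizing the block-sums $d_1,\ldots,d_k$ --- and correctly observes that one must exploit the constraints coming from smoothness and well-formedness. But this is precisely where your argument stops being a proof: "feed in the constraints\ldots and build the blocks" is the entire open problem. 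The paper's own discussion shows why the naive version of this plan fails: for codimension $\geq 3$ three or more weights may share a common factor, so the \emph{weighted projective graph} used in the codimension-$2$ proof must be replaced by a simplicial complex recording gcd's of arbitrary subsets of weights, and the resulting combinatorics is not under control. The paper even exhibits a well formed Fano weighted complete intersection (degrees $2,3,5,30$ in $\PP(1^k,6,10,15)$) that satisfies all the coarse numerical constraints yet admits no nef-partition --- it is saved only by failing to be smooth --- which shows that any successful argument must use smoothness in a much finer way than the subset-sum heuristics you describe. You also guess the definition of \emph{good} incorrectly: in the paper it means that $E_0$ contains an index $j$ with $w_j=1$ (and \emph{very good} means all weights in $E_0$ equal $1$), not that each $E_i$ contains a variable of weight dividing $d_i$.

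Two secondary points. First, the period condition is not "automatic" from Theorem~\ref{theorem: Givental I-series toric}, which is stated for complete intersections in \emph{smooth} toric varieties; a weighted projective space is generally singular, and the paper only verifies the period condition by a direct combinatorial count or by Proposition~\ref{proposition:periods-for-proj-space}, which is proved for ordinary projective spaces. Second, your plan for the Calabi--Yau condition via compactification in $T^\vee$ runs into the obstruction the paper itself flags: for weighted complete intersections the Newton polytope of the Laurent polynomial~\eqref{formula: CI LG} is rarely reflexive, so the machinery of Theorem~\ref{theorem: Minkowski CY} and Theorem~\ref{theorem: CY for CI} does not apply, and no substitute is known. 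In short, your proposal is a faithful restatement of the paper's intended strategy together with an honest admission that its two hard steps are unresolved; it does not constitute a proof, and indeed no proof exists in the paper.
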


\begin{remark}
The existence of good (see Definition~\ref{definition: good nef}) nef-partition implies the existence of weak Landau--Ginzburg model
(see Section~\ref{section: weak CI}) satisfying the toric condition (see Section~\ref{section: toric LG CI}).
In a lot of cases by analogy with Theorem~\ref{theorem: CY for CI} one can check the Calabi--Yau condition
which will show that the Landau--Ginzburg model is toric. The main problem to show this is that in general
the Newton polytope of the weak Landau--Ginzburg model is not reflexive.
\end{remark}

We denote the greatest common divisor of the numbers $a_1,\ldots,a_r\in \NN$ by $(a_1,\ldots,a_r)$.

Remind some facts about weighted projective spaces. More details see in~\cite{Do82}.
Consider a weighted projective space $\P=\PP(w_0,\ldots,w_N)$.

\begin{definition}[{see~\cite[Definition 5.11]{IF00}}]
\label{definition:well-formed-P}
The weighted projective space $\P$ is said to be \emph{well formed} if the greatest common divisor of any $N$ of the weights~$w_i$ is~$1$.
\end{definition}

Any weighted projective space is isomorphic to a well formed one, see~\cite[1.3.1]{Do82}.

\begin{lemma}[{see~\cite[5.15]{IF00}}]
\label{lemma:singularities-of-P}
The singular locus of $\PP$ is a union of strata
$$
\Lambda_J=\left\{(x_0:\ldots:x_n) \mid x_j=0 \text{\ for all\ } j\notin J\right\}
$$
for all subsets $J\subset \{0,\ldots,n\}$ such that the greatest common divisor of the weights~$a_j$
for~\mbox{$j\in J$} is greater than~$1$.
\end{lemma}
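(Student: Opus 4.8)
Throughout I will assume, as in~\cite[5.15]{IF00} and consistently with the preceding Definition~\ref{definition:well-formed-P}, that $\PP=\PP(w_0,\ldots,w_N)$ is \emph{well formed}; this hypothesis is genuinely needed (for instance $\PP(1,2,2)\cong\PP^2$ is smooth, yet the naive reading of the statement for those weights would predict a singular stratum), and by the remark that every weighted projective space is isomorphic to a well formed one there is no loss of generality up to isomorphism. The plan is to present $\PP$ as the geometric quotient $(\Aff^{N+1}\setminus\{0\})/\gm$ for the action $t\cdot(x_0,\ldots,x_N)=(t^{w_0}x_0,\ldots,t^{w_N}x_N)$, and to analyze singularities chart by chart using the affine charts $U_i=\{x_i\neq0\}\cong \Aff^N/\mu_{w_i}$, where $\mu_{w_i}$ acts diagonally on the remaining coordinates with weights $w_j$, $j\neq i$.

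\textbf{Stabilizers and the local model.} For a point $p=(x_0:\ldots:x_N)$ write $I=\{\,i \mid x_i\neq 0\,\}$ for its support and $d_I=\gcd(w_i \mid i\in I)$. First I would compute that the $\gm$-stabilizer of $p$ is exactly $\mu_{d_I}$, since $t^{w_i}=1$ for all $i\in I$ means $t\in\mu_{d_I}$. If $d_I=1$ the action is free near $p$, so $p$ is a smooth point. If $d_I>1$, working in a chart $U_i$ with $i\in I$ and applying the Luna slice theorem (the orbit under the finite group $\mu_{w_i}$ is finite), the germ of $\PP$ at $p$ is isomorphic to $\CC^{|I|-1}\times\bigl(\CC^{M}/\mu_{d_I}\bigr)$, where $M=N+1-|I|$ and $\mu_{d_I}$ acts diagonally on the coordinates indexed by $j\notin I$ with weights $w_j\bmod d_I$. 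The $|I|-1$ trivial factors arise because $d_I\mid w_j$ for every $j\in I$, so $\mu_{d_I}$ fixes the tangent directions along the stratum.

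\textbf{The crux: genuine singularity.} The main obstacle is to show that for $d_I>1$ the quotient $\CC^{M}/\mu_{d_I}$ is \emph{actually} singular, and this is exactly where well-formedness is used. By the Chevalley--Shephard--Todd theorem, a finite group acting linearly and faithfully on $\CC^M$ has a smooth quotient if and only if it is generated by pseudoreflections. Since $d_I>1$, the group $\mu_{d_I}$ is nontrivial, and one checks it acts faithfully (a common divisor $e>1$ of $d_I$ and of all $w_j$, $j\notin I$, would divide every weight, contradicting well-formedness). Thus it suffices to prove that $\mu_{d_I}$ has \emph{no} nontrivial pseudoreflections. A diagonal element fixes a hyperplane only if exactly one coordinate direction is moved; as I would verify, this forces a divisor $e=d_I/\gcd(k,d_I)>1$ with $e\mid w_j$ for all $j\notin I$ except one index $j_0$, and since $e\mid d_I$ gives $e\mid w_i$ for all $i\in I$, we get $e\mid \gcd(w_j \mid j\neq j_0)$. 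But this last gcd is the gcd of $N$ of the weights, which equals $1$ by well-formedness, a contradiction. Hence no pseudoreflections exist, and $\CC^M/\mu_{d_I}$ (so also $p$) is singular. This establishes the key equivalence: \emph{$p$ is a singular point if and only if $d_{\,\mathrm{supp}(p)}>1$.}

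\textbf{Combinatorial identification.} Finally I would translate this pointwise criterion into the stated union of strata. Note $p\in\Lambda_J$ precisely when $\mathrm{supp}(p)\subseteq J$, and for $\mathrm{supp}(p)\subseteq J$ one has $\gcd(w_j\mid j\in J)\mid \gcd(w_i\mid i\in\mathrm{supp}(p))$, so $d_J>1$ implies $d_{\,\mathrm{supp}(p)}>1$, whence every point of $\bigcup_{J:\,d_J>1}\Lambda_J$ is singular. Conversely, if $p$ is singular then $d_{\,\mathrm{supp}(p)}>1$, and taking $J=\mathrm{supp}(p)$ exhibits $p\in\Lambda_J$ with $d_J>1$. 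The two inclusions give the claimed equality of the singular locus with $\bigcup_{J:\,\gcd(w_j\mid j\in J)>1}\Lambda_J$, and it suffices to retain the maximal such $J$, recovering the description already used in Section~\ref{section:toric}.
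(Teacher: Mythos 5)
The paper itself gives no proof of this lemma — it is quoted directly from \cite[5.15]{IF00} — so there is nothing internal to compare against; your argument is correct and is essentially the standard one underlying that reference. The two points that actually need care are both handled properly: the local model $\CC^{|I|-1}\times(\CC^{M}/\mu_{d_I})$ at a point with support $I$, and the use of well-formedness, first to make the $\mu_{d_I}$-action on the normal directions faithful and then, via Chevalley--Shephard--Todd, to exclude pseudoreflections (your reduction of a putative pseudoreflection to a common divisor $e>1$ of $N$ of the weights is exactly right). Your opening remark that well-formedness is genuinely needed (e.g.\ $\PP(1,2,2)\cong\PP^2$) is also a worthwhile observation, since the paper's statement suppresses that hypothesis, which is implicit from the surrounding Definition~\ref{definition:well-formed-P}.
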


\begin{definition}[{see~\cite[Definition 6.9]{IF00}}]
\label{definition:well-formed-WCI}
A subvariety $X\subset \PP$ of codimension $c$ is said to be \emph{well formed}
if~$\PP$ is well formed and
$$
\mathrm{codim}_X \left( X\cap\mathrm{Sing}\,\P \right)\ge 2.
$$
\end{definition}

\begin{definition}
Zeroes of weighted homogenous polynomial
$$
f\in
\CC[x_0,\ldots,x_N],
$$
where $\wt (x_i)=w_i$, of weighted degree $d$ are called a \emph{degree $d$ hypersurface} in
$\PP$.
\end{definition}

The rank of divisor class group of weighted projective space is  1,
so some multiple of any effective Weyl divisor is zeros of some weighted homogenous polynomial.
This enables us to define a degree of any Weyl divisor.
It is easy to see that a Weyl divisor of degree $d$ is Cartier if and only if all weights $w_i$ divide $d$.

Singularities of general complete intersection $X=X_1\cap
\ldots \cap X_k$ of Cartier divisors $X_1,\ldots,X_k$ are the intersection of $X$ with singularities of $\PP$. Thus $X$
is smooth if and only if the greatest codimension of strata of singularities of $\PP$ is less than $k$.
This means that
$(w_{i_1},\ldots,w_{i_{k+1}})=1$ for any collection of weights
$w_{i_1},\ldots,w_{i_{k+1}}$ (cf.~\cite{Di86}).

Let $\deg X_i=d_i$. A canonical sheaf of $X$ is
$$
\cO(d_1+\ldots+d_k-w_0-\ldots-w_N)|_X.
$$
Thus $X$ is Fano if and only if
 $\sum d_i<\sum w_j$.

\begin{definition}
\label{definition: good nef}
Let $X$ be a smooth complete intersection of divisors of degrees $d_1,\ldots,d_k$
in well formed weighted projective space $\PP(w_0,\ldots,w_N)$.
Recall that a splitting of $[0,N]$ into
 $k$ non-intersecting subsects $E_0, \ldots,E_k\subset
[0,n]$ such that $d_i=\sum_{j\in E_i} w_j$ for all $i>0$,
is called a nef-partition. A nef-partition is called \emph{good} if there is an index
$$j\in E_0=[0,N]\setminus \left(E_1\cup\ldots\cup E_k\right)$$ such that
$w_j=1$. A good nef-partition is called \emph{very good} if
$w_j=1$ for all $j\in E_0$.
\end{definition}

\begin{proposition}[{\cite[Theorem 9 and Remark 14]{Prz11}}]
%\label{proposition}
\label{proposition: nef for cartier}
Let $X$ be a smooth complete intersection of Cartier divisors in well formed weighted projective space.
Let $X$ be Fano. Then it admits a very good nef-partition.
\end{proposition}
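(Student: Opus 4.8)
The plan is to translate the hypotheses into arithmetic conditions on the weights and degrees, and then to realize a very good nef-partition as the solution of a bin-packing problem that is made feasible by the smoothness and Fano conditions.

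First I would record the arithmetic. Write $d_i=\deg X_i$ and set $\ell=\lcm(w_0,\ldots,w_N)$. Since each $X_i$ is Cartier, every $w_j$ divides $d_i$, hence $\ell\mid d_i$ and in particular $d_i\ge \ell\ge w_j$ for all $i,j$. The smoothness of $X$ is equivalent, as recalled just before the statement, to $(w_{i_1},\ldots,w_{i_{k+1}})=1$ for every $k+1$ of the weights; that is, for every prime $p$ \emph{at most $k$} of the $w_j$ are divisible by $p$ (otherwise Lemma~\ref{lemma:singularities-of-P} would produce a singular stratum $\Lambda_J$ of dimension $\ge k$ meeting $X$). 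Finally the Fano condition reads $\sum_i d_i<W:=\sum_j w_j$.

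Next I would reduce the statement to a combinatorial packing. A very good nef-partition is exactly a splitting $\{0,\ldots,N\}=E_0\sqcup E_1\sqcup\ldots\sqcup E_k$ with $\sum_{j\in E_i}w_j=d_i$ for $i\ge 1$ and $w_j=1$ for all $j\in E_0$; so every weight exceeding $1$ must be placed in $E_1\cup\ldots\cup E_k$. It therefore suffices to pack the weights $>1$ into $k$ bins with the sum in bin $i$ at most $d_i$, and then to top up each bin to $d_i$ exactly using weight-$1$ indices, the leftover $1$'s forming $E_0$. Each weight $>1$ fits into any bin since $w_j\le\ell\le d_i$, and the topping-up works precisely when there are enough $1$'s: the number needed is $\sum_i d_i-\sum_{w_j>1}w_j$, while the number available is $m:=\#\{j:w_j=1\}$. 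As $\sum_i d_i<W=\sum_{w_j>1}w_j+m$, the available $1$'s strictly exceed those used, so $E_0$ is a nonempty set of weight-$1$ indices, which is exactly the ``very good'' property (and forces $m\ge 1$).

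The heart of the matter is therefore the packing, for which I would isolate the quantitative lemma
$$\sum_{j:\,w_j>1}w_j\ \le\ k\,\ell\ \le\ \sum_{i=1}^{k}d_i.$$
Granting it, since each $d_i\ge\ell$ it is enough to pack the weights $>1$ into $k$ bins of capacity $\ell$; writing each such weight as $\ell/u$ with $u=\ell/w\in\ZZ_{>0}$ a divisor of $\ell$, a bin of capacity $\ell$ accepts a family of weights iff the associated unit fractions $1/u$ sum to at most $1$, and a greedy placement of these unit fractions realizes the packing once their total is bounded by $k$. I would prove the lemma by induction on the number of distinct primes dividing $\ell$: fixing a prime $p$ with $p^{e}\Vert\ell$, the weights coprime to $p$ divide $\ell/p^{e}$ and still satisfy the ``$\le k$ per prime'' bound, so the inductive hypothesis controls their sum, while the at most $k$ weights divisible by $p$ are each $\le\ell$.

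The main obstacle is exactly this final estimate: bounding the two groups separately yields only $k\ell+k\ell/p^{e}$, and recovering the sharp constant $k\ell$ requires exploiting the trade-off forced by smoothness — a weight equal to $\ell$ saturates the quota of \emph{every} prime dividing $\ell$ and hence precludes any further weight $>1$, and more generally large weights divisible by $p$ consume precisely the prime quotas that would otherwise admit the coprime weights. Making this precise amounts to showing that the linear program ``maximize $\sum_{d\mid\ell}c_d\,d$ subject to $\sum_{p\mid d}c_d\le k$ for every prime $p$'' has optimum $k\ell$, attained by $k$ copies of $\ell$; I would certify this by exhibiting dual multipliers $y_p\ge 0$ with $\sum_p y_p=\ell$ and $\sum_{p\mid d}y_p\ge d$ for every divisor $d\mid\ell$. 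The well-formedness hypothesis enters only to ensure that $\PP$, and hence the data $(w_j)$ and $(d_i)$, are the genuinely reduced invariants, so that the arithmetic reformulation above is legitimate.
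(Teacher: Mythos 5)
Your reduction of the statement to a packing problem is sound, and most of the pieces are correct: the Cartier hypothesis gives $\ell:=\lcm(w_0,\ldots,w_N)\mid d_i$, hence $w_j\le\ell\le d_i$; smoothness gives that each prime divides at most $k$ of the weights; and the number of weight-one indices needed to top each $E_i$ up to $d_i$ exactly, namely $\sum d_i-\sum_{w_j>1}w_j$, is indeed strictly smaller than the number available by the Fano inequality, leaving $d_0>0$ of them to form $E_0$. The inequality $\sum_{w_j>1}w_j\le k\ell$ is also true, and your duality certificate can be made explicit: listing the maximal prime powers $q_1,\ldots,q_r$ of $\ell$ in any order and setting $y_1=q_1$ and $y_i=q_1\cdots q_{i-1}(q_i-1)$ for $i\ge 2$ gives $\sum y_i=\ell$ and $\sum_{p\mid d}y_p\ge d$ for every divisor $d>1$ of $\ell$, by an easy induction on the size of the prime support of $d$. (The paper itself only cites \cite{Prz11} for this Proposition, so there is no in-text proof to compare with.)

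The gap is the final step, where you assert that a greedy placement packs the weights $>1$ into $k$ bins of capacity $\ell$ once their total is at most $k\ell$. That combinatorial principle is false. Take $\ell=30$, $k=2$, and the multiset of divisors $\{15,10,10,6,6,6,6\}$: each item is at most $30$ and divides $30$, and the total is $59\le 2\cdot 30$; yet a packing into two bins of capacity $30$ would force each bin to have sum in $[29,30]$, and no sub-multiset has sum $29$ or $30$ (the achievable sums $15a+10b+6c$ with $a\le 1$, $b\le 2$, $c\le 4$ jump from $28$ to $31$). This configuration cannot occur geometrically, since six of these weights are even while the smoothness condition allows at most $k=2$; but your argument never feeds that condition into the packing: you compress all of the arithmetic into the single inequality $\sum_{w_j>1}w_j\le k\ell$ and then attempt to pack using only that inequality together with $w_j\le\ell$, and those two facts alone are insufficient. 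To close the gap, the per-prime bound must be used inside the packing argument itself --- for instance by an induction that peels off one prime $p$ at a time, distributes the at most $k$ weights divisible by $p$ one per bin, and controls the interaction with the weights coprime to $p$ (which is the delicate point: done naively a bin may receive up to $\ell+\ell/p^{a}$). This is where the real content of \cite[Theorem 9]{Prz11} lies, and as written your proposal does not supply it.
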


\begin{remark}
\label{remark: nef all 1}
Denote the Fano index of a variety $X$ by $d_0=\sum w_i-\sum d_j$.
The proof of Proposition~\ref{proposition: nef for cartier} shows that at least $d_0+1$ weights are equal to 1.
This bound is strict: the example is a hypersurface of degree 6 in $\PP(1,1,2,3)$.
\end{remark}

Conjecture~\ref{conjecture: nef for weighted} holds not only for complete intersections of Cartier divisors.

\begin{theorem}[{\cite[Theorem 1.3]{PSh17b}}]
\label{theorem: nef for codim 2}
Smooth well-formed Fano complete intersection of codimension $2$ admits a very good nef-partition.
\end{theorem}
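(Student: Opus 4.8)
The plan is to reformulate the existence of a very good nef-partition as a purely combinatorial subset-sum problem on the weights, and then to establish the latter using the geometric constraints. Write $X=X_1\cap X_2\subset \PP=\PP(w_0,\ldots,w_N)$ with $\deg X_i=d_i$ and Fano index $d_0=\sum_{i=0}^N w_i-d_1-d_2\geq 1$. A very good nef-partition is a decomposition $[0,N]=E_0\sqcup E_1\sqcup E_2$ with $\sum_{j\in E_i}w_j=d_i$ for $i=1,2$ and $w_j=1$ for every $j\in E_0$. Since the total weight equals $d_0+d_1+d_2$, the requirement $w_j=1$ on $E_0$ is equivalent to asking that every weight exceeding $1$ lie in $E_1\cup E_2$. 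First I would record the following reduction: letting $n_1$ be the number of weights equal to $1$ and $\Sigma_{>1}=\sum_{w_j>1}w_j=d_0+d_1+d_2-n_1$, a very good nef-partition exists \emph{if and only if} the weights exceeding $1$ can be split into two groups of sums $s_1\leq d_1$ and $s_2\leq d_2$. Indeed, given such a split one assigns $d_1-s_1$, $d_2-s_2$, and $d_0$ of the weight-$1$ indices to $E_1$, $E_2$, $E_0$ respectively; these three non-negative integers sum to exactly $n_1$, so the distribution is always realizable. Thus the whole statement reduces to the subset-sum feasibility condition (B): there is a subset $T$ of the weights $>1$ with $\mathrm{sum}(T)\in[\max(0,\Sigma_{>1}-d_2),\min(\Sigma_{>1},d_1)]$.

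Next I would extract the structural constraints on the weights. Assume $d_1\leq d_2$. Smoothness of $X$ means that the greatest common divisor of any three weights is $1$ (the codimension-$2$ case of the criterion recalled before Definition~\ref{definition: good nef}); equivalently, every prime divides at most two of the $w_j$. Quasi-smoothness of the general complete intersection forces, for each variable $x_j$ of weight $w_j>1$, a monomial of degree $d_1$ or $d_2$ that is a pure power of $x_j$, or such a power times one further variable; in particular one shows $w_j\leq d_2$ for all $j$, and a weight with $w_j>d_1$ is necessarily \emph{matched} to the degree $d_2$. Call a weight \emph{medium} if $d_1<w_j\leq d_2$ and \emph{small} if $1<w_j\leq d_1$. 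Medium weights must be assigned to the group with budget $d_2$, so the crux of (B) is to control their total.

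With this in hand I would carry out the construction. When all divisors are Cartier the result already follows from Proposition~\ref{proposition: nef for cartier}, so one may assume the non-Cartier configurations. Each medium weight is matched to $d_2$ and hence appears in a degree-$d_2$ monomial of shape $x_j^{a}$ or $x_j^{a}x_k$; combined with the fact that each prime divides at most two weights, this bounds both the number of medium weights and the arithmetic relations among them, and yields the inequality $\sum_{\text{medium}}w_j\leq d_2$. Assigning all medium weights to the $d_2$-group and then greedily distributing the small weights --- always placing the next one in the group currently furthest below its budget --- keeps both partial sums below their respective budgets, because each small weight is at most $d_1\leq d_2$ and the running deficits never drop below the size of the weight being placed. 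This produces the required split $s_1\leq d_1$, $s_2\leq d_2$, verifying (B) and hence the theorem.

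The main obstacle is precisely the inequality $\sum_{\text{medium}}w_j\leq d_2$ together with the simultaneous feasibility of the greedy step: one must rule out configurations in which two weights both exceed $d_1$ and together exceed $d_2$, or in which, after placing the mediums, the small weights cannot be balanced within the two remaining budgets. This is where the delicate interaction between the monomial (quasi-smoothness) conditions and the ``each prime divides at most two weights'' constraint enters, and I expect it to require a careful classification of the possible multisets of large weights --- in particular treating separately the weights sharing a common prime factor, which by smoothness occur only in pairs. Once this classification is complete, condition (B) holds in every case and the very good nef-partition is obtained.
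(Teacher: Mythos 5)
Your reduction of the theorem to a subset-sum feasibility statement is correct and is essentially the right way to set the problem up: a very good nef-partition exists if and only if the weights exceeding $1$ can be split into two groups with sums $s_1\le d_1$ and $s_2\le d_2$, the weight-one indices then filling the deficits and populating $E_0$ (which is nonempty since $d_0\ge 1$). You also correctly identify the two geometric inputs: any three weights are coprime (so each prime divides at most two of the $w_j$ --- this is exactly the structure the paper encodes in its ``weighted projective graphs''), and the quasi-smoothness monomial conditions bounding the large weights. In spirit this matches the argument of \cite{PSh17b} that the paper sketches.

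However, the decisive step is missing. Everything hinges on two claims you state but do not prove: that $\sum_{\mathrm{medium}}w_j\le d_2$, and that the greedy placement of the remaining weights never overflows a budget. Neither follows from the bookkeeping you give. Your condition (B) already presupposes $\Sigma_{>1}\le d_1+d_2$, i.e.\ that the number of weight-one indices is at least $d_0$; this is a nontrivial assertion in the non-Cartier case (the analogue of Remark~\ref{remark: nef all 1}) and is nowhere justified. More seriously, the greedy argument is false without the arithmetic input: with budgets $d_1=5$, $d_2=10$ and weights $\{3,3,3,3,3\}$ the total is exactly $d_1+d_2$ yet no subset sums to $5$, so no valid split exists --- such configurations are excluded only because $3$ would then divide more than two weights. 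So the coprimality and monomial constraints are not a safety net for a greedy procedure; they are the entire content of the proof, and the ``careful classification of the possible multisets of large weights'' that you defer is precisely the hard combinatorial analysis (carried out in \cite{PSh17b} via the weighted projective graphs, including the handling of exceptional configurations) that constitutes the proof. As written, the proposal is a correct reformulation plus a plan, not a proof.
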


\begin{proof}[Idea of the proof]
One need to study the so called~\emph{weighted projective graphs}, that is graphs whose vertices are marked by weights of the weighted projective space, and edges connect those and only those vertices whose markings have non-trivial common divisor.
\end{proof}

If $X$ is a smooth well formed Calabi--Yau weighted complete intersection
of codimension~$1$ or~$2$, we can argue in the same way as in the proof of Proposition~\ref{proposition: nef for cartier} and Theorem~\ref{theorem: nef for codim 2}
to show that there exists a nef partition for $X$, for which we necessarily have $E_0=\varnothing$ in
the notation of Definition~\ref{definition: good nef}.
Constructing the dual nef partition we obtain a Calabi--Yau variety $Y$ that is mirror dual to $X$, see~\cite{BB96}. In the same paper
it is proved that the Hodge-theoretic mirror symmetry
holds for $X$ and $Y$. That is, for a given variety $V$ one can define \emph{string Hodge numbers} $h_{st}^{p,q}(V)$
as Hodge numbers of a crepant resolution of $V$ if such resolution exists.
Then, for $n=\dim X=\dim Y$, one has $h_{st}^{p,q}(X)=h_{st}^{n-p,q}(Y)$ provided that the ambient toric variety (weighted projective
space in our case) is Gorenstein.

Finally, we would like to point out a possible approach to a proof
of Conjecture~\ref{conjecture: nef for weighted} along the lines of Theorem~\ref{theorem: nef for codim 2}.
If $X$ is a smooth well formed Fano weighted complete intersection
of codimension $3$ or higher in a weighted projective space $\P=\P(w_0,\ldots,w_N)$,
it is possible that some three weights $w_{i_1}$, $w_{i_2}$, and $w_{i_3}$
are not coprime. Thus a weighted projective graph constructed in the proof of
Theorem~\ref{theorem: nef for codim 2} does not provide an adequate
description of singularities of the weighted projective space~$\P$.
An obvious way to (try to) cope with this
is to replace a graph by a simplicial complex that would remember the greatest common divisors
of arbitrary subsets of weights. However, this leads to combinatorial difficulties
that we cannot overcome at the moment. Except for the most straightforward ones,
like the effects on \emph{weak vertices} (which would be not that easy to control) and
possibly larger number of exceptions,
there is also a less obvious one (which is in fact easy to deal with). Namely, we
need a finer information about weights and degrees than that provided by
\cite[Lemma~2.15]{PrzyalkowskiShramov-Weighted}.

\begin{example}
Let $X$ be a weighted complete intersection of hypersurfaces of degrees
$2$, $3$, $5$, and $30$ in $\P(1^k,6,10,15)$, where $1^k$ stands for $1$ repeated $k$ times.
Then $X$ is a well formed
Fano weighted complete intersection
provided that $k$ is large and~$X$ is general. Note that the conclusion of
\cite[Lemma~2.15]{PrzyalkowskiShramov-Weighted} holds for $X$. However, it is easy to see
that~$X$ is not smooth. Moreover, there is no nef partition for~$X$.
\end{example}

In any case, it is easy to see that the actual information
one can deduce from the fact that a weighted complete intersection is
smooth is much stronger than that provided by \cite[Lemma~2.15]{PrzyalkowskiShramov-Weighted}.
We also expect that combinatorial difficulties that one has to face on
the way to the proof of Conjecture~\ref{conjecture: nef for weighted} proposed in the proof of Theorem~\ref{theorem: nef for codim 2} are possible
to overcome.

\subsection{Weak Landau--Ginzburg models}
\label{section: weak CI}

Consider a general complete intersection $Y\subset \PP[w_0,\ldots,w_N]$ of hypersurfaces of degrees $d_1,\ldots,d_k$.

Put
$$d_0=\sum w_i-\sum d_j.$$
Let $d_0\ge 1$, that is let $Y$ be Fano.
Assume the existence of nef-partition $E_0,\ldots,E_k$ for $Y$.
Let $a_{i,1},\ldots,a_{i,r_i}$ be variables that correspond to indices from $E_i$.
Givental's Landau--Ginzburg model for $Y$ and a trivial divisor is given in the torus
$$(\CC^*)^{N}\simeq \TTT[a_{i,j}], \ \ \ i\in [1,k], \ j\in [1,i_r],$$ by equations
\begin{equation}
\label{projspace}
a_{i,1}+\ldots+a_{i,r_i}=1, \ \ \ \ i\in [1,k], %\quad i\in [1,k],
\end{equation}
and the superpotential
$
w=\sum a_{0,j}.
$

The variety given by the equations~\eqref{projspace},
after the change of variables
$$x_{i,j}=\frac{a_{i,j}}{\sum_{s} a_{i,s}},\ a_{i,r_i}=1,\ \ \ i\in [1,k],$$
is birational to the torus
$$(\CC^*)^m\simeq \TTT[x_{i,j}], \quad i\in[0,k], j\in [1,r_i-1].$$

The superpotential $w$ in the new variables is given by the Laurent polynomial
\begin{equation}
\label{formula: CI LG}
f_{Y}=\frac{\prod_{i=1}^k(x_{i,1}+\ldots+x_{i,r_i-1}+1)^{d_i}}{\prod_{i=0}^k \prod_{j=1}^{d_i-1} x_{i,j}}+x_{0,1}+\ldots+x_{0,r_0-1}.
\end{equation}

Formula~\eqref{particular-Quantum-Lefschetz} enables one to easily find the constant term of regularized  $I$-series for $Y$
and to compare it with constant terms series for $f_Y$.
The formula for this series can be found easily combinatorially. One can check that the period condition holds for $f_Y$,
that is it is a weak Landau--Ginzburg model for $Y$.
However one can prove that the series coincide with Givental's integral.

\begin{proposition}[{see~\cite[Proposition 10.4]{PSh17}}]
\label{proposition:periods-for-proj-space}
The following holds:
$$I_Y=\int\limits_{
\substack{|x_{i,j}|=\varepsilon_{i,j} %\\ |y_s|=\varepsilon_s
}} \frac{\Omega(x_{0,1}, \ldots, x_{k,d_k-1})}{1-tf_Y}.$$
\end{proposition}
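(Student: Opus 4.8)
The statement to prove, Proposition~\ref{proposition:periods-for-proj-space}, asserts that the Givental integral $I_Y$ equals the main period $I_{f_Y}$ of the Laurent polynomial $f_Y$ obtained after the explicit birational change of variables. The plan is to track the integral~\eqref{eq:Givental-integral} through precisely the monomial and birational changes of variables described in Section~\ref{section: complete Givental}, and to verify that at each stage the integrand and the cycle transform in the way required by the residue formulas already established in the excerpt.

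First I would write down Givental's integral~\eqref{eq:Givental-integral} for the weighted projective space situation, specializing to $D=0$. Here the variety $X$ is (a resolution of) $\PP(w_0,\ldots,w_N)$, so $\rho=1$ and the relation $R_1$ together with the nef-partition equations~\eqref{projspace} cut out $LG_0(Y)$. The first step is to use the monomial change of variables splitting $\mathcal D\simeq \Pic(X)^\vee\oplus\mathcal N$, getting rid of the single variable $y_1$ by taking the residue along $\{y_1=0\}$; this is exactly the passage from~\eqref{eq:integral xy} to the subsequent displayed integral, justified by the residue identity $\frac{1}{2\pi i}\int_\sigma \frac{dU}{U}\wedge\Omega_0=\int_{\sigma'}\Omega_0|_{U=0}$ of~\cite{ATY85} quoted in the text. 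Then I would perform the affine change $x_{i,j}=a_{i,j}/\sum_s a_{i,s}$, $a_{i,r_i}=1$, which realizes the birational isomorphism $LG_0(Y)\dashrightarrow (\CC^*)^N$ and transforms the superpotential $w=\sum a_{0,j}$ into the Laurent polynomial $f_Y$ of~\eqref{formula: CI LG}. The key bookkeeping is that the standard logarithmic form $\Omega(u_1,\ldots,u_{N+\rho})$ is invariant (up to sign, see Remark~\ref{remark:Givental-integral sign}) under monomial substitutions of the form $u_i\mapsto u_i^{\pm 1}U_i$ with $U_i$ independent of $u_i$, which is the identity $\Omega(u_1,\ldots,u_i^{\pm1}U_i,\ldots)=\pm\Omega(u_1,\ldots,u_i,\ldots)$ recorded just before~\eqref{eq:integral xy}.

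Having identified the integrand, I would confirm that after eliminating the remaining constraints~\eqref{projspace} by residues the denominator collapses from $\prod_{j=0}^k(1-\sum_{s\in E_j}u_s)$ to the single factor $1-tf_Y$, matching the form of~\eqref{eq:restricted integral} with all $\gamma_i=1$ (the $D=0$ case). Concretely, the $k$ partition equations are absorbed into the affine normalization $a_{i,r_i}=1$, so that the only surviving pole in the radial directions comes from the superpotential, and introducing the parameter $t$ by scaling $u_i\mapsto tu_i$ for $i\in E_0$ produces precisely the factor $\frac{1}{1-tf_Y}$. Then the integral becomes $\int_{\delta_1}\Omega(x_{0,1},\ldots,x_{k,d_k-1})/(1-tf_Y)$ over a cycle homologous to the torus $\{|x_{i,j}|=\varepsilon_{i,j}\}$; by definition~\ref{definition: main integral} this is exactly $I_{f_Y}$, and by Theorem~\ref{theorem: Givental I-series toric} the left-hand side is $\widetilde I^Y_0=I_Y$.

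The main obstacle I anticipate is the careful control of the integration cycle under the successive changes of variables and residue operations. The formula of~\cite{ATY85} requires that the form $\Omega_0$ have no pole along the divisor $\{U=0\}$ one is restricting to, and that both the ambient and residual integrals be well defined; verifying this hypothesis at each residue step (and that the relevant cycle $\sigma'\subset\{U=0\}$ is the correct torus with suitably small radii $\varepsilon_{i,j}$) is the delicate point, since the birational map is not biregular and one must ensure the cycle is pushed to one homologous to $\delta_1^0=\{|x_i|=\varepsilon_i\}$ and avoids the indeterminacy locus. The algebraic matching of integrands is essentially forced by the construction, so the substance of the proof lies in this homological tracking; once it is in place, the equality $I_Y=I_{f_Y}$ follows, and comparing Taylor coefficients in $t$ reproduces the constant-terms description via Remark~\ref{remark: Picard--Fuchs}.
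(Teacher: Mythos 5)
Your proposal is correct and follows essentially the same route as the paper, whose own proof is the one-line indication ``use changes of variables and the Residue Theorem'': you track Givental's integral through the monomial splitting $\mathcal D\simeq\Pic(X)^\vee\oplus\mathcal N$, take residues along the relation and nef-partition factors via the identity from~\cite{ATY85}, and identify the result with the main period of $f_Y$ over a cycle homologous to $\{|x_{i,j}|=\varepsilon_{i,j}\}$. You also correctly isolate the only genuinely delicate point, namely the homological control of the cycle under the residue steps and the birational change of variables.
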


\begin{proof}[Idea of the proof]
Use changes of variables and the Residue Theorem.
\end{proof}

Thus smooth complete intersections having a good nef-partition have a weak Landau--Ginzburg model as well.

%\begin{corollary}
%Let $X$ be a complete intersection of Cartier divisors (for instance, a smooth hypersurface) in a weighted projective space
%or a smooth complete intersection of codimension $2$. By Proposition~\ref{proposition: nef for cartier} and Theorem~\ref{theorem: nef for codim 2}, the variety $X$ has a very good nef-partition. Thus $X$ has a weak Landau--Ginzburg model.
%\end{corollary}

\begin{remark}
It seems to be natural to consider Givental's Landau--Ginzburg models for quasi-smooth Fano complete intersections.
However even quasi-smooth Cartier hypersurface does not always admit such a model.
The example is a hypersurface of degree 30 in $\PP(1, 6, 10, 15)$.
Moreover, even if such a hypersurface has a Givental type Landau--Ginzburg, it's now always presentable by a weak Landau--Ginzburg model
as above. An example is the hypersurface of degree 30 in $\PP(1,1,1,1,1,6,10,15)$.
\end{remark}

\subsection{Calabi--Yau compactifications}
\label{section: Calabi--Yau CI}
The method of constructing log Calabi--Yau compactifications applied in Theorem~\ref{theorem: Minkowski CY} can be generalized to higher dimensions. That is, this can be done if coefficients of weak Landau--Ginzburg models of Givental type guarantee that
the base locus of the pencil of hypersurfaces in a toric variety we compactify in is a union of components corresponding to linear sections.
These components can be singular in the case of complete intersections, however the singularities ``come from the ambient space''
and can be resolved under a crepant resolution of the toric variety we compactify in.
This proves that the Calabi--Yau principle holds for weighted complete intersections.
However this works only if the Newton polytope of the weak Landau--Ginzburg model is reflexive.
This always holds for usual complete intersections but rarely holds for weighted ones.

Consider the matrix
$$
M_{d_1,\ldots,d_k;\iY}
=\left(%
\begin{array}{rrrr|r|rrrr|rrr}
  \iY & 0 & \ldots & 0 & \ldots & 0 & 0 & \ldots & 0 & -1 & \ldots & -1 \\
  0 & \iY & \ldots & 0 & \ldots & 0 & 0 & \ldots & 0 & -1 & \ldots & -1 \\
  \ldots & \ldots & \ldots & \ldots & \ldots & \ldots & \ldots & \ldots & \ldots & \ldots & \ldots & \ldots\\
  0 & 0 & \ldots & \iY & \ldots & 0 & 0 & \ldots & 0 & -1 & \ldots & -1 \\
  -\iY & -\iY & \ldots & -\iY & \ldots & 0 & 0 & \ldots & 0 & -1 & \ldots & -1 \\
  \hline
  \ldots & \ldots & \ldots & \ldots & \ldots & \ldots & \ldots & \ldots & \ldots & \ldots & \ldots & \ldots\\
  \hline
  0 & 0 & \ldots & 0 & \ldots & \iY & 0 & \ldots & 0 & -1 & \ldots & -1 \\
  0 & 0 & \ldots & 0 & \ldots & 0 & \iY & \ldots & 0 & -1 & \ldots & -1 \\
  \ldots & \ldots & \ldots & \ldots & \ldots & \ldots & \ldots & \ldots & \ldots & \ldots & \ldots & \ldots\\
  0 & 0 & \ldots & 0 & \ldots & 0 & 0 & \ldots & \iY & -1 & \ldots & -1 \\
  0 & 0 & \ldots & 0 & \ldots & -\iY & -\iY & \ldots & -\iY& -1 & \ldots & -1 \\
\hline
  0 & 0 & \ldots & 0 & \ldots & 0 & 0 & \ldots & 0 & \iY-1 & \ldots & -1 \\
  \ldots & \ldots & \ldots & \ldots & \ldots & \ldots & \ldots & \ldots & \ldots & \ldots & \ldots & \ldots \\
  0 & 0 & \ldots & 0 & \ldots & 0 & 0 & \ldots & 0 & -1 & \ldots & \iY-1 \\
\end{array}%
\right),
$$
depending on positive integer numbers $d_1,\ldots,d_k,\iY$,
which is formed from $k$ blocks of sizes $(d_i-1)\times d_i$ and one last block of size $\iX\times\iX$.
Define $k_{d_1,\ldots,d_k;\iY}$ as the number that is less by one than the number of integral points in the convex hull of rays of rows of the matrix.

\begin{theorem}[{\cite[Theorem 1]{Prz18}}]
\label{theorem: CY for CI}
Let $X\subset \PP^N$ be a Fano complete intersection of hypersurfaces of degrees $d_1,\ldots,d_k$.
Let $\iX=N+1-\sum d_i$. Let $f_X$ be a toric Landau--Ginzburg model of Givental's type for $X$.
Then $f_X$ admits a log Calabi--Yau compactification $f_X\colon Z\to \PP^1$ such that $f_X^{-1}(\infty)$
is a reduced divisor, which is a union of smooth rational varieties.
It consists of $k_{d_1,\ldots,d_k;\iX}$ components
and combinatorially it is given by a triangulation of a sphere.
\end{theorem}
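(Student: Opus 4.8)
The plan is to follow the compactification procedure of Theorem~\ref{theorem: Minkowski CY} and Proposition~\ref{proposition:CY hyperelliptic-CY}, carried out now in dimension $n=N-k=\dim X$. First I would start from the Givental-type Laurent polynomial $f_X$ of formula~\eqref{formula: CI LG}, clear the denominator $\prod x_{i,j}$ in the pencil $\{f_X=\lambda\}$, and compactify the torus $(\CC^*)^{n}$ into the Gorenstein toric Fano variety $\bar P$ encoded by the matrix $M_{d_1,\ldots,d_k;\iX}$ — concretely, a product of (weighted) projective spaces whose multidegree is dictated by the factors $(x_{i,1}+\ldots+x_{i,d_i-1}+1)^{d_i}$ and by the linear terms in the variables of $E_0$. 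The crucial structural input here is that, for complete intersections in the \emph{ordinary} projective space $\PP^N$, the Newton polytope $N(f_X)$ is reflexive (this is exactly the hypothesis that fails for weighted projective spaces and is the reason the theorem is stated over $\PP^N$); hence $\bar P$ is Gorenstein Fano and the compactified fibres are genuine (singular) anticanonical Calabi--Yau hypersurfaces in $\bar P$.

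Next I would analyse the base locus. As in Theorem~\ref{theorem: Minkowski CY}, the pencil $\{\lambda_0 f_X=\lambda_1\}$ is generated by a general member and the toric boundary divisor $D$ of $\bar P$, which is the fibre over $\infty$; by Fact~\ref{fact: toric 4} the nonvanishing of the coefficients of $f_X$ at the vertices of $N(f_X)$ guarantees that the base locus meets no torus-invariant stratum of $\bar P$. The multinomial structure of the factors $(x_{i,1}+\ldots+x_{i,d_i-1}+1)^{d_i}$ should force the restriction of $f_X$ to each toric stratum to factor into linear pieces, so that the base locus is a union of rational linear sections — the higher-dimensional analogue of Lemma~\ref{lemma:restrictions Minkowski}. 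Since the remark following Lemma~\ref{lemma: 3dim smoothness} warns that maximal triangulations need not be smooth in dimension $\geq 4$, I would \emph{not} rely on that lemma; instead I would use that the ambient $\bar P$ is either smooth or crepantly resolvable (its singularities ``come from the ambient space''), so that after a crepant resolution the base components become smooth. I would then iterate the blow-up of Proposition~\ref{proposition:CY compactification}, now in dimension $n$: each centre is a smooth subvariety of multiplicity one inside the reduced boundary, so every blow-up preserves smoothness of the total space and the identity $-K=(\text{fibre over }\infty)$. After finitely many steps this yields a smooth total space $Z$ with reduced $f_X^{-1}(\infty)=-K_Z$.

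It remains to describe and count the components of $f_X^{-1}(\infty)$. Because the base locus avoids all toric strata, the fibre over infinity is the strict transform of $D$, whose components are the toric prime divisors of the resolved ambient variety, i.e. smooth rational toric varieties indexed by the rays of the maximal triangulation of $\conv(\text{rows of }M_{d_1,\ldots,d_k;\iX})$. These rays are precisely the lattice points on the boundary of that polytope, and their dual incidence complex is the induced triangulation of the boundary sphere $\partial\,\conv(\text{rows of }M_{d_1,\ldots,d_k;\iX})\cong S^{n-1}$, giving the asserted triangulation of a sphere (exactly as in Corollary~\ref{proposition: fibers over infinity}). Since the origin is the unique interior lattice point, the number of boundary points equals $|\conv(\text{rows of }M_{d_1,\ldots,d_k;\iX})\cap\ZZ^{n}|-1$, which is $k_{d_1,\ldots,d_k;\iX}$ by definition.

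The main obstacle is the base-locus analysis of the second paragraph. In the surface and threefold cases the base components are smooth rational curves and Proposition~\ref{proposition:CY compactification} only has to blow up curves; for complete intersections the base components are higher-dimensional rational varieties that are genuinely singular before resolving $\bar P$, and one must verify both that resolving the ambient variety smooths them and that the successive blow-up centres stay smooth and meet the reduced boundary with multiplicity one throughout the process. Establishing this transversality by exploiting the exact multinomial coefficients of $f_X$ — and doing so without the crutch of Lemma~\ref{lemma: 3dim smoothness} — is the crux; by contrast, the reflexivity of $N(f_X)$ and the purely combinatorial identification of the component count with $|\conv(\text{rows of }M_{d_1,\ldots,d_k;\iX})\cap\ZZ^{n}|-1$ are comparatively routine once the geometry of the base locus is under control.
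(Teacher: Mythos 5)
Your proposal follows the same route the paper takes (and defers to~\cite{Prz18} for): compactify the pencil in the Gorenstein toric variety whose fan polytope is the dual of the reflexive Newton polytope of $f_X$, observe that the base locus is a union of linear sections whose singularities come from the ambient space and are removed by a crepant resolution, iterate the blow-up procedure of Proposition~\ref{proposition:CY compactification}, and count the components of $f_X^{-1}(\infty)$ as the boundary lattice points of that dual polytope, whose incidence complex triangulates $S^{\dim X-1}$. The difficulty you single out --- controlling the base locus and the smoothness and multiplicity of the successive blow-up centres in dimension $\ge 4$, where Lemma~\ref{lemma: 3dim smoothness} is unavailable --- is precisely the part the survey leaves to the reference, so your reconstruction matches the paper's argument.
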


\begin{proof}[Idea of the proof]
Similar to the proof of Theorem~\ref{theorem: Minkowski CY}.
\end{proof}

\begin{problem}
Find a formula for $k_{d_1,\ldots,d_k;\iX}$ in terms of $d_1,\ldots,d_k,\iX$.
\end{problem}

\begin{question}
By Corollary~\ref{proposition: fibers over infinity} and Theorem~\ref{theorem: CY for CI}, fibers over infinity
of log Calabi--Yau compactifications of toric Landau--Ginzburg models for Fano threefolds and complete intersections
are reduced and combinatorially are given by triangulations of spheres.
Does this hold in a general case?
\end{question}

\begin{remark}[cf. Remark~\ref{remark: generic 3-Laurent}]
Let $T$ be a smooth toric variety with $F(T)=\Delta$.
Let $f$ be a \emph{general} Laurent polynomial with $N(f)=\Delta$. The Laurent polynomial $f$ is a toric Landau--Ginzburg model for a pair $(T,D)$, where $D$ is a general divisor on $\widetilde T$.
Indeed, the period condition for it is satisfied by~\cite{Gi97b}.
Following the compactification procedure from Theorem~\ref{theorem: CY for CI}, one can see that the base locus $B$ is a union of smooth transversally intersecting subvarieties of codimension 2 (not necessary rational). This means that in the same way as above $f$
satisfies the Calabi--Yau condition. Finally the toric condition holds for $f$ tautologically. Thus $f$ is a toric Landau--Ginzburg model for $(T,D)$.
\end{remark}

In~\cite{PSh15a} Calabi--Yau compactifications for Fano complete intersections in usual projective spaces are constructed in a way
different from the one in Theorem~\ref{theorem: CY for CI}.
The method used in loc. cit. enables one to follow the number of reducible fibers components of the compactification.

\begin{theorem}[{\cite[Theorem 1.2]{PSh15a}}]
\label{theorem: CI components}
Conjecture~\ref{conjecture: Hodge-components} holds for Fano complete intersections.
\end{theorem}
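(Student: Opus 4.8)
The plan is to compute both sides of the desired equality $h^{1,N-1}(X)=k_Y$ separately and then to match them, where $X\subset\PP^{N+k}$ is a smooth Fano complete intersection of multidegree $(d_1,\dots,d_k)$ and dimension $N\geqslant 3$, and $Y$ is its Landau--Ginzburg model. For the left-hand side I would first invoke the Lefschetz hyperplane theorem: for $N\geqslant 3$ the class $H^{1,N-1}(\PP^{N+k})$ vanishes (the only nonzero Hodge numbers of the ambient space sit on the diagonal), so $h^{1,N-1}(X)$ receives no contribution from the ambient space and coincides with the primitive Hodge number $h^{1,N-1}_{\mathrm{prim}}(X)$ of the middle cohomology $H^N(X)$. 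This primitive number is governed by the classical generating function for Hodge numbers of complete intersections; extracting the coefficient of the relevant monomial produces an explicit expression $P(d_1,\dots,d_k)$ depending only on the multidegree. This reduces the left-hand side to a combinatorial quantity.

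For the right-hand side I would start from the Givental-type weak Landau--Ginzburg model $f_X$ given by formula~\eqref{formula: CI LG} and build a Calabi--Yau compactification specifically designed to keep every finite fiber and its components visible, rather than the toric compactification of Theorem~\ref{theorem: CY for CI}. Concretely, I would compactify the pencil $\{f_X=\lambda\}$ to a family of anticanonical hypersurfaces, crepantly resolve the ambient singularities, and then resolve the base locus by successive blow-ups of its smooth components, tracking at each stage which exceptional divisors fall into fibers and which become horizontal. As in the component-counting identity used in the proof of Theorem~\ref{theotem:KKP conjecture-3}, the number of irreducible components of a fiber over $\lambda$ equals the number of components of the naive compactified member plus the defects of the base loci lying over $\lambda$. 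Summing the excess $\rho_s-1$ over the finitely many critical values $s\in\Sigma\subset\CC$ then yields $k_Y$ as a second explicit expression in $(d_1,\dots,d_k)$; the factored shape of the numerator $\prod_i(x_{i,1}+\dots+x_{i,r_i-1}+1)^{d_i}$ dictates exactly along which loci the finite fibers degenerate, so that the whole count localizes on the base locus cut out by the linear forms $x_{i,1}+\dots+x_{i,r_i-1}+1$.

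Finally I would verify that the two expressions coincide, which completes the proof. The hard part will be the bookkeeping in the second step: determining, for each finite critical value, precisely how many new components the fiber acquires under the base-locus resolution. This requires controlling the singularities of the compactified members, separating the isolated ones (which contribute nothing) from those supported along base curves, and, in the relevant two-dimensional slices, invoking the du~Val intersection computations of Propositions~\ref{proposition:du-Val-intersection} and~\ref{proposition:du-Val-self-intersection} together with their higher-dimensional analogues. Since the ambient singularities are resolved crepantly and do not create fiber components, the analysis reduces to the base locus alone; organizing the resulting count into a closed formula that matches $h^{1,N-1}_{\mathrm{prim}}(X)$ is the main technical obstacle, and one expects that after the dust settles it collapses to a single combinatorial identity in the $d_i$.
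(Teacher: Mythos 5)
Your overall strategy --- compute $h^{1,N-1}(X)$ from the classical (Hirzebruch) generating function for Hodge numbers of complete intersections, compute $k_Y$ from an explicit Calabi--Yau compactification of the Givental-type model~\eqref{formula: CI LG}, and match the two closed formulas --- is indeed the strategy of the cited proof in~\cite{PSh15a}, and your treatment of the left-hand side is fine (for $N\geq 3$ the type $(1,N-1)$ lies in the primitive middle cohomology, so the Lefschetz reduction is correct). The problem is your plan for the right-hand side. You propose to import the defect bookkeeping from the proof of Theorem~\ref{theotem:KKP conjecture-3}: successive blow-ups of the base locus, with the contribution of each center controlled by Propositions~\ref{proposition:du-Val-intersection} and~\ref{proposition:du-Val-self-intersection} ``together with their higher-dimensional analogues.'' Those propositions are statements about du~Val \emph{surface} singularities and local intersection numbers of curves on surfaces; they govern the threefold case precisely because there the fibers are surfaces and the base locus is a configuration of curves. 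For a complete intersection of dimension $N$ the fibers are $(N-1)$-folds and the base locus is a union of codimension-two subvarieties of the ambient space; no higher-dimensional analogue of the du~Val analysis is available in this paper or in~\cite{CP18}, and the ``defect of a point/curve'' formalism does not carry over. So the step on which your whole component count rests is exactly the one you leave open, and it would not go through as stated.

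What~\cite{PSh15a} does instead --- and what the present paper alludes to when it says the compactification there is ``different from the one in Theorem~\ref{theorem: CY for CI}'' and ``enables one to follow the number of reducible fibers components'' --- is to build the compactification by an explicit chain of birational modifications adapted to the factored shape of the numerator $\prod_i(x_{i,1}+\dots+x_{i,r_i-1}+1)^{d_i}$, so that the unique reducible fiber (there is only one, consistent with $\rho(X)=1$) and its irreducible components are enumerated directly and combinatorially in terms of the multidegree, with no local singularity analysis of du~Val type required. If you want to salvage your route, you would need to replace the du~Val step by such a direct enumeration; as written, the proposal identifies the correct target identity but does not contain a viable method for establishing the $k_Y$ side.
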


\subsection{Toric Landau--Ginzburg models}
\label{section: toric LG CI}
The toric variety given by a polytope dual to a Newton polytope of toric Landau--Ginzburg model
enables one to show that Landau--Ginzburg models for weighted complete intersections satisfy the toric
condition. Here, unlike in Theorem~\ref{theorem: CY for CI}, we do not need integrality
of the polytope. Recall that Facts~\ref{fact: toric 1} and~\ref{fact: toric 3} enable one
to define the toric variety whose fan polytope is the Newton polytope for the polynomial~\eqref{formula: CI LG} by equations.
Recall also that these equations are homogenous relations on integral points of the Newton polytope.
The shape of the polynomial shows that the polytope is given by ``triangles'', so the relations are of Veronese type.
In other words the toric degeneration corresponding to the polynomial~\eqref{formula: CI LG}
is the image by Veronese map of the complete intersection
$$
\left\{
  \begin{array}{l}
    z_{1,1}\cdot \ldots\cdot z_{1,r_1}=z_{0,1}^{d_1} \\
    \ldots \\
    z_{k,1}\cdot \ldots\cdot z_{k,r_k}=z_{0,1}^{d_k}
  \end{array}
\right.
$$
in $\PP[z_{i,j}]$, $i\in [0,k]$, $j\in [1,r_i]$, where weights of $z_{i,j}$ correspond to elements of $E_i$ and the weight
of $z_{0,1}$ is $1$.

Thus, the following theorem holds.

\begin{theorem}[{\cite[Theorem 2.2]{ILP13}}]\label{theorem: CI are toric LG}
    There exist a flat degeneration of $X$ to the toric variety $T_{N(f_X)}$.
\end{theorem}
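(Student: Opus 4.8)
The plan is to realise $T_{N(f_X)}$ explicitly as a binomial (Veronese-type) complete intersection and then to degenerate $X$ onto it inside the ambient weighted projective space. First I would read off the combinatorics of $\Delta=N(f_X)$ from the shape of the Laurent polynomial~\eqref{formula: CI LG}: since $f_X$ is a product of powers $(x_{i,1}+\ldots+x_{i,r_i-1}+1)^{d_i}$ divided by the monomial $\prod_{i,j}x_{i,j}$, its monomials organise into $k$ groups, one for each factor $E_i$ of the nef-partition, together with the summands $x_{0,1},\ldots,x_{0,r_0-1}$ coming from $E_0$. By Facts~\ref{fact: toric 1} and~\ref{fact: toric 3}, the toric variety with fan polytope $\Delta$ can be written down by equations: in its anticanonical embedding the homogeneous coordinates $z_{i,j}$ are indexed by the lattice points of $\Delta$, and the variety is cut out by the binomials attached to the affine relations among those lattice points. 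The point I would verify is that for the polytope coming from~\eqref{formula: CI LG} — which is built out of simplices ("triangles") reflecting the products of linear forms — these relations are precisely the Veronese-type relations
\begin{equation*}
z_{1,1}\cdot\ldots\cdot z_{1,r_1}=z_{0,1}^{d_1},\quad \ldots,\quad z_{k,1}\cdot\ldots\cdot z_{k,r_k}=z_{0,1}^{d_k},
\end{equation*}
where the $z_{i,j}$, $j\in[1,r_i]$, correspond to the elements of $E_i$ and $z_{0,1}$ is a weight-$1$ coordinate of $E_0$ (such a coordinate exists because the nef-partition is good, Definition~\ref{definition: good nef}).

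Next I would produce the flat degeneration. Working in $\PP=\PP[z_{i,j}]$ with weights dictated by the nef-partition, choose weighted-homogeneous defining polynomials $g_1,\ldots,g_k$ of degrees $d_1,\ldots,d_k$ for $X$ and consider the one-parameter family
\begin{equation*}
g_i^{(s)}=s\,g_i+(1-s)\bigl(z_{i,1}\cdot\ldots\cdot z_{i,r_i}-z_{0,1}^{d_i}\bigr),\qquad s\in\Aff^1,
\end{equation*}
equivalently a weight ($\gm$-)degeneration extracting the indicated leading binomial from each $g_i$ (note this is a genuinely toric, not monomial, limit, unlike the $\init_\prec$-degenerations discussed before Theorem~\ref{theorem: triangulation-Ilten}). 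Each binomial $z_{i,1}\cdot\ldots\cdot z_{i,r_i}-z_{0,1}^{d_i}$ is weighted-homogeneous of degree $d_i=\sum_{j\in E_i}w_j$, so every fibre is a complete intersection of the fixed multidegree $(d_1,\ldots,d_k)$; the fibre over $s=1$ is $X$ and the fibre over $s=0$ is the binomial complete intersection $X_0$. All fibres therefore share one Hilbert polynomial, and a family of subschemes of $\PP$ with constant Hilbert polynomial over the integral base $\Aff^1$ is flat. Here I would check that the $k$ binomials form a regular sequence — this uses the disjointness of $E_1,\ldots,E_k$, which forces $X_0$ to have the expected codimension $k$ and hence to be pure of dimension $\dim X$, so that the multidegree, and thus the Hilbert polynomial, is indeed constant through $s=0$.

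It remains to identify the special fibre $X_0$ with $T_{N(f_X)}$. Intersecting $X_0$ with the big torus of $\PP$ yields the subtorus defined by the multiplicative relations $\prod_{j\in E_i}z_{i,j}=z_{0,1}^{d_i}$, which has dimension $N-k=\dim X$; hence $X_0$ is a toric variety. The monomial map whose components are the lattice points of $\Delta$ — i.e. the anticanonical (Veronese) map of $X_0$ — is a closed immersion carrying $X_0$ onto the variety cut out by the Veronese-type binomials of the first paragraph, that is onto $T_{N(f_X)}$ in its anticanonical embedding; since a Gorenstein Fano toric variety is recovered from its fan polytope, this gives an isomorphism $X_0\simeq T_{N(f_X)}$, and the family above is the desired flat degeneration. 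I expect this last step — confirming that the lattice points and affine relations of $N(f_X)$ reproduce exactly the Veronese complete intersection, that the Veronese image is the \emph{normal} variety $T_{N(f_X)}$, and that the rays of the fan of $X_0$ have convex hull $N(f_X)$ — to be the main obstacle; the degeneration and flatness are then routine bookkeeping with the nef-partition and the constant Hilbert polynomial (compare the proof of Theorem~\ref{theorem: CY for CI}).
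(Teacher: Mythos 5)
Your proposal follows the same route as the paper: the paper identifies $T_{N(f_X)}$ with the binomial (Veronese-type) complete intersection $\{z_{i,1}\cdots z_{i,r_i}=z_{0,1}^{d_i}\}$ in $\PP[z_{i,j}]$ via Facts~\ref{fact: toric 1} and~\ref{fact: toric 3}, and then degenerates the defining equations of $X$ to these binomials (as worked out explicitly in Example~\ref{ex:ci}). Your additional bookkeeping on flatness via the constant Hilbert polynomial and the regular-sequence check is exactly the detail the paper delegates to the cited reference, so the two arguments coincide in substance.
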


\begin{example}[The del Pezzo surface of degree 2]\label{ex:ci}
We now consider the example of del Pezzo surface of degree 2 and a description of its degeneration via generators and relations,
cf. Remark~\ref{remark: del Pezzo 1 and 2}. This is a hypersurface of degree 4 in $\PP(1,1,1,2)$. Its weak Landau--Ginzburg model is
$$
f_X=\frac{(x+y+1)^4}{xy}.
$$
The corresponding Newton polytope $\Delta_{f_{S_2}}$ has vertices equal to the columns of the matrix
$$
\left(
  \begin{array}{rrr}
3&-1&-1\\
-1&3&-1\\
  \end{array}
\right).
$$
The dual polytope $\nabla_{f_{S_2}}=\Delta_{f_{S_2}}^\vee$ thus has vertices equal to the columns of the matrix
$$
\left(
  \begin{array}{rrr}
	  1 & 0 & -1/2\\
	  0&1&-1/2\\
  \end{array}
\right).
$$
This is not a lattice polytope (so that the polygon $\Delta_{f_{S_2}}$ is not reflexive). However, its double dilation $\nabla^2_{f_{S_2}}=2\cdot \nabla_{f_{S_2}}$ is in fact integral.
The integral points of $\nabla$ are $u=(-1,-1)$ and $v_{ab}=(a,b)$ for $a,b\geq 0$, $a+b\leq 2$. These correspond to generators for the homogeneous coordinate ring of the toric degeneration $T$ in this (the doubleanticanonical) embedding.

Affine homogeneous relations among these lattice points correspond to binomial relations in the ideal of $T$. In this case, these relations are generated by five 2-Veronese type relations
\begin{align*}
	&v_{20}+v_{02}=2v_{11}, &v_{20}+v_{01}=v_{10}+v_{11},\\
	&v_{20}+v_{00}=2v_{10}, &v_{02}+v_{10}=v_{01}+v_{11},\\
	&v_{02}+v_{00}=2v_{01}
\end{align*}
together with the relation
$$
u+v_{11}=2v_{00}.
$$

On the other hand, consider the 2-Veronese embedding of $\{x_0x_1x_2=y_0^4\}\subset \PP(1,1,2,1)$.
In coordinates $z_{02}=x_0^2$, $z_{20}=x_1^2$, $w=x_2$, $z_{00}=y_0^2$,
$z_{11}=x_0x_1$, $z_{01}=x_0y_0$, $z_{10}=x_1y_0$, this hypersurface is given by the equation
$$
wz_{11}=z_{00}^2
$$
together with five 2-Veronese-type equations
\begin{align*}
	&z_{20}z_{02}=z_{11}^2, &z_{20}z_{01}=z_{10}z_{11},\\
	&z_{20}z_{00}=z_{10}^2, &z_{02}z_{10}=z_{01}z_{11},\\
	&z_{02}z_{00}=z_{01}^2.
\end{align*}
These correspond to the affine homogeneous relations above, so we can in fact realize our $T$ as the hypersurface $\{x_0x_1x_2=y_0^4\}\subset \PP(1,1,2,1)$.
Thus, by degenerating the equation defining $S_2$, we get a degeneration of the del Pezzo surface of degree 2 to $T$.
\end{example}

Proposition~\ref{proposition:periods-for-proj-space}, Theorem~\ref{theorem: CY for CI}, and Theorem~\ref{theorem: CI are toric LG} imply the following.

\begin{corollary}
\label{corollary: CI are toric LG}
Smooth Fano complete intersections have toric Landau--Ginzburg models.
\end{corollary}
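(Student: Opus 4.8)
The plan is to verify, for a smooth Fano complete intersection $X=X_1\cap\ldots\cap X_k\subset\PP^N$ of hypersurfaces of degrees $d_1,\ldots,d_k$, each of the three requirements in Definition~\ref{definition: toric LG} for the pair $(X,0)$. The candidate Laurent polynomial is the Givental-type weak Landau--Ginzburg model $f_X$ produced by formula~\eqref{formula: CI LG}. Such a polynomial is available because an ordinary projective space automatically carries a nef-partition: the splitting of $[0,N]$ into the blocks $E_1,\ldots,E_k$ cutting out $X_1,\ldots,X_k$ together with the complementary block $E_0$ is a very good nef-partition in the sense of Definition~\ref{definition: good nef}, since all weights equal $1$. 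Thus the construction of Section~\ref{section: complete Givental} applies verbatim and yields $f_X$.

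First I would establish the \textbf{period condition}. By construction $f_X$ is the superpotential of $LG(X)$ written in torus coordinates, and Proposition~\ref{proposition:periods-for-proj-space} identifies the main period $I_{f_X}(t)$ of Definition~\ref{definition: main integral} with the specialized Givental integral $I_X=I_{(X,0)}$. By Theorem~\ref{theorem: Givental I-series toric}, in its specialized form (cf.\ the discussion around~\eqref{eq:restricted integral}), this integral equals the constant term of the regularized $I$-series $\widetilde{I}^{X}_0=\widetilde{I}^{X,0}_0$. Hence $I_{f_X}=\widetilde{I}^{X,0}_0$, which is exactly the period condition.

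Next I would invoke the two remaining structural results directly. Theorem~\ref{theorem: CY for CI} provides a log Calabi--Yau compactification $f_X\colon Z\to\PP^1$ with $-K_Z=f_X^{-1}(\infty)$ reduced; removing the fiber over infinity yields a smooth Calabi--Yau total space $Y=Z\setminus f_X^{-1}(\infty)$, so the \textbf{Calabi--Yau condition} holds. Finally, Theorem~\ref{theorem: CI are toric LG} gives a flat degeneration of $X$ to the toric variety $T_{N(f_X)}$ whose fan polytope is $N(f_X)$, realized concretely as the Veronese image of a binomial complete intersection in a (weighted) projective space. This is precisely the \textbf{toric condition} $X\rightsquigarrow T_X$ with $F(T_X)=N(f_X)$. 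Assembling the three verified conditions shows that $f_X$ is a toric Landau--Ginzburg model for $X$, which is the assertion of the corollary.

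The main obstacle is not in combining these three inputs but in the scope of the Calabi--Yau step. Theorem~\ref{theorem: CY for CI} is proved only for complete intersections in \emph{ordinary} projective space, where the Newton polytope $N(f_X)$ is reflexive, so that one may compactify fibers inside $T^\vee=T_{N(f_X)^\vee}$ and resolve the base locus crepantly exactly as in Theorem~\ref{theorem: Minkowski CY}. For weighted complete intersections the relevant polytope is typically non-reflexive (see Example~\ref{ex:ci}), and the crepant resolution argument breaks down; this is precisely the gap flagged in the remark following Conjecture~\ref{conjecture: nef for weighted}. Consequently the clean statement proved here is for complete intersections in $\PP^N$, and the weighted case would have to be treated separately, conditional on the good behaviour of the Newton polytope in each instance.
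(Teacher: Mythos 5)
Your proof is correct and follows essentially the same route as the paper, which derives the corollary by combining Proposition~\ref{proposition:periods-for-proj-space} (period condition), Theorem~\ref{theorem: CY for CI} (Calabi--Yau condition), and Theorem~\ref{theorem: CI are toric LG} (toric condition) for the Givental-type Laurent polynomial~\eqref{formula: CI LG}. Your closing observation that the Calabi--Yau step is only available for complete intersections in ordinary projective space is also consistent with the paper's intended scope for this corollary.
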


\subsection{Boundness of complete intersections}
In the previous subsections we discussed toric Landau--Ginzburg models for weighted complete intersections.
One can easily bound the number of usual complete intersections of given dimension.
It turns out that the number of weighted complete intersections is also bounded.

That is, the following statement is a combination of~\cite[Theorem~1.1]{PrzyalkowskiShramov-Weighted}, \cite[Theorem~1.3]{ChenChenChen}, and~\cite[Corollary~5.3(i)]{PST17}.

\begin{theorem}[{see~\cite[Theorem 2.4]{PSh18}}]
\label{theorem:Fano-invariants}
Let $X$ be a smooth well formed Fano complete intersection in the weighted projective space~$\mbox{$\P=\PP(w_0,\ldots,w_N)$}$
which is not a section of a linear cone (in other words, all degrees defining the complete intersection
differ from weights of $\PP$).
Let $k$ be a codimension of $X$ in $\PP$, let $n=N-k=\dim (X)$, and let $l$ be a numbers of weights among $w_i$ that are equal to $1$.
Then

\begin{itemize}
\item[(i)] $w_N\le N$;

\item[(ii)] $k\le n$;

\item[(iii)] $l\ge k$.
\end{itemize}
\end{theorem}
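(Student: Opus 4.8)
The statement is an assembly of three independent results, so the plan is to treat the three inequalities in turn while basing all of them on one common combinatorial input drawn from smoothness. Since $X$ is well formed and is not a section of a linear cone, smoothness implies quasi-smoothness, and quasi-smoothness can be detected from the monomials occurring in the defining polynomials. The first step is therefore to record the standard monomial criterion (the weighted analogue of the Jacobian/Iano--Fletcher condition, cf.~\cite[Lemma~2.15]{PrzyalkowskiShramov-Weighted}): after ordering $w_0\le\ldots\le w_N$ and $d_1\le\ldots\le d_k$, for every index $i$ there is either a pure power $x_i^{a}$ of some degree $d_l$, so that $w_i\mid d_l$, or a monomial $x_i^{a}x_m$ of some degree $d_l$, so that $w_i\mid d_l-w_m$. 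The hypothesis that $X$ is not a linear cone excludes the degenerate possibility $d_l=w_i$, and well-formedness --- both the gcd conditions on the weights and the requirement $\mathrm{codim}_X(X\cap\mathrm{Sing}\,\P)\ge 2$ --- is what prevents too many weights from sharing a common factor.

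For (i), $w_N\le N$, I would follow~\cite{PrzyalkowskiShramov-Weighted}. The idea is to feed the largest weight $w_N$ into the divisibility relations above: the monomial attached to $x_N$ links $w_N$ to some degree $d_l$ and at most one further weight, and iterating this bookkeeping over all variables forces the existence of sufficiently many distinct smaller weights. Comparing the resulting count against the Fano inequality $\sum_j w_j>\sum_i d_i$, while using well-formedness to forbid a large common divisor, yields the bound. I expect this to be the main obstacle: the crude divisibility conditions are necessary but far from sufficient for smoothness --- the example of degrees $2,3,5,30$ in $\P(1^k,6,10,15)$ satisfies them yet is singular --- so one cannot read the bound off~\cite[Lemma~2.15]{PrzyalkowskiShramov-Weighted} alone, and the counting has to exploit finer consequences of quasi-smoothness and be carried out carefully to avoid losing the sharp constant.

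For (ii), $k\le n$, equivalently $N+1\ge 2k+1$, I would invoke~\cite{ChenChenChen}. Here the guiding principle is that each of the $k$ equations has to be supported on enough coordinates for the common zero locus to be smooth of the expected dimension $n$; quantitatively, concentrating the weights on too few variables would create a singular coordinate stratum of $\P$ meeting $X$ in too large a dimension, contradicting well-formedness together with smoothness. Translating this into a lower bound on the number $N+1$ of variables gives $n\ge k$.

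Finally, for (iii), $l\ge k$, I would use~\cite[Corollary~5.3(i)]{PST17}. The natural route is to examine smoothness along the coordinate stratum obtained by setting all weight-$1$ variables to zero: if fewer than $k$ of the weights were equal to $1$, there would not be enough monomials linear in a weight-$1$ coordinate to make the $k$ differentials $df_1,\ldots,df_k$ independent along a suitable torus-invariant point, violating quasi-smoothness. Assembling (i)--(iii) then proves the theorem; throughout, the recurring subtlety is that the divisibility relations and the well-formedness constraints must be tracked simultaneously, which is precisely why genuine smoothness, rather than quasi-smoothness of a single hypersurface, is needed.
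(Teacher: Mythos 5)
Your proposal matches the paper exactly: the paper gives no independent argument, simply observing that the theorem is the combination of \cite[Theorem~1.1]{PrzyalkowskiShramov-Weighted} for (i), \cite[Theorem~1.3]{ChenChenChen} for (ii), and \cite[Corollary~5.3(i)]{PST17} for (iii), which is precisely the decomposition and the assignment of references you use. Your supplementary sketches of the underlying arguments, including the caveat that the divisibility conditions of \cite[Lemma~2.15]{PrzyalkowskiShramov-Weighted} are necessary but not sufficient (illustrated by the degrees $2,3,5,30$ in $\P(1^k,6,10,15)$), are consistent with the discussion the paper itself gives around this statement.
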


In particular, this theorem implies the following.

\begin{proposition}[{\cite[\S 5]{PSh17b}}]
\label{proposition: CI dim 4 5}
Smooth Fano weighted complete intersections of dimension at most $5$ have very good nef-partitions.
In particular, they have weak Landau--Ginzburg models satisfying the toric condition.
\end{proposition}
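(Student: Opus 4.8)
The plan is to reduce the statement to the purely combinatorial problem of producing a very good nef-partition, and then to exploit the numerical bounds of Theorem~\ref{theorem:Fano-invariants} to make that problem finite. First I would record the implication that matters: by the remark following Conjecture~\ref{conjecture: nef for weighted} and by the explicit formula~\eqref{formula: CI LG} of Section~\ref{section: weak CI}, the existence of a very good nef-partition for $X$ produces a weak Landau--Ginzburg model of Givental type, and by Theorem~\ref{theorem: CI are toric LG} this model satisfies the toric condition. Hence the whole assertion follows once we exhibit a very good nef-partition in the sense of Definition~\ref{definition: good nef}. Before constructing it I would dispose of the degenerate situation: if $X$ is a section of a linear cone, i.e. some defining degree $d_i$ equals some weight $w_j$, then the corresponding equation eliminates the variable $x_j$ and realizes $X$ as a complete intersection of smaller codimension in a weighted projective space of smaller dimension; iterating, we may assume that $X$ is not a section of a linear cone, which is exactly the hypothesis needed to invoke Theorem~\ref{theorem:Fano-invariants}.

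With that hypothesis in force, I would apply Theorem~\ref{theorem:Fano-invariants} in the case $n=\dim X\le 5$. Part (ii) gives $k\le n\le 5$, so the codimension is at most $5$; part (i) gives $w_N\le N=n+k\le 10$, so every weight is bounded; and the Fano condition $\sum d_i<\sum w_j$ together with $d_i\ge 2$ (no degree may equal a weight, and $1$ is a weight since $l\ge k\ge 1$) bounds the degree vector. Thus the numerical type $(w_0,\ldots,w_N;d_1,\ldots,d_k)$ of $X$ ranges over a finite, explicit list. For $k=1$ and $k=2$ the existence of a very good nef-partition is already available: codimension one is covered, after the linear-cone reduction, by the Cartier case of Proposition~\ref{proposition: nef for cartier}, and codimension two is precisely Theorem~\ref{theorem: nef for codim 2}.

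The remaining work, and the heart of the matter, is the range $k\in\{3,4,5\}$. Here I would run the weighted-graph method underlying Theorem~\ref{theorem: nef for codim 2} in its refined, higher-codimension form: smoothness and well-formedness force any $k+1$ of the weights to be coprime, so every prime divides at most $k$ of the weights, which is exactly the input needed to organize the non-unit weights into groups $E_1,\ldots,E_k$ with prescribed sums $d_1,\ldots,d_k$ while leaving only weights equal to $1$ in $E_0$; part (iii), $l\ge k$, then guarantees enough unit weights both to pad the groups and to fill $E_0$. Because $k\le 5$ and the weights are bounded by $10$, the simplicial complex that must replace the graph of Theorem~\ref{theorem: nef for codim 2} (recording common divisors of arbitrary subsets of weights, as flagged in the discussion after that theorem) has bounded size, so the combinatorial obstruction can be settled by a finite verification over the list assembled above. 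I expect this finite but delicate case analysis --- controlling primes that divide several weights at once and checking that each target degree is attainable as a weight-subset sum in every numerical type --- to be the main obstacle; the geometric and mirror-symmetry content is entirely carried by the reductions of the first two paragraphs.
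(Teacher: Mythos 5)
Your proposal follows essentially the same route as the paper: the proposition is presented there precisely as a consequence of the boundedness results of Theorem~\ref{theorem:Fano-invariants} (codimension $k\le n\le 5$, weights $w_N\le N$, at least $k$ unit weights), with the residual finite combinatorial verification of very good nef-partitions deferred to \cite[\S 5]{PSh17b}, just as you defer it. Your reductions (linear-cone elimination, the codimension $1$ and $2$ cases via Proposition~\ref{proposition: nef for cartier} and Theorem~\ref{theorem: nef for codim 2}, and the passage from a very good nef-partition to a weak Landau--Ginzburg model satisfying the toric condition via Theorem~\ref{theorem: CI are toric LG}) match the paper's intended argument.
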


Thus, the discussion above implies the following.

\begin{theorem}
%\label{theorem}
\label{theorem: nef toric}
Let $X$ be a smooth complete intersection in a well formed weighted projective space such that either $X$
is a complete intersection of Cartier divisors, or it is of codimension $2$,
or its dimension is not greater than $5$.
Than $X$ has a weak Landau--Ginzburg model satisfying the toric condition.
\end{theorem}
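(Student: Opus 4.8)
The plan is to assemble the statement from the three existence results for good nef-partitions together with the common Givental-type machinery, since once a good nef-partition is available the construction of the weak Landau--Ginzburg model and the verification of the toric condition are uniform. First I would isolate the core mechanism: for any smooth well-formed Fano weighted complete intersection $Y$ that admits a good nef-partition $E_0,\ldots,E_k$ in the sense of Definition~\ref{definition: good nef}, Givental's construction of Section~\ref{section: complete Givental} produces the Laurent polynomial $f_Y$ of~\eqref{formula: CI LG}. Here the goodness hypothesis (the existence of $j\in E_0$ with $w_j=1$) is exactly what permits the monomial changes of variables and residue computations of Section~\ref{section: weak CI} to reduce the superpotential of $LG_0(Y)$ to $f_Y$ on an honest torus. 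The period condition then follows by combining Proposition~\ref{proposition:periods-for-proj-space}, which identifies the main period $I_{f_Y}$ with Givental's integral $I_Y$, with Theorem~\ref{theorem: Givental I-series toric}, which identifies $I_Y$ with $\widetilde I^Y_0$; hence $f_Y$ is a weak Landau--Ginzburg model for $Y$.

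Next I would record that the toric condition is automatic for this $f_Y$. Indeed, Theorem~\ref{theorem: CI are toric LG} realizes the toric variety $T_{N(f_Y)}$ whose fan polytope is the Newton polytope of $f_Y$ as a Veronese image of an explicit binomial complete intersection, and exhibits a flat degeneration of $X$ to $T_{N(f_Y)}$. Since $F(T_{N(f_Y)})=N(f_Y)$ by construction, this is precisely the toric condition of Definition~\ref{definition: toric LG}. Thus the single implication ``good nef-partition $\Rightarrow$ weak Landau--Ginzburg model satisfying the toric condition'' holds in general, and the remaining work is only to supply a good nef-partition in each of the three listed cases.

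Finally I would dispatch the three cases by citing the corresponding existence theorems, each of which in fact yields a \emph{very good} nef-partition (hence a good one). If $X$ is a complete intersection of Cartier divisors, Proposition~\ref{proposition: nef for cartier} applies; if $X$ has codimension $2$, Theorem~\ref{theorem: nef for codim 2} applies; and if $\dim X\le 5$, Proposition~\ref{proposition: CI dim 4 5} applies. In every case the core mechanism of the first two paragraphs then produces $f_X$ with the required properties, completing the proof. The main obstacle, were one proving the theorem from scratch rather than assembling it, is precisely the existence of good nef-partitions: the codimension-$2$ case rests on the combinatorics of weighted projective graphs used in Theorem~\ref{theorem: nef for codim 2}, while the dimension $\le 5$ case leans on the boundedness estimates of Theorem~\ref{theorem:Fano-invariants} (in particular the bound $l\ge k$ guaranteeing enough weight-one coordinates to form $E_0$). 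Since these are already established earlier in the paper, the present theorem reduces to the bookkeeping above; the only point requiring care is to check that ``very good'' specializes to ``good'' so that the uniform reduction to the torus is legitimate in each instance.
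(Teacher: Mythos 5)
Your proposal is correct and follows essentially the same route as the paper: the paper likewise invokes Proposition~\ref{proposition: nef for cartier}, Theorem~\ref{theorem: nef for codim 2}, and Proposition~\ref{proposition: CI dim 4 5} to obtain a very good nef-partition, applies the change of variables of Section~\ref{section: weak CI} to get a Laurent polynomial of type~\eqref{formula: CI LG}, verifies the period condition by a combinatorial count or a straightforward generalization of Proposition~\ref{proposition:periods-for-proj-space} to weighted projective spaces, and cites Theorem~\ref{theorem: CI are toric LG} for the toric condition. The only point worth flagging is that Proposition~\ref{proposition:periods-for-proj-space} and Theorem~\ref{theorem: Givental I-series toric} as stated are tailored to the unweighted (smooth toric) setting, so in the weighted case one must note, as the paper does, that the period identity requires a (routine) generalization.
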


\begin{proof}
By Proposition~\ref{proposition: nef for cartier}, Theorem~\ref{theorem: nef for codim 2},
or Proposition~\ref{proposition: CI dim 4 5},
the variety $X$ has a very good nef-partition.
Thus, applying the change of variables from Section~\ref{section: weak CI},
we get a Laurent polynomial of type~\eqref{formula: CI LG}.
A standard combinatorial count (or straightforward generalization of Proposition~\ref{proposition:periods-for-proj-space}
for weighted projective spaces) shows that the polynomials satisfy the period condition.
Moreover, by Theorem~\ref{theorem: CI are toric LG} they satisfy the toric condition as well.
\end{proof}

\begin{question}
A lot of varieties have several different nef-partitions.
In~\cite{Li16} (see also~\cite{Pri}) it is shown that under some mild conditions Givental's Landau--Ginzburg models
for complete intersections in Gorenstein toric varieties corresponding to different nef-partitions are birational.
Is it true for smooth weighted complete intersections?
\end{question}

\section{Complete intersections in Grassmannians}
\label{section: CI in Grass}
It turns out that Givental's constructions can be applied not only to complete intersections in smooth toric varieties,
but also to complete intersections in varieties admitting ``good'' toric degenerations.
In this section we, following~\cite{PSh15b}, use such degenerations for Grassmannians $\G(n, k+n)$, $k,n\ge 2$,
and construct weak Landau--Ginzburg models for complete intersections therein.
We will use constructions for Landau--Ginzburg models analogous to Givental's ones, which are presented in~\cite{BCFKS97} and~\cite{BCFKS98}
(see also~\cite[B25]{EHX97}). We show that they can be presented by weak Landau--Ginzburg models following~\cite{PSh15b}.
Other methods of presenting them as weak Landau--Ginzburg models see in~\cite{PSh17}, see also~\cite{PSh14} and~\cite{Pri16}.

\subsection{Construction}
\label{section: Grass construction}

We define a quiver $\QQQ$
as a set of vertices
$$
\Ver(\QQQ)=\{(i,j)\mid i\in [1,k], j\in [1,n]\}\cup\{(0,1), (k,n+1)\}
$$
and a set of arrows
$\Ar(\QQQ)$ described as follows.
All arrows are either \emph{vertical} or \emph{horizontal}.
For any $i\in [1,k-1]$ and any $j\in [1,n]$ there is one
vertical arrow~\mbox{$\vv_{i,j}=\arrow{(i,j)}{(i+1,j)}$} that goes from the vertex $(i,j)$
down to the vertex $(i+1,j)$. For any $i\in [1,k]$ and any $j\in [1,n-1]$
there is one horizontal
arrow~\mbox{$\hh_{i,j}=\arrow{(i,j)}{(i,j+1)}$} that goes from the vertex $(i,j)$ to the right
to the vertex $(i,j+1)$.
We
also add an extra vertical arrow~\mbox{$\vv_{0,1}=\arrow{(0,1)}{(1,1)}$}
and an extra horizontal
arrow~\mbox{$\hh_{k,n}=\arrow{(k,n)}{(k,n+1)}$} to~\mbox{$\Ar(\QQQ)$},
see Figure~\ref{figure:quiverG36}.

\begin{figure}[htbp]
\begin{center}
\includegraphics[width=7cm]{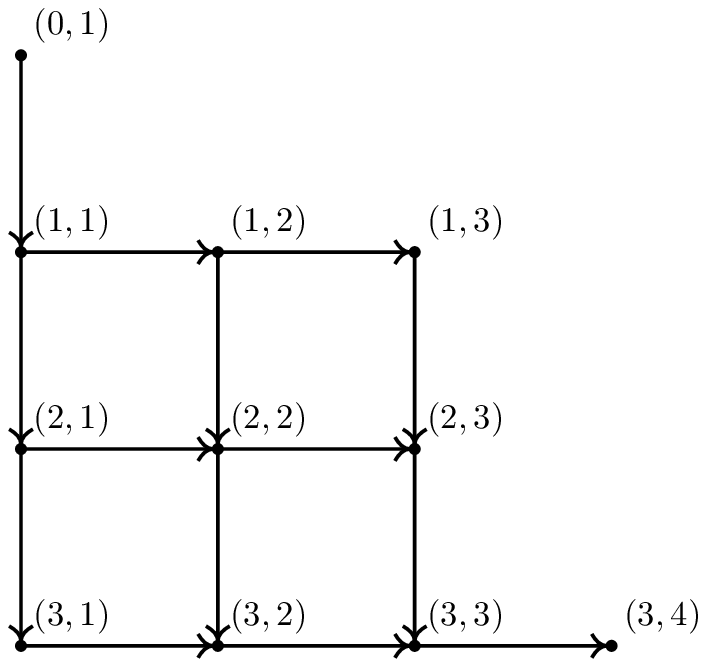}
\end{center}
\caption{Quiver $\QQQ$ for Grassmannian $\G(3,6)$}
\label{figure:quiverG36}
\end{figure}

For any arrow
$$\alpha=\arrow{(i,j)}{(i',j')}\in\Ar(\QQQ)$$
we define its \emph{tail} $t(\alpha)$
and its \emph{head} $h(\alpha)$ as the vertices $(i,j)$ and $(i',j')$, respectively.

For $r,s\in [0,k]$, $r<s$, we define a \emph{horizontal block}
$\HB(r,s)$ as a set of all vertical arrows $\vv_{i,j}$
with $i\in [r,s-1]$.
For example, the horizontal block $\HB(0,1)$ consists of a single
arrow $\vv_{0,1}$, while the horizontal block $\HB(1,3)$ consists
of all arrows $\vv_{1,j}$ and $\vv_{2,j}$, $j\in [1,n]$.
Similarly, for $r,s\in [1, n+1]$, $r<s$, we define a \emph{vertical block}
$\VB(r,s)$ as a set of all horizontal arrows $\hh_{i,j}$
with $j\in [r,s-1]$. Finally, for $r\in [0,k]$, $s\in [1,n+1]$
we define a \emph{mixed block}
$\MB(r,s)=\HB(r,k)\cup\VB(1,s)$.
For example, the mixed block $\MB(0,n)$ consists
of all arrows of $\Ar(\QQQ)$ except the arrow $\hh_{k,n}$.
When we speak about a block, we mean either a horizontal,
or a vertical, or a mixed block.
We say that the \emph{size}
of a horizontal block $\HB(r,s)$ and of a vertical block $\VB(r,s)$
equals $s-r$, and the size of a mixed block $\MB(r,s)$ equals $s+k-r$.

Let $B_1,\ldots,B_l$ be blocks. We say that they are \emph{consecutive}
if the arrow $\vv_{0,1}$ is contained in $B_1$, and
for any $p\in [1,l]$ the union $B_1\cup\ldots\cup B_p$ is a block.
This happens only in one of the following
two situations: either there is an index $p_0\in [1,l]$ and sequences
of integers $0<r_1<\ldots<r_{p_0}=k$ and $0<r_1'<\ldots<r_{l-p_0}'\le n+1$
such that
\begin{multline*}B_1=\HB(0,r_1), B_2=\HB(r_1,r_2), \ldots,
B_{p_0}=\HB(r_{p_0-1}, r_{p_0}),\\
B_{p_0+1}=\VB(0, r_1'), \ldots, B_l=\VB(r_{l-p_0-1}', r_{l-p_0}'),
\end{multline*}
or there is an index $p_0\in [1,l]$ and sequences
of integers $0<r_1<\ldots<r_{p_0-1}<k$ and
$0<r_1'<\ldots<r_{l-p_0-1}'\le n+1$
such that
\begin{multline*}
B_1=\HB(0,r_1), B_2=\HB(r_1,r_2), \ldots,
B_{p_0-1}=\HB(r_{p_0-2}, r_{p_0-1}),
B_{p_0}=\MB(r_{p_0}, r_1'),\\
B_{p_0+1}=\VB(r_1',r_2'), \ldots, B_l=\VB(r_{l-p_0-2}', r_{l-p_0-1}').
\end{multline*}
The first case occurs when there are no mixed blocks among $B_1,\ldots,B_l$,
and the second case occurs when one of blocks is mixed.

Let $S=\{x_1,\ldots,x_N\}$ be a finite set. %We denote the torus
%$$\Spec \TT[x_1,\ldots, x_N]\simeq (\C^*)^N$$
%by $\TT(S)$. Note that $x_1,\ldots, x_N$
%may be interpreted as coordinates on $\TT(S)$.
We introduce a set of variables
$\widetilde{V}=\{\widetilde{a}_{i,j}\mid i\in [1,k], j\in [1,n]\}$.
It is convenient to think that the variable $\widetilde{a}_{i,j}$
is associated to a vertex $(i,j)$ of the quiver $\QQQ$.
Laurent polynomials in the variables $\widetilde{a}_{i,j}$ are
regular functions on the torus $\TTT(\widetilde{V})$.
We also put $\widetilde{a}_{0,1}=\widetilde{a}_{k,n+1}=1$.

For any subset $A\subset\Ar(\QQQ)$ we define
a regular function
$$\widetilde{F}_A=\sum\limits_{\alpha\in A}\frac{\widetilde{a}_{h(\alpha)}}
{\widetilde{a}_{t(\alpha)}}
$$
on the torus $\TTT(\widetilde{V})$.

Let $Y$ be a complete intersection of hypersurfaces of degrees
$d_1,\ldots, d_l$ in $\G(k,n+k)$, $\sum d_i<n+k$. Consider consecutive blocks
$B_1, \ldots, B_l$ of size $d_1,\ldots, d_l$, respectively, and put
$$B_0=\Ar(\QQQ)\setminus\big(B_1\cup\ldots\cup B_l\big).$$

Let $\widetilde{L}\subset\TTT(\widetilde{V})$ be the subvariety
defined by equations
$$\widetilde{F}_{B_1}=\ldots=\widetilde{F}_{B_l}=1.$$
In~\cite{BCFKS97} and~\cite{BCFKS98}
it was suggested that a Landau--Ginzburg model for $Y$
is given by the variety $\widetilde{L}$ with
superpotential given by the function
$\widetilde{F}_{B_0}$.
We call it  model \emph{of type BCFKS}.

The main result of this subsection is the following.

\begin{theorem}[{\cite[Theorem 2.2]{PSh15b}}]\label{theorem: CI in Grass Laurent}
The subvariety $\widetilde{L}$ is birational to a to\-rus $\mbox{$\Y\simeq(\C^*)^{nk-l}$}$,
and the birational equivalence $\widetilde{\tau}\colon\Y\dasharrow\widetilde{L}$
can be chosen so that
$\widetilde \tau^*\left(\widetilde F_{B_0}\right)$ is a
%the pull-back of $\widetilde{F}_{B_0}$ is a
regular
function on~$\Y$. In particular this function is given by a Laurent polynomial.
\end{theorem}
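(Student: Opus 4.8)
The plan is to construct an explicit birational map $\widetilde\tau\colon\Y\dashrightarrow\widetilde L$ by eliminating exactly one degree of freedom per block, directly generalizing the computation that produced the Laurent polynomial~\eqref{formula: CI LG} in the projective case (cf. Section~\ref{section: weak CI}). First I would record the dimension bookkeeping: the torus $\TTT(\widetilde V)$ has dimension $kn$, the constraints $\widetilde F_{B_1}=\ldots=\widetilde F_{B_l}=1$ number $l$, and since $\dim\widetilde L=kn-l=\dim Y$ the target must be $\Y\simeq(\C^*)^{nk-l}$. For every arrow $\alpha$ I abbreviate $b_\alpha=\widetilde a_{h(\alpha)}/\widetilde a_{t(\alpha)}$, so each $\widetilde F_{B_p}=\sum_{\alpha\in B_p}b_\alpha$ is a sum of $d_p$ Laurent monomials, and $\widetilde F_{B_0}$ is likewise a sum of monomials $b_\alpha$. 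The essential structural input is that $B_1,\ldots,B_l$ are \emph{consecutive}: $B_1$ contains $\vv_{0,1}$ and each partial union $B_1\cup\ldots\cup B_p$ is again a block, so the blocks are nested growing out of the top-left corner of $\QQQ$. This nesting is exactly what will let me triangularize the elimination.

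The per-block mechanism is a monomial change of coordinates followed by a projective-type ratio substitution. For a single block $B_p$ I choose a reference arrow $\alpha_p^\ast\in B_p$ and introduce $d_p-1$ new coordinates $x_{p,\alpha}=b_\alpha/b_{\alpha_p^\ast}$ for $\alpha\in B_p\setminus\{\alpha_p^\ast\}$; the equation $\widetilde F_{B_p}=1$ then reads $b_{\alpha_p^\ast}\,\ell_p=1$ and is solved by
$$b_{\alpha_p^\ast}=\frac{1}{\ell_p},\qquad b_\alpha=\frac{x_{p,\alpha}}{\ell_p}\ \ (\alpha\in B_p\setminus\{\alpha_p^\ast\}),\qquad \ell_p=1+\sum_{\alpha\in B_p\setminus\{\alpha_p^\ast\}}x_{p,\alpha}.$$
This is precisely the substitution $x_j=u_j/u_{r}$ that, in the projective case, turns $1/(u_1\cdots u_r)$ with $\sum u_s=1$ into $(x_1+\ldots+x_{r-1}+1)^{d}/\prod x_j$, with the linear factor $\ell_p$ playing the role of $(x_1+\ldots+1)$. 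Doing this simultaneously for all blocks expresses every original variable $\widetilde a_{i,j}$ (restricted to $\widetilde L$) as a Laurent monomial in the remaining free coordinates times a product $\prod_p\ell_p^{\,e_{(i,j),p}}$ of integer powers of the linear factors $\ell_p$. I would package this by first producing a monomial automorphism of $\TTT(\widetilde V)$ that carries the $\sum_p d_p$ block monomials $\{b_\alpha:\alpha\in B_1\cup\ldots\cup B_l\}$ onto fresh coordinates with pairwise disjoint supports, so that the $l$ equations decouple into the normalized shape $\sum x_{p,\alpha}+1=\ell_p$, one block at a time.

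The step I expect to be the main obstacle is showing that, after all substitutions, $\widetilde\tau^*\widetilde F_{B_0}$ is genuinely a Laurent polynomial, i.e. that the denominators $\ell_p$ never survive. Each $B_0$-monomial $b_\alpha=\widetilde a_{h(\alpha)}/\widetilde a_{t(\alpha)}$ acquires the factor $\ell_p^{\,e_{h(\alpha),p}-e_{t(\alpha),p}}$, so Laurentness is equivalent to the \emph{monotonicity inequality} $e_{h(\alpha),p}-e_{t(\alpha),p}\ge 0$ for every block $p$ and every arrow $\alpha\in B_0$. The exponent $e_{(i,j),p}$ depends only on the position of the vertex $(i,j)$ relative to the row/column strip occupied by the block $B_p$ (before, inside, or after it), because moving along a $B_p$-arrow multiplies a variable by $b_\alpha\sim\ell_p^{-1}$. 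The consecutiveness of the blocks, together with the grid geometry of $\QQQ$ — a horizontal block $\HB(r,s)$ is a strip of vertical arrows and a vertical block $\VB(r,s)$ a strip of horizontal arrows, with the mixed block bridging them — forces every arrow left in $B_0$ to point "outward" from each block strip in the sense that makes this difference non-negative. I would verify this inequality case by case (horizontal, vertical, mixed block against horizontal versus vertical $B_0$-arrow), which is the combinatorial heart of the argument and precisely where the specific definition of consecutive blocks is used; the example of $\G(2,4)$ with a quadric section, where $\widetilde F_{B_0}$ comes out as $v(x_1+x_2+1)^2/x_1+v(x_1+x_2+1)^2/(x_1x_2)+1/v$, is the model computation to generalize.

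Finally I would confirm that $\widetilde\tau$ is birational rather than merely dominant. For this it suffices to check two things: that the $\sum_p d_p$ block characters are linearly independent and generate a \emph{saturated} (primitive) sublattice of the character lattice of $\TTT(\widetilde V)$, so that they extend to a $\ZZ$-basis and the monomial coordinate change of the previous paragraph is an isomorphism of tori of degree one; and that the remaining free coordinates together with the $x_{p,\alpha}$ form a transcendence basis, so $\widetilde\tau$ has a rational inverse obtained by reading the substitutions backwards. The saturation claim again follows from the chain/strip structure of the blocks, for which the relevant exponent matrix is unimodular (as in the $4\times 4$ computation for $\G(2,4)$, whose determinant is $\pm1$). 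Assembling these pieces gives the birational equivalence $\widetilde\tau\colon\Y\simeq(\C^*)^{nk-l}\dashrightarrow\widetilde L$ with $\widetilde\tau^*(\widetilde F_{B_0})$ a Laurent polynomial, which is the assertion of the theorem.
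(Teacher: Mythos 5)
Your overall strategy --- eliminate degrees of freedom using the block equations and then show that the denominators so introduced cancel against $\widetilde F_{B_0}$, with the cancellation reduced to a sign condition on exponent differences along $B_0$-arrows that is forced by consecutiveness --- correctly identifies the combinatorial heart of the matter (it corresponds to the paper's explicit weight assignment $\wt_p$ on the vertices of $\QQQ$). But the elimination mechanism you propose does not work as stated, because of a miscount that then propagates. A block of size $d_p$ is not a set of $d_p$ arrows: a horizontal block $\HB(r,s)$ of size $s-r$ contains \emph{all} vertical arrows in $s-r$ rows, i.e.\ about $(s-r)n$ arrows, and similarly for vertical and mixed blocks. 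So $\widetilde F_{B_p}$ is a sum of roughly $d_pn$ monomials, your substitution $x_{p,\alpha}=b_\alpha/b_{\alpha_p^\ast}$ introduces roughly $d_pn-1$ ratios rather than $d_p-1$, and the dimension bookkeeping collapses. Worse, the arrow-monomials $b_\alpha$ are very far from independent characters on $\TTT(\widetilde V)$: there are $2kn-n-k+2$ arrows in all but only $kn$ coordinates, with one multiplicative relation per elementary square of the grid. Already in the paper's running example for $\G(3,6)$ (blocks $\HB(0,1)$, $\HB(1,2)$, $\MB(2,2)$, $\VB(2,3)$) the blocks together contain $13$ arrows while $\dim\TTT(\widetilde V)=9$, so no monomial automorphism can carry the block monomials to fresh coordinates with pairwise disjoint supports, and one cannot independently prescribe $b_\alpha=x_{p,\alpha}/\ell_p$ within each block. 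Your closing ``saturation'' claim is therefore false for the character set you apply it to.

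The paper circumvents exactly this obstruction by solving for only \emph{one} variable per block. It introduces a free action of $\TTT\simeq(\C^*)^l$ on the $kn$-dimensional torus, via weights $\wt_p(i,j)$ on the vertices, for which each $F_{B_p}$ is semi-invariant of weight $w_p^{-1}$; the action on the $l$ chosen weight variables is given by a unimodular upper-triangular matrix, so the quotient is a torus $\Y\simeq(\C^*)^{nk-l}$ admitting a section, and $L$ is identified with the graph of $y\mapsto\left(\bar F_1(y),\ldots,\bar F_l(y)\right)$ over $\Y$ --- birationality comes for free, with no saturation argument. Laurentness of the superpotential then follows because $F_{B_0}-a$ is $\TTT$-invariant while $a$ is semi-invariant of weight $w_1^{d_1}\cdots w_l^{d_l}$, giving $\tau^*F_{B_0}=\bar F_A+\bar F_1^{d_1}\cdots\bar F_l^{d_l}\,\bar a$; the non-negativity you intended to verify arrow by arrow is encoded in the exponents $d_p\geq 0$. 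To repair your argument you would have to isolate a single coordinate per block and express everything else through it and the quotient coordinates --- which is essentially the paper's construction.
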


\begin{remark}\label{remark:order}
The Laurent polynomial provided by Theorem~\ref{theorem: CI in Grass Laurent}
may significantly change if one takes the degrees $d_1, \ldots, d_l$
in a different order (cf. Examples~\ref{example:1121}
and~\ref{example:1112}).
\end{remark}

To prove Theorem~\ref{theorem: CI in Grass Laurent}
we will use slightly more convenient coordinates than $\widetilde{a}_{i,j}$.
Make a monomial change of variables $\psi\colon \TTT(V)\to\TTT(V)$ defined by
\begin{equation}
\label{eq: psi}
a_{i,j}=\widetilde{a}_{i,j}\cdot \widetilde{a}_{k,n},\quad a=\widetilde{a}_{k,n}.
\end{equation}
Put
$$V=\{a_{i,j}\mid i\in [1,k], j\in [1,n], (i,j)\neq (k,n)\}\cup \{a\}.$$
Put $a_{k,n}=1$ and $a_{0,1}=a_{k,n+1}=a$ for convenience.
As above, for any subset $A\subset\Ar(\QQQ)$ we define
a regular function
$$F_A=\sum\limits_{\alpha\in A}\frac{a_{h(\alpha)}}
{a_{t(\alpha)}}
$$
on the torus $\TTT(V)$. Let
$L\subset\TTT(V)$ be the subvariety
defined by equations
$$F_{B_1}=\ldots=F_{B_l}=1.$$
We are going to check that
the subvariety $L$ is birational to a torus $\Y\simeq(\C^*)^{nk-l}$,
and the birational equivalence $\tau\colon\Y\dasharrow L$
can be chosen so that
the pull-back of $F_{B_0}$ is a regular
function on $\Y$.
Obviously, the latter assertion is equivalent to
Theorem~\ref{theorem: CI in Grass Laurent}.

The following assertion is well known and easy to check.

\begin{lemma}
\label{lemma:torus}
Let $\X$ be a variety with a free action of a torus $\TTT$. Put $\Y=\X/\TTT$,
and let $\varphi\colon \X\to \Y$ be the natural projection.
Suppose that $\varphi$ has a section $\sigma\colon \Y\to \X$. Then one has an isomorphism
\begin{equation*}%\label{eq:iso}
\xi\colon \X\stackrel{\sim}\to \TTT\times \Y.
\end{equation*}
Moreover, suppose that a function $F\in \Gamma(\X,\O_{\X})$
is semi-invariant with respect to the $\TTT$-action,
i.\,e. there is a character $\chi$ of $\TTT$
such that for any $x\in \X$ and $t\in \TTT$ one has $F(tx)=\chi(t)F(x)$.
Then there is a function $\bar{F}\in\Gamma(\Y,\O_{\Y})$ such that
$F=\xi^*\big(\chi\cdot\bar{F}\big)$.
\end{lemma}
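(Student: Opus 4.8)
Proof proposal for Lemma~\ref{lemma:torus}.

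The plan is to exhibit the isomorphism $\xi$ together with an explicit inverse, and then read off the statement about semi-invariant functions. First I would construct a candidate inverse: define the morphism
$$
\eta\colon \TTT\times\Y\to\X,\qquad \eta(t,y)=t\cdot\sigma(y),
$$
which is a morphism because $\sigma$ is a morphism and the action is algebraic. I claim $\eta$ is an isomorphism and set $\xi:=\eta^{-1}$. The point is to produce the inverse regularly. For any $x\in\X$ the points $x$ and $\sigma(\varphi(x))$ lie in the same $\TTT$-orbit: since $\varphi$ is $\TTT$-invariant and $\varphi\circ\sigma=\id_\Y$, we have $\varphi(\sigma(\varphi(x)))=\varphi(x)$. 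Freeness then provides a \emph{unique} $t\in\TTT$ with $x=t\cdot\sigma(\varphi(x))$.

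The heart of the matter is that this $t$ depends regularly on $x$, and this is exactly where freeness of the action is used. Concretely, freeness together with $\Y=\X/\TTT$ being a geometric quotient means that $\varphi\colon\X\to\Y$ is a $\TTT$-torsor, i.e. the action morphism $\TTT\times\X\to\X\times_\Y\X$, $(t,x)\mapsto(tx,x)$, is an isomorphism; composing its inverse with the first projection yields a division morphism $\delta\colon\X\times_\Y\X\to\TTT$ characterized by $\delta(x',x'')\cdot x''=x'$. Setting
$$
\xi(x)=\big(\delta(x,\sigma(\varphi(x))),\ \varphi(x)\big)
$$
gives a morphism $\X\to\TTT\times\Y$, and a direct check shows $\eta\circ\xi=\id_\X$ and $\xi\circ\eta=\id_{\TTT\times\Y}$: indeed $\eta(\xi(x))=\delta(x,\sigma(\varphi(x)))\cdot\sigma(\varphi(x))=x$, while $\varphi(t\cdot\sigma(y))=y$ and $\delta(t\cdot\sigma(y),\sigma(y))=t$ give $\xi(\eta(t,y))=(t,y)$.

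For the second assertion I would set $\bar F=\sigma^*F\in\Gamma(\Y,\O_\Y)$, which is regular since $\sigma$ is a morphism and $F$ is regular, and write $\chi\cdot\bar F$ for the regular function $(t,y)\mapsto\chi(t)\bar F(y)$ on $\TTT\times\Y$ (regular because a character of $\TTT$ is a unit in $\Gamma(\TTT,\O_\TTT)$). The identity is then checked pointwise: for $x\in\X$ write $\xi(x)=(t,\varphi(x))$, so that $x=t\cdot\sigma(\varphi(x))$, and by semi-invariance
$$
F(x)=F\big(t\cdot\sigma(\varphi(x))\big)=\chi(t)\,F(\sigma(\varphi(x)))=\chi(t)\,\bar F(\varphi(x))=\big(\xi^*(\chi\cdot\bar F)\big)(x),
$$
which is precisely $F=\xi^*(\chi\cdot\bar F)$.

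The main obstacle is the regularity of the division morphism $\delta$, equivalently the fact that the bijective morphism $\eta$ is an isomorphism of varieties rather than merely a set-theoretic bijection; this uses freeness of the $\TTT$-action (and not just the existence of the quotient). In the situation in which the lemma is applied, $\X$ and $\Y$ are open subsets of algebraic tori and the action is by explicit monomial scalings, so the torsor $\varphi$ is visibly split by $\sigma$ and $\delta$ is given by an explicit Laurent monomial, making this step transparent; in general it is the standard fact that a free torus action with geometric quotient is a $\TTT$-torsor and that a torsor admitting a section is trivial.
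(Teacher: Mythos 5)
The paper offers no proof of this lemma at all --- it is introduced with ``The following assertion is well known and easy to check'' and then used directly --- so there is nothing to compare line by line; your write-up supplies the standard argument that the paper takes for granted. Your construction is the right one: trivialize the quotient via $\eta(t,y)=t\cdot\sigma(y)$, invert it using the division morphism $\delta$ coming from the torsor structure, and set $\bar F=\sigma^*F$, after which the identity $F=\xi^*(\chi\cdot\bar F)$ is a one-line computation from semi-invariance. The only step that deserves scrutiny is the one you yourself flag: passing from ``$\eta$ is a bijective morphism'' to ``$\eta$ is an isomorphism'' requires either knowing that $\varphi$ is a genuine $\TTT$-torsor (scheme-theoretic freeness, not just trivial stabilizers) or a normality hypothesis on $\X$ so that Zariski's main theorem applies; the lemma as stated assumes only that $\X$ is ``a variety,'' so strictly speaking this is a gap in the statement rather than in your proof. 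In the only situation where the lemma is invoked, $\X$ and $\Y$ are algebraic tori, the action is by explicit monomial scalings governed by a unitriangular weight matrix, and the section is given by setting the weight variables to $1$ (cf.\ Example~\ref{example:pi-sigma}), so $\delta$ is an explicit Laurent monomial and $\eta$ is visibly a monomial isomorphism of tori; your proof, specialized to that case, is exactly what the paper relies on.
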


Recall that $B_1,\ldots,B_l$ are consecutive blocks.
In particular, the arrow $\vv_{0,1}$ is contained in~$B_1$.

We are going to define the weights $\wt_1, \ldots, \wt_l$ of the vertices
of $\QQQ$ so that the following properties are satisfied.
Consider an arrow $\alpha%=\arrow{(i,j)}{(i',j')}
\in\Ar(\QQQ)$.
Then
$$
%\wt_p(i',j')-\wt_p(i,j)=
\wt_p\left(h(\alpha)\right)-wt_p \left(t(\alpha)\right)=
\left\{
\begin{array}{l}
-1 \text{\ if\ } \alpha\in B_p,\\
0 \text{\ if\ } \alpha\notin B_p \text{\ and\ } \alpha\neq\hh_{k,n}.
\end{array}
\right.
$$
Also, for any $p\in [1,l]$ we require the following properties:
\begin{itemize}
\item one has $\wt_p(i,j)\ge 0$ for all $(i,j)$;

\item one has $\wt_p(k,n)=0$, so that
$$\wt_p(k,n+1)-\wt_p(k,n)=\wt_p(k,n+1)\ge 0;$$

\item one has $\wt_p(0,1)=\wt_p(k,n+1)$.
\end{itemize}

Actually, there is only one way to assign weights
so that the above requirements are met.
Choose an index~\mbox{$p\in [1,l]$}.
If $B_p=\HB(r,s)$ is a horizontal
block, we put
$$
\wt_p(i,j)=
\left\{
\begin{array}{l}
s-i, \text{\ if\ } i\in [r,s], j\in [1,n],\\
0, \text{\ if\ } i\in [s+1,k], j\in [1,n],\\
s-r, \text{\ if\ } i\in [1,r-1], j\in [1,n], \text{\ or\ } (i,j)=(0,1).
\end{array}
\right.
$$
In particular, this gives
$\wt_p(0,1)=s-r$.
If $B_p=\MB(r,s)$ is a mixed block, we put
$$
\wt_p(i,j)=
\left\{
\begin{array}{l}
(k-i)+(s-j), \text{\ if\ } i\in [r,k], j\in [1,s],\\
k-i, \text{\ if\ } i\in [r,k], j\in [s+1,n],\\
(k-r)+(s-j), \text{\ if\ }
i\in [1,r-1], j\in [1,s], \text{\ or\ } (i,j)=(0,1),\\
k-r, \text{\ if\ } i\in [1,r-1], j\in [s+1,n].
\end{array}
\right.
$$
If $B_p=\VB(r,s)$ is a vertical block,
we put
$$
\wt_p(i,j)=
\left\{
\begin{array}{l}
s-j, \text{\ if\ } i\in [1,k], j\in [r,s],\\
s-r, \text{\ if\ } i\in [1,k], j\in [1,r-1], \text{\ or\ } (i,j)=(0,1),\\
0, \text{\ if\ } i\in [1,k], j\in [s+1,n].
\end{array}
\right.
$$
Finally, we always put $\wt_p(k,n+1)=\wt_p(0,1)$.

To any block $B$ we associate a \emph{weight vertex} of the quiver $\QQQ$
as follows. If $B=\HB(r,s)$ is a horizontal block,
then its weight vertex is $(s-1,1)$. If $B$ is a mixed block $\MB(r,s)$ or
a vertical %weight\footnote{VERTICAL}
block $\VB(r,s)$, then its weight vertex is $(k,s-1)$.
If $B$ is a block and $(i,j)$ is its weight vertex, we
define the \emph{weight variable} of $B$ to be $a_{i,j}$
provided that $(i,j)\neq (0,1)$, and to be $a$ otherwise.

An example of weights assignment corresponding
to Grassmannian $\G(3,6)$ and mixed block $B=\MB(2,2)$
is given on Figure~\ref{figure:weightsG36}.
The solid arrows are ones that are
contained in $B$, while the dashed
arrows are those of $\Ar(\QQQ)\setminus B$.
The weight vertex $(3,1)$ of $B$ is marked by a white circle.

\begin{figure}[htbp]
\begin{center}
\includegraphics[width=7cm]{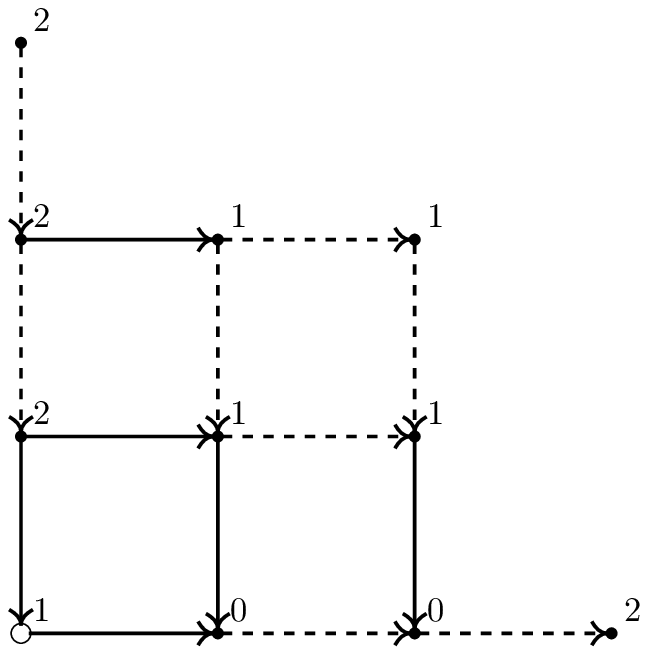}
\end{center}
\caption{Weights for Grassmannian $\G(3,6)$ and mixed block $\MB(2,2)$}
\label{figure:weightsG36}
\end{figure}

\begin{example}\label{example:weight-coordinates}
Consider the quiver $\QQQ$ corresponding to the Grassmannian $\G(3,6)$
(see Figure~\ref{figure:quiverG36}).
Suppose that $l=4$, $B_1=\HB(0,1)$, $B_2=\HB(1,2)$, $B_3=\MB(2,2)$,
and~\mbox{$B_4=\VB(2,3)$}.
Then the weight vertices of the blocks are $(0,1)$,
$(1,1)$, $(3,1)$, and $(3,2)$, respectively, and the weight variables
are $a$, $a_{1,1}$, $a_{3,1}$, and $a_{3,2}$.
\end{example}

Consider a torus
$$
\X=\TTT(V)\simeq(\C^*)^{nk}
$$
and a torus $\TTT\simeq (\C^*)^l$ with coordinates $w_1,\ldots,w_l$.
Define an action of $\TTT$ on $\X$ by
$$
(w_1,\ldots, w_l)\cdot a_{i,j}=w_1^{\wt_1(i,j)}\cdot\ldots\cdot w_l^{\wt_l(i,j)}\cdot a_{i,j}
$$
for all $i\in [1,k]$, $j\in [1,n]$, $(i,j)\neq (k,n)$, and
$$
(w_1,\ldots, w_l)\cdot a=
w_1^{\wt_1(0,1)}\cdot\ldots\cdot w_l^{\wt_l(0,1)}\cdot a.
$$

Using nothing but the basic properties of weights, we obtain the following lemmas.

\begin{lemma}\label{lemma:weights-Fi}
Fix $p\in [1,l]$. Then $F_{B_p}$
is a semi-invariant function on $\X$ with respect to the action of $\TTT$
with weight $w_p^{-1}$.
\end{lemma}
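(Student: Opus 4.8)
The plan is to prove the lemma by a direct monomial computation, once a single combinatorial observation is in place. The only genuinely special feature of the quiver is the extra horizontal arrow $\hh_{k,n}$, for which the weight rule does not prescribe the value $\wt_q(h(\alpha))-\wt_q(t(\alpha))$. I would therefore begin by checking that $\hh_{k,n}$ lies in none of the blocks $B_1,\ldots,B_l$, i.e. that $\hh_{k,n}\in B_0$. Since $\hh_{k,n}=\arrow{(k,n)}{(k,n+1)}$ is horizontal, it can be contained only in a vertical or mixed block, and only if that block reaches the column index $s=n+1$. Using the explicit classification of consecutive blocks (both the no-mixed-block and the mixed-block case), the horizontal parts always telescope to total size $k$, so the total size $\sum d_i$ of $B_1,\ldots,B_l$ equals $k$ plus the rightmost column index reached by the vertical (or mixed) part; the hypothesis $\sum d_i<n+k$ then forces this column index to be at most $n-1<n+1$, so no block reaches $\hh_{k,n}$. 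As a consistency check, were $\hh_{k,n}\in B_p$, the rule would demand $\wt_p(k,n+1)-\wt_p(k,n)=-1$, contradicting $\wt_p(k,n)=0$ together with $\wt_p(k,n+1)=\wt_p(0,1)\ge 0$.

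With this in hand I would compute the action of $t=(w_1,\ldots,w_l)\in\TTT$ on a single monomial of $F_{B_p}$. Recalling the conventions $a_{0,1}=a_{k,n+1}=a$ and $\wt_q(0,1)=\wt_q(k,n+1)$, the action multiplies each coordinate $a_{i,j}$ by $\prod_{q}w_q^{\wt_q(i,j)}$ uniformly (the two special vertices both being handled through $a$ with the common weight $\wt_q(0,1)$), so for an arrow $\alpha$ the term $a_{h(\alpha)}/a_{t(\alpha)}$ is multiplied by $\prod_{q=1}^l w_q^{\wt_q(h(\alpha))-\wt_q(t(\alpha))}$. Fix $\alpha\in B_p$. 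For the factor $q=p$ the defining property of the weights gives exponent $\wt_p(h(\alpha))-\wt_p(t(\alpha))=-1$. For $q\neq p$ the blocks $B_p$ and $B_q$ are disjoint, so $\alpha\notin B_q$; moreover $\alpha\neq\hh_{k,n}$ by the first step, and hence the same defining property yields exponent $0$. Consequently every monomial of $F_{B_p}$ is scaled by $w_p^{-1}$.

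Summing over $\alpha\in B_p$ then gives $F_{B_p}(t\cdot x)=w_p^{-1}F_{B_p}(x)$ for all $x\in\X$ and $t\in\TTT$, which is exactly the assertion that $F_{B_p}$ is semi-invariant with character $w_p^{-1}$. I expect the only real work to lie in the first paragraph: pinning down that $\hh_{k,n}\in B_0$ uniformly in the two cases of the classification of consecutive blocks, via the telescoping size count and the inequality $\sum d_i<n+k$. Everything after that is the formal computation above, which uses nothing beyond the listed properties of the weights ($\wt_p\ge 0$, $\wt_p(k,n)=0$, $\wt_p(0,1)=\wt_p(k,n+1)$, and the prescribed values of $\wt_p(h(\alpha))-\wt_p(t(\alpha))$) and the pairwise disjointness of the blocks.
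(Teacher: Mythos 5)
Your proof is correct and is exactly the direct verification the paper has in mind: the paper omits the argument, saying only that the lemma follows ``using nothing but the basic properties of weights,'' and you supply precisely that monomial-by-monomial computation. The one point genuinely worth spelling out is your preliminary check that $\hh_{k,n}$ lies in none of $B_1,\ldots,B_l$ (forced by the telescoping size count and $\sum d_i<n+k$), so that the exceptional arrow never interferes with the weight rule; you handle this correctly.
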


Recall that
$$B_0=\Ar(\QQQ)\setminus \big(B_1\cup\ldots\cup B_l\big).$$
Put $A=B_0\setminus\{\hh_{k,n}\}$.
Note that %$F_{\{\hh_{k,n}\}}=a$, and
$F_{B_0}=F_{A}+a$.

\begin{lemma}\label{lemma:weights-FA}
The function $F_{A}$ is invariant with respect to the action of $\TTT$.
On the other hand, the function $a$ is semi-invariant with weight
$$\mu(w)=w_1^{d_1}\cdot\ldots\cdot w_l^{d_l}.$$
\end{lemma}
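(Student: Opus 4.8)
The plan is to verify Lemma~\ref{lemma:weights-FA} by a direct computation using the defining properties of the weights $\wt_p$ established just before the statement. Recall that $A=B_0\setminus\{\hh_{k,n}\}$, so every arrow $\alpha\in A$ satisfies $\alpha\notin B_p$ for all $p\in[1,l]$ and $\alpha\neq\hh_{k,n}$. Hence the second case of the defining relation for the weights applies: $\wt_p(h(\alpha))-\wt_p(t(\alpha))=0$ for every $p$. First I would use this to show that each summand $a_{h(\alpha)}/a_{t(\alpha)}$ of $F_A$ is $\TTT$-invariant.

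The key computation is as follows. For $(w_1,\ldots,w_l)\in\TTT$ and an arrow $\alpha\in A$, the definition of the action gives
$$
\frac{(w\cdot a)_{h(\alpha)}}{(w\cdot a)_{t(\alpha)}}
=\frac{\prod_{p=1}^l w_p^{\wt_p(h(\alpha))}\,a_{h(\alpha)}}
{\prod_{p=1}^l w_p^{\wt_p(t(\alpha))}\,a_{t(\alpha)}}
=\left(\prod_{p=1}^l w_p^{\wt_p(h(\alpha))-\wt_p(t(\alpha))}\right)\frac{a_{h(\alpha)}}{a_{t(\alpha)}}
=\frac{a_{h(\alpha)}}{a_{t(\alpha)}},
$$
where the last equality uses $\wt_p(h(\alpha))-\wt_p(t(\alpha))=0$ for all $p$. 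One subtlety to address is that some arrows in $A$ may have the vertex $(0,1)$ as tail or $(k,n+1)$ as head, where the variable is $a$ rather than an $a_{i,j}$; here I would invoke the conventions $a_{0,1}=a_{k,n+1}=a$ together with the weight normalizations $\wt_p(k,n)=0$ and $\wt_p(0,1)=\wt_p(k,n+1)$, so that the same cancellation goes through. Summing over $\alpha\in A$ shows $F_A$ is invariant.

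For the second assertion, the variable $a$ corresponds to the vertex $(0,1)$ (equivalently $(k,n+1)$), so under the action $a$ is multiplied by $\prod_{p=1}^l w_p^{\wt_p(0,1)}$. The remaining task is to identify this monomial as $\mu(w)=\prod_{p=1}^l w_p^{d_p}$, i.e.\ to check $\wt_p(0,1)=d_p$ for each $p$. This I would read off case by case from the explicit weight assignments: for a horizontal block $B_p=\HB(r,s)$ one has $\wt_p(0,1)=s-r$, which equals the size $d_p$ of $B_p$; for a mixed block $\MB(r,s)$ one has $\wt_p(0,1)=(k-r)+(s-j)$ evaluated at the stipulated value giving $(k-r)+s$, matching the mixed-block size $s+k-r=d_p$; and for a vertical block $\VB(r,s)$ one has $\wt_p(0,1)=s-r=d_p$. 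Since $d_p$ is by construction the size of $B_p$, this yields $\wt_p(0,1)=d_p$ and hence $a$ is semi-invariant of weight $\mu(w)$.

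I do not expect a genuine obstacle here, as the lemma is essentially bookkeeping; the only delicate point is the careful handling of the boundary vertices $(0,1)$ and $(k,n+1)$ and the normalization $\wt_p(k,n)=0$, so I would state those conventions explicitly at the outset to keep the cancellation transparent. The verification that $\wt_p(0,1)$ equals the block size $d_p$ in all three block types is the one place where one must consult the explicit formulas rather than the abstract defining property, so I would present that as the main (if routine) step.
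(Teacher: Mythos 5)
Your proof is correct and follows the only natural route: the paper itself offers no written proof, asserting that the lemma follows from ``the basic properties of weights,'' which is exactly the bookkeeping you carry out (the invariance of $F_A$ from the vanishing of $\wt_p(h(\alpha))-\wt_p(t(\alpha))$ for $\alpha\notin B_1\cup\ldots\cup B_l$, $\alpha\neq \hh_{k,n}$, and the semi-invariance of $a$ from $\wt_p(0,1)=d_p$). One small correction in the mixed-block case: evaluating the weight formula at $(i,j)=(0,1)$ gives $\wt_p(0,1)=(k-r)+(s-1)$, not $(k-r)+s$; this still equals $d_p$, because the size of $\MB(r,s)=\HB(r,k)\cup\VB(1,s)$ is in fact $(k-r)+(s-1)$ rather than the $s+k-r$ stated in the paper --- compare the worked example, where $B_3=\MB(2,2)$ in $\G(3,6)$ has $d_3=2=(3-2)+(2-1)$, and where the action matrix indeed gives $a\mapsto w_1w_2w_3^2w_4\cdot a$ with $\wt_3(0,1)=2$. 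So your two off-by-one discrepancies cancel and the identity $\wt_p(0,1)=d_p$ that you need is true, but the intermediate equalities as you wrote them are each off by one.
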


Consider the quotient $\Y=\X/\TTT$,
and let $\varphi\colon \X\to \Y$ be the
natural projection.
Let $x_1,\ldots, x_l$ be weight variables
of the blocks $B_1,\ldots, B_l$, respectively, and
$\Sigma\subset \X$ be the subvariety defined by
equations
$$\{x_i=1\mid i\in [1,l]\}\subset \X.$$
Note that $\TTT$ acts on a coordinate $x_i$ multiplying
it by
$w_i\cdot N_i$, where $N_i$ is a monomial in~\mbox{$w_{i+1},\ldots, w_l$}.
In other words, define the matrix $M$
by
$$
(w_1,\ldots,w_l)\cdot x_i=\prod w_j^{M_{i,j}} x_i.
$$
Then $M$ is integral upper-triangular %unipotent
matrix
with %non-negative integral entries.
units on the diagonal.
Thus $\Sigma$ has a unique common point
with any fiber of $\varphi$. Therefore, there exists a section
$\sigma\colon \Y\to \X$ of the projection $\varphi$ whose image is $\Sigma$.
Also, we see that the action of $\TTT$ on $\X$ is free.
By Lemma~\ref{lemma:torus} we conclude that $\X\simeq \TTT\times \Y$.
In particular, one has $\Y\simeq (\C^*)^{nk-l}$.

Let $V'$ be the set of all variables of
$V$ except for $x_1,\ldots, x_l$.
We regard the variables of $V$ as coordinates on $\X$,
and the variables of $V'$ as coordinates on $\Y\simeq\TTT(V')$.
In these coordinates the morphism $\sigma$ is given
in a particularly simple way. Namely,
for any point $y\in \Y$ the point $\sigma(y)$
has all weight coordinates equal to $1$,
and the other coordinates equal to the corresponding coordinates of $y$.

\begin{example}\label{example:pi-sigma}
In the notation of Example~\ref{example:weight-coordinates}
one has
$$
\X=\TTT\big(\{a, a_{1,1}, a_{1,2}, a_{1,3}, a_{2,1}, a_{2,2}, a_{2,3}, a_{3,1}, a_{3,2}\}\big)
$$
and
$$
\Y=\TTT\big(\{a_{1,2}, a_{1,3}, a_{2,1}, a_{2,2}, a_{2,3}\}\big).
$$
The action of the torus $\TTT\simeq (\C^*)^4$ is defined
by the matrix
$$
M=\left(
    \begin{array}{cccc}
      1 & 1 & 2 & 1 \\
      0 & 1 & 2 & 1 \\
      0 & 0 & 1 & 1 \\
      0 & 0 & 0 & 1 \\
    \end{array}
  \right)
$$
as
\begin{multline*}
(w_1,w_2,w_3,w_4)\colon \big(a, a_{1,1}, a_{1,2}, a_{1,3}, a_{2,1}, a_{2,2}, a_{2,3}, a_{3,1}, a_{3,2}\big)
\mapsto\\ \mapsto
\big(w_1w_2w_3^2w_4\cdot a, w_2w_3^2w_4\cdot a_{1,1},
w_2w_3w_4\cdot a_{1,2},
w_2w_3\cdot a_{1,3},\\
w_3^2w_4\cdot a_{2,1},
w_3w_4\cdot a_{2,2}, w_3\cdot a_{2,3}, w_3w_4\cdot a_{3,1},
w_4\cdot a_{3,2}\big).
\end{multline*}
(Note that the weights corresponding to the block $B_3$ can be seen on Figure~\ref{figure:weightsG36}.)
The matrix
$$
M^{-1}=\left(
    \begin{array}{rrrr}
      1 & -1 & 0 & 0 \\
      0 & 1 & -2 & 1 \\
      0 & 0 & 1 & -1 \\
      0 & 0 & 0 & 1 \\
    \end{array}
  \right)
$$
gives $w_1^{-1}=\frac{a}{a_{1,1}}$, $w_2^{-1}=\frac{a_{1,1}a_{3,2}}{a_{3,1}^2}$, $w_3^{-1}=\frac{a_{3,1}}{a_{3,2}}$, and $w_4^{-1}=a_{3,2}$, so
the projection $\varphi\colon \X\to \Y$ is given by
\begin{multline*}
\varphi\colon \left(a, a_{1,1}, a_{1,2}, a_{1,3}, a_{2,1}, a_{2,2}, a_{2,3}, a_{3,1}, a_{3,2}\right)
\mapsto\\ \mapsto
\left( \frac{a_{3,1}}{a_{1,1}a_{3,2}}\cdot a_{1,2},
\frac{a_{3,1}}{a_{1,1}}\cdot a_{1,3},
\frac{a_{3,2}}{a_{3,1}^2}\cdot a_{2,1},
\frac{1}{a_{3,1}}\cdot a_{2,2}, \frac{a_{3,2}}{a_{3,1}}\cdot a_{2,3} \right),
\end{multline*}
and the section $\sigma\colon \Y\to \X$ is given by
$$
\sigma\colon \big(a_{1,2}, a_{1,3}, a_{2,1}, a_{2,2}, a_{2,3}\big)\mapsto
\big(1, 1, a_{1,2}, a_{1,3}, a_{2,1}, a_{2,2}, a_{2,3}, 1, 1\big).
$$
\end{example}

Applying Lemma~\ref{lemma:weights-Fi} together with
Lemma~\ref{lemma:torus}, we see that there exist regular functions
$\bar{F}_p$, $p\in [1,l]$, on $\Y$ such that
under the identification $\X\simeq \TTT\times \Y$ one has
$$F_p=w_p^{-1}\cdot \varphi^*\bar{F}_p.$$
Similarly, applying Lemma~\ref{lemma:weights-FA} together with
Lemma~\ref{lemma:torus}, we see that there exist regular functions
$\bar{F}_{A}$ and $\bar{a}$ on $\Y$ such that
$F_{A}=\varphi^*\bar{F}_{A}$ and $a=\mu(w)\varphi^*\bar{a}$.

Consider a rational map
$$
y\mapsto \big(\bar{F_1}(y),\cdots,\bar{F}_l(y)\big)
$$
from $\Y$ to $\TTT$.
Define a rational map $\tau\colon \Y\dasharrow \X$
as
$$
y\mapsto \big(\bar{F_1}(y),\cdots,\bar{F}_l(y)\big)\cdot
\sigma(y).
$$
It is easy to see that
the closure of the image of $\Y$ under the map $\tau$ is the
subvariety~\mbox{$L\subset \X$}.
In particular, $\tau$ gives a birational equivalence between
$\Y$ and $L$.

Now it remains to notice that
$$\tau^*F_{A}=\tau^*\varphi^*\bar{F}_A=\bar{F}_A.$$
On the other hand,
one has
$$\tau^*a=
\mu\big(\bar{F_1}(y),\cdots,\bar{F}_l(y)\big)\sigma^*\varphi^*\bar{a}=
\mu\big(\bar{F_1}(y),\cdots,\bar{F}_l(y)\big)\bar{a}.
$$
This means that the map
$\widetilde \tau=\tau\varphi\psi$, where $\psi$ is given by formulas~\eqref{eq: psi} provides a birational map
required for Theorem~\ref{theorem: CI in Grass Laurent}.

\begin{remark}
\label{remark: algorithm}
The above proof of Theorem~\ref{theorem: CI in Grass Laurent} provides a very explicit
way to write down the Laurent polynomial $\tau^*F_{B_0}$. Namely,
consider a complete intersection~\mbox{$Y\subset \G(n,n+k)$}
of hypersurfaces of degrees $d_i$, $i\in [1,l]$.
The following cases may occur.
\begin{itemize}
  \item One has $d_1+\ldots+d_l\le k.$
Put $u_i=d_1+\ldots+d_i$ for $i\in [1,l]$.
Then the BCFKS Landau--Ginzburg model for $Y$ is birational to $(\CC^*)^{nk-l}$ with superpotential
$$
\sum_{i=u_{l}+1}^{k}\sum_{j=1}^n \frac{a_{i,j}}{a_{i-1,j}}+\sum_{i=1}^{k}\sum_{j=2}^n \frac{a_{i,j}}{a_{i,j-1}}
+a\left(\frac{a_{1,1}}{a}+\sum_{i=2}^{d_1}\sum_{j=1}^n \frac{a_{i,j}}{a_{i-1,j}}\right)^{d_1}
\prod_{p=2}^l
\left(\sum_{i=u_{p-1}}^{u_p}\sum_{j=1}^n \frac{a_{i,j}}{a_{i-1,j}}\right)^{d_p},
$$
where we put $a_{1,u_1-1}=1$ if $u_1>1$ and $a=1$ otherwise,
$a_{1,u_i-1}=1$ for $i\in[2,l]$, and $a_{k,n}=1$.

  \item One has $d_1+\ldots+d_l> k.$ Let $m\in [0,l-1]$
be the maximal index such that
$d_1+\ldots+d_m\le k.$
Put $u_i=d_1+\ldots+d_i$ for $i\in [1,m]$ and
$u_i=d_1+\ldots+d_i-k$ for $i\in [m+1,l]$.

If $m=0$, then the BCFKS Landau--Ginzburg model for $Y$ is birational to $(\CC^*)^{nk-l}$ with superpotential
\begin{multline*}
\sum_{i=1}^{k}\sum_{j=u_l+1}^n \frac{a_{i,j}}{a_{i,j-1}}+\\
+a\left(\frac{a_{1,1}}{a}+
\sum_{i=2}^{k}\sum_{j=1}^n \frac{a_{i,j}}{a_{i-1,j}}+
\sum_{i=1}^{k}\sum_{j=2}^{u_1} \frac{a_{i,j}}{a_{i,j-1}}\right)^{d_1}
\cdot \prod_{p=2}^{l}\left(\sum_{i=1}^k\sum_{j=u_{p-1}}^{u_{p}}
\frac{a_{i,j}}{a_{i,j-1}}\right)^{d_p}.
\end{multline*}
If $m>1$, then the BCFKS Landau--Ginzburg model for $Y$ is birational to $(\CC^*)^{nk-l}$ with superpotential
\begin{multline*}
\sum_{i=1}^{k}\sum_{j=u_l+1}^n \frac{a_{i,j}}{a_{i,j-1}}
+a\left(\frac{a_{1,1}}{a}+\sum_{i=2}^{d_1}\sum_{j=1}^n \frac{a_{i,j}}{a_{i-1,j}}\right)^{d_1}\cdot\\
\cdot \prod_{p=2}^m
\left(\sum_{i=u_{p-1}}^{u_p}\sum_{j=1}^n
\frac{a_{i,j}}{a_{i-1,j}}\right)^{d_p}\cdot
\left(\sum_{i=u_m}^{k}\sum_{j=1}^n \frac{a_{i,j}}{a_{i-1,j}}+\sum_{i=1}^k\sum_{j=2}^{u_{m+1}} \frac{a_{i,j}}{a_{i,j-1}}\right)^{d_{m+1}}\cdot \\ \cdot
\prod_{p=m+2}^{l}\left(\sum_{i=1}^{k}\sum_{j=u_{p-1}}^{u_p} \frac{a_{i,j}}{a_{i,j-1}}\right)^{d_p}
.
\end{multline*}

In both cases we put $a_{1,u_1-1}=1$ if $u_1>1$ and $a=1$ otherwise,
$a_{1,u_p-1}=1$ for $p\in[2,m]$, $a_{k,u_p-1}$ for $p\in [m+1,l]$, and $a_{k,n}=1$.

\end{itemize}

\end{remark}

\begin{example}[\cite{PSh14}]
Consider a smooth Fano fourfold $Y$
of index~$2$ that is a section of the Grass\-man\-ni\-an $\G(2,6)$
by $4$ hyperplanes.
A very weak LG model of $Y$ is given by
$$
f_Y=\frac{(a_{4}+a_{3})\cdot (a_{4}+a_{3}+a_{2})}
{a_{3}\cdot a_{2}\cdot a_{1}}+
\frac{a_{4}+a_{3}}
{a_{3}\cdot a_{2}}+
\frac{1}{a_{3}}+
\frac{1}{a_{4}}+
a_{4}+a_{3}+a_{2}+a_{1}.
$$
Put $\TTT=\TTT[a_1,a_2,a_3,a_4]$.
Consider a relative compactification of a family $f_Y\colon \TTT\to \Aff^1$ given by
an embedding of $\TTT$ into the projective space $\PP^4$ with homogeneous coordinates $a_0,\ldots,a_4$.
It is a family of compact singular Calabi--Yau threefolds.
The total space of this family admits a crepant resolution of singularities~\mbox{$LG(Y)$}.
Moreover one can check that $LG(Y)$ is a family of Calabi--Yau threefolds such that
its generic fiber is smooth, and $LG(Y)$ has exactly $12$ singular
fibers. Furthermore each of these singular fibers has exactly one singular point, and this point is an ordinary double singularity.
We expect that $LG(Y)$ satisfies Homological Mirror Symmetry conjecture.
The structure of singular fibers of $LG(Y)$ confirms this expectation.
Indeed, \mbox{by~\cite[Cor.~10.3]{Kuz06}} there is a full exceptional collection of length $12$ on $Y$.
On the other hand, by %Homological Mirror Symmetry
Homological Mirror Symmetry
conjecture the
category $D^b(coh\,Y)$
is equivalent to the Fukaya--Seidel category for a dual Landau--Ginzburg model.
\end{example}

\subsection{Periods}
\label{section: Grass periods}

In this subsection we discuss period integrals for Laurent polynomials obtained in Theorem~\ref{theorem: CI in Grass Laurent}.

Recall the definition of Givental's integral in our case.

Given a torus $\TTT(\{x_1,\ldots,x_r\})$ we call a cycle $\{|x_i|=\varepsilon_{i}\mid i\in [1,r]\}$ depending on some
real numbers $\varepsilon_i$ \emph{standard}.

\begin{definition}[see~\cite{BCFKS97}]
\label{def:integral}
\emph{An (anticanonical) Givental's integral} for $Y$ is an integral
\begin{equation*}
\label{eq:restricted integral}
I_Y^0=\int_{\delta}%\limits_{|x_i|=\varepsilon_i}
\frac{
\Omega (\{\widetilde a_{i,j}\})
}{\prod_{j=1}^l\left(1-\widetilde F_j\right)\cdot \left(1-t\widetilde F_0\right)} \in \CC[[t]]
\end{equation*}
for a standard cycle $\delta=\{|\widetilde a_{i,j}|=\varepsilon_{i,j}\mid i\in [1,k], j\in [1,n], \varepsilon_{i,j}\in \R_+\}$,
whose orientation %order on $\{\widetilde a_{i,j}\}$
is chosen such that $\left.I_Y^0\right|_{t=0}=1$.
\end{definition}

In~$\mbox{\cite[Conjecture 5.2.3]{BCFKS97}}$ it is conjectured
that $\widetilde{I}^G_0=I_G^0$, and a formula for~$\widetilde{I}^G_0$ is provided.
This conjecture was
proved for $n=2$ in~\cite[Proposition 3.5]{BCK03} and for any~$n\ge 2$
in~\cite{MR13}.
%For a complete intersection $Y$ i
In discussion after Conjecture~5.2.1 in~\cite{BCFKS98} it is explained
that from the latter theorems and the Quantum Lefschetz Theorem it follows that Givental's integral $I^0_{Y}$ equals $\widetilde{I}^Y_0$.
We summarize the results mentioned above as follows.

\begin{theorem}
\label{theorem: periods of CI}
Let $Y=Y_1\cap \ldots \cap Y_l\subset \G(n,k+n)$
be a smooth Fano complete intersection. Denote $d_i= \deg Y_i$ and $d_0=k+n-\sum d_i$.
Then %the series
$$
\widetilde{I}_0^Y=I^0_Y=\sum_{d\ge 0}\sum_{s_{i,j}\ge 0} \frac{\prod_{i=0}^l (d_id)!}{(d!)^{k+n}} \prod_{i=1}^{k-1} \prod_{j=1}^{n-1} \binom{s_{i+1,j}}{s_{i,j}} \binom{s_{i,j+1}}{s_{i,j}} t^{d_0d},
$$
where we put $s_{k,j}=s_{i,n}=d$. %, is a period of BCFKS Landau--Ginzburg model for $Y$.
\end{theorem}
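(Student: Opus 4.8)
The statement has two parts: the chain of equalities $\widetilde I_0^Y=I_Y^0$ (a period/mirror assertion), and the evaluation of $I_Y^0$ as the explicit double sum. The plan is to treat these separately. First I would assemble the equality $\widetilde I_0^Y=I_Y^0$ from the mirror theorem for Grassmannians together with the Quantum Lefschetz principle, and then evaluate Givental's integral $I_Y^0$ of Definition~\ref{def:integral} as an iterated residue to obtain the quiver binomial product.

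For the first equality I would argue as follows. When $l=0$, i.e. for the Grassmannian $G=\G(n,k+n)$ itself, the identity $\widetilde I_0^G=I_G^0$ together with a closed formula for $\widetilde I_0^G$ is precisely \cite[Conjecture 5.2.3]{BCFKS97}; this conjecture is established for $n=2$ in \cite[Proposition 3.5]{BCK03} and for all $n\ge 2$ in \cite{MR13}. To pass to the complete intersection $Y=Y_1\cap\ldots\cap Y_l$ I would invoke Quantum Lefschetz in the form of Theorem~\ref{theorem: Givental I-series toric} and formula~\eqref{particular-Quantum-Lefschetz}: as explained in the discussion following \cite[Conjecture 5.2.1]{BCFKS98}, combining the Grassmannian mirror theorem with Quantum Lefschetz yields $\widetilde I_0^Y=I_Y^0$, the hypersurface classes $Y_1,\ldots,Y_l$ contributing the hypergeometric factors $\prod_{i=1}^l(d_id)!$ and the anticanonical class contributing the regularization factor $(d_0d)!$. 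This also identifies the $t$-grading as $t^{-K_Y\cdot\beta}=t^{d_0d}$.

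For the explicit formula I would compute $I_Y^0$ directly. Expanding the factors $\frac{1}{1-\widetilde F_j}=\sum_{m\ge 0}\widetilde F_j^{\,m}$ and $\frac{1}{1-t\widetilde F_0}=\sum_{d\ge 0}t^d\widetilde F_0^{\,d}$, the residue that defines the integral (equivalently, by Remark~\ref{remark: Picard--Fuchs}, the constant-term series of the associated Laurent polynomial, cf. Definition~\ref{definition: main integral}) selects the coefficient of the standard logarithmic form. Since each $\widetilde F_A=\sum_{\alpha}\widetilde a_{h(\alpha)}/\widetilde a_{t(\alpha)}$ is a sum over arrows of the quiver $\QQQ$, the constant-term extraction amounts to balancing, at every vertex $(i,j)$, the total exponent entering and leaving through the horizontal and vertical arrows. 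Introducing flow variables $s_{i,j}$ with boundary values $s_{k,j}=s_{i,n}=d$, the multinomial expansions along $\QQQ$ collapse to the product $\prod_{i=1}^{k-1}\prod_{j=1}^{n-1}\binom{s_{i+1,j}}{s_{i,j}}\binom{s_{i,j+1}}{s_{i,j}}$, while the $d$-th powers of the constraint functions supply the factor $(d!)^{-(k+n)}\prod_{i=0}^l(d_id)!$; matching with the first part then produces the displayed double sum.

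The genuinely deep input, which I would simply cite, is the Grassmannian mirror theorem of \cite{MR13} (and \cite{BCK03} for $n=2$) and the validity of Quantum Lefschetz beyond the toric setting through \cite{BCFKS98}. The main obstacle in the self-contained part is the combinatorial bookkeeping of the residue: one must verify that the iterated constant-term extraction along $\QQQ$ produces exactly the stated binomial product with the boundary identifications $s_{k,j}=s_{i,n}=d$, and that the normalization conventions match, namely the orientation of $\delta$ chosen so that $\left.I_Y^0\right|_{t=0}=1$ and the dependence of the Laurent polynomial on the ordering of the degrees $d_1,\ldots,d_l$ recorded in Remark~\ref{remark:order}, which affects the polynomial but not the resulting period.
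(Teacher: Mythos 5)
Your proposal matches the paper's treatment: the paper gives no independent proof of this theorem, but simply summarizes the cited results --- the Grassmannian mirror identity $\widetilde I_0^G=I_G^0$ with its closed formula from \cite[Conjecture 5.2.3]{BCFKS97}, proved in \cite[Proposition 3.5]{BCK03} for $n=2$ and in \cite{MR13} in general, combined with the Quantum Lefschetz argument explained after Conjecture~5.2.1 in \cite{BCFKS98} --- which is exactly the citation chain you assemble. Your additional sketch of evaluating $I_Y^0$ by iterated constant-term extraction along the quiver is consistent with (and goes slightly beyond) what the paper records, but the essential content and the deep external inputs are the same.
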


It turns out that changes of variables constructed in Theorem~\ref{theorem: CI in Grass Laurent} preserve this period.

\begin{proposition}
\label{theorem: Grass I-series}
The period condition holds for Laurent polynomials given by Theorem~\ref{theorem: CI in Grass Laurent}.
In other words, Theorem~\ref{theorem: CI in Grass Laurent} provides weak Landau--Ginzburg models for Fano complete intersections
in Grassmannians. %$\G(n, k+n)$.
\end{proposition}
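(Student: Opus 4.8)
We must show that the Laurent polynomials $\tau^*F_{B_0}$ produced by Theorem~\ref{theorem: CI in Grass Laurent} satisfy the period condition, i.e.\ that their constant terms series (equivalently their main period $I_f$) coincides with $\widetilde{I}^Y_0$ for the complete intersection $Y\subset\G(n,k+n)$.

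\begin{proof}[Plan of the proof]
The plan is to track the period integral through the same chain of birational maps and torus quotients that was used to prove Theorem~\ref{theorem: CI in Grass Laurent}, verifying that each step transforms Givental's integral $I^0_Y$ into the main period $I_f$ of the resulting Laurent polynomial without changing its value. By Theorem~\ref{theorem: periods of CI} we already know that $\widetilde I^Y_0=I^0_Y$, where $I^0_Y$ is the Givental-type integral over the BCFKS model $\widetilde L$ with the standard cycle $\delta$; so it suffices to prove that this integral equals the main period $I_f$ of $f=\tau^*F_{B_0}$ in the sense of Definition~\ref{definition: main integral}.

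First I would pass from the coordinates $\widetilde a_{i,j}$ to the coordinates $a_{i,j}$, $a$ via the monomial map $\psi$ of~\eqref{eq: psi}; since $\psi$ is a monomial automorphism of the torus it preserves the standard logarithmic form $\Omega$ up to sign and sends the standard cycle to a standard cycle, so the integral is unchanged. Next, using the free torus action of $\TTT\simeq(\CC^*)^l$ and the splitting $\X\simeq\TTT\times\Y$ furnished by Lemma~\ref{lemma:torus}, I would rewrite $\Omega(\{a_{i,j}\})$ as (up to sign) $\Omega(w_1,\ldots,w_l)\wedge\varphi^*\Omega(V')$, and substitute $F_{B_p}=w_p^{-1}\varphi^*\bar F_p$, $F_A=\varphi^*\bar F_A$, $a=\mu(w)\varphi^*\bar a$ from Lemmas~\ref{lemma:weights-Fi} and~\ref{lemma:weights-FA}. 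The key computation is to evaluate the resulting iterated residue in the $w_p$-variables: each factor $1-F_{B_p}=1-w_p^{-1}\varphi^*\bar F_p$ in the denominator contributes a pole, and applying the residue formula of~\cite[Theorem 1.1]{ATY85} successively in $w_1,\ldots,w_l$ (in the order dictated by the upper-triangular weight matrix $M$, whose unit diagonal guarantees the residues are taken cleanly) amounts to setting $w_p^{-1}=\bar F_p$, i.e.\ to imposing the defining equations $\widetilde F_{B_p}=1$ of $\widetilde L$. This is exactly the residue mechanism already used in Section~\ref{section: complete Givental} to pass from~\eqref{eq:Givental-integral} to~\eqref{eq:restricted integral}, and it identifies the remaining integral over $\Y$ with $\int_{\delta_1}\Omega(V')/(1-t\,\tau^*F_{B_0})$.

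The final step is to recognize the surviving integral as the main period of $f$. After taking residues the superpotential becomes $\tau^*F_{B_0}=\bar F_A+\mu(\bar F_1,\ldots,\bar F_l)\bar a=f$, which is a genuine Laurent polynomial on $\Y\simeq(\CC^*)^{nk-l}$ by Theorem~\ref{theorem: CI in Grass Laurent}; and the cycle $\delta_1$ is homologous to the standard cycle $\delta_1^0$, so by Remark~\ref{remark: Picard--Fuchs} the integral equals $\Phi_f=I_f(t)$. Combining the chain of equalities gives $I_f=I^0_Y=\widetilde I^Y_0=\widetilde I^{Y}_0$, which is the period condition. The main obstacle will be the residue bookkeeping: one must check that the form $\Omega(V')$-part has no spurious pole along the successive hypersurfaces $\{w_p^{-1}=\bar F_p\}$ (so that the hypothesis of~\cite[Theorem 1.1]{ATY85} is met at each stage), and that the order of residue extraction—forced by the consecutive-block structure and the triangularity of $M$—is consistent across all the cases enumerated in Remark~\ref{remark: algorithm}. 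All of this is a direct, if tedious, adaptation of the toric computation of Proposition~\ref{proposition:periods-for-proj-space} to the quiver setting, with the semi-invariance Lemmas~\ref{lemma:weights-Fi} and~\ref{lemma:weights-FA} doing the essential work of separating the torus-direction residues from the base $\Y$.
\end{proof}
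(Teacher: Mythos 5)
Your proposal is correct and follows essentially the same route as the paper: both pass to the $(w_1,\ldots,w_l)\cup V'$ coordinates via $\varphi\psi$, use the semi-invariance of $F_{B_p}$ and $F_{B_0}$ to factor the logarithmic form, and then take iterated residues at $w_p=\bar F_p$ (the paper phrases this as the substitution $u_p=w_p-\bar F_p$ followed by residues at $u_p=0$), reducing Givental's integral to the main period of $f=\tau^*F_{B_0}$. The cycle and pole bookkeeping you flag as the main obstacle is exactly what the paper delegates to the ``appropriate choice of cycle $\Delta'$'' with a reference to the analogous toric computation; note only the small slip that the residue imposes $w_p=\bar F_p$, not $w_p^{-1}=\bar F_p$.
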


\begin{proof}
We follow the notation from Theorem~\ref{theorem: CI in Grass Laurent}.
A toric change of variables $\varphi\psi$ change coordinates $\{\widetilde a_{i,j}\}$ by coordinates $\{w_i\}\cup V'$.
One gets
\begin{multline*}
I_Y^0=\int_{\delta}
\frac{
\Omega (\{\widetilde a_{i,j}\})
}{\prod_{j=1}^l\left(1-\widetilde F_j\right)\cdot \left(1-t\widetilde F_0\right)} =\\=
\int_{\delta'} \Omega(V')
\wedge\left(\bigwedge_{j=1}^l\left(\frac{1}{2\pi \sqrt{-1}}\frac{dw_j}{w_j\cdot\left(1-\bar F_{j}/w_j\right)}\right)\right)\cdot \frac{1}{1-t
\bar F}
\end{multline*}
for an appropriate choice of an orientation on $\delta'$, where $\bar F= \bar{F}_{A}+\mu(w)\cdot \bar{a}$.
Following the birational isomorphism $\tau$, consider variables $u_i=w_i-\bar F_{i}$ instead of $w_i$.
Then, after appropriate choice of cycle $\Delta'$ (cf.~\cite[proof of Proposition 10.5]{PSh17}) one gets
\begin{multline*}
I_Y^0=
\int_{\delta'} \Omega(V')
\wedge \left(\bigwedge_{j=1}^l\left(\frac{1}{2\pi \sqrt{-1}}\frac{dw_j}{w_j-\bar F_{j}}\right)\right)\cdot \frac{1}{1-t \bar F}=\\=
\int_{\Delta'} \Omega(V')
\wedge\left(\bigwedge_{j=1}^l\left(\frac{1}{2\pi \sqrt{-1}}\frac{du_j}{u_j}\right)\right)\cdot \frac{1}{1-t F_u}=
\int_{\Delta} \frac{
\Omega(V')
}{1-t f}=\sum [f^i]t^i,
\end{multline*}
where $\Delta$ is a projection of $\Delta'$ on $\TTT(V)$ and $F_u$ is a result of replacement of $w_i$ by
of $u_i+F_{B_i}$ in $\bar F$.
\end{proof}

\begin{problem}[{cf.~\cite[Problem 17]{Prz13}}]
\label{problem: toric LG}
Let $Y$ be a Fano complete intersection in $\G(n, k+n)$, and let $f_Y$ be the Laurent polynomial for $Y$ given by Theorem~\ref{theorem: CI in Grass Laurent}. Prove that $f_Y$ is a toric Landau--Ginzburg model. Prove that the number of components
of a central fiber of a Calabi--Yau compactification for $f_Y$ is equal to $h^{1,\mathrm{dim}\,Y-1}(Y)+1$ (cf. Conjecture~\ref{conjecture: Hodge-components}).
\end{problem}

\begin{remark}
In~\cite{DH15} it was shown by other methods that BCFKS Landau--Ginzburg models are birational to algebraic tori.
Moreover, these Laurent polynomials representing the superpotentials
are recovered from toric degenerations. Thus if one show that these polynomials satisfy the period condition,
then they are weak Landau--Ginzburg models satisfying the toric condition.
\end{remark}

\begin{example}\label{example:1121}
Let $Y$ be a smooth
intersection of the Grassmannian $\G(3,6)$ with
a quadric and three hyperplanes.
Put $l=4$, $d_1=d_2=d_4=1$, and $d_3=2$.
The BCFKS Landau--Ginzburg model in this case
is birational to a torus
$$
\Y\simeq \TTT\big(\{a_{1,2}, a_{1,3}, a_{2,1}, a_{2,2}, a_{2,3}\}\big)
$$
with the superpotential
\begin{multline*}
f_Y=
\left(a_{2,1}+\frac{a_{2,2}}{a_{1,2}}+\frac{a_{2,3}}{a_{1,3}}\right)  %\cdot\\
\cdot\left(\frac{1}{a_{2,1}}+\frac{a_{3,2}}{a_{2,2}}+
\frac{1}{a_{2,3}}+a_{1,2}+\frac{a_{2,2}}{a_{2,1}}+1\right)^2\cdot
\left(\frac{a_{1,3}}{a_{1,2}}+\frac{a_{2,3}}{a_{2,2}}+1\right)
\end{multline*}
given by Remark~\ref{remark: algorithm}.
By
Theorem~\ref{theorem: periods of CI} (see also~\cite[Example~5.2.2]{BCFKS97}) one has
\begin{multline}\label{eq:const-term-1121}
I^0_Y=\sum_{d, b_1,b_2,b_3,b_4}\frac{(2d)!}{(d!)^{2}} \binom{b_2}{b_1}  \binom{b_3}{b_1}  \binom{d}{b_2}  \binom{b_4}{b_2}
 \binom{b_4}{b_3}  \binom{d}{b_3}  \binom{d}{b_4}^2t^{d}=\\
 =1+12t+756t^2+78960t^3+10451700t^4+1587790512t^5+263964176784t^6+\\
 +46763681545152t^7+8685492699286260t^8+\cdots.
\end{multline}
One can check that
the first few terms we write down on the right
hand side of the formula~\eqref{eq:const-term-1121}
equal the first few terms of the series~\mbox{$\sum [f_Y^i]t^i$}.
\end{example}

\begin{example}\label{example:1112}
Let $Y$ be a smooth
intersection of the Grassmannian $\G(3,6)$ with
a quadric and three hyperplanes, i.\,e. the variety
that was already considered in Example~\ref{example:1121}.

Put $l=4$, $d_1=d_2=d_3=1$, and $d_4=2$.
One has
$$
\Y=\TTT\big(\{a_{1,2}, a_{1,3}, a_{2,2}, a_{2,3}, a_{3,1}\}\big).
$$
By Remark~\ref{remark: algorithm} we get
\begin{multline*}
f_Y=\left(1+\frac{a_{2,2}}{a_{1,2}}+\frac{a_{2,3}}{a_{1,3}}\right)\cdot
\left(a_{3,1}+\frac{1}{a_{2,2}}+\frac{1}{a_{2,3}}\right)
\cdot \left(a_{1,2}+a_{2,2}+\frac{1}{a_{3,1}}+\frac{a_{1,3}}{a_{1,2}}
+\frac{a_{2,3}}{a_{2,2}}+1\right)^2.
\end{multline*}
One can check that the first few constant terms $[f_Y^i]$
coincide with the first few terms of the series presented on the right
side of the formula~\eqref{eq:const-term-1121}.
Note that the Laurent polynomial
$f_Y$ can't be obtained from the polynomial from Example~\ref{example:1121}
by monomial change of variables (cf. Remark~\ref{remark:order}).
It would be interesting to find out if
these two Laurent polynomials
are mutational equivalent (cf.~\cite[Theorem~2.24]{DH15}).
\end{example}

%\thebibliography{XXX}

\end{document}